\renewcommand{\@biblabel}[1]{\quad#1.}
\newtheorem{theorem}{Theorem}
\newtheorem{corollary}[theorem]{Corollary}
\newtheorem{definition}[theorem]{Definition}
\newtheorem{lemma}[theorem]{Lemma}
\newtheorem{proposition}[theorem]{Proposition}
\theoremstyle{remark}
\newtheorem{remark}[theorem]{Remark}
\newtheorem{example}[theorem]{Example}
\numberwithin{equation}{section}
\numberwithin{theorem}{section}
\newcommand{\E}{\mathbb{E}}
\newcommand{\R}{\mathbb{R}}
\newcommand{\Ey}{\mathbb{E}_y} 
\newcommand{\Exy}{\mathbb{E}^x_y}
\newcommand{\1}{\mathbbm 1} 
\newcommand{\di}{\mathrm{d}}
\renewcommand{\epsilon}{\varepsilon}
\DeclareMathOperator{\sgn}{sgn}
\newcommand{\ind}{\text{\ensuremath{1\hspace*{-0.9ex}1}}}
\newcommand{\RMST}{\mathrm{RMST}}
\newcommand{\W}{\mathbb{W}}
\newcommand{\itembullet}{\,\begin{picture}(-1,1)(-1,-3)\circle*{2}\end{picture}\ }
\title[ ]{Delayed Switching Identities and Multi-Marginal Solutions to the Skorokhod Embedding Problem}
\author{Alexander M. G. Cox}
\address{Department of Mathematical Sciences, University of Bath}
\email{a.m.g.cox@bath.ac.uk}
\author{Annemarie M. Grass}
\address{Faculty of Mathematics, University of Vienna}
\email{annemarie.grass@univie.ac.at}
\date{\today.}
\thanks{\textit{Funding:}
A.~Grass gratefully acknowledges financial support through FWF project P 35197 \textit{(Grant DOI: 10.55776/P35197)}}
\begin{document}
\maketitle
\begin{abstract}
In this article, we consider a generalisation of the Skorokhod embedding problem (SEP) with a \emph{delayed} starting time. In the delayed SEP, we look for stopping times which embed a given measure in a stochastic process, which occur after a given delay time. Our first contribution is to show that the switching identities introduced in a recent paper of Backhoff, Cox, Grass and Huesmann extend to the case with a delay.

We then show that the delayed switching identities can be used to establish an optimal stopping representation of Root and Rost solutions to the multi-marginal Skorokhod embedding problem.  We achieve this by rephrasing the multi-period problem into a one-period framework with delay. This not only recovers the known multi-marginal representation of Root, but also establishes a previously unknown optimal stopping representation associated to the multi-marginal Rost solution. The Rost case is more complex than the Root case since it naturally requires randomisation for general initial measures, and we develop the necessary tools to develop these solutions. Our work also provides a comprehensive and complete treatment of \emph{discrete} Root and Rost solutions, embedding discrete measures into simple symmetric random walks. 

\medskip

\noindent \textit{Key words.} Skorokhod embedding problem; Root
barrier; Rost barrier; optimal stopping; switching identities.

\medskip

\noindent \textit{MSC 2020.} Primary 60G40, 60J65, 90C41; secondary 91G20
\end{abstract}

%
\section{Introduction} \label{sec:intro}
We consider the \emph{Skorokhod embedding problem}, that is  given a 
Brownian motion $(W_t)_{t \geq 0}$ started according to an initial distribution $W_0 \sim \lambda$, 
and a probability distribution $\mu$ on $\mathbb{R}$, 
the question is to find a 
stopping time $\tau$ such that
\begin{equation}\label{SEP} 
    W_\tau \sim \mu \text{ and } (W_{t \wedge \tau})_{t \geq 0} \text{ is uniformly integrable.} \tag{{\sf SEP}} 
\end{equation} 
While this problem was originally posed and solved by Skorokhod \cite{Sk65,Sk61} in the early 60s, 
it remains an active field of research to the present day. 
The 2004 survey paper \cite{Ob04} features more than 20 distinct solution to \eqref{SEP} 
and more recent contributions include 
\cite{ CoHo07, CoWa12, CoWa13, GaMiOb14, ObDoGa14, HeObSpTo16, ObSp17, BeCoHu14, 
BeCoHu16, GhKiPa19, CoObTo19, BaBe20, GhKiLi20, BaZh21, GaObZo21, BeNuSt19, CoKi19b} 
among others. 

Of primary interest in this article will be an extension of the classical \eqref{SEP} to \emph{multiple marginals}. 
Given a sequence of measures $\mu_1, \dots, \mu_n$ on the real line, 
the \emph{multi-marginal Skorokhod embedding problem} is to find an increasing sequence of stopping times $\tau_1 \leq \dots \leq \tau_n$ such that
\begin{equation} \label{MMSEP} \tag{{\sf MMSEP}}
        W_{\tau_k} \sim \mu_k, \, k = 1, \dots, n \text{ and } (W_{t \wedge \tau_n})_{t \geq 0} \text{ is uniformly integrable}.
\end{equation}
It is an application of Strassen's Theorem \cite{St65} and a well know fact that \eqref{MMSEP} 
will admit a solution if and only if the measures are in increasing convex order,
that is  $\lambda \leq_c \mu_1 \leq_c \cdots \leq_c \mu_n$. 
Throughout the paper we will assume this condition satisfied.

This extension to \eqref{MMSEP} is particularly motivated by applications in robust mathematical finance given the connections between the \eqref{SEP} and robust option pricing first proposed in Hobson's paper \cite{Ho98a}; see also \cite{Ho11} for an extensive survey.  These connections enable the computation of robust bounds on option prices while considering a market-given marginal distribution at the time of expiration.  Extending this theory to multiple marginals would allow the incorporation of additional market data into the robust pricing problem. We develop this approach in a companion paper, \cite{CoGr23b}.

\medskip
The focal point of this paper are two fundamental and influential solutions to \eqref{SEP} 
known as Root \cite{Ro69} and Rost \cite{Ro76} barrier solutions. 
These stopping times can be represented as hitting times of subsets $R \subseteq \R_+\times \R$,  
which are subject to some type of \emph{barrier structure}, precisely:
\begin{align}
\begin{split} \label{def:BarrierSet}
    &\itembullet \text{ If } (t,x) \in R^{Root} \text{ then } (s,x) \in R^{Root}  \text{ for all } s > t.
\\  &\itembullet \text{ If } (t,x) \in R^{Rost} \text{ then } (s,x) \in R^{Rost}  \text{ for all } s < t.
\end{split}
\end{align}
See Figure \ref{fig:intro-examples} for an illustration. 
\begin{figure}
\captionsetup{margin=2cm}
\centering
\begin{subfigure}{.44\linewidth}
\centering
\includegraphics[width = 0.49\linewidth]{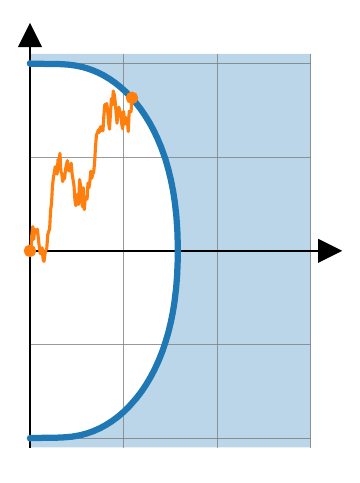}
\centering
\includegraphics[width = 0.49\linewidth]{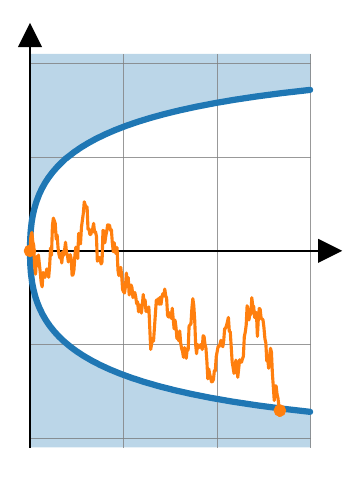}
\caption{Root (left) and Rost (right) barriers solving \eqref{SEP} for 
$\lambda = \delta_0$ and $\mu = \mathsf{U}[-2,2]$.}
\end{subfigure}
\quad
\begin{subfigure}{.44\linewidth}
\centering
\includegraphics[width = 0.49\linewidth]{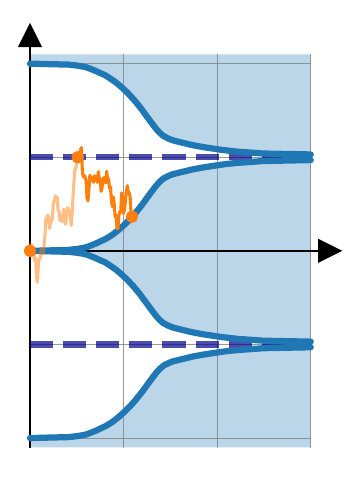}
\centering
\includegraphics[width = 0.49\linewidth]{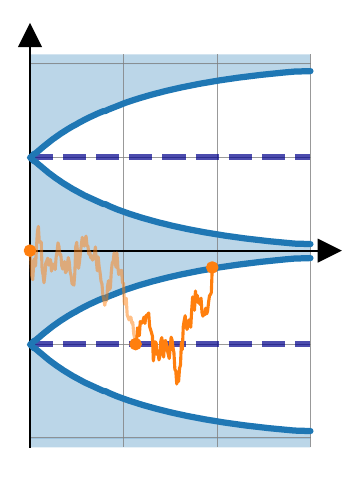}
\caption{Root (left) and Rost (right) barriers solving \eqref{MMSEP} for 
$\lambda = \delta_0$, $\mu_1 = \frac{1}{2}(\delta_{-1} + \delta_{1})$ and $\mu_2 = \mathsf{U}[-2,2]$.}
\end{subfigure}

\caption{Single-marginal (a) and multi-marginal (b) Root and Rost solutions embedding the measure $\mathsf{U}[-2,2]$, 
one of the examples specifically mentioned \cite{Ro69} for its previously unknown barrier structure.}
\label{fig:intro-examples}
\end{figure}

Our interest in Root and Rost solutions is twofold. 
First, there is a substantial interest in the probability literature on these solutions, see for example 
\cite{Lo70, Ro76, CoWa12, CoWa13, GaMiOb14, ObDoGa14, BeHuSt16, De18, CoObTo19, GaObZo21, RiTaTo20, BaBe20, BrHu21}.
Second, such solutions are related to robust bounds on variance options considering multiple market-given marginal distributions. 
In \cite{CoGr23b} we carry out an empirical exploration of such robust bounds based on the results presented here. 
See \cite{Ne94, CaLe10, CoWa12, CoWa13, HoKl12} for related work in the financial context. 

While existence results of Root and Rost barriers were shown by their originators 
and also an extension 
to multiple marginals was given in \cite{BeCoHu16}, 
as was noted in Root's original work \cite{Ro69}, concrete barriers are only known in the most trivial of situations.   
To actually construct a concrete barrier $R$ given a probability measure $\mu$ 
or even more so a sequence barriers given a sequence of probability measures $\mu_1, \dots, \mu_n$ is a highly non trivial task. 

Prior research in \cite{CoWa12, CoWa13, GaMiOb14, ObDoGa14, De18, GaObZo21} established 
that Root and Rost barriers which solve the (single-marginal) \eqref{SEP} 
can be recovered via solutions to a respective associated PDE in variational form. 
Solutions to such PDEs, in turn, are known to have a representation by means of an optimal stopping problem. 
While for a long time this optimal stopping representation of Root and Rost solutions was
exclusively accessible through this detour over PDEs in variational form, 
a direct proof utilizing time-reversal arguments was given in \cite{CoObTo19}, 
notably for the multi-marginal Root case. 
It was furthermore the aim of the article \cite{BaCoGrHu21} to give a simple probabilistic argument connecting Root and Rost solutions to their respective optimal stopping representations in the single marginal case by establishing a specific switching identity. 
To the best of our knowledge, an optimal stopping representation associated to the multi-marginal Rost problem remained unknown to this day.

In order to establish such a multi-marginal representation, 
a crucial first step is to reduce the multi-step viewpoint to a single-marginal perspective where 
the initial marginal is now a space-time law instead of a spatial law at zero.
This approach builds on an approach implicit in \cite{CoObTo19}, and which we make explicit in our approach. Our solution is computed in terms of a \emph{delay} stopping time inducing an initial space-time law after which we embed with a Root or Rost barrier respectively.  Given such a representation it becomes apparent how a multi-marginal solution can be recovered inductively.

An initial step in our process is to extend the results established in \cite{BeCoHu16} as well as \cite{BaCoGrHu21} to allow for such a delay.  The implications of this extension are twofold: First, it enables a more concise and intuitive recovery of the multi-marginal optimal stopping problem given in \cite{CoObTo19}.  Second, we can provide a novel characterisation of the multi-marginal Rost embedding in terms of the solution to a given optimal stopping (or multiple stopping) problem.  This lays the foundation for constructing Rost barriers such that their hitting times solve the \eqref{MMSEP} computable as demonstrated in Figure \ref{fig:intro-examples}.

We briefly review the known results for the single marginal problem and then proceed to state its multi-marginal extension.

%
%
%
%
%
\subsubsection*{Summary of Single Marginal Results} 
Simple probabilistic arguments justifying the optimal stopping characterization of the single marginal Root and Rost \eqref{SEP} were given in \cite{BaCoGrHu21}. 
Let us denote by $\mu^{Root}$ (resp. $\mu^{Rost}$) the law of the Brownian motion started with distribution $\lambda$ 
at the time it hits $R^{Root}$ (resp. $R^{Rost}$).
By $\mu_T^{Root}$ (resp. $\mu_T^{Rost}$) we denote the time this Brownian motion hits the barrier 
$R^{Root} \cup \left([T, \infty) \times \mathbb{R} \right)$ 
(resp. $R^{Rost} \cup \left([T, \infty) \times \mathbb{R} \right)$). 
Recall the potential of a measure $m$, denoted by 
\[U_{m}(y):=-\int|y-x|\, m(\di x).\]
The single-marginal relations of interest, as provided in \cite{CoWa12, CoWa13, BaCoGrHu21, GaObZo21} among others 
are
\begin{align} \label{eq:Root}
    U_{\mu_{T}^{Root}}(y)   &= \E_y \left [ U_{\mu^{Root}}\left(W_{\tau^{*} }\right)\1_{\tau^{*}<T} + U_{\lambda}\left(W_{\tau^{*}}\right)\1_{\tau^{*}=T}  \right ] 
\\\label{eq:RootOSP}
                            &= \sup\limits_{ \tau\leq T}\E_y \left [ U_{\mu^{Root}}(W_{\tau})\1_{\tau<T} + U_{\lambda}(W_{\tau})\1_{\tau=T}  \right ] , 
\end{align}
where the optimizer is $\tau^*:=\inf\left\{t \geq 0: (T-t,W_t) \in R^{Root} \right\}\wedge T$, \\
and
\begin{align} \label{eq:Rost}
 U_{\mu^{Rost}}(y)- U_{\mu_T^{Rost}}(y) &= \E_y \left[ \left(U_{\mu^{Rost}}-U_{\lambda}\right)(W_{\tau^*})  \right] 
 \\\label{eq:RostOSP}
                                        &= \sup\limits_{\tau\leq T}\E_y \left[ \left(U_{\mu^{Rost}}-U_{\lambda}\right)(W_{\tau})  \right],
\end{align} 
where the optimizer is $\tau^*:=\inf\left\{t \geq 0: (T-t,W_t) \in R^{Rost} \right\}\wedge T$.

%
%
%
%
%
\subsubsection*{Multi-Marginal Extensions} 
We are interested in Equations \eqref{eq:Root} - \eqref{eq:RostOSP} in a multi marginal setting.
While the multi marginal extension of the Root case \eqref{eq:Root} - \eqref{eq:RootOSP} is known due to \cite{CoObTo19}, 
the Rost equivalent appears to be novel. 
The results can be stated as follows. 

Let $R_1, R_2, \dots, R_n$ denote a sequence of Root (resp. Rost) barriers 
as defined in \eqref{def:BarrierSet}. 
Then consider the associated Root (resp. Rost) stopping times
\begin{align*}
    \rho_1 &:= \inf\left\{t \geq 0 : (t, W_t) \in R_1 \right\},
\\  \rho_k &:= \inf\left\{t \geq \rho_{k-1} : (t, W_t) \in R_k \right\}, \text{ for } k \in \{2, \dots, n\}.
\end{align*}
Define the respective laws $\mu_i := \mathcal{L}^{\lambda}(W_{\rho_i})$ and $\mu_{i,T} := \mathcal{L}^{\lambda}(W_{\rho_i \wedge T})$.
More precisely, $\mu_i$ will be the law of the Brownian motion starting with distribution $\lambda$ 
at the time it consecutively hit all of the barrier sets $R_1, \dots, R_i$ respectively, 
and $\mu_{i,T}$ will be the time it consecutively hit the barrier sets 
$R_1 \cup \left([T, \infty) \times \mathbb{R} \right), \dots,  R_i \cup \left([T, \infty) \times \mathbb{R} \right)$. 
Then these measures are in convex order, i.e. 
$\mu_1 \leq_c \mu_2 \leq_c \cdots \leq_c \mu_n$ as well as $\mu_{1,T} \leq_c \mu_{2,T} \leq_c \cdots \leq_c \mu_{n,T}$.
Moreover define the function,
\begin{equation}
    U_{\mu_{0,T}}(x) := U_{\mu_{0}}(x) := U_{\lambda}(x)  
        \quad \text{ for all } T\geq0.
\end{equation}
The multi-marginal extension of our relations of interest are given as follows. 
\begin{theorem} \label{thm:MM-Root}
If $(R_k)$ denotes a sequence of Root barriers, then 
\begin{align}
    \label{eq:MMRoot}
    U_{\mu_{k,T}}(y) &= \E_y\left[ U_{\mu_{k-1,T - \tau^*_k}} (W_{\tau^*_k}) 
        + (U_{\mu_k} - U_{\mu_{k-1}})(W_{\tau^*_k})\ind_{\{\tau^*_k  < T\}}\right]   
\\ \label{eq:MMRootOSP}
                            &= \sup_{\tau \leq T} \E_y\left[ U_{\mu_{k-1,T-\tau}} (W_{\tau}) 
        + (U_{\mu_k} - U_{\mu_{k-1}})(W_{\tau})\ind_{\{\tau < T\}}\right],
\end{align}
where the optimizer is $\tau^{*}_k :=\inf\left\{t \geq 0 : (T-t,W_t) \in R_k\right\}\wedge T$.

Moreover, given the interpolating potentials $U_{\mu_{k,T}}(y)$ for all $(T,y) \in [0, \infty) \times \mathbb{R}$ 
we can recover the Root barriers $R_1, \dots, R_n$ embedding the measures $\mu_1, \dots, \mu_n$ in the following way
\[
    R_k = \left\{(T,y) : \left(U_{\mu_{k,T}} - U_{\mu_{k-1,T}} \right)(y) = \left(U_{\mu_{k}} - U_{\mu_{k-1}}\right)(y) \right\}, \,\, k \in \{1, \dots, n\}.
\]
\end{theorem}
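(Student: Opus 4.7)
The plan is to argue by induction on $k$, invoking at each step the delayed single-marginal switching identity established earlier in the paper. The base case $k=1$ is immediate: under the conventions $\mu_0=\lambda$ and $U_{\mu_{0,T}}\equiv U_\lambda$, the identities \eqref{eq:MMRoot} and \eqref{eq:MMRootOSP} reduce literally to the single-marginal statements \eqref{eq:Root} and \eqref{eq:RootOSP} respectively, so no new work is needed there.

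For the inductive step, the key observation is that the first $k-1$ truncated embeddings produce a joint space-time law of $(\rho_{k-1}\wedge T, W_{\rho_{k-1}\wedge T})$ whose spatial marginal is $\mu_{k-1,T}$. Embedding $\mu_k$ on top of this via the $k$-th Root barrier is then \emph{precisely} a delayed single-marginal Root problem: the ``initial law after the delay'' is $\mu_{k-1}$, its truncated version corresponding to remaining time-budget $s$ is $\mu_{k-1,s}$, and the Root-optimizer for embedding $\mu_k$ from this delayed start is exactly $\tau^*_k = \inf\{t\geq 0 : (T-t,W_t)\in R_k\}\wedge T$. Applying the delayed switching identity in this form yields the first equality in \eqref{eq:MMRoot}, and its optimal-stopping counterpart yields \eqref{eq:MMRootOSP}. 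To make the sup representation rigorous I would verify that
\[
    M_t := U_{\mu_{k-1,T-t}}(W_t) + (U_{\mu_k}-U_{\mu_{k-1}})(W_t)\ind_{\{t<T\}}
\]
is a supermartingale which becomes stationary on the set $\{(T-t,W_t)\in R_k\}$; the supermartingale property rests on the spatial superharmonicity of each $U_{\mu_{k-1,s}}$ together with the monotonicity of $U_{\mu_{k-1,s}}$ in the time-budget $s$, both of which are natural consequences of the barrier construction.

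For the barrier-recovery formula, it then suffices to observe that at points $(T,y)\in R_k$ the optimizer stops immediately, so $U_{\mu_{k,T}}(y) = M_0 = U_{\mu_{k-1,T}}(y) + (U_{\mu_k}-U_{\mu_{k-1}})(y)$, while strict inequality between $U_{\mu_{k,T}} - U_{\mu_{k-1,T}}$ and $U_{\mu_k} - U_{\mu_{k-1}}$ outside $R_k$ follows from the strong Markov property applied at $\tau^*_k>0$. The principal obstacle will be the careful bookkeeping that identifies the space-time law produced by the first $k-1$ truncated embeddings with a valid initial condition for the delayed switching identity; in particular one must verify that its ``remaining-time'' truncations coincide with the barrier-defined measures $\mu_{k-1,s}$, and that $\tau^*_k$ — which only ``sees'' $R_k$ and ignores the earlier barriers entirely — is indeed the optimizer independently of the preceding delay. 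Once these compatibility checks are in place, the theorem follows by a clean invocation of the delayed single-marginal result.
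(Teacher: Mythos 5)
Your overall route is the paper's own: Theorem \ref{thm:MM-Root} is obtained inductively as a special case of the delayed single-marginal representation (Theorem \ref{thm:delayed-Root}), taking the delay $\eta=\rho_{k-1}$, under which $\alpha_X=\mu_{k-1}$, $V^{\alpha}_{s}=U_{\mu_{k-1,s}}$, $\beta^{\alpha}=\mu_k$ and $\beta^{\alpha}_T=\mu_{k,T}$. Your ``compatibility checks'' are exactly these identifications, and they are immediate from the definitions, since $\mu_{k-1,s}=\mathcal{L}^{\lambda}(W_{\rho_{k-1}\wedge s})$ and the barrier $R_k$ is the one governing the post-delay embedding; the barrier-recovery formula is likewise inherited verbatim from Theorem \ref{thm:delayed-Root} under the same substitution, so no separate strong Markov argument is needed there.

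The one step that would fail if you actually leaned on it is your proposed verification of \eqref{eq:MMRootOSP}: the gain process $M_t = U_{\mu_{k-1,T-t}}(W_t)+(U_{\mu_k}-U_{\mu_{k-1}})(W_t)\ind_{\{t<T\}}$ is \emph{not} a supermartingale. If it were, the supremum in \eqref{eq:MMRootOSP} would be attained by $\tau\equiv 0$ and would equal $U_{\mu_{k-1,T}}(y)+(U_{\mu_k}-U_{\mu_{k-1}})(y)$ for every $(T,y)$, contradicting the recovery formula, which says this equality characterises $R_k$. Indeed $s\mapsto U_{\mu_{k-1,s}}$ is \emph{decreasing} (truncated laws increase in convex order as $s$ grows), so $t\mapsto U_{\mu_{k-1,T-t}}(W_t)$ carries an upward drift in time that competes with spatial concavity, and the nonpositive term $(U_{\mu_k}-U_{\mu_{k-1}})(W_t)$ is dropped at $t=T$; the object with the supermartingale property is the value process $U_{\mu_{k,T-t}}(W_t)$, which dominates $M_t$ and coincides with it on $R_k$, not $M_t$ itself. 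Fortunately this extra verification is unnecessary: the sup representation is already part of Theorem \ref{thm:delayed-Root}, which the paper establishes not by a martingale-theoretic argument but via the discrete switching identities for random walks and a Donsker-type limit, so your inductive reduction alone delivers \eqref{eq:MMRoot}--\eqref{eq:MMRootOSP}.
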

\begin{theorem} \label{thm:MM-Rost}
If  $(\bar R_k)$ denotes a sequence of Rost barriers, then
\begin{align}
    \label{eq:MMRost}
    U_{\mu_{k}}(y) - U_{\mu_{k,T}}(y)  &= \E_y\left[ U_{\mu_{k}} (W_{\tau^*_k})  
            - U_{\mu_{k-1,T-\tau^*_k}} (W_{\tau^*_k}) \right]   
\\ \label{eq:MMRostOSP}
        &= \sup_{\tau \leq T} \E_y\left[ U_{\mu_{k}} (W_{\tau}) 
            - U_{\mu_{k-1,T-\tau}} (W_{\tau}) \right]
\end{align}
where the optimizer is $\tau^*_{k}:=\inf\left\{t \geq 0: (T-t,W_t)\in \bar R_k\right\}\wedge T$. 

Moreover, given the interpolating potentials $U_{\mu_{k,T}}(y)$ for all $(T,y) \in [0, \infty) \times \mathbb{R}$ 
we can recover the Rost barriers $\bar R_1, \dots, \bar R_n$ embedding the measures $\mu_1, \dots, \mu_n$ in the following way
\[
    \bar R_k = \left\{(T,y) : U_{\mu_{k,T}}(y)  =  U_{\mu_{k-1,T}}(y)\right\}, \,\, k \in \{1, \dots, n\}.
\]

\end{theorem}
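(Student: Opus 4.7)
The plan is to reduce Theorem \ref{thm:MM-Rost} to a single application of the delayed Rost switching identity established earlier in the paper. The key observation is that the $k$-th Rost stopping $\rho_k$ is obtained by running the process up to the random time $\rho_{k-1}$, at which point the spatial marginal is $\mu_{k-1}$, and then hitting the Rost barrier $\bar R_k$. Viewed from stage $k$, this is precisely a single-marginal Rost problem with a delay: the delay is $\rho_{k-1}$, the post-delay initial law is $\mu_{k-1}$, and the target is $\mu_k$. The quantity $\mu_{k-1, T-\tau}$ appearing in the theorem is exactly $\mu_{k-1}$ truncated by the remaining time budget $T-\tau$, thereby playing the role that $\lambda$ plays in the single-marginal identity \eqref{eq:Rost}.

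Given this setup, I would first derive the identity \eqref{eq:MMRost} by a direct application of the delayed Rost switching identity: it has the same form as \eqref{eq:Rost}, but with the delay-truncated potential $U_{\mu_{k-1, T-\tau^*_k}}$ in place of $U_\lambda$. For the optimal stopping representation \eqref{eq:MMRostOSP}, the inequality $\geq$ is immediate by choosing $\tau = \tau^*_k$. For the reverse inequality, I would run a standard verification argument: define $V(T,y)$ as the right-hand side of \eqref{eq:MMRost}; then the process $V(T-t, W_t)$ is a supermartingale under the Brownian dynamics on $[0,T]$, which becomes a martingale up to $\tau^*_k$ by the downward-closed structure of $\bar R_k$. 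Applying optional sampling then yields the reverse inequality.

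For the barrier recovery, the direction $(T,y) \in \bar R_k \implies U_{\mu_{k,T}}(y) = U_{\mu_{k-1,T}}(y)$ follows by noting that $\tau^*_k = 0$ at $(T,y) \in \bar R_k$ by downward closure, and substituting this into \eqref{eq:MMRost}. The converse relies on the Rost barrier being characterised by the coincidence set of the relevant potentials: if the potentials agreed at some $(T,y) \notin \bar R_k$ one could construct a Rost embedding of $\mu_k$ from $\mu_{k-1}$ strictly smaller than $\bar R_k$, contradicting the minimality that characterises Rost barriers. This is the multi-marginal analogue of the classical single-marginal characterisation, extended via the same delay argument.

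I expect the main obstacle to be the randomisation required in the Rost case, as emphasised in the abstract. Unlike the Root analogue, where the delay mechanism passes cleanly through without auxiliary choices, the joint law of $(\rho_{k-1}, W_{\rho_{k-1}})$ at stage $k$ may have atoms or non-product structure requiring an enlargement of the probability space or an explicit randomisation before the delayed Rost identity can be applied. Formalising this — ensuring that $\bar R_k$ together with a suitable randomisation genuinely embeds $\mu_k$ starting from the delayed law of $(\rho_{k-1}, W_{\rho_{k-1}})$ — is the technical heart of the argument. Once this is in place, the rest of the proof is a direct application of the delayed switching identity and barrier arguments sketched above.
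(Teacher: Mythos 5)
Your reduction is exactly the paper's: Theorem \ref{thm:MM-Rost} is obtained there as a corollary of the delayed Rost representation (Theorem \ref{thm:delayed-Rost}) by taking $\eta = \rho_{k-1}$, so that $\alpha_X = \mu_{k-1}$, $\beta^{\alpha} = \mu_k$, and $V^{\alpha}_{T-\tau} = U_{\mu_{k-1},T-\tau}$, with the induction over $k$ and the randomisation issue (handled via Theorem \ref{thm:dRost} and the fact that $\mathcal{L}^{\lambda}(\rho_{k-1})$ has no atoms beyond $t=0$) identified just as you describe. Your additional supermartingale verification for \eqref{eq:MMRostOSP} and the minimality argument for the barrier recovery are not needed, since the delayed theorem you invoke already contains both the optimal stopping representation and the recovery formula.
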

%
%
%
%
%
\subsubsection*{Outline of the Paper}
In Section \ref{sec:delayed} we introduce the notion of \emph{delayed} Skorokhod embeddings, 
which contain the multi-marginal Skorokhod embedding as a special case. 
We will establish a general existence of such solution following the approach in \cite{BeCoHu16} 
and present an optimal stopping representation theorem. 

Inspired by the framework proposed in \cite{BaCoGrHu21}, 
we first provide a analogous statements for simple symmetric random walks in Section \ref{sec:discrete}. 
First we show general existence of delayed Root and Rost solution in this context, 
subsequently we state and prove the corresponding respective optimal stopping representation. 

Section \ref{sec:limit} is dedicated to providing a suitable discretization of the continuous-time framework 
and of the respective optimal stopping representations such that the results of Section \ref{sec:discrete} apply. 
We then take a Donsker-type limit back to a Brownian motion, 
recovering the continuous times results stated in Section \ref{sec:delayed}.
%
%
%
%
%
%
%
%
%
%
%
%
%
%
%
%
%
%
%
%
%
%
%
%
%
%
%
%
%
%
%
%
%
%
%
%
%
%
%
%
%
%
%
%
%
%
%
%
%
%
\section{Delayed Root and Rost Embeddings} \label{sec:delayed}
For a more convenient and comprehensive treatment of \eqref{MMSEP}, 
but also present results in greater generality 
we introduce a relatively recent extension of the classical \eqref{SEP} known as \emph{delayed} embeddings. 
Although the concept of delayed embeddings has been implicitly explored in \cite{CoObTo19},
to best of our knowledge the specific name was first mentioned in \cite{BrHu21}. 

We consider a \emph{delay stopping time} $\eta$ and require our solution to \eqref{SEP} 
to wait for this delay before embedding the measure $\mu$. 

Formally, given a measure $\mu$, a Brownian motion $(W_t)_{t \geq 0)}$ started according to an initial distribution $W_0 \sim \lambda$ 
and a (possibly randomized) delay stopping time $\eta$ the \emph{delayed Skorokhod embedding problem} is to find a stopping time $\tau$ such that 
\begin{equation}\label{dSEP} 
    W_{\tau} \sim \mu \text{ where } \tau \geq \eta \text{ and } (W_{\tau \wedge t})_{t \geq 0} \text{ is uniformly integrable.} \tag{{\sf dSEP}} 
\end{equation}
We will denote a \eqref{dSEP} by the triple $(\lambda, \eta, \mu)$ and assume throughout the paper that $\eta$ is an integrable stopping time and that $\mu$ dominates $\mathcal{L}^{\lambda}(W_{\eta})$ in the convex order. 
Trivially, the \eqref{dSEP} has no solution if either of these conditions are violated. 
%
%
%
%
%
\subsection[Root and Rost solutions to \eqref{dSEP}]{Root and Rost solutions to (dSEP)}
%
We are interested in Root and Rost solutions to \eqref{dSEP}, that is solutions of the form 
\[
    \rho := \inf \left\{ t \geq \eta : (t,W_t) \in R \right\}
\]
where $R$ is a Root or Rost barrier as defined in \eqref{def:BarrierSet}. 

In \cite{BeCoHu17} the necessary theory and tools were given to identify specific solutions to \eqref{SEP} as stopping times that are optimisers over appropriate additional optimisation criteria. 
Such \eqref{SEP} with additional optimisation criteria are denoted by {\sf (OptSEP)}.
In \cite{BeCoHu16} this concept was extended to multiple marginals and denoted by {\sf (OptMSEP)}. 
Especially multi-marginal Root and Rost solutions were recovered as solutions to appropriate {\sf (OptMSEP)}.

We can further ask whether we can recover the delayed Root and 
Rost embeddings as optimisers over an appropriate optimisation criteria. 
The answer to this is positive, and is a relatively simple
application of the results of \cite{BeCoHu16}. 
We work in the class of \emph{randomised multi-stopping times} introduced in \cite{BeCoHu16}, $\RMST{}^1_2$, 
which we understand to be probability measures on the space $\R \times C_0(\R_+) \times \Xi^2$, 
where $\Xi^2 = \{(s_1, s_2) \in \R^2_+, s_1 \le s_2\}$, 
and where the measure projected on its first two coordinates is equal to the measure $\lambda \times \W$,
with $\W$ the Brownian path measure on $C_0(\R_+)$. 
The $\Xi^2$ variables represent the first and second stopping times respectively. The measure has
further constraints on its structure (see \cite{BeCoHu16}, Definition 3.9),
which ensure that the stopping time properties are respected.

We are interested in optimising a functional of the form
$\E[F(W_{\tau_2},\tau_2)]$ over the class of randomised stopping times
$(\tau_1,\tau_2)$ which satisfy the embedding constraint,
$W_{\tau_2} \sim \mu$, and for which $\tau_1 = \eta$ is the given (randomised) delay stopping
time embedding $\alpha_X = \mathcal{L}^{\lambda}(W_{\eta})$.

Note that $\eta$ can be understood in the sense of randomised multi-stopping times, 
see Lemma 3.11 of \cite{BeCoHu16}.

Moreover, we argue that for the specific form of $F(x,t) = h(t)$ for a
convex/concave function $h$, then the stopping time recovered as the optimiser is
exactly the delayed Root/Rost stopping time. We can do this using a
slight modification of the arguments in \cite{BeCoHu16}.

We proceed as follows:

Let $\RMST{}^1_2(\eta,\mu)$ denote the subset of $\RMST{}^1_2$ such
that the projection onto $\R \times C_0(\R_+) \times \Xi$ is exactly
$\eta$. Then:
\begin{enumerate}
\item $\RMST^1_2(\eta,\mu) \subseteq \RMST(\lambda,\alpha_X,\mu)$ 
as defined as in \cite[Definition 3.18]{BeCoHu16}
which is a compact set (Proposition 3.19);
\item $\RMST^1_2(\eta,\mu)$ is closed, as the projection map is
  continuous, hence it is also compact;
\item The proof of a suitably modified version of Theorem 5.2 now goes
  through essentially verbatim to give the following theorem.
\end{enumerate}

\begin{theorem}
  Assume that $\gamma : S^{\otimes 2} \to\R$ is Borel
  measurable. Assume that {\sf (OptMSEP)} is well posed and that $\xi \in
  \RMST^1_2(\eta,\mu)$ is an optimizer. Then there exists a
  $(\gamma,\xi)$-monotone Borel set $\Gamma$ such that $r_1(\xi)(\Gamma)= 1$.
\end{theorem}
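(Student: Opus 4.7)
The plan is to adapt the proof of Theorem 5.2 of \cite{BeCoHu16} to the constrained class $\RMST^1_2(\eta,\mu)$, as the paragraph preceding the statement already sketches. The argument has three pieces: compactness of the constrained set, a stop-go pair machinery compatible with the fact that the first stopping time is frozen to $\eta$, and the standard transfinite exhaustion producing a monotone Borel set $\Gamma$.

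First I would make compactness of $\RMST^1_2(\eta,\mu)$ explicit, which is exactly the content of the two bullet points already recorded: the set sits inside the compact $\RMST(\lambda,\alpha_X,\mu)$ of \cite[Proposition 3.19]{BeCoHu16} and is closed therein as the preimage of $\{\eta\}$ under the continuous projection onto the first stopping component. Well-posedness of {\sf (OptMSEP)} supplies nonemptiness, and together with upper semicontinuity of $\xi\mapsto\int\gamma\,d\xi$ this yields the optimizer $\xi$ assumed in the hypothesis.

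Next I would set up the notion of an improving stop-go pair, modified so that swaps act only on the second stopping time. Concretely, a pair $(f,g)$ with $f=(x,\omega,s_1,s_2)$ and $g=(x',\omega',s_1',s_2')$ should count as improving if rerouting the post-$s_2$ continuation of $\omega$ with the post-$s_2'$ continuation of $\omega'$, together with the corresponding rechoice of second stopping times, preserves the law $\mu$ at the terminal time and strictly increases $\int\gamma\,d\xi$. The key observation is that such swaps act strictly after the first stopping time $s_1\le s_2$, so the first-component projection is untouched and the rerouted measure remains in $\RMST^1_2(\eta,\mu)$. Morally, this reduces the problem to a single-marginal SEP with random initial datum $(W_\eta,\eta)$, for which the monotonicity principle of \cite{BeCoHu17} directly applies.

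With compactness and the swap machinery in place, the transfinite exhaustion of \cite[Theorem 5.2]{BeCoHu16} runs verbatim: iteratively removing improving stop-go pairs from the support of $r_1(\xi)$ can only strictly improve the objective, contradicting optimality, so the union of the removed sets has $r_1(\xi)$-measure zero, and what remains is a Borel $\Gamma$ with $r_1(\xi)(\Gamma)=1$ supporting no improving stop-go pair, i.e.\ a $(\gamma,\xi)$-monotone set. The main technical obstacle I anticipate is purely bookkeeping: one must verify that the swap preserves the structural conditions of \cite[Definition 3.9]{BeCoHu16} for randomised multi-stopping times when $\tau_1$ is frozen to $\eta$. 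Since the swap is entirely post-$\tau_1$, the filtration-compatibility constraints before $\tau_1$ are inherited automatically, so this is a routine rather than a conceptual difficulty.
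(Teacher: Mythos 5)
Your proposal follows essentially the same route as the paper: establish compactness of $\RMST^1_2(\eta,\mu)$ as a closed subset of the compact set $\RMST(\lambda,\alpha_X,\mu)$ of \cite[Proposition 3.19]{BeCoHu16}, observe that the stop-go modifications act only after the frozen first stopping time $\eta$ and hence stay inside the constrained class, and then run the argument of \cite[Theorem 5.2]{BeCoHu16} essentially verbatim, which is exactly what the paper does. (Only a minor quibble: the internal mechanism of that theorem is a measurable-selection/contradiction argument rather than a transfinite exhaustion, but since both you and the paper defer to it, this does not affect correctness.)
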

Here, $\Gamma$ being $(\gamma,\xi)$-monotone is as defined in 
\cite[Definition 5.1]{BeCoHu16} , without the constraint on the projections of the
respective sets. The final conclusion of the Theorem can be applied as
in \cite[Theorem 2.7]{BeCoHu16}, \cite[Theorem 2.10]{BeCoHu16} (or more transparently, the corresponding
results in \cite{BeCoHu16}) to recover the Root/Rost forms of the
optimisers.

\medskip

While Root barrier solutions will always be adapted to the filtration generated by the underlying Brownian motion, 
it is a well known fact that Rost barrier solutions will require some form of external randomization 
in the case where the initial and the terminal distribution share some mass. 
The story is similar for delayed Root and Rost embeddings and will be summed up in the following two theorems. 
\begin{theorem} \label{thm:dRoot}
Consider a \eqref{dSEP} given by $(\lambda, \eta, \mu)$. 
Then there exists a Root barrier $R$ such that the stopping time $\rho \geq \eta$ defined via
\[
    \rho = \inf \left\{ t \geq \eta : (t,W_t) \in R \right\}
\]
embeds the measure $\mu$. 
\end{theorem}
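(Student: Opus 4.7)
The plan is to realise the delayed Root embedding as an optimiser of a suitable {\sf (OptMSEP)} in the class $\RMST^1_2(\eta,\mu)$. Concretely, I would fix $\gamma : S^{\otimes 2} \to \R$ of the form $\gamma((x_1,t_1),(x_2,t_2)) = h(t_2)$ for a bounded strictly convex $h$ (for example $h(t) = e^{-t}$), and minimise the functional $\xi \mapsto \int \gamma \, d\xi$ over $\RMST^1_2(\eta,\mu)$. By the compactness of this set, established in the paragraph preceding the theorem as a closed subset of the compact $\RMST(\lambda,\alpha_X,\mu)$, and the continuity of $\xi \mapsto \int \gamma \, d\xi$, a minimiser $\xi^*$ exists.

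The monotonicity theorem stated just above the theorem then furnishes a $(\gamma,\xi^*)$-monotone Borel set $\Gamma$ with $r_1(\xi^*)(\Gamma) = 1$. With our choice of $\gamma$, the monotonicity condition reduces to the classical Root swapping argument used in \cite[Theorem 2.7]{BeCoHu16}: given a pair consisting of a path stopped at absolute space-time $(t,x)$ and a second path that has already exhausted its delay and passes through $(s,x)$ with $s \geq t$, strict convexity of $h$ forbids that the second path continues beyond $(s,x)$. Collecting these stopping locations yields a set $R \subseteq \R_+ \times \R$ with the Root barrier property $(t,x) \in R \Rightarrow (s,x) \in R$ for all $s > t$. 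The stopping time $\rho := \inf\{t \geq \eta : (t,W_t) \in R\}$ then coincides $\xi^*$-almost surely with the second marginal of the optimiser and therefore embeds $\mu$.

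The main obstacle is to verify that the delay does not disrupt the absolute-time barrier structure. In the undelayed and classical multi-marginal situations of \cite{BeCoHu16}, every trajectory is active from time $0$, so a barrier in absolute time is the natural object. In our setting, paths only become eligible for stopping after their own, possibly randomised, delay realisation of $\eta$, so one could worry that the resulting stopping region depends on the starting time. The argument hinges on the fact that $\gamma$-monotonicity is formulated in terms of the absolute space-time coordinates of the stopped paths, and so the swap argument produces a single $R \subseteq \R_+ \times \R$ which serves every trajectory regardless of the realisation of its delay. The remaining verifications — Borel measurability of $R$, and uniform integrability of $(W_{t\wedge\rho})_{t\geq 0}$ under integrability of $\eta$ and the convex order assumption $\mathcal{L}^{\lambda}(W_{\eta}) \leq_c \mu$ — then proceed by standard arguments analogous to those in \cite{BeCoHu16}.
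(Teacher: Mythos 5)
Your proposal is correct and follows essentially the same route as the paper: the paper also realises the delayed Root stopping time as the optimiser of a time-functional $F(x,t)=h(t)$ with $h$ convex over the compact class $\RMST^1_2(\eta,\mu)$, invokes the suitably modified monotonicity principle of \cite{BeCoHu16}, and then transfers the barrier structure via the swap argument of \cite[Theorem 2.7]{BeCoHu16} (resp. \cite[Theorem 2.1]{BeCoHu17}). Your remark that $\gamma$-monotonicity is phrased in absolute space-time coordinates, so the delay does not affect the barrier construction, is precisely the observation the paper relies on when it says the proof goes through ``heeding the modifications given above.''
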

The proof of this theorem follow analogously to the proof of \cite[Theorem 2.7]{BeCoHu16}, 
or similarly of \cite[Theorem 2.1]{BeCoHu17} heeding the modifications given above. 
Moreover, it is an easy application of the standard Loynes argument \cite{Lo70} 
to see that such a Root stopping time $\rho$ must be unique in the sense that 
for any other Root stopping time $\tilde \rho$ solving the \eqref{dSEP} we would have $\mathbb{P}^{\lambda}\left[ \rho = \tilde \rho \right] = 1$. 
More details on how to generalize the Loynes uniqueness argument can be found in \cite[Section 4]{Gr23}. 

\medskip

We proceed to Rost solutions. 
In order to give better insights on what will happen in the case of delayed Rost embeddings, 
we quickly review the case of an undelayed \eqref{SEP} determined by the initial measure $\lambda$ and terminal measure $\mu$. 
It is well known that all mass shared between the two measures 
$\lambda$ and $\mu$ must be stopped immediately in the following way. 
Let $\bar \rho$ be a Rost stopping time solving \eqref{SEP}, 
then
\(
     \mathbb{P}^{\lambda} \left[ \bar \rho = 0, W_0 \in A\right] = (\lambda \wedge \mu) (A)
\)
where the infimum of measures is defined as 
\[
    (\lambda \wedge \mu) (A) := \inf_{\scaleto{B \subseteq A,\, B \text{ measurable}}{4pt}} \big (\lambda(B) + \mu(A \setminus B) \big). 
\]
On the set $\{\bar\rho > 0\}$ the stopping time $\bar \rho$ will be  
the first hitting time of a Rost barrier $\bar R$. 

On a set where $(\lambda \wedge \mu)(A) < \lambda(A)$ we will have to randomise
our initial stopping rule in $A$ in order stop only sufficiently much mass.  If
$(\lambda \wedge \mu)(A) < \mu(A)$ then at least in some points in $A$ we need
to stop all mass and in fact we will need to extend the Rost barrier beyond 0 at
those points to stop more mass later on.  But in this case, all mass will stop
immediately in those points \emph{independent} of the choice of randomisation
rule, as a consequence of the recurrence of Brownian motion.  Thus any
randomisation rule will give the same stopping rule although an obvious
``maximal'' rule is to stop all paths starting in $A$ due to the randomisation.
If $(\lambda \wedge \mu)(A) = \mu(A) = \lambda(A)$, the randomisation rule that
stops all mass is the only viable stopping rule since there will be no further
barrier in $A$ beyond time 0.

In the delayed framework something similar needs to happen while it is apparent that 
things are complicated by the fact that not all mass will be released at time 0 at once, 
but rather over time, according to the measure $\mathcal{L}^{\lambda}(\eta)$. 
It is, however, clear that such external randomization in only feasible in the \emph{atoms} 
of the measure $\mathcal{L}^{\lambda}(\eta)$, of which there can only be countably many $\{t_0, t_1, \dots\}$. 
A \emph{delayed, randomized} Rost solution will therefore consist of 
both a Rost barrier $\bar R$ and some description of this randomization in the atoms of the delay stopping time. 
Hence, we choose to represent such a solution as follows.
\begin{definition} \label{def:dRost} 
A delay stopping time $\eta$ which distribution has atoms in $\{t_0, t_1, \dots\}$, 
a Rost barrier $\bar R$ and a family of probability densities $\{\varphi_0, \varphi_1, \dots\}$  
determine a \emph{delayed, randomized} Rost stopping time $\bar \rho^{\varphi}$ in the following way.
\[
    \bar \rho^{\varphi} := \inf \left\{ t \geq \eta : (t,W_t) \in \bar R \right\} \wedge Z
\]
where
\[
    Z = \min_{k \in \mathbb{N}} Z_k \,\,\text{ for }\,\,
    Z_k := 
        \begin{cases}
            t_k  &\text{ with probability }\, \mathbb{P}^{\lambda} 
                \left[Z_k = t_k \big{|} W_{\eta}, \eta = t_k \right] 
                    = \varphi_k\left( W_{\eta} \right)
        \\  \infty & \text{ otherwise. }
      \end{cases}
\]
\end{definition}
Let 
\begin{equation*} 
    \alpha := \mathcal{L}^{\lambda}((\eta, W_{\eta})) \in \mathcal P ([0, \infty) \times \mathbb{R})
\end{equation*}
denote the space-time distribution induced by the delay stopping time $\eta$. 
Then we abbreviate by 
\begin{equation*} 
    \alpha_{t} := \alpha\left(\{t\} \times \cdot \right)
\end{equation*}
its spatial marginal at time $t$. 

A crucial fact to note about Rost solutions is that at any time point $t \geq 0$ 
we are able to tell exactly how much mass was already embedded in a given measurable set $A \subseteq \mathbb{R}$ 
but more importantly we are furthermore able to decide 
that no more mass will be embedded in this set going forward in time. 
These properties allow us to define Rost solutions in an iterative fashion 
and for this purpose, given a Rost solution $\bar \rho$ we define the following quantity
\[
    \nu_t :=  \mu - \mathbb{P}^{\lambda} \left[ W_{\bar \rho} \in \cdot, \bar\rho < t \right],
\]
the `mass not embedded before time $t$'

Let $t_k$ denote an atom of the measure $\mathcal{L}^{\lambda}(\eta)$, 
then note that the quantity $\alpha_{t_k}$ is given 
while the quantity $\nu_{t_k}$ is fully determined by the Rost barrier \emph{before} time $t_k$, 
i.e. $\bar R \cap ([0,t_k) \times \mathbb{R})$ and the randomization densities $\{\varphi_l\}_{t_l < t_k}$.

We consider
\begin{equation} \label{def:psi}
    \psi_{k}(x) := \frac{\mathrm{d}\left(\alpha_{t_k} \wedge \nu_{t_k} \right) }{ \mathrm{d}\left( \alpha_{t_k}\right)} (x),
\end{equation}
the Radon-Nikodym density of $\alpha_{t_k} \wedge \nu_{t_k}$ with respect to $\alpha_{t_k}$ 
and state the following theorem.
\begin{theorem} \label{thm:dRost}
Consider \eqref{dSEP} given by $(\lambda, \eta, \mu)$. 
Let $\{ t_0, t_1, \dots \}$ 
denote the set of 
(at most countably many) atoms  of $\mathcal{L}^{\lambda}(\eta)$. 
Then there exists a Rost barrier $\bar R$ such that the stopping time $\rho \geq \eta$ defined via
\[
    \rho = \inf \left\{ t \geq \eta : (t,W_t) \in \bar R \right\} \wedge Z
\]
where
\[
    Z = \min_{k \in \mathbb{N}} Z_k \,\,\text{ for }\,\,
    Z_k := 
        \begin{cases}
            t_k  &\text{ with probability }\, \mathbb{P}^{\lambda} 
                \left[Z_k = t_k \big{|} W_{\eta}, \eta = t_k \right] 
                    = \psi_{k} \left( W_{\eta} \right)
        \\  \infty & \text{ otherwise }
      \end{cases}
\]
is a solution to \eqref{dSEP}.
\end{theorem}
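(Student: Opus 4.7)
The plan is to combine the generalised existence result invoked above (via $\RMST^1_2(\eta,\mu)$) with an explicit description of the mandatory randomisation at each atom of $\mathcal{L}^{\lambda}(\eta)$, mirroring the treatment of Rost barriers in \cite{BeCoHu16,BeCoHu17}.

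First, I would apply the monotonicity principle stated above with a cost function $\gamma$ of Rost type analogous to \cite[Theorem 2.10]{BeCoHu16}: compactness of $\RMST^1_2(\eta,\mu)$ gives an optimiser of a strictly concave functional $\E[h(\tau_2)]$, and monotonicity forces this optimiser to take the form of a hitting time of some Rost barrier $\bar R$ after $\eta$, possibly augmented by immediate stopping on a subset at each atom of $\eta$. The Rost shape of $\bar R$ follows from the standard swap argument: two paths meeting at a common $(t,y) \in \bar R$ must be stopped simultaneously, and the geometry of the $\gamma$-monotone set produces the required $R^{Rost}$ structure.

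Next, I would pin down the immediate stopping rule at each atom $t_k$. Let $m_k$ denote the spatial distribution of the mass that the optimiser stops at exactly $t_k$. Immediate stopping at $t_k$ is only available to paths whose delay has just expired, so $m_k \le \alpha_{t_k}$, and any such mass is part of the residual embedding, so $m_k \le \nu_{t_k}$; hence $m_k \le \alpha_{t_k} \wedge \nu_{t_k}$. For the reverse inequality I would argue by contradiction: if strict inequality held on a Borel set $A$, then by recurrence of Brownian motion the missing overlap mass in $A$ would have to be stopped strictly after $t_k$ by $\bar R$, whilst simultaneously $\bar R$ would have to accept fresh mass into $A$ at some later time. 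A swap exchanging these two contributions strictly improves the strictly concave functional $\E[h(\tau_2)]$, contradicting optimality. Hence $m_k = \alpha_{t_k} \wedge \nu_{t_k}$, and disintegrating against $\alpha_{t_k}$ yields precisely the conditional stopping density $\psi_k$ of \eqref{def:psi}.

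Finally, on the complement event $\{Z > \eta\}$ the optimiser stops by hitting $\bar R$, and consistency of $\bar R$ across the at most countably many atoms together with the continuous part of $\mathcal{L}^{\lambda}(\eta)$ follows by iterating the analysis and invoking the Loynes-type uniqueness for Rost barriers (cf.\ the discussion after Theorem \ref{thm:dRoot} and \cite[Section 4]{Gr23}), producing a single globally defined Rost barrier on $[0,\infty) \times \R$. The principal obstacle is the swap argument pinning down $m_k = \alpha_{t_k} \wedge \nu_{t_k}$ when $\mathcal{L}^{\lambda}(\eta)$ has a non-trivial continuous component, since one must disentangle the mass arriving via $\eta$ at exactly $t_k$ from that processed by the continuous-time barrier just before or after $t_k$; this is the delayed analogue of the classical observation that the undelayed Rost solution immediately stops the common mass $\lambda \wedge \mu$ at time zero, and its rigorous execution leans on the formal swap machinery of \cite{BeCoHu16,BeCoHu17}.
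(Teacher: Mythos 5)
Your overall strategy matches the paper's for the existence part: the paper also obtains the barrier together with some randomisation at the atoms by invoking the modified optimisation machinery over $\RMST^1_2(\eta,\mu)$ (compactness plus the $(\gamma,\xi)$-monotonicity principle, transferred from \cite[Theorems 2.10/2.4]{BeCoHu16}/\cite{BeCoHu17}), so that step is essentially identical. Where you diverge is in how the randomisation is identified with $\psi_k$ of \eqref{def:psi}. You argue via optimality: a swap that strictly improves the concave functional $\E[h(\tau_2)]$, invoking recurrence. The paper instead proves Lemma~\ref{lem:Rost-uniqueness}, a pure feasibility statement: for \emph{any} family of densities $\varphi_k$ for which the barrier-plus-randomisation time solves \eqref{dSEP}, replacing $\varphi_k$ by $\psi_k$ changes the stopping time only on a null set. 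Its proof uses only the embedding constraint and the Rost geometry: $\varphi_k\le\psi_k$ is forced; on the spatial section $A_k$ of the barrier everything with $\eta=t_k$ stops regardless of the density; off $A_k$ the Rost structure forbids any stopping after $t_k$, so embedding $\mu$ forces the mass stopped at $t_k$ there to be exactly $\nu_{t_k}=\alpha_{t_k}\wedge\nu_{t_k}$. This feasibility route is both simpler and sharper than your swap: your version would still require constructing an admissible competitor in $\RMST^1_2(\eta,\mu)$ and verifying strict improvement, and recurrence is not the operative fact here (the relevant fact is that a Rost barrier admits no stopping at a site after $t_k$ unless that site lies in $A_k$, where no deficit can occur). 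Two further points to tighten: your assertion that only mass with $\eta=t_k$ can stop at exactly $t_k$ needs the (true, a.s.) observation that mass released before $t_k$ hits the barrier at exactly $t_k$ with probability zero, which the paper also uses; and "precisely the density $\psi_k$" should be read as almost-sure equality of the induced stopping times, since the paper stresses that the densities themselves are not unique (e.g.\ on $A_k$ or where $\alpha_{t_k}$ vanishes). With these adjustments your argument would go through, but the paper's Lemma~\ref{lem:Rost-uniqueness} buys the same conclusion without appealing to optimality at all.
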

As in the Root case, the existence of a Rost barrier will 
follow analogously to the proof of \cite[Theorem 2.10]{BeCoHu16}
or similarly of \cite[Theorem 2.4]{BeCoHu17} heeding the modifications given above. 

Outside of the atoms of the delay distribution, 
more precisely conditional on the event $\{ \eta \not \in \{ t_0, t_1, \dots \}\}$,  
two Rost stopping times $\bar \rho$ and $\tilde \rho$ solving the same \eqref{dSEP}
are given as hitting times of two Rost barriers, hence the same Loynes argument as mentioned in the Root case 
will provide a.s. equality.

The density of the random variables $Z_k$ does not necessarily need to be unique. 
There is, however, a sort of maximality about the chosen densities $\psi_0, \psi_1, \dots$ in Theorem \ref{thm:dRost} 
which will be clarified with the following lemma.
\begin{lemma} \label{lem:Rost-uniqueness}
Consider a \eqref{dSEP} given by $(\lambda, \eta, \mu)$. 
Let $\bar R$ be a Rost barrier and $\{\varphi_0, \varphi_1, \dots\}$ be a family of densities such that 
the delayed, randomized Rost stopping time $ \bar \rho^{\varphi}$ given as in Definition \ref{def:dRost} solves the 
\eqref{dSEP}. 
Then for any $k \in \mathbb{N}$ we can replace $\varphi_k$ by $\psi_k$ given by \eqref{def:psi} 
to obtain the stopping time $\bar \rho^{\psi}$ for which we will have 
$\mathbb{P}^{\lambda} \left[ \bar \rho^{\varphi} =  \bar \rho^{\psi} \right] = 1$. 
\end{lemma}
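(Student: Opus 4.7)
The plan is to couple $\bar\rho^\varphi$ and $\bar\rho^\psi$ on the same Brownian path via a shared uniform random variable and reduce the claim to a martingale identity; the crucial structural point is that on the set where $\psi_k$ strictly exceeds $\varphi_k$, the Rost barrier $\bar R$ itself must already force stopping at $t_k$, making the randomisation difference inconsequential. The first step is to establish the admissibility bound $\varphi_k \leq \psi_k$ $\alpha_{t_k}$-almost everywhere: any admissible density satisfies $\varphi_k \leq 1$ as a randomisation probability, and $\varphi_k \cdot \alpha_{t_k}$ cannot exceed the residual mass $\nu_{t_k}$ still to be embedded at time $t_k$ (otherwise $\bar\rho^\varphi$ would over-embed in some set); combining these gives $\varphi_k \cdot \alpha_{t_k} \leq \alpha_{t_k} \wedge \nu_{t_k} = \psi_k \cdot \alpha_{t_k}$ by \eqref{def:psi}.

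Next, I would couple the two rules by driving the $k$-th randomisation with a single uniform $U_k$, setting $Z_k^\varphi = t_k$ iff $U_k \leq \varphi_k(W_{t_k})$ and $Z_k^\psi = t_k$ iff $U_k \leq \psi_k(W_{t_k})$, and keeping the Brownian path and all other atomic randomisations unchanged. The bound $\varphi_k \leq \psi_k$ then yields $\bar\rho^\psi \leq \bar\rho^\varphi$ pathwise. Once one knows that $\bar\rho^\psi$ still embeds $\mu$, a Wald/Pythagoras identity gives
\begin{equation*}
\mathbb{E}[\bar\rho^\varphi - \bar\rho^\psi] \;=\; \mathbb{E}\bigl[(W_{\bar\rho^\varphi} - W_{\bar\rho^\psi})^2\bigr] \;=\; \mathbb{E}[W_{\bar\rho^\varphi}^2] - \mathbb{E}[W_{\bar\rho^\psi}^2] \;=\; 0,
\end{equation*}
using the tower identity $\mathbb{E}[W_{\bar\rho^\varphi}\mid\mathcal{F}_{\bar\rho^\psi}] = W_{\bar\rho^\psi}$; combined with the pathwise inequality this forces $\bar\rho^\varphi = \bar\rho^\psi$ almost surely, with a standard truncation argument handling the case of infinite second moment.

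The hard step is verifying that $\bar\rho^\psi$ itself embeds $\mu$. The strategy is to show that on the set $\{\psi_k > \varphi_k\}$ the barrier $\bar R$ must already contain $(t_k,\cdot)$: otherwise the surplus mass $(\psi_k - \varphi_k)\alpha_{t_k}$, which under $\varphi$ is released past $t_k$ into the Brownian flow, would be absorbed by $\bar R$ strictly later and, by recurrence, typically at different positions, contradicting the embedding property of $\bar\rho^\varphi$ together with the extremality built into the definition of $\psi_k$. Consequently on this set the barrier stops the path at $t_k$ regardless of the randomisation outcome, so the change from $\varphi_k$ to $\psi_k$ alters the stopping time only on a $\mathbb{P}^\lambda$-null event. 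Making this argument fully rigorous requires careful bookkeeping of the residual mass $\nu_t$ across atoms, paralleling the immediate-stopping discussion reviewed before Definition~\ref{def:dRost}.
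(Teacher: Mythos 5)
Your opening step (the admissibility bound $\varphi_k \leq \psi_k$, obtained by bounding the mass stopped at $t_k$ by both $\alpha_{t_k}$ and $\nu_{t_k}$) matches the paper exactly, and the coupling giving $\bar\rho^{\psi} \leq \bar\rho^{\varphi}$ pathwise is fine. The Wald/minimality step is also salvageable in principle (modulo the second-moment issue, which is better handled by quoting minimality of uniformly integrable embeddings than by an unspecified truncation). But the whole content of the lemma sits in what you call the ``hard step'' --- that replacing $\varphi_k$ by $\psi_k$ still embeds $\mu$ --- and there your argument is a genuine gap, not a proof. The claim that on $\{\psi_k > \varphi_k\}$ the barrier must already contain $(t_k,\cdot)$, justified by ``by recurrence, typically at different positions, contradicting the embedding property\dots together with the extremality built into the definition of $\psi_k$,'' is a heuristic with no mechanism behind it: recurrence gives no contradiction by itself, and the assertion is not even the right dichotomy. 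Outside the barrier the correct statement is not that such points cannot exist, but that the two densities must in fact agree $\alpha_{t_k}$-almost everywhere there, which is precisely what has to be \emph{proved} by a mass-conservation argument.

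The paper's proof supplies exactly this missing mechanism. Writing $A_k$ for the spatial points of $\bar R$ at time $t_k{+}$, the randomisation is irrelevant on $A_k$ since both stopping times hit the barrier there regardless of the coin flips. On $A_k^c$ the Rost (inverse-barrier) structure forces $\mathbb{P}^{\lambda}\bigl[W_{\bar\rho^{\varphi}} \in A_k^c,\ \bar\rho^{\varphi} > t_k\bigr] = 0$: no mass can ever be embedded at those sites after $t_k$. Feeding this into the embedding constraint on $B_k := \{\varphi_k < \psi_k\} \cap A_k^c$ gives $\nu_{t_k}(B_k) = $ (mass stopped at exactly $t_k$ in $B_k$) $= \int_{B_k}\varphi_k\,\mathrm{d}\alpha_{t_k} \leq (\alpha_{t_k}\wedge\nu_{t_k})(B_k) \leq \nu_{t_k}(B_k)$, so all inequalities are equalities and $\varphi_k = \psi_k$ holds $\alpha_{t_k}$-a.e.\ on $B_k$; hence the two stopping rules differ only on a null event and $\mathbb{P}^{\lambda}[\bar\rho^{\varphi} = \bar\rho^{\psi}] = 1$ follows directly, without needing your coupling-plus-Wald detour. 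Until you replace your recurrence heuristic with this (or an equivalent) bookkeeping of $\alpha_{t_k}$, $\nu_{t_k}$ and the no-later-embedding property of Rost barriers, your proposal does not establish the lemma.
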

\begin{proof}
Let us consider an arbitrary atom $t_k$ of the delay distribution $\mathcal{L}^{\lambda}(\eta)$. 
First of all we clarify that in order for $\varphi_k$ to be feasible we must have $\varphi_k \leq \psi_k$, 
as for all measurable sets $A \subseteq \mathbb{R}$ we have
\[
    \mathbb{P}^{\lambda} \left[ W_{\bar \rho^{\varphi}} \in A, \bar \rho^{\varphi} = t_k, \eta = t_k\right] 
    \leq 
    \begin{cases}
         \mathbb{P}^{\lambda} \left[ W_{t_k} \in A, \eta = t_k\right] = \alpha_{t_k}(A),  
    \\   \mathbb{P}^{\lambda} \left[ W_{\bar \rho^{\varphi}} \in A, \bar \rho^{\varphi} = t_k \right] 
          \leq \mathbb{P}^{\lambda} \left[ W_{\bar \rho^{\varphi}} \in A, \bar \rho^{\varphi} \geq t_k \right] = \nu_{t_k}(A).
    \end{cases}
\]
Denote by $A_{k} := \bigcap_{\varepsilon > 0} \{x: (t_k + \varepsilon, x) \in \bar R \}$ all spatial points in the barrier $\bar R$ at time $t_k$. 
Then clearly we have
\[
      \mathbb{P}^{\lambda} \left[ \bar \rho^{\varphi} = t_k \big{|} \eta = t_k, W_{t_k} \in A_{t_k} \right] 
    = 1
    = \mathbb{P}^{\lambda} \left[ \bar \rho^{\psi} = t_k \big{|} \eta = t_k, W_{t_k} \in A_{t_k} \right]
\]
since both stopping times are hitting times of the barrier $\bar R$. 
Hence, the specific values of $\varphi$ and $\psi$ are irrelevant 
and we might replace $\varphi(x)$ with $\psi(x)$ for all $x \in A_k$. 

Due to the Rost barrier structure, 
on the set $A_{k}^c$, no mass will be embedded at a later time point,
\begin{equation} \label{eq:A-c-0}
    \mathbb{P}^{\lambda} \left[ W_{\bar \rho^{\varphi}} \in A_{t_k}^c, \bar \rho^{\varphi} > t_k \right] = 0 
    = \mathbb{P}^{\lambda} \left[ W_{\bar \rho^{\psi}} \in A_{t_k}^c, \bar \rho^{\psi} > t_k \right]. 
\end{equation}
Consider now the set $B_{k} := \{x:\varphi_k(x) < \psi_{k}(x)\} \cap A_k^c$. 
Let us assume that $\alpha_{t_k}(B_k) > 0$ as otherwise both $\varphi_k(x)$ and $\psi_k(x)$ must be 0 for all $x \in B_k$ anyway. 
Our claim is then proven if we are able to establish $\int_{B_k} \varphi(x)\mathrm{d}x = \int_{B_k} \psi(x)\mathrm{d}x$.

It is a consequence of \eqref{eq:A-c-0} that
\begin{align*}
    \mu(B_k) = \mathbb{P}^{\lambda} \left[ W_{\bar \rho^{\varphi}} \in B_k, \bar \rho^{\varphi} < t_k \right]
             + \mathbb{P}^{\lambda} \left[ W_{\bar \rho^{\varphi}} \in B_k, \bar \rho^{\varphi} = t_k \right],
\end{align*}
or equivalently that 
\begin{align*}
\nu_{t_k}(B_k) &= \mathbb{P}^{\lambda} \left[ W_{\bar \rho^{\varphi}} \in B_k, \bar \rho^{\varphi} = t_k \right]
\\        &= \mathbb{P}^{\lambda} \left[ W_{\bar \rho^{\varphi}} \in B_k, \bar \rho^{\varphi} = t_k, \eta = t_k \right].
\end{align*}
Here the second equality follows from the specific structure of Rost solutions, 
as all mass that is already released before time $t_k$ and stopped in $B_k$ 
will have done so \emph{before} time $t_k$ almost surely.  
Then 
\begin{align*}
\nu_{t_k}(B_k) = \mathbb{P}^{\lambda} \left[ W_{\bar \rho^{\varphi}} \in B_k, \bar \rho^{\varphi} = t_k, \eta = t_k\right]
    & = \mathbb{P}^{\lambda} \left[ W_{t_k} \in B_k, \bar \rho^{\varphi} = t_k, \eta = t_k\right]
\\  & = \alpha_{t_k}(B_k) \int_{B_k} \varphi(x)\mathrm{d}x
\\  & \leq   \alpha_{t_k}(B_k) \int_{B_k} \psi(x)\mathrm{d}x
    = (\alpha_{t_k} \wedge \nu_{t_k})(B_k)
\end{align*}
In particular, this implies that
\[
(\alpha_{t_k} \wedge \nu_{t_k})(B_k) = \nu_{t_k}(B_k), 
\]
hence we must have $\int_{B_k} \varphi(x)\mathrm{d}x = \int_{B_k} \psi(x)\mathrm{d}x$.
\end{proof}
Here we observe that if the set of atoms ${t_0, t_1, \dots}$ is either finite or infinite but can be ordered 
we can assume $t_0 \leq t_1 \leq \cdots$ and construct the densities $\psi_k$ consecutively. 
When this is not the case, meaning ${t_0, t_1, \dots}$ is infinite and cannot be ordered, 
while our result will still hold, 
we cannot expect to derive an explicit formula for the functions $\psi_k$ in this scenario.%

An explicit Root solution to \eqref{MMSEP} is clear given Theorem \ref{thm:dRoot} while an explicit Rost solution can be recovered from Theorem \ref{thm:dRost} in the following way.  
\begin{corollary}
Consider a starting measure $\lambda$ and a sequence of measures $\mu_1, \dots, \mu_n$ such that $\lambda \leq_c \mu_1 \leq_c \cdots \leq_c \mu_n$. 
Then there exists a sequence or Rost barriers $R_1, \dots, R_n$ such that the stopping times $\rho_1 \leq \rho_2 \leq \cdots \leq \rho_n$ defined in the following way are a solution to \eqref{MMSEP}. 
\[
    \rho_0 := 0 \, \text{ and } \,
    \rho_k := \inf \left\{ t \geq \rho_{k-1} : (t,W_t) \in R_k \right\} \wedge Z_k \,\text{ for }\, k \geq 1
\]
where
\[
    Z_k := 
        \begin{cases}
            0   &\text{ with probability }\, \mathbb{P}^{\lambda} 
                \left[Z_k = 0 \big{|} W_0, \rho_{k-1} = 0 \right] 
                    = \frac{\mathrm{d}\left( \mu^0_{k-1} \wedge \mu_k \right) }{\mathrm{d} \mu^0_{k-1}} \left(W_0 \right)
            \\ \infty  &\text{ otherwise }
        \end{cases}
\]
for $\mu^0_k := \mathbb{P}^{\lambda}\left[\rho_{k-1} = 0, W_0 \in \cdot \right]$.
\end{corollary}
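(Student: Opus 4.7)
The plan is to prove the corollary by induction on $k$, applying Theorem \ref{thm:dRost} at each stage to the delayed embedding problem $(\lambda, \rho_{k-1}, \mu_k)$.

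The base case $k=1$ is essentially Theorem \ref{thm:dRost} applied with trivial delay $\eta = \rho_0 = 0$. Since $\mathcal{L}^\lambda(\eta) = \delta_0$, the only atom is $t_0 = 0$, and one has $\alpha_0 = \lambda$, $\nu_0 = \mu_1$, so the maximal density is $\psi_0 = \frac{\mathrm{d}(\lambda \wedge \mu_1)}{\mathrm{d}\lambda}$, matching the corollary's rule with the natural convention that $\mu^0_0 = \lambda$.

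For the inductive step, suppose $\rho_1 \leq \cdots \leq \rho_{k-1}$ have been constructed with $W_{\rho_j} \sim \mu_j$ and $(W_{t \wedge \rho_j})_{t \geq 0}$ uniformly integrable for all $j \leq k-1$. I would first verify the hypotheses of Theorem \ref{thm:dRost}: the convex ordering $\mathcal{L}^\lambda(W_{\rho_{k-1}}) = \mu_{k-1} \leq_c \mu_k$ is given, and integrability of $\rho_{k-1}$ follows from optional sampling applied to the martingale $W_t^2 - t$ via uniform integrability, yielding $\mathbb{E}[\rho_{k-1}] = \int x^2\,\mathrm{d}\mu_{k-1}(x) - \int x^2\,\mathrm{d}\lambda(x) < \infty$ (with a truncation argument if second moments are infinite).

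The crucial step is identifying the atoms of $\mathcal{L}^\lambda(\rho_{k-1})$. Each $\rho_j$ with $j < k$ is built as $\inf\{t \geq \rho_{j-1} : (t, W_t) \in R_j\} \wedge Z_j$. Hitting times of closed sets by Brownian motion have no atoms at positive times by absolute continuity of Brownian marginals, and $Z_j \in \{0, \infty\}$; hence the only possible atom of $\rho_{k-1}$ is at $0$. On that atom the spatial distribution is $\mathbb{P}^\lambda[\rho_{k-1} = 0, W_0 \in \cdot]$, which is precisely the measure the corollary's formula calls $\mu^0_{k-1}$. With $\alpha_0 = \mu^0_{k-1}$ and $\nu_0 = \mu_k$, Theorem \ref{thm:dRost} produces a Rost barrier $R_k$ together with the density $\psi_0 = \frac{\mathrm{d}(\mu^0_{k-1} \wedge \mu_k)}{\mathrm{d}\mu^0_{k-1}}$, which is exactly the randomization rule defining $Z_k$. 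The resulting stopping time $\rho_k$ has the claimed form, satisfies $W_{\rho_k} \sim \mu_k$ with $(W_{t \wedge \rho_k})_{t \geq 0}$ uniformly integrable, and $\rho_{k-1} \leq \rho_k$ by construction, closing the induction.

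The main obstacle is the atom identification: ruling out atoms of $\rho_{k-1}$ in $(0, \infty)$ relies on knowing that hitting times of Rost barriers by Brownian motion have no positive-time atoms. While intuitively clear, this should be spelled out carefully; in any case Lemma \ref{lem:Rost-uniqueness} ensures that, at any unforeseen positive-time atoms, the choice of randomization density is immaterial, so the explicit formulas of the corollary remain consistent with any delayed Rost solution delivered by Theorem \ref{thm:dRost}.
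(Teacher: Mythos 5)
Your proposal is correct and follows essentially the same route as the paper: an inductive application of Theorem \ref{thm:dRost} with $\eta = \rho_{k-1}$, resting on the observation that hitting times of Rost (inverse) barriers have no atoms at positive times, so the only atom to consider is $t_0 = 0$ with $\tilde\alpha_0 = \mu^0_{k-1}$ and $\nu_0 = \mu_k$, yielding the density $\frac{\mathrm{d}(\mu^0_{k-1}\wedge\mu_k)}{\mathrm{d}\mu^0_{k-1}}$. The paper treats the base case $k=1$ by citing Rost's original work rather than applying Theorem \ref{thm:dRost} with trivial delay, and asserts the no-positive-atoms fact with the same brevity you do; otherwise the arguments coincide.
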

\begin{proof}
Note that for $k=1$ this exact structure of the Rost solution $\rho_1$ has already been established in \cite{Ro73}. 
Observing that $\rho_1 \leq \dots \leq \rho_n$ are hitting times of inverse barriers, 
we can establish that $\mathcal{L}^{\lambda}(\rho_1)$ and consequently 
any $\mathcal{L}^{\lambda}(\rho_k)$, $k \geq 2$, do not possess atoms beyond time $0$.

Inductively recovering the multi-marginal Rost solutions from Theorem \ref{thm:dRost} 
becomes feasible by considering the single atom $t_0 = 0$. 
Thus, for $k \geq 2$, set $\eta = \rho_{k-1}$ and $\mu = \mu_k$, with $\nu_0 = \mu_k$ and $\tilde \alpha_0 = \mu^0_k$. 
Subsequently, invoking Theorem \ref{thm:dRost}, we establish the existence of a Rost barrier $R := R_k$, 
ensuring that $\rho_k \geq \rho_{k-1}$ is of the desired structure with $Z_k := Z_0$.
\end{proof}
%
%
%
%
%
\subsection[Optimal Stopping Representation of \eqref{dSEP} and \eqref{MMSEP}]{Optimal Stopping Representation of (dSE) and (MMSEP)}
%
We will derive the multi marginal Root optimal stopping problem \eqref{eq:MMRoot} - \eqref{eq:MMRootOSP} 
resp. the multi marginal Rost  optimal stopping problem \eqref{eq:MMRost} - \eqref{eq:MMRostOSP} 
as a special case of a \emph{delayed} Root resp. Rost representation.

For this purpose we consider a starting distribution $\lambda$ for a Brownian motion and an integrable delay stopping time $\eta$. 
Recall the space time distribution induced by the delay 
\begin{equation*} 
    \alpha := \mathcal{L}^{\lambda}((\eta, W_{\eta})) \in \mathcal P ([0, \infty) \times \mathbb{R}).
\end{equation*}
We denote the $\mathbb{P}^{\lambda}$-potential of $W_{\eta \wedge T}$ by
\[
    V^{\alpha}_T (x) 
    = -\E^{\lambda}\left[\left|W_{\eta \wedge T} - x \right| \right] 
\]
and the spatial marginal of $\alpha$ by
\[
    \alpha_X (A) := \alpha([0, \infty) \times A) 
        = \mathcal{L}^{\lambda}(W_{\eta})(A)
         \quad \text{ for } A \subseteq \R \text{ measurable.}
\]
Let now $R \subseteq [0, \infty) \times \mathbb{R}$ denote a Root (resp. Rost) barrier,
then we can define the (delayed) Root (resp. Rost) stopping time 
\begin{equation*}
    \rho_{\eta} := \inf \left\{t \geq \eta : (t, W_t) \in R \right\}.
\end{equation*}
These stopping times induce the following measures
\[ \beta^{\alpha} := \mathcal{L}^{\lambda} \left(W_{\rho^{\eta}}\right)
        \text{ as well as }
    \beta^{\alpha}_T := \mathcal{L}^{\lambda} \left(W_{\rho^{\eta} \wedge T}\right) \text{ for } T \geq 0
\]
and the (stopped) potential
\[
  U_{\beta^{\alpha}_T}(x) = - \E^{\lambda}\left[\left|W_{\rho^{\eta} \wedge T} - x\right| \right].
\]
The extension of the Root optimal stopping problem \eqref{eq:Root}-\eqref{eq:RootOSP} will then be given in the following theorem. 
\begin{theorem} \label{thm:delayed-Root}
Let $R$ denote a \emph{Root} barrier inducing the potentials $U_{\beta^{\alpha}}$ and $U_{\beta^{\alpha}_T}$. 
Then for every $(T, x) \in [0, \infty) \times \mathbb{R}$ we have the representation
\begin{align}
    \label{eq:delayed-Root}
     U_{\beta^{\alpha}_T}(x) &= \E^x\left[ V^{\alpha}_{T - \tau^*} (W_{\tau^*}) 
        + (U_{\beta^{\alpha}} - U_{\alpha_X})(W_{\tau^*})\ind_{\tau^*  < T}\right]   
\\ \label{eq:delayed-RootOSP}
                            &= \sup_{\tau \leq T} \E_y\left[ V^{\alpha}_{T - \tau} (W_{\tau}) 
        + (U_{\beta^{\alpha}} - U_{\alpha_X})(W_{\tau})\ind_{\tau  < T}\right].
\end{align}
where the optimizer is given by $\tau^* := \inf \{t \geq 0 : (T-t, W_t) \in R \} \wedge T$.

Moreover, given the interpolating potentials $U_{\beta^{\alpha}_T}(x)$ for all $(T,x) \in [0, \infty) \times \mathbb{R}$ 
we can recover the Root barrier $R$ embedding the measure $\beta^{\alpha}$ in the following way
\[
    R = \left\{(T,x) : \left(U_{\beta^{\alpha}_T} - V^{\alpha}_T \right)(x) 
                       = \left(U_{\beta^{\alpha}} - U_{\alpha_X}\right)(x) \right\}.
\]
\end{theorem}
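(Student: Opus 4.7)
The plan is to follow the Donsker-type program sketched in the outline of the paper: prove the identities first for a simple symmetric random walk, where the switching argument of \cite{BaCoGrHu21} can be adapted pathwise, and then pass to the Brownian limit. The new ingredient compared to \cite{BaCoGrHu21} is that the classical initial distribution $\lambda$ is replaced by the space-time measure $\alpha = \mathcal{L}^\lambda((\eta, W_\eta))$ induced by the delay; this is precisely what the potential $V^\alpha_T$ encodes, since $V^\alpha_T(x) = -\E^\lambda[|W_{\eta\wedge T} - x|]$ plays the role previously played by $U_\lambda$ in the undelayed case.

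First, I would discretize the setting: replace $W$ by a simple symmetric random walk $S^N$ on the grid, represent $\eta$ by a randomised stopping time $\eta^N$ for $S^N$ in the sense of \cite{BeCoHu16}, and approximate the Root barrier $R$ (obtained for the discrete problem via Theorem \ref{thm:dRoot}) by a lattice Root barrier $R^N$; set $\tau^{*,N}:=\inf\{k: (T-k, S^N_k)\in R^N\}\wedge T$. At the discrete level, the delayed switching identity
\[
 U_{\beta^{\alpha,N}_T}(x) = \E^x\!\left[V^{\alpha,N}_{T-\tau^{*,N}}(S^N_{\tau^{*,N}}) + \bigl(U_{\beta^{\alpha,N}} - U_{\alpha^N_X}\bigr)(S^N_{\tau^{*,N}})\,\ind_{\tau^{*,N} < T}\right]
\]
should follow from a direct path-swapping argument: given two independent copies of the walk, one run forward from $x$ and the other started from $\lambda$ and delayed by $\eta^N$, one swaps their future trajectories at the joint space-time point where the forward walk enters the time-reversed barrier while the other walk, past its delay, is hitting the forward barrier. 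The Root property \eqref{def:BarrierSet} makes the swap consistent (later times remain in the barrier), and the $\pm 1$ symmetry of the walk converts the three resulting distance terms into the three potential terms on the right. Optimality of $\tau^{*,N}$ in the supremum version then follows by a standard backward-induction/Snell envelope argument on the lattice, since the candidate value function meets the obstacle exactly on $R^N$ and satisfies the appropriate discrete supermartingale property elsewhere.

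Second, I would pass to the Donsker limit. Using integrability of $\eta$, the $1$-Lipschitz continuity of all potentials involved, and a.s.\ convergence of the hitting times off a null set (argued from monotonicity of Root barriers in $T$), the discrete identity and optimality pass to \eqref{eq:delayed-Root}--\eqref{eq:delayed-RootOSP}. The recovery formula then falls out directly: if $(T,x) \in R$ then $\tau^* = 0$ and \eqref{eq:delayed-Root} collapses exactly to $U_{\beta^\alpha_T}(x) - V^\alpha_T(x) = (U_{\beta^\alpha} - U_{\alpha_X})(x)$, while if $(T,x)\notin R$ then $\tau^*>0$ $\mathbb{P}^x$-a.s., and the Loynes-type uniqueness mentioned after Theorem \ref{thm:dRoot} combined with strict monotonicity of potentials under non-trivial Brownian evolution yields the strict inequality.

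The main obstacle I expect is the Donsker passage rather than the switching argument itself. Specifically, one must construct the discrete delays $\eta^N$ so that the induced space-time measures $\alpha^N$ converge to $\alpha$ jointly with the stopped spatial marginals $\mathcal{L}^\lambda(W_{\eta \wedge T})$ (so that $V^{\alpha,N}_T \to V^\alpha_T$ uniformly in $T$ on compacts), and the discrete barriers $R^N$ must converge to $R$ in a sense strong enough to yield simultaneous convergence of both $\rho^{\eta^N}$ and $\tau^{*,N}$. This requires a delayed adaptation of the stability theory of randomised multi-stopping times from \cite{BeCoHu16}; handling atoms of $\mathcal{L}^\lambda(\eta)$, which the Rost case will later make essential but which also appear implicitly here, is the most delicate point.
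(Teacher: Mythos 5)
Your overall architecture---prove a delayed switching identity for the simple symmetric random walk first, then pass to Brownian motion by a Donsker-type argument---is exactly the paper's program, and you correctly identify that $V^{\alpha}_T$ takes over the role previously played by $U_\lambda$. Within the discrete stage, though, your execution diverges from the paper's. The paper does not argue by literally swapping trajectories, nor by a Snell-envelope verification: it introduces the interpolation function $F^{\lambda,y}_{\rho_{\eta},\tau}(s)=\E^{\lambda}_y\left[\left|X_{\rho_{\eta}\wedge(T-\tau\wedge s)}-Y_{\tau\wedge s}\right|\right]$ for two \emph{independent} walks (a forward delayed one and a backward one), shows it is increasing in $s$ in general and constant when both times are Root times, and identifies its endpoints with the two sides of \eqref{eq:delayed-Root}; the delay enters through a dedicated optional-sampling lemma and a decomposition over the events $\{\eta>T\}$, $\{T-\eta\le\tau^*\}$, $\{\tau^*<T-\eta\}$, which is precisely where the new (delayed) content sits and which your sketch leaves implicit. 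Your Snell-envelope alternative for the supremum is viable in principle (domination of the obstacle and the supermartingale property off the barrier can be checked with discrete Tanaka / expected local times), but it is not off-the-shelf ``standard'' --- it carries the same content as the paper's inequality obtained from monotonicity of $F$ plus Jensen and optional sampling.

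One concrete gap: discrete Root embeddings generically require randomisation \emph{at the barrier boundary} (the paper's Example \ref{exp:easy-Rost-example}), so a deterministic lattice barrier $R^N$ cannot in general embed the discretised target exactly. You must therefore either prove the discrete identity for randomised fields of stopping probabilities, with optimisers ranging over the class $\mathcal{T}_r$ as in Theorem \ref{thm:discrete-delayed-Root}, or give up exact discrete embedding and shift the burden to the limit, where the paper controls the discrepancy by sandwiching the randomised time between the hitting times of $\{r^N>0\}$ and $\{r^N=1\}$ (the barriers $R^{(N)}_{\pm}$). Your plan as written conflates these two options; once that is repaired, the remaining work is the delayed convergence theory (delayed versions of Root's convergence lemma and of the lemmas of \cite{CoKi19b}), which you rightly flag as the main obstacle and which the paper carries out in Section \ref{sec:limit}.
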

Analogously we can give an optimal stopping representation for delayed Rost embeddings extending 
\eqref{eq:Rost}-\eqref{eq:RostOSP} in the following way.
\begin{theorem} \label{thm:delayed-Rost}
If $\bar R$ denotes a \emph{Rost} barrier inducing the potentials $U_{\beta^{\alpha}}$ and $U_{\beta^{\alpha}_T}$.
Then for every $(T, x) \in [0, \infty) \times \mathbb{R}$ we have the representation 
\begin{align}
    \label{eq:delayed-Rost}
    U_{\beta^{\alpha}}(x) - U_{\beta^{\alpha}_T}(x)  &= \E^x\left[ U_{\beta^{\alpha}} (W_{\sigma_*})  
            - V^{\alpha}_{T - \sigma_*} (W_{\sigma_*}) \right]   
\\ \label{eq:delayed-RostOSP}
        &= \sup_{\sigma \leq T} \E^x\left[ U_{\beta^{\alpha}} (W_{\sigma}) 
            - V^{\alpha}_{T - \sigma} (W_{\sigma}) \right],
\end{align}
where the optimizer is given by $\sigma_* := \inf \{t \geq 0 : (T-t, W_t) \in \bar D \} \wedge T$.

Moreover, given the interpolating potentials $U_{\beta^{\alpha}_T}(x)$ for all $(T,x) \in [0, \infty) \times \mathbb{R}$ 
we can recover the Rost barrier $\bar R$ embedding the measure $\beta^{\alpha}$ in the following way
\[
    \bar R = \left\{(T,x) : U_{\beta^{\alpha}_T}(x)  =  V^{\alpha}_{T}(x)\right\}.
\]
\end{theorem}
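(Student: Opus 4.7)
The plan is to follow the discretization roadmap set out in the paper's outline: establish all three claims (the switching identity, the supremum characterization, and the barrier recovery) first in a simple symmetric random walk setting, and then transfer the results to Brownian motion via a Donsker-type limit. Working in discrete time and space trades measure-theoretic subtleties for combinatorial arguments, which is particularly helpful for handling the randomization at atoms of $\mathcal L^\lambda(\eta)$ required by Theorem \ref{thm:dRost}.

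For the delayed switching identity
\[
U_{\beta^\alpha}(x) - U_{\beta^\alpha_T}(x) = \mathbb E^x\bigl[ U_{\beta^\alpha}(W_{\sigma_*}) - V^\alpha_{T-\sigma_*}(W_{\sigma_*}) \bigr],
\]
I would adapt the pathwise switching argument of \cite{BaCoGrHu21}. In the undelayed case the role of $\lambda$ (the initial law, released at time $0$) is exchanged between a target path and an observer path; in the delayed setting the target's mass is released gradually in time according to $\alpha$, so the natural exchange produces $V^\alpha_{T-\sigma_*}$ in place of $U_\lambda$. Concretely, one couples an observer random walk started at $x$ with an independent $\lambda$-started, $\eta$-delayed, Rost-embedded random walk, runs them over $[0,T]$, and performs a swap at the time $\sigma_*$. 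Since both paths use the same Rost barrier and the random walk is space--time symmetric, the joint law is invariant under the switch; taking $-|\cdot - y|$-expectations and rearranging yields the identity.

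The supremum characterization then reduces to a standard verification. Writing $g(T,x) := U_{\beta^\alpha}(x) - V^\alpha_T(x)$ for the immediate-stopping payoff and $V(T,x) := U_{\beta^\alpha}(x) - U_{\beta^\alpha_T}(x)$ for the candidate value, one checks (a) $V \geq g$ with equality precisely on $\bar R$, which gives the barrier recovery formula $\bar R = \{(T,x) : U_{\beta^\alpha_T}(x) = V^\alpha_T(x)\}$; (b) for any stopping rule $\sigma \leq T$, the process $t \mapsto V(T-t, W_t)$ is a supermartingale on $[0,T]$, which in the discrete setting can be read off from the Rost barrier structure by direct inspection; (c) $\sigma_*$ coincides with the first entry of $(T-t, W_t)$ into the stopping region $\{V = g\}$, so the supermartingale is a martingale up to $\sigma_*$ and equality is attained.

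The main obstacle will be the Donsker-type passage from the discrete identities to the continuous ones, and I expect three difficulties. First, one must approximate $\eta$ by discrete delay times in a way that is compatible with the atomic structure of $\mathcal L^\lambda(\eta)$ and with the Radon--Nikodym randomization densities $\psi_k$ appearing in Theorem \ref{thm:dRost}. Second, the discrete Rost barriers must converge in a Hausdorff-type sense suitable for hitting times, so that the limiting $\sigma_*$ genuinely hits the continuous barrier. Third, the supremum must be exchangeable with the Donsker limit, which requires uniform control of the payoff process in $\sigma$; this last point is the most delicate but should be obtainable from the $1$-Lipschitz continuity of potentials of integrable measures together with uniform bounds on $V^\alpha_T$ in $T$.
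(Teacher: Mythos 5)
Your high-level roadmap (prove everything for simple symmetric random walks, then pass to Brownian motion via a Donsker-type limit) is the same as the paper's, but your discrete core is genuinely different, and one piece of it would fail as described. The paper does \emph{not} prove the Rost identity by a symmetric path swap: it first establishes the delayed \emph{Root} representation via the interpolation function and the core argument, and then transfers it to Rost through the time-reversal Root--Rost symmetry and Proposition \ref{prp:Root-Rost-prp}, precisely because in the delayed setting the naive switching argument breaks down -- Lemma \ref{lem:interpolation-function}(ii) is no longer valid when the backward stopping time carries a delay, so ``the joint law is invariant under the switch'' is exactly the step that does not survive the delay; the paper's fix is to place the delay only on the forward (embedding) process and keep the observer undelayed. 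Your verification argument, by contrast, is a viable alternative route to \eqref{eq:delayed-Rost}--\eqref{eq:delayed-RostOSP} at the discrete level, but its two decisive claims are asserted rather than proved: (a) $U_{\beta^{\alpha}_T}\le V^{\alpha}_T$ with equality on $\bar R$ needs conditional Jensen (using $\rho\ge\eta$) plus a discrete Tanaka computation showing that released mass in a barrier column is stopped instantly and accrues no local time; and (b) the supermartingale/martingale property is not ``direct inspection'' -- one has to compute that the one-step drift of $t\mapsto \bigl(U_{\beta^{\alpha}}-U_{\beta^{\alpha}_{T-t}}\bigr)(W_t)$ at a site $x$ equals $-\mathbb{P}^{\lambda}\bigl[W_{\rho}=x,\ \rho\ge T-t\bigr]$, which is $\le 0$ always and $=0$ off the Rost barrier; this Tanaka-type identity is where the Rost structure and the constraint $\rho\ge\eta$ actually enter, and it delivers the martingale property up to the hitting time $\sigma_*$. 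Note also that in discrete time the barrier necessarily carries randomized boundary points (fields $\mathcal{T}_r$, $\mathcal{R}_r$), so ``equality precisely on $\bar R$'' only holds up to unreachable points, and the optimizers must be handled as randomized rules -- this is what later forces the sandwich barriers $R^{(N)}_{\pm}$ in the limit.

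The second substantive gap is the limit passage, which is where most of the paper's effort lies and which your sketch under-weights. Exchanging the supremum with the Donsker limit is not a consequence of Lipschitz continuity of potentials alone: the paper needs $\varepsilon$-cutoff payoff functions, monotonicity of $T\mapsto V^{\alpha}_T$, and a supermartingale argument excluding near-horizon stopping, on top of delayed versions of Root's convergence lemma and the hitting-time stability results of Cox--Kinsley. For Rost specifically one must additionally split $\{\rho>\eta\}$ from $\{\rho=\eta\}$, prove convergence of the mass stopped ``at the delay'', and identify the limiting randomization at the atoms of $\mathcal{L}^{\lambda}(\eta)$ via the maximality statement of Lemma \ref{lem:Rost-uniqueness}. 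You correctly flag these as difficulties, but as it stands the proposal defers exactly the steps that constitute the proof of Theorem \ref{thm:delayed-Rost}.
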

First we want to observe that for the choice of $\eta = 0$ the single marginal identities can easily be recovered. 
Indeed, 
\begin{itemize}
\item[\itembullet] $\beta^{\alpha} = \mu$ and $\beta^{\alpha}_T = \mu_T$ as in Section \ref{sec:intro}
\item[\itembullet] $\alpha = \delta_0 \times \lambda$ thus $\alpha_X = \lambda$
\item[\itembullet] $V^{\alpha}_T (y) = -\E^{\lambda} \left[\left|W_{0} - y \right| \right] = U_{\lambda}(y)$ for any $T \geq 0$
\end{itemize}
hence
\begin{align*}
    \E_y\left[ V^{\alpha}_{T - \tau} (W_{\tau}) + (U_{\mu} - U_{\alpha_X})(W_{\tau})\ind_{\tau  < T}\right]   
&=    \E_y\left[ U_{\lambda}(W_{\tau}) + (U_{\mu} - U_{\lambda})(W_{\tau})\ind_{\tau  < T}\right]   
\\&=    \E_y\left[U_{\mu}(W_{\tau})\ind_{\tau  < T} +  U_{\lambda}(W_{\tau}) \ind_{\tau  = T}\right].   
\end{align*}
in the Root case, and
\[
    \E^x\left[ U_{\beta^{\alpha}} (W_{\sigma}) - V^{\alpha}_{T - \sigma} (W_{\sigma}) \right] 
    = \E^x\left[ U_{\mu} (W_{\sigma}) - U_{\lambda} (W_{\sigma}) \right]
\]
in the Rost case respectively.

Furthermore, the multi-marginal extensions now follow as an corollary 
as they can be deduced inductively from the delayed optimal stopping problem 
\eqref{eq:delayed-Root}-\eqref{eq:delayed-RootOSP} 
(resp. \eqref{eq:delayed-Rost}-\eqref{eq:delayed-RostOSP}) in the following way. 
\begin{corollary}
Let $R_1, R_2, \dots, R_n$ denote a sequence of Root (resp. Rost) barriers inducing the potentials $U_{\mu_1}, \dots, U_{\mu_n}$ as well as $U_{\mu_1, T}, \dots, U_{\mu_n, T}$. 
Then the optimal stopping representations \eqref{eq:MMRoot}-\eqref{eq:MMRootOSP} (resp. \eqref{eq:MMRost}-\eqref{eq:MMRostOSP}) hold. 
\end{corollary}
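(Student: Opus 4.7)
The plan is to deduce the multi-marginal identities by iteratively applying the delayed single-marginal representations (Theorem \ref{thm:delayed-Root} in the Root case, Theorem \ref{thm:delayed-Rost} in the Rost case), with the preceding hitting time $\rho_{k-1}$ playing the role of the delay $\eta$. Fix $k \in \{1,\dots,n\}$ and set $\eta := \rho_{k-1}$, using the convention $\rho_0 \equiv 0$. With the given barrier $R_k$ (resp.\ $\bar R_k$), the delayed hitting time $\rho^\eta = \inf\{t \geq \eta : (t,W_t) \in R_k\}$ coincides by construction with $\rho_k$, so the induced space-time law $\alpha := \mathcal{L}^\lambda\bigl((\rho_{k-1}, W_{\rho_{k-1}})\bigr)$ has spatial marginal $\alpha_X = \mu_{k-1}$, while $\beta^\alpha = \mu_k$ and $\beta^\alpha_T = \mu_{k,T}$.

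The crucial identification of the potential appearing in the delayed theorem is
\[
V^\alpha_T(x) \;=\; -\mathbb{E}^\lambda\bigl[|W_{\rho_{k-1} \wedge T} - x|\bigr] \;=\; U_{\mu_{k-1,T}}(x),
\]
which follows immediately from $\mathcal{L}^\lambda(W_{\rho_{k-1} \wedge T}) = \mu_{k-1,T}$. Substituting the four identifications ($\alpha_X = \mu_{k-1}$, $\beta^\alpha = \mu_k$, $\beta^\alpha_T = \mu_{k,T}$, $V^\alpha_T = U_{\mu_{k-1,T}}$) into \eqref{eq:delayed-Root}--\eqref{eq:delayed-RootOSP} directly yields \eqref{eq:MMRoot}--\eqref{eq:MMRootOSP}, and into \eqref{eq:delayed-Rost}--\eqref{eq:delayed-RostOSP} yields \eqref{eq:MMRost}--\eqref{eq:MMRostOSP}. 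The optimizers $\tau^*_k = \inf\{t \geq 0 : (T-t, W_t) \in R_k\}\wedge T$ appearing in the multi-marginal statements are precisely the optimizers supplied by the delayed theorems under this substitution, and the barrier-recovery formulas in Theorems \ref{thm:MM-Root} and \ref{thm:MM-Rost} transcribe from their delayed counterparts in the same way.

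Before invoking the delayed theorems at stage $k$, one must check that $\rho_{k-1}$ is an admissible delay, i.e.\ integrable with $\mu_k$ convex-dominating $\mathcal{L}^\lambda(W_{\rho_{k-1}}) = \mu_{k-1}$. Integrability propagates inductively from the uniform integrability of $(W_{t \wedge \rho_n})$, and the convex order $\mu_{k-1} \leq_c \mu_k$ is assumed throughout. The main point I would expect to demand some care is the Rost case: if $\mathcal{L}^\lambda(\rho_{k-1})$ carries atoms then $\eta$ is only defined in the randomised multi-stopping sense, so the randomisation data from stage $k-1$ must be compatible with the hypotheses of Theorem \ref{thm:delayed-Rost}. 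However, the corollary preceding this one establishes that for $k \geq 2$ the law $\mathcal{L}^\lambda(\rho_{k-1})$ has at most a single atom located at $t=0$, which fits directly into the framework of Definition \ref{def:dRost}; consequently the iterative application of the delayed representations closes and yields the claimed multi-marginal identities.
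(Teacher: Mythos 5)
Your argument is correct and follows essentially the same route as the paper: apply the delayed representation (Theorem \ref{thm:delayed-Root}, resp.\ Theorem \ref{thm:delayed-Rost}) with $\eta = \rho_{k-1}$ and $R = R_k$, identifying $\alpha_X = \mu_{k-1}$, $\beta^{\alpha} = \mu_k$, $\beta^{\alpha}_T = \mu_{k,T}$ and $V^{\alpha}_T = U_{\mu_{k-1,T}}$. Your additional checks (integrability of the delay, convex order, the atom discussion in the Rost case) are sound but not needed beyond what the paper's inductive substitution already uses, since the corollary takes the barriers and their hitting times as given.
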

\begin{proof}
For the first marginal induced by $R_1$ the optimal stopping representation is simply the single marginal representation 
\eqref{eq:Root}-\eqref{eq:RootOSP} (resp. \eqref{eq:Rost}-\eqref{eq:RostOSP})
already known resp. recovered above. 
Let now $k \in \{2, \dots, n\}$ and consider $\eta = \rho_{k}$ and $R = R_{k+1}$. 
Then 
\begin{itemize}
\item[\itembullet] $\alpha = \mathcal{L}^{\lambda}(\rho_k, W_{\rho_k})$, thus $\alpha_X = \mu_k$ and $U_{\alpha_X}(y) = U_{\mu_{k}} (y)$,
\item[\itembullet] $\beta^{\alpha} = \mu_{k+1}$, thus $U_{\beta^{\alpha}}(y) = U_{\mu_{k+1}} (y)$ and $U_{\beta^{\alpha}_T}(y) = U_{\mu_{k+1, T}} (y)$,
\item[\itembullet] $V^{\alpha}_T (y) = -\E^{\lambda} \left[\left|W_{\rho_k \wedge T} - y \right| \right] = U_{\mu_{k,T}}(y)$.
\end{itemize}
Hence we recover the multi marginal Root (resp. Rost) optimal stopping problems 
\eqref{eq:MMRoot}-\eqref{eq:MMRootOSP} (resp. \eqref{eq:MMRost}-\eqref{eq:MMRostOSP}).
\end{proof}%
%
%
%
%
%
%
%
%
%
%
%
%
%
%
%
%
%
%
%
%
%
%
%
%
%
%
%
%
%
%
%
%
%
%
%
%
%
%
%
%
%
%
%
%
%
%
%
%
%

\section[A Discrete \eqref{dSEP} and its Optimal Stopping Representation]{A Discrete (dSEP) and its Optimal Stopping Representation} \label{sec:discrete}                                                             
%
We take on the idea of \cite{BaCoGrHu21} to cast our problem in a discrete setting 
in order to use arguments for SSRWs to derive a discrete version of the desired result 
and then recover the continuous result through a Donsker type limiting argument. 
In this section we define a discrete version of \eqref{dSEP}. 
We furthermore define discrete Root and Rost solutions and give an explicit construction. 

Let $X$ denote a SSRWs on some probability space $(\Omega, \mathbb{P})$, started at some possibly random initial position.
Given $x \in \mathbb{Z}$ we write $\mathbb{P}^x$ for the conditional distribution when $X_0=x$. 
Hence we can consider a probability measure $\lambda$ on $\mathbb{Z}$ a ``starting'' distribution for $X$ 
by setting $\mathbb{P}^\lambda := \sum_{x\in \mathbb{Z}} \mathbb{P}^x\lambda(\{x\})$. 

%
\subsection[A Discrete \eqref{dSEP}]{A Discrete (dSEP)} \label{sec:discrete-existence}
Consider a SSRW $(X_t)_{t \in \mathbb{N}}$ started in $X_0 \sim \lambda$. 
Given a discrete measure $\mu$ the \emph{discrete Skorokhod embedding problem} is 
to find a (discrete) stopping time $\rho$ such that 
\begin{equation}\label{discrete-SEP} 
    X_{\rho} \sim \mu \text{ and } X_{t \wedge \rho} \text{ is uniformly integrable.} 
\end{equation}

Given a delay stopping time $\eta$ the \emph{delayed} discrete Skorokhod embedding problem (or \emph{delayed} \eqref{dSEP}) is to find  
a (discrete) stopping time $\rho \geq \eta$ such that \eqref{discrete-SEP} is satisfied.

Just as in the continuous case we are interested in the existence of barrier type solutions to the discrete \eqref{dSEP}. 
These barriers can be defined analogously to the continuous setting as subsets $R \subseteq \mathbb{N}\times \mathbb{Z}$ satisfying the following respective property.
\begin{itemize}
\item[\itembullet] $R$ will denote a (discrete) \emph{Root} barrier if for $(t,x)\in R$ and $s>t$ we also have $(s,x)\in R$.
\item[\itembullet] $R$ will denote a (discrete) \emph{Rost} barrier if for $(t,x)\in R$ and $s<t$ we also have $(s,x)\in R$.
\end{itemize}

While in the continuous setting it is known that no external randomization is needed in order to give a Root solution to the \eqref{SEP} 
and external randomization is needed only in $t=0$ for the Rost solution (and only if the initial measure $\lambda$ and the terminal measure $\mu$ share mass) this is no longer the case in the discrete setting. 
We will see that in order to give a discrete Root (resp. Rost) solution external randomization will be necessary in both cases also beyond time $t = 0$, however only at the boundary of the respective barrier.

\begin{example} \label{exp:easy-Rost-example}
Consider the simple example
\begin{align*}
    \lambda &= \delta_0
\\      \mu &= \frac{1}{3} \delta_{-2} + \frac{1}{3} \delta_0 + \frac{1}{3} \delta_{2}.
\end{align*}
First note that all paths of a SSRW $X$ that reach $\pm 2$ at any point in time will be absorbed there. 
As all paths originate from $(0,0)$, the probability of being in site $x = 0$ at time $t = 2$ is given by $\frac{1}{2} > \frac{1}{3} = \mu(\{0\})$. 
Consequently, we cannot stop all paths in $(2,0)$ as this would embed too much mass in $\{0\}$. 
However, if we allow all paths to proceed from $(2,0)$, then the probability of being in site $x=0$ at time $t=4$ without having hit $\pm 2$ before is given by $\frac{1}{4} < \frac{1}{3} = \mu(\{0\})$.
In other words, this would result in an insufficient mass being embedded in ${0}$.
Hence, it becomes necessary to stop \emph{some} paths in $(2,0)$, but not all of them. 
Precisely, we will need to stop $\frac{1}{3}$ of paths in $(2,0)$ while allowing the remainder to reach either of $\{-2,0,2\}$ 
at time $t=4$ in order to correctly embed the distribution $\mu$. 
Refer to Figure \ref{fig:delayed-Root-example} for an illustration on possible realizations of such paths. 
\begin{figure}[H]
\captionsetup{margin=2cm}
\centering
\begin{subfigure}{.30\linewidth}
\centering
\includegraphics[width = 1\linewidth]{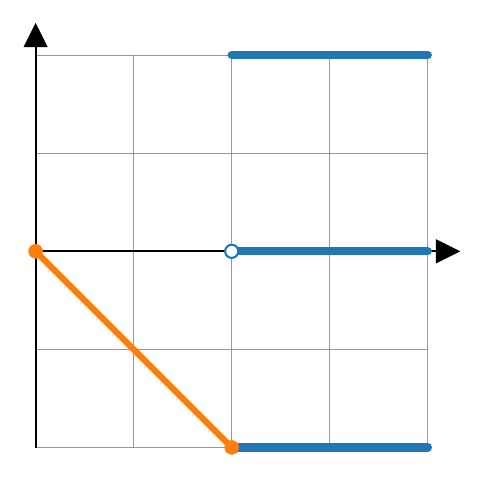}
\end{subfigure}
\begin{subfigure}{.30\linewidth}
\centering
\includegraphics[width = 1\linewidth]{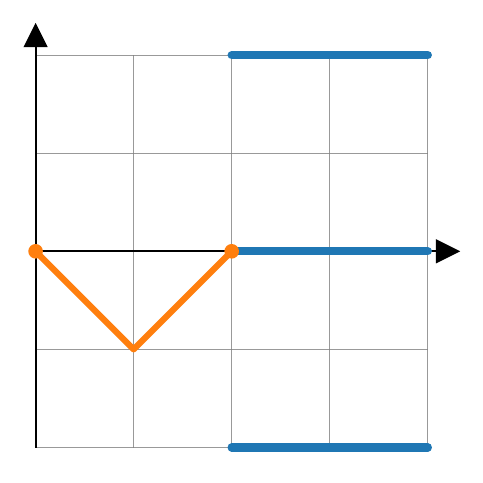}
\end{subfigure}
\begin{subfigure}{.30\linewidth}
\centering
\includegraphics[width = 1\linewidth]{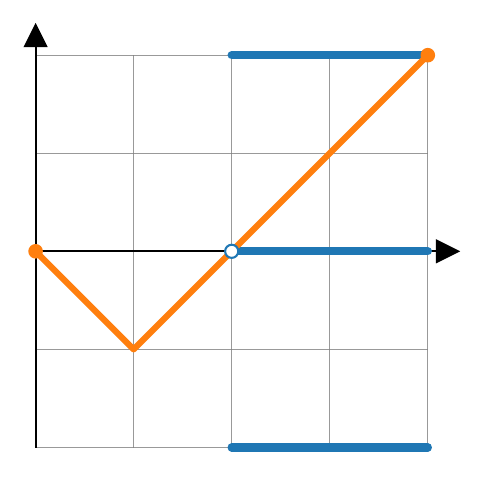}
\end{subfigure}
\caption{Possible realizations of paths hitting a Root barrier that solves the discrete \eqref{SEP} given by $\lambda = \delta_0$ and $\mu = \frac{1}{3} \delta_{-2} + \frac{1}{3} \delta_0 + \frac{1}{3} \delta_{2}$.}
\label{fig:delayed-Root-example}
\end{figure}
\end{example}
We will formalize this concept of randomized stopping in the following way. 

If $\rho$ denotes a stopping time for the process $X$ then we can for each point $(t,x) \in \mathbb{N} \times \mathbb{Z}$ consider the probability of $\rho$ stopping in site $x$ at time $t$ after $\eta$ by
\begin{equation}
    r_t(x) := \mathbb{P}^{\lambda}\left[ \rho = t | \left(\rho \wedge t, X_{\rho \wedge t}\right) = (t,x), \eta \leq t \right].
\end{equation}
If now $R$ denotes a barrier and $\rho$ its hitting time then in the classic (deterministic) case we should have
$r_t(x) \in \{0,1\}$ and $r_t(x) = 1$ if and only if $(t,x) \in R$, i.e. we stop in the barrier immediately after arrival.
However, if we allow for $r_t(x) \in (0,1)$ but agree on the convention that $(t,x) \in R$ when $r_t(x)>0$, 
then this can be understood as stopping inside of the barrier only with probability $r_t(x)$.
We can now give an equivalent definition of discrete Root (resp. Rost) barriers on the basis of these stopping probabilities.
A field of stopping probabilities denoted as $(r_t(x))_{(t,x) \in \mathbb{N} \times \mathbb{Z}}$ represents a 
\begin{align}
    &\itembullet\,\, \text{Root barrier if for all } (t,x) \in \mathbb{N} \times \mathbb{Z} \text{ such that } r_t(x)>0
        \text{ we have } r_{t+s}(x) = 1 \,\forall s \geq 1,        \label{eq:Root-field-def}
\\  &\itembullet\,\, \text{Rost barrier if for all } (t,x) \in \mathbb{N} \times \mathbb{Z} \text{ such that } r_t(x)>0
        \text{ we have } r_{t-s}(x) = 1 \,\forall s = 1, \dots, t. \label{eq:Rost-field-def}
\end{align}
This definition guarantees the respective barrier structure 
and furthermore assures that randomization only happens at the boundary points of the respective barriers. 
We will also refer to $(r_t(x))_{(t,x) \in \mathbb{N} \times \mathbb{Z}}$ as Root and Rost field of stopping probabilities, respectively.

Let $u = (u_0, u_1, \dots)$ denote a sequence of iid uniform random variable, i.e. $u_i \sim U[0,1]$. 
Then our Root (resp. Rost) stopping times subject to external randomization at the endpoints can be represented as follows
\begin{equation}
    \rho = \rho_u = \inf \left\{ t \geq \eta : r_t(X_t) > u_t \right\}.
\end{equation}
We could think of at every time point $t$ flipping a coin with success probability $r_t(X_t)$.
Was the coin flip successful the process stops, if not it moves onto the next point.
Note that if $r_t(x) \in \{0,1\}$ for all $(t,x) \in \mathbb{N} \times \mathbb{Z}$, then this definition will coincide with the usual non-randomized definition
\begin{equation}
    \rho  = \inf \left\{ t \geq \eta : (t, X_t) \in R \right\}.
\end{equation}

Our stopping time $\rho$ is now a \emph{randomized} stopping time taking the two arguments $\omega$ and $u$, thus $\rho: \Omega \times [0,1]^{\mathbb{N}} \rightarrow \mathbb{N}$.
While this is a slightly unconventional definition of an external randomization used for its more intuitive explanation in this context, 
we can also cast this randomized stopping time into the more conventional framework of using only a single variable of external randomization in the following way.
For each $\omega \in \Omega$ we consider the sequence $(\zeta_{\omega}(t))_{t \in \mathbb{N}}$ of the  probability of the process $X_{\rho \wedge t}(\omega)$ being `alive' at time $t$ given via
\begin{align*}
    \zeta_{\omega}(0) &:= (1-r_0(X_0(\omega))) \vee \ind_{\eta(\omega)=0} \quad \text{ and }
\\  \zeta_{\omega}(t) &:= \zeta_{\omega}(t-1) \cdot (1-r_t(X_t(\omega))) \vee \ind_{t < \eta(\omega)}.
\end{align*}
Note that $(\zeta_{\omega}(t))_{t \in \mathbb{N}}$ is a decreasing sequence and $\xi_{\omega}(\{0, \dots,t\}):= 1 - \zeta_{\omega}(t)$ will define a measure on $\mathbb{N}$.
Given a uniformly distributed random variable $u \sim U[0,1]$ independent of $X$ the (randomized) stopping time
\[
    \tilde \rho (\omega, u) := \inf \{ t \geq 0 : \xi_{\omega}(\{0, \dots,t\}) \geq u\}
\] 
now gives a more traditional representation of our randomized Root (resp. Rost) stopping time.

\ \\ 
In the following two subsections we will give an explicit construction of the stopping probabilities $(r_t(x))_{(t,x) \in \mathbb{N} \times \mathbb{Z}}$ solving the discrete \eqref{dSEP} both in the Root and Rost case.
\medskip
To guarantee uniform integrability of all solutions constructed we will henceforth always assume $\mu$ to denote a finite sum of atoms. 
Let $x_*$ and $x^*$ denote the smallest and largest site respectively at which we have an atom. 
Then in both the Root and the Rost case our solution should be bounded from above by the (delayed) hitting time
\[
    H_{x_*, x^*} := \inf \{t \geq \eta : X_t \in \{x_*, x^* \}\}.
\]
Since $\eta$ is a stopping time with finite expectation then so is $H_{x_*, x^*}$, 
which in turn implies uniform integrability of $(X_{t \wedge \rho})_{t \geq 0}$.
%
%
%
%
%
%
%
%
%
%
\subsubsection{Existence of Discrete Delayed Root Solutions}
%
An explicit Root solution to the discrete \eqref{dSEP} can be established with the help of expected local times. 
For a SSRW random walk $X$ the local time at site $x$ up to time $t$ is defined via
\[
    L_t^x(X) = \#\{ s : 0 \leq s < t, X_s = x \} = \sum_{s = 0}^{t-1} \ind_{X_s = x}.
\]
We note the following discrete version of Tanaka's Formula for local times.
\begin{theorem} \label{thm:discrete-Tanaka}
Consider a random walk $X$ defined by $X_t = X_0 + \sum_{s=1}^{t}\xi_{s}$ where $\xi_s$ are iid Rademacher random variables.
Then for $x \in \mathbb{Z}$ we can represent the local time $L_t^x(X)$ of $X$ until time $t$ as
\begin{equation} \label{eq:discrete-Tanaka}
    L_t^x(X) =  |X_t - x| - |X_0 - x| - \sum_{s = 0}^{t-1} \sgn(X_s - x) \xi_{s+1}.
\end{equation}
\end{theorem}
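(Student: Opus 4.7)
The plan is to prove the identity by telescoping $|X_s - x|$ along the trajectory and carefully analysing the one-step increment in three cases. Starting from
\[
|X_t - x| - |X_0 - x| = \sum_{s=0}^{t-1}\bigl(|X_{s+1}-x| - |X_s - x|\bigr),
\]
I would substitute $X_{s+1} = X_s + \xi_{s+1}$ with $\xi_{s+1} \in \{-1,+1\}$ and evaluate the summand according to the sign of $X_s - x$.

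First, when $X_s > x$ (hence $X_s - x \geq 1$), the successor $X_{s+1} \geq x$ as well, so $|X_{s+1}-x| - |X_s-x| = (X_s + \xi_{s+1} - x) - (X_s - x) = \xi_{s+1} = \sgn(X_s - x)\,\xi_{s+1}$. A mirror computation in the case $X_s < x$ yields the increment $-\xi_{s+1} = \sgn(X_s - x)\,\xi_{s+1}$. The only case where the identity ``off-diagonal'' fails is $X_s = x$: there $|X_s - x| = 0$ while $|X_{s+1} - x| = 1$ deterministically, so the increment equals $1$, independently of $\xi_{s+1}$.

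Summing the three cases and adopting the convention $\sgn(0) = 0$, the $s$ with $X_s \neq x$ contribute the full sum $\sum_{s=0}^{t-1} \sgn(X_s - x)\,\xi_{s+1}$, while the $s$ with $X_s = x$ contribute exactly $\sum_{s=0}^{t-1} \ind_{X_s = x} = L_t^x(X)$. Rearranging the resulting identity
\[
|X_t - x| - |X_0 - x| = \sum_{s=0}^{t-1} \sgn(X_s - x)\,\xi_{s+1} + L_t^x(X)
\]
gives precisely \eqref{eq:discrete-Tanaka}.

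There is no real obstacle here; the only subtle point is the treatment of the ``singularity'' at $X_s = x$, which is the discrete analogue of the need to specify $\sgn(0)$ in the continuous Tanaka formula. With the natural convention $\sgn(0) = 0$ the flat parts of $|\cdot - x|$ contribute nothing to the martingale sum, and the entire local-time accumulation at $x$ is collected through the deterministic $+1$ jump of $|X_{s+1}-x|$ when $X_s = x$.
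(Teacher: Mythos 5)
Your proof is correct. Note that the paper itself does not prove Theorem \ref{thm:discrete-Tanaka} at all -- it simply cites \cite{CsRe85} -- so your telescoping argument supplies the standard, self-contained verification: decompose $|X_t-x|-|X_0-x|$ into one-step increments, observe that off the level $x$ the increment equals $\sgn(X_s-x)\,\xi_{s+1}$ exactly (since $X_s-x\in\mathbb{Z}\setminus\{0\}$ forces $|X_s-x|\geq 1$, so no sign change can occur in one step), and that at $X_s=x$ the increment is deterministically $+1$, which is precisely what accumulates into $L_t^x(X)=\sum_{s=0}^{t-1}\ind_{X_s=x}$. You also correctly flag the one genuinely delicate point, namely that the identity holds pathwise only under the convention $\sgn(0)=0$ (with $\sgn(0)=\pm 1$ it would hold only in expectation), a convention the paper's statement leaves implicit; making it explicit, as you do, is exactly right.
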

\noindent 
We refer to \cite{CsRe85} for a proof. 

Let 
\begin{equation*} 
    L_{s,t}^x(X)= \# \{u: s \leq u < t  \text{ and } X_u = x\} = \sum_{u = s}^{t-1} \ind_{X_u = x} = L_t^x(X) - L_s^x(X)
\end{equation*}
denote the local time of the process $X$ at site $x$ \emph{between} time $s$ and $t$ (for $s \leq t$).

For our delay stopping time $\eta$ write $\alpha_X = \mathcal{L}^{\lambda}(X_{\eta})$. 
Consider now $X_0 \sim \lambda$ and let $\rho \geq \eta$ be a stopping time for the random walk $X$ such that $X_{\rho} \sim \mu$. 
Then due to \eqref{eq:discrete-Tanaka} we have the following representation of the \emph{expected} local time between $\eta$ and $\rho$
\begin{equation} \label{eq:expected-lt}
    \E^{\lambda}\left[L_{\eta,\rho}^x(X)\right] = U_{\alpha_X}(x) - U_{\mu}(x) =: L(x).
\end{equation}
Conversely, for any solution $\rho \geq \eta$ to \eqref{dSEP} Equation \eqref{eq:expected-lt} must be satisfied. 

\medskip
Let $(r_t(x))_{(t,x) \in \mathbb{N} \times \mathbb{Z}}$ denote a field of stopping probabilities and $\rho$ its corresponding stopping time delayed by the stopping time $\eta$. 

Then we define 
\[
    \tilde \alpha_t(x):= \mathbb{P}^{\lambda} \left[ X_{t} = x, \eta \leq t, \rho \geq t \right],
\]
the free mass available in site $x$ at time $t$. 

Define $\alpha(t,x) := \mathbb{P}^{\lambda}\left[X_{\eta} = x, \eta = t\right]$,
then we can compute $\tilde \alpha_t$ inductively using only information up to time $t-1$ in the following way
\begin{equation*}
    \tilde \alpha_t(x) = \frac{1}{2}\left((1-r_{t-1}(x+1)) \tilde \alpha_{t-1}(x+1) + (1-r_{t-1}(x-1)) \tilde \alpha_{t-1}(x-1)\right) + \alpha(t,x).
\end{equation*}
Precisely, paths can only end up in $(t,x)$ from the neighboring points $(t-1, x+1)$ and $(t-1, x+1)$. 
They will with probability $1-r_{t-1}(x+1)$ resp. $1-r_{t-1}(x-1)$ leave the respective site and then with probability $\frac{1}{2}$ end up in $(t,x)$. 
Moreover, we only consider mass after the stopping time $\eta$, 
this is represented by $\alpha(t,x)$, the probability of particles \emph{appearing} in site $x$ at time $t$.
Note that at time $0$ we thus only have
\(
    \tilde \alpha_0(x) = \alpha(0,x).
\)

With the help of the quantities $\tilde \alpha_t(x)$ the expected number of visits in site $x$ up until time $t$ of the process $X$ can then be written as
\begin{equation}
    l_t^x := \sum_{s=0}^{t-1} (1-r_s(x))\tilde \alpha_s(x).
\end{equation}
In particular we will have $l_0^x = 0$ for all $x \in \mathbb{Z}$. 

We recall $L(x) := U_{\alpha_X}(x) - U_{\mu}(x)$ and define
\[
    \tilde L_t(x) := L(x) - l_t^x,
\]
the `missing local time' after time $t$. 
Note that $\tilde L_t(x)$ (resp. $l_t^x$) can be computed only using information up to time $t-1$. 

We are now equipped to give a precise construction of a Root solution to the discrete \eqref{dSEP}.
\begin{theorem} \label{thm:ddSEP-Root}
Consider a field of stopping probabilities $(r_t(x))_{(t,x) \in \mathbb{N} \times \mathbb{Z}}$ inductively defined via the formula
\[
  r_t(x):= 1 - \frac{ L(x) - l_t(x) }{ \tilde \alpha_t(x) } \wedge 1 
         = 1 - \frac{\left(\tilde L_t \wedge \tilde \alpha_t \right) \left( x \right) }{ \tilde \alpha_t(x) }
    \text{ when }  \tilde \alpha_t(x) > 0
\]
and chosen to comply with \eqref{eq:Root-field-def} otherwise.  

Then this field represents a Root field of stopping probabilities 
and the associated stopping time
\begin{equation*}
    \rho = \inf \left\{ t \geq \eta : r_t(X_t) > u_t \right\}.
\end{equation*}
will be a Root solution to the discrete \eqref{dSEP}. %
\end{theorem}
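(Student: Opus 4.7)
The plan is to verify the Root barrier property of the field $(r_t(x))$, bound $\rho$ by $H_{x_*, x^*}$ to obtain uniform integrability, and then use the discrete Tanaka formula to identify $\mathcal{L}^\lambda(X_\rho)$ with $\mu$.

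Starting with the Root barrier structure, I would observe that the recursion $(1-r_t(x))\tilde\alpha_t(x) = \tilde L_t(x) \wedge \tilde\alpha_t(x)$ yields $\tilde L_{t+1}(x) = (\tilde L_t(x) - \tilde\alpha_t(x))^+$, so $\tilde L_t(x)$ is non-negative and non-increasing in $t$. Whenever $r_t(x) > 0$, the defining formula forces $\tilde L_t(x) < \tilde\alpha_t(x)$, hence $\tilde L_{t+1}(x) = 0$, and therefore $r_{t+s}(x) = 1$ for all $s \geq 1$ whenever $\tilde\alpha_{t+s}(x) > 0$ (the convention handles the complementary case). This verifies \eqref{eq:Root-field-def}. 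For uniform integrability, I would use that $\alpha_X \leq_c \mu$ with both measures supported in $[x_*, x^*]$ and sharing a common mean, so the potentials $U_{\alpha_X}$ and $U_\mu$ agree outside $(x_*, x^*)$; in particular $L(x_*) = L(x^*) = 0$. Consequently $r_t(x_*) = r_t(x^*) = 1$ whenever the walker is present, so it is absorbed upon first entry to $\{x_*, x^*\}$ and $\rho \leq H_{x_*, x^*}$ almost surely, which is integrable because $\eta$ is.

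The heart of the argument is to identify $\nu := \mathcal{L}^\lambda(X_\rho)$ with $\mu$. I would apply Theorem~\ref{thm:discrete-Tanaka} at the integrable stopping times $\eta$ and $\rho$: the bounded increments $|\sgn(X_s - x)\xi_{s+1}| \leq 1$ allow optional stopping to kill the martingale contribution, giving
\[
  \E^\lambda\!\left[L_{\eta,\rho}^x(X)\right] \;=\; U_{\alpha_X}(x) - U_\nu(x).
\]
On the other hand, writing $L_{\eta,\rho}^x(X) = \sum_{t \geq 0} \ind_{\{X_t = x,\, \eta \leq t,\, \rho > t\}}$ and interchanging sum and expectation,
\[
  \E^\lambda\!\left[L_{\eta,\rho}^x(X)\right] \;=\; \sum_{t \geq 0}(1 - r_t(x))\,\tilde\alpha_t(x) \;=\; \lim_{t \to \infty} l_t(x) \;=:\; l_\infty(x).
\]
Since each increment $l_{t+1}(x) - l_t(x) = \tilde L_t(x) \wedge \tilde\alpha_t(x)$ is capped at $\tilde L_t(x)$, we have $l_\infty(x) \leq L(x)$, whence $U_\nu \geq U_\mu$, i.e.\ $\nu \leq_c \mu$.

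The remaining and most delicate step is to upgrade this inequality to $l_\infty(x) = L(x)$ for all $x$, which via the potential identity yields $\nu = \mu$. The Root property established above forces $l_\infty(x) = L(x)$ automatically at every site $x$ where $r_t(x) > 0$ ever occurs, so the issue localises on the defect set $E := \{x : r_t(x) = 0 \text{ for all } t\}$, on which $\nu$ puts no mass. Writing $d := U_\nu - U_\mu \geq 0$, one has $d = 0$ on $E^c$, and the discrete Laplacian satisfies $\Delta d(x) = 2(\mu(\{x\}) - \nu(\{x\})) = 2\mu(\{x\})$ on $E$. I would then argue that $\mu(E) = 0$: if $x_0 \in E$ had $\mu(\{x_0\}) > 0$, the recurrence of the walker on $(x_*, x^*)$ prior to absorption would accumulate $\sum_t \tilde\alpha_t(x_0)$ exceeding $\tilde L_t(x_0)$ at some finite $t$, forcing $r_t(x_0) > 0$ and contradicting $x_0 \in E$. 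Once $\mu(E) = 0$, $d$ is discrete-harmonic on $E$ with zero boundary values, and the discrete maximum principle yields $d \equiv 0$. This last accumulation argument is where I expect the bulk of the technical work.
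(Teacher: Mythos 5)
Your setup matches the paper's own proof: the recursion $\tilde L_{t+1}(x) = (\tilde L_t(x)-\tilde\alpha_t(x))^+$ and the observation that $r_t(x)>0$ forces $\tilde L_{t+1}(x)=0$, hence $r_{t+s}(x)=1$ afterwards, is exactly how the paper verifies \eqref{eq:Root-field-def}; the bound $\rho\le H_{x_*,x^*}$ for uniform integrability and the discrete Tanaka identity $\E^{\lambda}[L^x_{\eta,\rho}(X)]=U_{\alpha_X}(x)-U_{\nu}(x)=l_\infty(x)$ are likewise the paper's route. Two remarks on the last step, though. First, a small definitional point: because of the convention at points with $\tilde\alpha_t(x)=0$, the event ``$r_t(x)>0$ for some $t$'' does not by itself give $l_\infty(x)=L(x)$; the dichotomy should be taken over the set where $\tilde\alpha_t(x)r_t(x)>0$ for some $t$ (equivalently, the atoms of $\nu$), so that $d:=U_\nu-U_\mu$ vanishes off the defect set and $\nu$ charges no point of the defect set.

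Second, and more importantly, your step ``$\mu(E)=0$'' is a genuine gap as argued: the process that is relevant for $\tilde\alpha_t(x_0)$ is the \emph{killed} walk (alive mass after $\eta$), which is not recurrent, so ``recurrence prior to absorption'' does not yield that $\sum_t\tilde\alpha_t(x_0)$ must at some finite time overrun the remaining budget $\tilde L_t(x_0)$; indeed, all alive mass near $x_0$ could in principle be absorbed at neighbouring barrier sites, and ruling this out when $\mu(\{x_0\})>0$ is essentially the statement you are trying to prove, so the sketch is close to circular. Fortunately this step is also unnecessary within your own framework: on the defect set you already have $\nu(\{x\})=0$, hence $\Delta d(x)=2\mu(\{x\})\ge 0$, i.e.\ $d$ is discretely convex (subharmonic) there, while $d\ge 0$ everywhere and $d=0$ off the defect set and outside $(x_*,x^*)$ (since $\nu$ and $\mu$ have equal mass and mean and are supported in $[x_*,x^*]$). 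The maximum principle for subharmonic functions on each maximal interval of the defect set, with zero boundary values, then gives $d\le 0$ and hence $d\equiv 0$ directly — harmonicity, i.e.\ $\mu(E)=0$, is never needed. This is exactly the paper's argument in different clothing: the paper shows that $\nu$ has no atoms on $I=\{d>0\}$ and then derives a contradiction on a maximal interval of $I$ from linearity of $U_\nu$ there versus concavity of $U_\mu$ with equality at the endpoints, which is the same convexity/maximum-principle step. If you replace your accumulation argument by this observation, your proof is complete and coincides in substance with the paper's.
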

\begin{proof}
Note the following 
\begin{itemize}
\item[\itembullet] As both $\tilde L_t$ and $\tilde \alpha_t$ only use information that is either given or acquired up to time $t-1$, 
it is clear that $r_t$ can be computed inductively to give a stopping time. 
\item[\itembullet] When $\tilde \alpha_t(x)=0$ the point $(t,x)$ cannot be reached by the stopped random walk after time $\eta$. 
The specific value of $r_t(x)$ is hence irrelevant for all the respective quantities and can be chosen to comply with the definition of a Root field of stopping probabilities. 
\item[\itembullet] $r_t(x) = 1 \Leftrightarrow l_t^x = L(x)$. 
    In other words, we must have already accrued enough local time at site $x$. 
Moreover we will then have $l_{t+s}^x = l_{t}^x$ and $r_{t+s}(x) = 1$ for all $s \geq 0$, 
hence the Root structure of our barrier is guaranteed. 
\item[\itembullet] $r_t(x) = 0 \Leftrightarrow \tilde L_t(x) = L(x) - l_t^x \geq \tilde \alpha_t(x)$.
    So no mass will be stopped in $(t,x)$ since we still need to accrue more local time and adding all available mass to $l_t^x$ will make us remain in our local time budget, i.e. $l_{t+1}^x = l_t^x + \tilde \alpha_t(x) \leq L(x)$.
\item[\itembullet] $r_t(x) \in (0,1) \Leftrightarrow 0 < \frac{L(x) - l_t^x}{\tilde \alpha_t(x)} < 1$. 
    As letting all mass run would add too much to our expected local time only a fraction should be added and we have 
$l_{t+1}^x = l_{t}^x + \frac{L(x) - l_t^x}{\tilde \alpha_t(x)}\tilde \alpha_t(x) = L(x)$. This will imply $r_{t+s} = 1$  for all $s \geq 1$ again showing that we will recover the desired Root barrier structure. 
\end{itemize}
Thus for each time point $t$ this construction will reveal a new layer of stopping probabilities $(r_t(x))_{x \in \mathbb{Z}}$ which will be of Root structure. 
Let $\rho_{\infty}$ denote the stopping time corresponding to the field of stopping probabilities $(r_t(x))_{(t,x) \in \mathbb{N} \times \mathbb{Z}}$ constructed as above.
It remains to show that $\mathcal{L}^{\lambda}(X_{\rho_{\infty}}) = \mu$.
As $l_t^x$ is increasing and bounded from above by $L(x)$ the limit $l_{\infty}^x := \lim_{t \rightarrow \infty} l_t^x$ exists and equals the expected local time of $X_{\rho_{\infty}}$ after the stopping time $\eta$, 
\(
 \E^{\lambda}\left[L_{\eta,\rho_{\infty} }^x(X)\right] = l_{\infty}^t.
\) 
Let $U_{\mu_{\infty}}$ denote the potential of $\mathcal{L}^{\lambda}\left(X_{\rho_{\infty}}\right)$, then due to Tanaka's formula we have
\[
U_{\mu_{\infty}}(x) = U_{\alpha_x} - l_{\infty}^t,
\]
hence our stopping time will embed the right measure if and only if 
\(
 l_{\infty}^t = L(x).
\)
Consider the set 
\[
    I := \{x \in \mathbb{Z}: l_{\infty}^x < L(x)\} = \{x \in \mathbb{Z}: U_{\mu_{\infty}}(x) > U_{\mu}(x)\}.
\] 
To conclude the proof of the theorem we must show that $I$ is empty. 
A first step is to show that for every $x\in I$ we have
\begin{equation} \label{eq:P0}
    \mathbb{P}^{\lambda}\left[X_{\rho_{\infty}} = x \right] = 0.
\end{equation} 
Note that this is equivalent to $\tilde \alpha_t(x) \wedge r_t(x) = 0$ for all $t \in \mathbb{N}$. 
However, we cannot have $\tilde \alpha(x) = 0$ for all $t \in \mathbb{N}$ as this would imply that site $x$ 
is never reached by the process $X$ with positive probability after time $\eta$, 
furthermore implying $L(x) = 0$, which is a contradiction.  
Thus there must exist at least one $T \in \mathbb{N}$ such that $\tilde \alpha_T(x) > 0$. 
Assume now that also $r_T(x) > 0$. 
Then $l_{\infty}^x \geq l_{T+1}^x = l_{T}^x + \left(\frac{L(x) - l_T^x}{\tilde \alpha_T(x)}\right)\tilde \alpha_T(x) = L(x)$ 
which is, again, a contradiction, allowing us to conclude \eqref{eq:P0}.

Now we proceed to show that $I$ must be empty. 
Consider a maximal interval $J \subseteq I$, 
i.e. either $J = \{\dots, x_+ -1, x_+ \}$ and $x_+ +1 \not\in I$, 
$J = \{x_-,x_- +1,  \dots\}$ and $x_- -1 \not\in I$ 
or $J = \{x_-,x_- +1,  \dots, x_+ -1, x_+\}$ and both $x_- -1, x_+ +1 \not\in I$.
Then we must have $U_{\mu_{\infty}}(x_- -1) = U_{\mu}(x_- -1)$ resp. $U_{\mu_{\infty}}(x_+ + 1) = U_{\mu}(x_+ + 1)$.
Note that $U_{\mu_{\infty}}$ is bounded from above by the concave function $U_{\alpha_X}$ and bounded from below by the concave function $U_{\mu}$. 
Hence we cannot have that $U_{\mu_{\infty}}|_J$ is a linear function and there must exists $x \in J$ such that
\[
 U_{\mu_{\infty}}(x-1) - U_{\mu_{\infty}}(x) < U_{\mu_{\infty}}(x) - U_{\mu_{\infty}}(x+1).
\]
However, this is equivalent to 
\[
    \mathbb{P}^{\lambda}\left[X_{\rho_{\infty}} = x \right] = - \frac{1}{2} \left( U_{\mu_{\infty}}(x-1) + U_{\mu_{\infty}}(x+1) \right) + U_{\mu_{\infty}}(x) > 0
\]
which contradicts Equation \eqref{eq:P0}. 
Hence $J = \emptyset$, thus also $I = \emptyset$ and our claim is proven.
\end{proof}
%
%
%
%
%
%
\subsubsection{Existence of Discrete Delayed Rost Solutions}
%
Given a delayed stopping time $\rho$ corresponding to a field of stopping probabilities $(r_t(x))_{(t,x) \in \mathbb{N} \times \mathbb{Z}}$ 
we consider the quantity 
\[
    \tilde \mu_t (x) := \mathbb{P}^{\lambda} \left[ X_{\rho} = x, \rho \leq t \right],
\]
the `mass embedded until time $t$'.
Then
\[
    \tilde \mu_t (x) = \sum_{s=0}^{t} \tilde \alpha_{s}(x)r_s(x)
\]
for $\tilde \alpha_t$ as defined in the previous section. 
Furthermore consider
\[
    \nu_t (x) := \mu\left(\{x\}\right) - \tilde \mu_{t-1} (x),
\]
the `mass not yet embedded' and note that it can be computed inductively using information only up to time $t-1$. 
We can then also write
\[
    \tilde \mu_t (x) = \mu\left(\{x\}\right) - \nu_t (x) + \tilde \alpha_t(x)r_t(x).
\]
Note that if the field of stopping probabilities embeds the measure $\mu$, then 
\[
    \lim_{t \rightarrow \infty} \tilde \mu_t (x) = \mu\left(\{x\}\right) 
    \text{ resp. } 
    \lim_{t \rightarrow \infty} \nu_t (x) = 0.
\]
With the help of these quantities we can give an explicit construction of Rost solution to the discrete \eqref{dSEP}.
\begin{theorem} \label{thm:ddSEP-Rost}
Consider a field of stopping probabilities $(r_t(x))_{(t,x) \in \mathbb{N} \times \mathbb{Z}}$ inductively defined via the formula
\[
    r_t(x) 
           = \frac{\nu_t(x)}{ \tilde \alpha_t(x) } \wedge 1 
           = \frac{ \left(\tilde \alpha_t  \wedge \nu_t \right) \left(x\right) }{ \tilde \alpha_t\left(x\right) } 
\text{ when }  \tilde \alpha_t(x) > 0
\]
and chosen to comply with \eqref{eq:Root-field-def} otherwise.  

Then this field represents a Rost field of stopping probabilities 
and the associated stopping time
\begin{equation*}
    \rho = \inf \left\{ t \geq \eta : r_t(X_t) > u_t \right\}.
\end{equation*}
will be a Rost solution to the discrete \eqref{dSEP}. 
\end{theorem}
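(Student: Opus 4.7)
I plan to follow the same template as the proof of Theorem~\ref{thm:ddSEP-Root}, with the ``missing mass'' $\nu_t(x) = \mu(\{x\}) - \tilde \mu_{t-1}(x)$ playing the role of the ``missing local time'' $\tilde L_t(x)$. The first task is to check that the inductive definition of $(r_t(x))$ is well posed and produces a Rost field in the sense of \eqref{eq:Rost-field-def}; then one shows that the measure $\mu_\infty(\{x\}) := \lim_t \tilde \mu_t(x)$ coincides with $\mu$, from which $\rho_\infty < \infty$ almost surely and $X_{\rho_\infty} \sim \mu$ will follow.

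\textbf{Well-definedness and Rost barrier structure.} Both $\tilde \alpha_t(x)$ and $\tilde \mu_{t-1}(x)$ (and hence $\nu_t(x)$) depend only on $(r_s(\cdot))_{s<t}$, and a short induction using $r_s(x) \le \nu_s(x)/\tilde \alpha_s(x)$ gives $\tilde \mu_t(x) \le \mu(\{x\})$, so that $\nu_t \ge 0$ and $r_t(x) \in [0,1]$. For the Rost property, I would observe: if $r_t(x) > 0$ then $\nu_t(x) > 0$, and should some $s < t$ satisfy $\tilde \alpha_s(x) > 0$ and $r_s(x) < 1$, the defining formula gives $r_s(x) = \nu_s(x)/\tilde \alpha_s(x)$ (or $r_s(x) = 0$ with $\nu_s(x) = 0$), which forces $\tilde \mu_s(x) = \mu(\{x\})$ and hence $\nu_u(x) = r_u(x) = 0$ for all $u > s$~--- contradicting $\nu_t(x) > 0$. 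The probabilistically irrelevant indices with $\tilde \alpha_s(x) = 0$ can then be assigned $r_s(x) = 1$ by convention to fulfil \eqref{eq:Rost-field-def}.

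\textbf{Embedding, and the main obstacle.} The pointwise bound $\mu_\infty \le \mu$ is immediate from Step~2. The delicate point is to upgrade this to equality, and this is where I expect the main work to lie. I plan to argue by contradiction: assume $I := \{x : \mu_\infty(\{x\}) < \mu(\{x\})\}$ is non-empty. The same reasoning as above, now applied in the limit, shows that for every $x \in I$ one must have $r_t(x) = 1$ at every $t$ with $\tilde \alpha_t(x) > 0$, since otherwise $\tilde \mu_t(x)$ would have already reached $\mu(\{x\})$, contradicting $x \in I$. Hence any realisation that reaches $I$ after time $\eta$ is absorbed at its first visit, and by recurrence of the SSRW on $\mathbb Z$ the hitting time of the non-empty set $I$ after $\eta$ is a.s.\ finite. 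Therefore $\rho_\infty < \infty$ a.s.\ and $\mu_\infty$ is a probability measure, so the pointwise bound upgrades to $\mu_\infty = \mu$~--- contradicting $I \ne \emptyset$. Thus $I = \emptyset$, $\mu_\infty = \mu$, and $\mathbb P^\lambda[\rho_\infty < \infty] = \mu_\infty(\mathbb Z) = 1$, giving $X_{\rho_\infty} \sim \mu$. The conceptual hurdle is exactly this propagation step: extending the single-step Rost argument to the limit measure to deduce $r_t(x) \equiv 1$ on $I$, after which recurrence of the SSRW supplies the closing contradiction.
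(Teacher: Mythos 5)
Your proposal is correct and follows essentially the same route as the paper's proof: the same inductive verification that $\nu_t \geq 0$ and that $\nu_\cdot(x)$ drops to and stays at zero as soon as some reachable time has $r_s(x)<1$ (giving the Rost structure), the same pointwise bound $\mu_\infty \leq \mu$, and the same closing contradiction via recurrence, namely that on $I$ every visit after $\eta$ is absorbing, hence $\rho_\infty < \infty$ a.s.\ and $\mu_\infty$ has total mass one. The only nitpick is your convention of setting $r_s(x)=1$ whenever $\tilde\alpha_s(x)=0$: if the barrier boundary at $x$ was already passed at an earlier reachable time, this blanket choice can violate \eqref{eq:Rost-field-def}, so these probabilistically irrelevant values should instead simply be chosen to respect the Rost structure, exactly as the paper does.
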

\begin{proof}
As both $\nu_t$ and $\tilde \alpha_t$ only use information that is either given or acquired up to time $t-1$, it is clear that $r_t$ can be computed iteratively through time, thus in each time step $t$ we identify the $t$-layer $r_t(x)_{x \in \mathbb{Z}}$ of the probability field. 
Let us justify that $(r_t(x))_{(t,x) \in \mathbb{N} \times \mathbb{Z}}$ defined this way both embeds the right measure $\mu$ and is of Rost structure. 

Firstly, if $\mu(\{x\}) = 0$ then $r_t(x) = 0$ for all $t \in \mathbb{N}$, thus consider only those $x$ such that $\mu(\{x\}) > 0$. 
Then consider the following cases.
\begin{enumerate}
\item[$t = 0:$] Only consider those $x$ such that $\tilde \alpha_0(x) = \alpha(0,x) > 0$ as otherwise the point $x$ cannot be reached by the process $X$ at time $0$ after time $\eta$.
    \begin{itemize}        
        \item[\itembullet] If $\tilde \alpha_0(x) \leq \mu(\{x\})$, then all mass available in $(0,x)$ should be stopped, hence $r_0(x) = 1$ and $\tilde \mu_0 (\{x\}) = \tilde \alpha_0(x) \leq \mu(\{x\})$. 
        \item[\itembullet] If $\tilde \alpha_0(x)  > \mu(\{x\})$, 
            then some mass needs to be stopped in $(0,x)$, precisely $r_0(x) = \frac{\mu(\{x\})}{\tilde \alpha_t(x)}$ as then we have 
            $\tilde \mu_0 (\{x\}) = \mu(\{x\})$.
    \end{itemize}
\item[$t > 0:$] Only consider those $x$ such that $\tilde \alpha_t(x) > 0$ as otherwise the point $x$ cannot be reached by the process $X$ at time $t$ after time $\eta$.
    \begin{itemize}
        \item[\itembullet] If $\nu_t(x) = 0$ then all necessary mass has already been embedded before time $t$, hence $r_t(x) = 0$. 
        \item[\itembullet] If $0 < \tilde \alpha_t(x) \leq \nu_t(x)$ then the available mass in site $x$ at time $t$ does not exceed the mass still needed at site $x$, hence $r_t(x) = 1$ as all mass can be added.
        \item[\itembullet] If $ 0 < \nu_t(x) < \tilde \alpha_t(x)$ then we have mass missing in site $x$ but the available mass at time $t$ exceeds the necessary mass, hence we should only add the fraction $r_t(x) = \frac{\nu_t(x)}{\tilde \alpha_t(x)}$ as then we have 
        $\tilde \mu_t (\{x\}) = \mu(\{x\}) - \nu_t(x) + \tilde \alpha_t(x) \frac{\nu_t(x)}{\tilde \alpha_t(x)} = \mu(\{x\})$.
        \end{itemize}
\end{enumerate}
It is crucial to note that the quantity $\nu_t(x)$ is decreasing and remains at 0 once it has reached there, hence clearly the field of stopping probabilities $(r_t(x))_{(t,x) \in \mathbb{N} \times \mathbb{Z}}$ defined as above is of Rost structure.

We are left to formally show that the stopping time
\[
   \rho_{\infty} := \inf \left\{ t \geq \eta: r_t(X_t) > u_t \right\}
\]
embeds the right measure into the SSRW, i.e. $\mathbb{P}^{\lambda}[X_{\rho_{\infty}} = x] = \mu(\{x\})$ for all $x \in \mathbb{Z}$.

We consider the measure 
\begin{align*}
    \mu_{\infty}(\{x\}) 
        := \mathbb{P}^{\lambda}[X_{\rho_{\infty}} = x] 
        = \lim_{t \rightarrow \infty} \tilde \mu_t(x)
        = \sum_{s=0}^{\infty} \tilde \alpha_s(x)p_s(x).
\end{align*}
Fist assume $\mu_{\infty}(\{x\}) > \mu(\{x\})$.
As this would be equivalent to $\lim_{t \rightarrow \infty} \nu_t(x) < 0$ there would need to exist a first $T \in \mathbb{N}$ such that $\nu_T(x) < 0$.  
However this contradicts the construction of $r_t(x)$ as described above.
\\ \
Hence we must have $\mu_{\infty}(\{x\}) \leq \mu(\{x\})$ for all $x \in \mathbb{Z}$. 

Consider the set $I:= \left\{x \in \mathbb{Z}: \mu_{\infty}(\{x\}) < \mu(\{x\})\right\}$. 
If we assume that $I$ is not empty then 
\[
    0 \leq \sum_{x \in \mathbb{Z}} \mu_{\infty}(\{x\}) < \sum_{x \in \mathbb{Z}} \mu(\{x\}) = 1.
\]
Note that $\mu_{\infty}(\{x\}) = \lim_{t \rightarrow \infty} \tilde \mu_t(x)  < \mu(\{x\})$ is equivalent to $\nu_t(x) > 0$ for every $t \in \mathbb{N}$, which in turn implies 
$r_t(x) = 1$ for all $t\in \mathbb{N}$ by the construction above. 
In particular, this implies $\rho_{\infty} \leq H_x^{\eta} := \inf\{ t \geq \eta : X_t = x \}$ which is a finite stopping time, hence also
$\mathbb{P}^{\lambda}[\rho_{\infty} < \infty] = 1$.
So $X_{\rho}$ is a well defined random variable and we must have $\sum_{x \in \mathbb{Z}} \mu_{\infty}(\{x\}) = 1$ which is a contradiction. 
This implies $I = \emptyset$ and $\mu_{\infty}(\{x\}) = \mu(\{x\})$, 
hence our constructed field of stopping probabilities and consequently the associated stopping time $\rho_{\infty}$ embeds the right measure.
\end{proof}
%
%
%
%
%
%
%
%
%
%
%
\subsection{A Discrete Optimal Stopping Representation} \label{sec:discrete-OSP}
%
In this section we will consider a second simple symmetric second random walk $Y$ on the probability space $(\Omega, \mathbb{P})$, 
mutually independent of the SSRW $X$ introduced in previous section. 
For given $x,y\in \mathbb{Z}$ we recall $\mathbb{P}^x$ as the conditional distribution given $X_0=x$ 
and analogously define $\mathbb{P}_y$ as the conditional distribution given $Y_0=y$. 
Similarly, conditioning on both events simultaneously is denoted by $\mathbb{P}^x_y$. 
We recall the starting distribution $\lambda$ for the process $X$ and introduce 
a corresponding starting distribution $\nu$ for $Y$ analogously, 
leading to $\mathbb{P}_{\nu}$ as well as $\mathbb{P}^{\lambda}_{\nu}$. 

From this point onward we envision the process $X$ evolving ``rightwards'' 
from some lattice point $(0,x)$ (where possibly $x = X_0 \sim \lambda$) 
and the process $Y$ evolving ``leftwards'' from the lattice point $(T,y)$ at time zero. 
In this context a (stopping) time $\tau$ for the ``backward'' process $Y$ will be measured as $T- \tau$ for the ``forward'' process $X$.

Let us define the discrete time equivalents of the objects introduced in Section \ref{sec:delayed}. 
We consider a discrete delay stopping time $\eta \in \mathbb{N} = \{0,1,\dots\}$ for the process $X$. 
Then analogous to the continuous case we define the space time measure 
\begin{equation*} 
    \alpha := \mathcal{L}^{\lambda}((\eta, X_{\eta})) \in \mathcal P (\mathbb{N} \times \mathbb{Z}),
\end{equation*}
its spatial marginal 
\[
    \alpha_X (A) := \alpha(\mathbb{N} \times A) 
        = \mathcal{L}^{\lambda}(X_{\eta})(A)
         \quad \text{ for } A \subseteq \mathbb{Z} \text{ measurable}
\]
and the $\mathbb{P}^{\lambda}$-potential of $X_{\eta \wedge T}$ by
\[
    V^{\alpha}_T (y) 
    := -\E^{\lambda}\left[\left|X_{\eta \wedge T} - y \right| \right]. 
\]
For a Root (resp. Rost) Barrier $R \subseteq \mathbb{Z}\times \mathbb{Z}$ consider the discrete, delayed Root (resp. Rost) stopping time 
\begin{equation*}
    \rho_{\eta} := \inf \left\{t \geq \eta : (t, X_t) \in R \right\}.
\end{equation*}
Similarly for a Root (resp. Rost) field of stopping probabilities $(r_t(x))_{(t,x) \in \mathbb{N} \times \mathbb{Z}}$ 
and a vector $U = (u_0, u_1, \dots)$ of iid $U[0,1]$-distributed random variables we consider the discrete, delayed Root (resp. Rost) stopping time 
\begin{equation*}
    \rho_{\eta} := \inf \left\{t \geq \eta   : r_t(X_t) > u_t \right\}.
\end{equation*}
We will throughout this paper assume that $\left(X_{\rho^{\eta} \wedge t}\right)_{t \in \mathbb{N}}$ is uniformly integrable. 
The stopping time $\rho_{\eta}$ induces the measures
\[ \beta^{\alpha} := \mathcal{L}^{\lambda} \left(X_{\rho_{\eta}}\right)
        \text{ as well as }
    \beta^{\alpha}_T := \mathcal{L}^{\lambda} \left(X_{\rho_{\eta} \wedge T}\right) \text{ for } T \in \mathbb{N}
\]
as well as the (stopped) potential
\[
  U_{\beta^{\alpha}_T}(y) = - \E^{\lambda}\left[\left|X_{\rho_{\eta} \wedge T} - y \right| \right].
\]
Moreover, given a field $(r_t(x))_{(t,x) \in \mathbb{N} \times \mathbb{Z}}$ of Root (resp. Rost) stopping probabilities we denote by $\mathcal{R}_r$ the set of all fields of Root (resp. Rost) stopping probabilities that agree with $r$ everywhere except on the boundary, precisely
\begin{equation*} 
  \mathcal{R}_r := \left\{ (s_t(x))_{(t,x) \in \mathbb{N} \times \mathbb{Z}}  : \parbox{8.5cm}{
    $(s_t(x))_{(t,x) \in \mathbb{N} \times \mathbb{Z}}$ is a Root (resp. Rost) field of stopping probabilities s.t. $s_t(x) = r_t(x)$ given $r_t(x) \in \{0,1\}$.
    } \right\}.
\end{equation*}
We will sometimes use the shorthand notation $s \in \mathcal{R}_r$ for $s = (s_t(x))_{(t,x) \in \mathbb{N} \times \mathbb{Z}}$ when the context makes it obvious. 
Furthermore, we define the set of all associated stopping times in the following way
\begin{equation*} 
  \mathcal{T}_r := \left\{ \inf \left\{t \geq 0 : s_{T-t}(Y_t) > v_t \right\} : \parbox{8cm}{
    $(s_t(x))_{(t,x) \in \mathbb{N} \times \mathbb{Z}} \in \mathcal{R}_r$ and $V = (v_0, \dots, v_T)$ 
    denotes a vector of iid $U[0,1]$-distributed random variables.}
\right\} .
\end{equation*}

Then we can then give the following discrete time analogue of \eqref{eq:delayed-Root}-\eqref{eq:delayed-RootOSP}, 

\begin{theorem} \label{thm:discrete-delayed-Root}
Let $(r_t(x))_{(t,x) \in \mathbb{N} \times \mathbb{Z}}$ denote a Root field of stopping probabilities, then we have the representation 
\begin{align} 
    \label{eq:discrete-delayed-Root}
     U_{\beta^{\alpha}_T}(y) &= \E_y\left[ V^{\alpha}_{T - \tau^*} (Y_{\tau^*}) 
                              + (U_{\beta^{\alpha}} - U_{\alpha_X})(Y_{\tau^*})\ind_{\tau^*  < T}\right]   
\\ \label{eq:discrete-delayed-RootOSP}
                            &= \sup_{\tau \leq T} \E_y\left[ V^{\alpha}_{T - \tau} (Y_{\tau}) 
                                              + (U_{\beta^{\alpha}} - U_{\alpha_X})(Y_{\tau})\ind_{\tau  < T}\right],
\end{align}
where all optimizers will be of the form $\tau^* := \tau \wedge T$ for $\tau \in \mathcal{T}_r$.  
\end{theorem}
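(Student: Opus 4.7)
The plan is to adapt the switching-identity approach of \cite{BaCoGrHu21} to the delayed setting, where the starting distribution $\lambda$ is effectively replaced by the space-time law $\alpha$ induced by the delay $\eta$. I would first reduce the first equality to an expected local-time identity, then establish that identity by coupling the forward walk $X$ with the independent backward walk $Y$, and finally deduce the $\sup$ representation via a standard Snell-envelope / dynamic-programming argument.

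\textbf{Reduction to a local-time identity.} Applying the discrete Tanaka formula (Theorem \ref{thm:discrete-Tanaka}) separately to $|X_{\rho_\eta \wedge T} - y|$ and to $|X_{\eta \wedge T} - y|$, and taking $\E^\lambda$, one obtains
\begin{equation*}
U_{\beta^\alpha_T}(y) - V^\alpha_T(y) = -\E^\lambda\!\left[L^y_{\eta \wedge T,\,\rho_\eta \wedge T}(X)\right],
\end{equation*}
and analogously $U_{\beta^\alpha}(y) - U_{\alpha_X}(y) = -\E^\lambda[L^y_{\eta,\,\rho_\eta}(X)]$. Substituting both into \eqref{eq:discrete-delayed-Root}, the first claimed equality is equivalent to
\begin{equation*}
\E^\lambda\!\left[L^y_{\eta \wedge T,\,\rho_\eta \wedge T}(X)\right] = \E^\lambda_y\!\left[L^{Y_{\tau^*}}_{\eta,\,\rho_\eta}(X)\,\ind_{\tau^* < T}\right].
\end{equation*}

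\textbf{The switching identity.} Couple $X$ (forward, started from $\lambda$, delayed by $\eta$, stopped at $\rho_\eta$) and $Y$ (backward from $y$, run to time $T$) independently. The Root field $r$ simultaneously governs $\rho_\eta$ and $\tau^*$. The defining Root property \eqref{eq:Root-field-def} yields the crucial barrier fact: on $\{\tau^* < T\}$ every lattice point $(s, Y_{\tau^*})$ with $s \geq T - \tau^*$ lies in the barrier, so any $X$-path visiting level $Y_{\tau^*}$ at a forward time in $[T-\tau^*, T]$ must already be stopped at that level. Partitioning on the first meeting of $X$ with the trajectory of $Y$ (at the post-$\eta$ crossing, which exists in expectation thanks to the Tanaka bookkeeping) then lets us swap local-time contributions at $y$ over the window $[\eta\wedge T,\rho_\eta\wedge T]$ for local-time contributions at $Y_{\tau^*}$ over the window $[\eta,\rho_\eta]$, mirroring the undelayed switching identity of \cite{BaCoGrHu21}. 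On the complementary event $\{\tau^* = T\}$, $Y$ never reaches the reversed barrier; a direct argument using the barrier structure shows that the contribution of this event to the LHS vanishes, giving the asserted identity.

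\textbf{Sup representation and characterisation of optimisers.} Set $V(t,y) := U_{\beta^\alpha_{T-t}}(y)$ and $g_t(y) := V^\alpha_{T-t}(y) + (U_{\beta^\alpha} - U_{\alpha_X})(y)\ind_{t < T}$. The first equality gives $V(0,y) = \E_y[g_{\tau^*}(Y_{\tau^*})]$. Using the same local-time / Tanaka bookkeeping, combined with concavity of the relevant potentials, I would verify that $(V(t,Y_t))_{0 \le t \le T}$ is a $Y$-supermartingale dominating $g$, becoming a martingale off the reversed barrier. This identifies $V(0,y)$ as the Snell envelope of $g$ at time zero, yielding \eqref{eq:discrete-delayed-RootOSP} and showing that $\tau^*$ is an optimiser. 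Any other $\tau = \tau' \wedge T$ with $\tau' \in \mathcal{T}_r$ is likewise optimal, because on boundary sites where $r_t(x) \in (0,1)$ one already has $V(t,x) = g_t(x)$, so the particular boundary randomisation does not affect the attained value.

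\textbf{Main obstacle.} The substantive difficulty lies in the switching identity. In the undelayed case of \cite{BaCoGrHu21} the coupling starts at time $0$; here the effective ``start'' of $X$ is the random space-time point $(\eta, X_\eta) \sim \alpha$, and every local-time accrual must be restricted to the post-$\eta$ window. This is exactly what justifies the appearance of $V^\alpha_T$ (in place of $U_\lambda$) on the LHS and of $U_{\alpha_X}$ (in place of $U_\lambda$) in the payoff. Handling this post-$\eta$ bookkeeping rigorously, together with the boundary randomisation inherent to the discrete Root field, is where the bulk of the work sits.
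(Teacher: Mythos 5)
Your overall strategy (couple a forward delayed walk $X$ with an independent backward walk $Y$ and exchange potential/local-time contributions, in the spirit of \cite{BaCoGrHu21}) is the right one, but your reduction step contains a genuine error, and the ``switching identity'' you then sketch inherits it. Substituting the Tanaka identities $U_{\beta^{\alpha}_T}(y)=V^{\alpha}_T(y)-\E^{\lambda}\bigl[L^y_{\eta\wedge T,\rho_{\eta}\wedge T}(X)\bigr]$ and $(U_{\beta^{\alpha}}-U_{\alpha_X})(x)=-\E^{\lambda}\bigl[L^x_{\eta,\rho_{\eta}}(X)\bigr]$ into \eqref{eq:discrete-delayed-Root} does \emph{not} give your displayed local-time identity; it gives
\[
\E^{\lambda}\bigl[L^y_{\eta\wedge T,\rho_{\eta}\wedge T}(X)\bigr]-\E^{\lambda}_y\bigl[L^{Y_{\tau^*}}_{\eta,\rho_{\eta}}(X)\ind_{\tau^*<T}\bigr]
= V^{\alpha}_T(y)-\E_y\bigl[V^{\alpha}_{T-\tau^*}(Y_{\tau^*})\bigr],
\]
and the right-hand side is nonnegative and in general strictly positive, so the claimed equality of the two expected local times is false. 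Already for $\eta=0$ your reduction would force $U_{\lambda}(y)=\E_y[U_{\lambda}(Y_{\tau^*})]$, which fails whenever $U_{\lambda}$ is strictly concave where $Y_{\tau^*}$ spreads out. Concretely: take $\lambda=\delta_0$, $\eta=0$, the Root barrier $R=\{(t,x):|x|\ge 2\}$, $T=2$, $y=0$. Then $\tau^*=T$ a.s., the left expected local time equals $1$, the right one equals $0$, while both sides of \eqref{eq:discrete-delayed-Root} correctly equal $-1$. The same example shows that your assertion that on $\{\tau^*=T\}$ ``the contribution of this event to the LHS vanishes'' cannot be right: the left-hand side is an expectation over $X$ alone and $\{\tau^*=T\}$ is a $Y$-event of full probability there.

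The discrepancy $V^{\alpha}_T(y)-\E_y[V^{\alpha}_{T-\tau^*}(Y_{\tau^*})]$ is exactly the delayed bookkeeping you identify as the main obstacle but do not carry out: it records the meetings of the backward walk with $X$ before the delay has elapsed, and controlling it is the heart of the proof. The paper handles it by working with the interpolation function $F^{\lambda,y}_{\rho_{\eta},\tau}(s)=\E^{\lambda}_y\bigl[\left|X_{\rho_{\eta}\wedge(T-\tau\wedge s)}-Y_{\tau\wedge s}\right|\bigr]$, which is monotone in $s$ in general and constant for the pair (delayed forward Root time, backward Root time) (Lemma \ref{lem:interpolation-function}), together with an optional-sampling lemma (Lemma \ref{lem:opt-stopping}) and a case decomposition over $\{\eta>T\}$, $\{\eta\le T,\,T-\eta\le\tau^*\}$, $\{\tau^*<T-\eta\}$ showing that $-F^{\alpha}(T)$ equals the right-hand side of \eqref{eq:discrete-delayed-Root} (Lemma \ref{lem:Root-prep}); the sup statement then follows from monotonicity of $F$ plus Jensen and optional sampling applied to an arbitrary $\tau\le T$, not from a Snell-envelope construction. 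Your proposed supermartingale property of $U_{\beta^{\alpha}_{T-t}}(Y_t)$ and the ``partition on the first meeting'' argument are plausible headlines, but they are precisely the statements that need proof in the delayed, randomised setting, so as written the proposal does not establish the theorem.
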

When $\rho_{\eta}$ corresponds to a discrete delayed Rost stopping the discrete version of 
\eqref{eq:delayed-Rost}-\eqref{eq:delayed-RostOSP} reads the following. 
\begin{theorem} \label{thm:discrete-delayed-Rost}
Let $(r_t(x))_{(t,x) \in \mathbb{N} \times \mathbb{Z}}$ denote a Rost field of stopping probabilities, 
then we have the representation
\begin{align}
    \label{eq:discrete-delayed-Rost}
    U_{\beta^{\alpha}}(x) - U_{\beta^{\alpha}_T}(x)  &= \E^x\left[ U_{\beta^{\alpha}} (X_{\sigma_*})  
            - V^{\alpha}_{T - \sigma_*} (X_{\sigma_*}) \right]   
\\ \label{eq:discrete-delayed-RostOSP}
        &= \sup_{\sigma \leq T} \E^x\left[ U_{\beta^{\alpha}} (X_{\sigma}) 
            - V^{\alpha}_{T - \sigma} (X_{\sigma}) \right],
\end{align}
where all optimizers will be of the form $\sigma^* := \sigma \wedge T$ for $\sigma \in \mathcal{T}_r$.
\end{theorem}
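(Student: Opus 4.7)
My plan is to mirror the switching-identity strategy implicit in the proof of Theorem \ref{thm:discrete-delayed-Root}, but dualised for the delayed Rost setting; the overall goal is to rewrite both sides of \eqref{eq:discrete-delayed-Rost}--\eqref{eq:discrete-delayed-RostOSP} as matching expected local times via the discrete Tanaka formula (Theorem \ref{thm:discrete-Tanaka}), and to identify the Rost barrier as the region where a certain supermartingale is frozen.

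First I would rewrite the left-hand side by applying Tanaka's formula to the forward walk $X$ under $\mathbb{P}^\lambda$ at the two stopping times $\rho_\eta \wedge T \leq \rho_\eta$: the standing uniform integrability kills the martingale term and yields
\[
U_{\beta^\alpha_T}(x) - U_{\beta^\alpha}(x) = \mathbb{E}^\lambda\bigl[L^x_{\rho_\eta\wedge T,\,\rho_\eta}(X)\bigr].
\]
Conditionally on $X_\sigma$, applying Tanaka to an independent copy $X'$ with its own delayed Rost stopping time similarly represents $U_{\beta^\alpha}(X_\sigma)-V^\alpha_{T-\sigma}(X_\sigma)$ as the negative of an expected local time of $X'$ accrued between $\eta' \wedge (T-\sigma)$ and $\rho'_\eta$ at the random site $X_\sigma$. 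The representation \eqref{eq:discrete-delayed-Rost} then reduces to the switching identity
\[
\mathbb{E}^\lambda\bigl[L^x_{\rho_\eta\wedge T,\,\rho_\eta}(X)\bigr] = \mathbb{E}^x \mathbb{E}^\lambda\bigl[L^{X_{\sigma_\ast}}_{\eta' \wedge (T-\sigma_\ast),\,\rho'_\eta}(X')\bigr].
\]

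The heart of the argument is establishing this switching identity. I would couple the two independent walks $X$ (forward from $\lambda$) and $X'$ (running backward through the window $[0,T]$ from $x$), and apply the strong Markov property at their first space-time meeting point. The Rost property \eqref{eq:Rost-field-def}---that every point below a barrier point at the same spatial site also lies in the barrier---ensures that any $X$-path which ever reaches such a site must have been stopped by the barrier there, which permits the local time accumulated by $X$ between $\rho_\eta\wedge T$ and $\rho_\eta$ to be re-expressed as boundary contributions from $X'$. The delay is handled by further decomposing $\alpha$ into the at-most-countable atomic part and the continuous part of $\mathcal L^\lambda(\eta)$, and invoking the randomisation densities $\psi_k$ of Lemma \ref{lem:Rost-uniqueness} at atoms to ensure the correct mass balance. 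Extension from $r$ to arbitrary $s \in \mathcal R_r$ is immediate because $r$ and $s$ agree outside the barrier boundary, where the stopping probability does not affect any local-time quantity.

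Finally, for the supremum statement \eqref{eq:discrete-delayed-RostOSP}, I would show that $M_t := U_{\beta^\alpha}(X_t) - V^\alpha_{T-t}(X_t)$ is a $\mathbb{P}^x$-supermartingale on $[0,T]$, attaining its maximum precisely on the backward Rost barrier $\{(t,y):(T-t,y)\in\bar R\}$; this uses the explicit construction $r_t = (\tilde\alpha_t \wedge \nu_t)/\tilde\alpha_t$ of Theorem \ref{thm:ddSEP-Rost} to compare the one-step increments of $U_{\beta^\alpha}$ and $V^\alpha_{T-t}$. Optional stopping then yields $\mathbb{E}^x[M_\sigma] \leq M_0$ for every $\sigma \leq T$, with equality at $\sigma_\ast$ by the switching identity. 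The main obstacle is executing the switching step cleanly: unlike the undelayed case of \cite{BaCoGrHu21}, here the source $\alpha$ releases mass continuously across space-time, so the switching has to be performed conditional on each particle's space-time release coordinate, and the countably many atoms of $\mathcal L^\lambda(\eta)$ require a separate accounting via the densities $\psi_k$.
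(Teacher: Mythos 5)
Your final step for \eqref{eq:discrete-delayed-RostOSP} is wrong, and it is the step that carries the inequality. You claim that $M_t := U_{\beta^{\alpha}}(X_t) - V^{\alpha}_{T-t}(X_t)$ is a $\mathbb{P}^x$-supermartingale on $[0,T]$ and conclude $\E^x[M_\sigma]\leq M_0$ for all $\sigma\leq T$ with equality at $\sigma_*$. But $M_0 = U_{\beta^{\alpha}}(x)-V^{\alpha}_T(x)$, whereas the theorem asserts that the value of the stopping problem is $U_{\beta^{\alpha}}(x)-U_{\beta^{\alpha}_T}(x)$, and $U_{\beta^{\alpha}_T}\leq V^{\alpha}_T$ (the law of $X_{\rho_\eta\wedge T}$ is a further stopping of the law of $X_{\eta\wedge T}$), typically strictly; so your bound would force immediate stopping to be optimal and contradicts the very identity you are proving. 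Indeed $M$ is the \emph{gain} process, not the Snell envelope, and it is not a supermartingale: a one-step computation using the potential-theoretic second differences and the core argument gives
\begin{equation*}
\E^x\left[M_{t+1}-M_t \,\middle|\, X_t=y\right] = -\beta^{\alpha}(\{y\}) + \mathbb{P}^{\lambda}\left[X_{\eta}=y,\ \eta\leq T-t-1\right],
\end{equation*}
which has no sign in general, since $\alpha_X$ and $\beta^{\alpha}$ are only ordered in convex order, not pointwise. The inequality has to be obtained by comparing $\E^x[U_{\beta^{\alpha}}(X_\sigma)-V^{\alpha}_{T-\sigma}(X_\sigma)]$ directly with $U_{\beta^{\alpha}}(x)-U_{\beta^{\alpha}_T}(x)$; in the paper this is Proposition \ref{prp:Root-Rost-prp} (i)--(ii), proved with Jensen's inequality, optional sampling and the core argument, after writing the left-hand side as $\E_\nu[|x-Y_\tau|-|x-Y_{\tau\wedge T}|]$ for the forward Rost time $\tau=\tau_\theta$ read backwards through the Root--Rost symmetry \eqref{eq:delayed-Root-Rost-symmetry1}.

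The equality part of your plan also leaves the genuinely hard point unaddressed. Your reduction of \eqref{eq:discrete-delayed-Rost} to a local-time switching identity via discrete Tanaka is fine as bookkeeping, but the proof of that identity is only gestured at (coupling the two walks and applying the strong Markov property at a first meeting point), and it does not engage with the specific obstruction the delay creates: once the forward Rost time carries a delay, its backward-Root reading is a \emph{delayed} backward Root time, and the constancy of the interpolation function (Lemma \ref{lem:interpolation-function}(ii)), which drives the undelayed argument of \cite{BaCoGrHu21}, fails. The paper's substitute is Proposition \ref{prp:Root-Rost-prp}(iii), proved by splitting on $\{\tau<T\}$, $\{\tau\geq T\}$, $\{\sigma<T\}$, $\{\sigma=T\}$ and applying the optional-sampling Lemma \ref{lem:opt-stopping} with the roles of the stopping times exchanged; some such argument is needed and your sketch does not supply it. Finally, invoking Lemma \ref{lem:Rost-uniqueness} and the densities $\psi_k$ is out of place here: in the discrete setting the randomisation is entirely encoded in the field $(r_t(x))$, the delay law is purely atomic, and no atomic/continuous decomposition of $\mathcal{L}^{\lambda}(\eta)$ enters the proof.
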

It will become clear in Section \ref{sec:Rost-OSP} why the processes $X$ and $Y$ appear differently in the Root and Rost optimal stopping problem.
%
%
%
%
%
\subsubsection{The Core Argument}
%
We present the core argument repeatedly used in \cite{BaCoGrHu21} and crucial for further results, 
an equality in expectation for differences of independent random walks.
\begin{equation}\label{eq:core}
    \Exy\left [ \left| X_{T-s}-Y_{s} \right|\right ] = \Exy\left [ |X_{T-(s-1)}-Y_{s-1}|\right ].
\end{equation}
The following two remarks break down the key ingredients needed for proving the core argument.
\begin{remark} \label{rmk:independence-prep}
Let $Z$ be a Rademacher random variable. Then for any $a \in \mathbb{Z}$ we have
\begin{equation*}
    \E \left[ \left| a + Z \right| \right] = |a| + \ind_{a = 0}.
\end{equation*}
\end{remark}
\begin{remark} \label{rmk:independence}
For a sigma algebra $\mathcal{H}$ let $A$ be a $\mathcal{H}$-measurable random variable taking values in $\mathbb{Z}$ 
and let $Z$ be a Rademacher random variable independent of $\mathcal{H}$. 
Then by Remark \ref{rmk:independence-prep} and the Independence Lemma we can conclude
\begin{equation*} 
    \E \left[ |A + Z| \big{|} \mathcal{H} \right] = |A| + \ind_{A = 0}.
\end{equation*}
\end{remark}
We can now prove the core argument \eqref{eq:core}.
On the one hand we have
\begin{align*}
    \Exy\left [ |X_{T-s}-Y_{s}|\right ] 
        &= \Exy\left[\Exy\left[ |X_{T-s}-Y_{s}| \big| X_{T-s},Y_{s-1}\right]\right ] \\
        &= \Exy\left[\Exy\left[ |(X_{T-s} - Y_{s-1})-(Y_{s}-Y_{s-1})|\big| X_{T-s},Y_{s-1}\right]\right] \\
        &= \Exy\left[ |X_{T-s}-Y_{s-1}|+\1_{X_{T-s}=Y_{s-1}}\right ]
\end{align*}
where we applied Remark \ref{rmk:independence} for the sigma algebra $\mathcal{H}:= \sigma(X_{T-s},Y_{s-1})$, 
the $\mathcal{G}$-measurable random variables $A:=X_{T-s}-Y_{s-1}$ and the Rademacher random variable 
$Z:=Y_{s}-Y_{s-1}$ which is independent of $\mathcal{H}$. 
Analogously, we can show
\begin{align*}
    \Exy\left [ |X_{T-(s-1)}-Y_{s-1}|\right ]
        &= \Exy\left[\Exy\left[ |X_{T-s+1}-Y_{s-1}|\big| X_{T-s},Y_{s-1}\right ]\right ] \\
        &= \Exy\left[\Exy\left[ |(X_{T-s}-Y_{s-1})+(X_{T-s+1} -X_{T-s})|\big| X_{T-s},Y_{s-1}\right ]\right ] \\
        &= \Exy\left[ |X_{T-s}-Y_{s-1}|+\1_{X_{T-s}=Y_{s-1}}\right],
\end{align*}
so altogether we have \eqref{eq:core}. 
See also Figure \ref{fig:RootRostPlot} for an illustration of the appearance of the indicator function.
\begin{figure}
\centering
\begin{subfigure}{.49\linewidth}
\centering
\includegraphics[width = 1\linewidth]{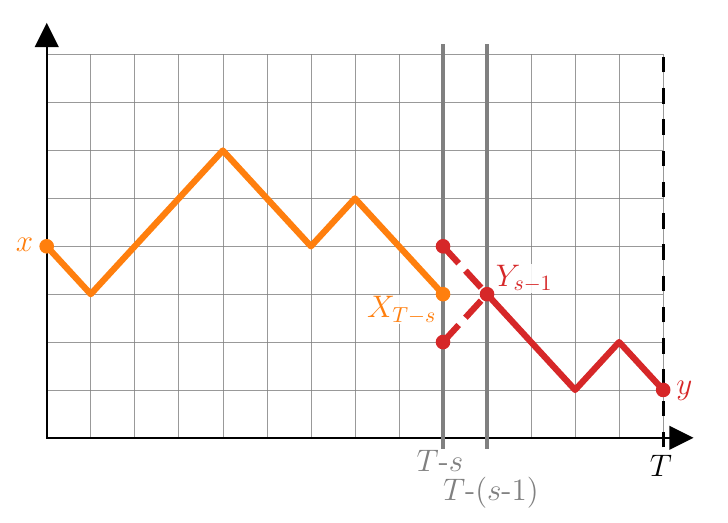}
\end{subfigure}
\begin{subfigure}{.49\linewidth}
\centering
\includegraphics[width = 1\linewidth]{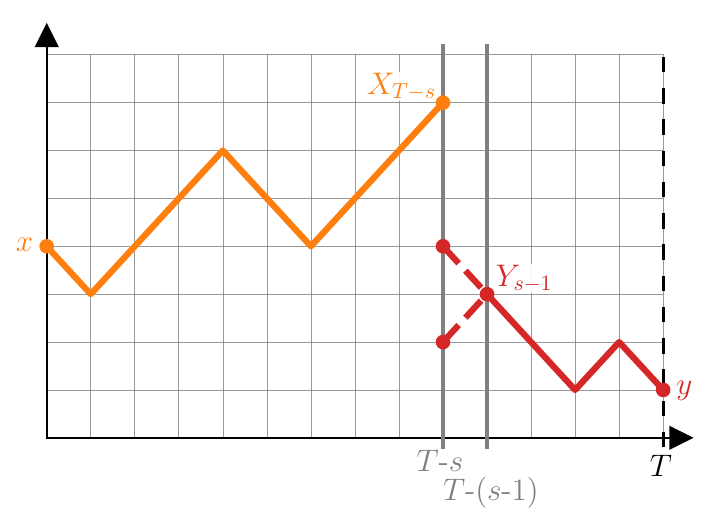}
\end{subfigure}
\caption{Illustrating the appearance of the indicator function in the core argument.}
\label{fig:RootRostPlot}
\end{figure}
In particular, Equation \eqref{eq:core} implies an independence of $s$, 
more precisely
\begin{equation*}
    \Exy\left [ |X_{T-s}-Y_{s}|\right ] = \Exy\left [ |X_{T}-Y_{0}|\right ]
    = \Exy\left [ |X_{0}-Y_{T}|\right ].
\end{equation*}
Note that we can replace $s$ by a $Y$ stopping time $\tau<T$ to receive 
\begin{equation*}
    \Exy\left [ |X_{T-\tau}-Y_{\tau}|\right ] = \Exy\left [ |X_{T}-Y_{0}|\right ].
\end{equation*}
Analogously, for an $X$ stopping time $\sigma < T$ we have 
\begin{equation*}
    \Exy\left [ |X_{\sigma}-Y_{T-\sigma}|\right ] = \Exy\left [ |X_{0}-Y_{T}|\right ],
\end{equation*}
collectively this leads to the identity 
\begin{equation} \label{eq:core-ST}
    \Exy\left [ |X_{T-\tau}-Y_{\tau}|\right ] = \Exy\left [ |X_{\sigma}-Y_{T-\sigma}|\right]
\end{equation}
which will serve as a key identity for subsequent arguments. 
%
%
%
%
%
\subsubsection{The Interpolation Function} \label{sec:interpolation-function}
%
In \cite{BaCoGrHu21} it was crucial to define an interpolating function $F(s)$ 
which connects between the LHS of Equation \eqref{eq:Root} and the RHS of Equations \eqref{eq:Root} and \eqref{eq:RootOSP}. 
With only slight modification of the proofs in \cite{BaCoGrHu21} we can
define a more general interpolation function and state the following lemma. 
\begin{lemma}\label{lem:interpolation-function}
Let $\lambda$ be a starting measure for the process $X$ and $\nu$ be a starting measure for the process $Y$. 
For an $X$ stopping time $\sigma$, a $Y$ stopping time $\tau$ and any $T \geq 0$ we consider the following interpolation function
\begin{equation}
    F^{\lambda, \nu}_{\sigma, \tau}(s) := 
        \E^{\lambda}_{\nu} \left[\left| X_{\sigma \wedge (T-\tau \wedge s)} - Y_{\tau \wedge s}\right|\right].
\end{equation}
Then we have the following:
\begin{enumerate}[(i)]
\item The function $F^{\lambda, \nu}_{\sigma, \tau}(s)$ is increasing in $s$, more precisely
\begin{align}
    F^{\lambda, \nu}_{\sigma, \tau}(s) 
        &= \E^{\lambda}_{\nu}\left[\left|X_{\sigma \wedge (T-\tau \wedge s)} - Y_{\tau \wedge (s-1)}\right| + 
            \ind_{X_{\sigma \wedge (T-s)} = Y_{s-1}, \tau \geq s} \right] \label{eq:F(s)}
\\      &\geq \E^{\lambda}_{\nu}\left[\left|X_{\sigma \wedge (T-\tau \wedge s)} - Y_{\tau \wedge (s-1)}\right| + 
            \ind_{X_{\sigma \wedge (T-s)} = Y_{s-1}, \tau \geq s, \sigma > T-s} \right]
        = F^{\lambda, \nu}_{\sigma, \tau}(s-1) \label{eq:F(s-1)}.
\end{align}
\item  If $\sigma$ is a (possibly randomized, possibly delayed) forward Root stopping time for the process $X$ 
and $\tau$ is a (possibly randomized) backwards Root stopping time for the process $Y$ 
then $F^{\lambda, \nu}_{\sigma, \tau}(s)$ is constant in $s$.
\end{enumerate}
\end{lemma}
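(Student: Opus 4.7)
The plan is to decompose both $F^{\lambda,\nu}_{\sigma,\tau}(s)$ and $F^{\lambda,\nu}_{\sigma,\tau}(s-1)$ according to the partition $\{\tau \leq s-1\}$ versus $\{\tau \geq s\}$ and invoke the core argument (via Remark \ref{rmk:independence}) twice: once introducing a Rademacher step of $Y$ to rewrite $F(s)$, once introducing a Rademacher step of $X$ to rewrite $F(s-1)$. On $\{\tau \leq s-1\}$ one has $\tau \wedge s = \tau \wedge (s-1) = \tau$, so the two interpolants contribute identically; all the work happens on $\{\tau \geq s\}$, where $\tau \wedge s = s$ and $\tau \wedge (s-1) = s-1$.

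For \eqref{eq:F(s)}, I would condition on the $\sigma$-algebra generated by $X_{\sigma \wedge (T-s)}$, $Y_{s-1}$ and $\ind_{\tau \geq s}$ (this indicator being measurable in $\mathcal{F}^Y_{s-1}$ enlarged by the randomization of $\tau$). Remark \ref{rmk:independence} applied with $A = X_{\sigma \wedge (T-s)} - Y_{s-1}$ and Rademacher $Z = Y_s - Y_{s-1}$ then replaces $|X_{\sigma \wedge (T-s)} - Y_s|$ by $|X_{\sigma \wedge (T-s)} - Y_{s-1}| + \ind_{X_{\sigma \wedge (T-s)} = Y_{s-1}}$ on $\{\tau \geq s\}$, which reassembled with the $\{\tau \leq s-1\}$ piece yields \eqref{eq:F(s)}. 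For \eqref{eq:F(s-1)}, I would write $X_{\sigma \wedge (T-s+1)} = X_{\sigma \wedge (T-s)} + \ind_{\sigma > T-s}(X_{T-s+1} - X_{T-s})$ and condition on the same data enlarged with $\ind_{\sigma > T-s}$ (measurable in $\mathcal{F}^X_{T-s}$ enlarged by the randomization of $\sigma$); a second application of Remark \ref{rmk:independence}, now with $Z = X_{T-s+1} - X_{T-s}$, contributes the extra indicator $\ind_{\sigma > T-s,\, X_{\sigma \wedge (T-s)} = Y_{s-1}}$ matching \eqref{eq:F(s-1)}. Subtracting leaves
\[
F^{\lambda,\nu}_{\sigma,\tau}(s) - F^{\lambda,\nu}_{\sigma,\tau}(s-1) = \mathbb{P}^{\lambda}_{\nu}\bigl[\tau \geq s,\, \sigma \leq T-s,\, X_\sigma = Y_{s-1}\bigr] \geq 0,
\]
which proves (i).

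For (ii), when $\sigma$ and $\tau$ are the forward and backward randomized Root stopping times associated with a common Root field $(r_t(x))_{(t,x)\in \mathbb{N}\times \mathbb{Z}}$, I would argue that the event on the right is $\mathbb{P}^{\lambda}_{\nu}$-null. On $\{\sigma \leq T-s\}$ we have $r_\sigma(X_\sigma) > 0$, so the Root structure \eqref{eq:Root-field-def} applied with $k = T-s+1-\sigma \geq 1$ forces $r_{T-s+1}(X_\sigma) = 1$; coupled with $X_\sigma = Y_{s-1}$ this gives $r_{T-s+1}(Y_{s-1}) = 1$. On the other hand, $\tau \geq s$ requires the backward rule not to have fired at step $s-1$, i.e.\ $v_{s-1} \geq r_{T-(s-1)}(Y_{s-1}) = 1$, which happens only on a $\mathbb{P}$-null set since $v_{s-1} \sim U[0,1]$. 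The main obstacle is bookkeeping the enlarged filtrations so that each Rademacher increment remains independent of its conditioning $\sigma$-algebra in the presence of randomization on both sides and a possible delay in $\sigma$; once $X$, $Y$ and the auxiliary uniforms are placed on a common product space, both applications of Remark \ref{rmk:independence} are routine and part (ii) reduces to the short barrier argument just given.
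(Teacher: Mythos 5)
Your proposal is correct and takes essentially the same route as the paper: part (i) is the same core-argument/Independence-Lemma computation (which the paper simply cites from the earlier single-marginal proofs), and part (ii) rests on exactly the same ingredients — the Root monotonicity of the stopping-probability field plus the almost-sure strictness of the uniforms — only run in contrapositive form (the paper deduces from $\tau \geq s$ that the field vanishes at $(u, Y_{s-1})$ for all $u < T-s+1$ and hence that $\sigma > T-s$ on the event in question, while you deduce from $\sigma \leq T-s$ and $X_\sigma = Y_{s-1}$ that the field equals $1$ at $(T-s+1, Y_{s-1})$, forcing the null event $v_{s-1} \geq 1$). The only slight narrowing is your assumption of a common field for the forward and backward stopping times, whereas the paper allows the backward field to be any $s \in \mathcal{R}_r$; this is immaterial, since $s$ and $r$ agree wherever $r \in \{0,1\}$, which is the only case your argument uses.
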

\begin{proof}
The essence of the proof of $(i)$ is an appropriate application of the Core Argument presented above, 
and $(i)$ follows in complete analogy to the proofs of \cite[Lemma 3.3, Lemma 3.4]{BaCoGrHu21}.
For $(ii)$ we have to make some modifications. 

Let $(r_t(x))_{(t,x) \in \mathbb{N} \times \mathbb{Z}}$ denote a Root field of stopping probabilities, 
then for $(s_t(x))_{(t,x) \in \mathbb{N} \times \mathbb{Z}} \in \mathcal{R}_r$ 
consider the stopping times 
\begin{align*}
    \rho^{\eta} &:= \inf \left \{ t \geq \eta : r_t(X_t) > u_t \right\}, \,\, \text{ and }
\\  \tau^*      &:= \inf \left \{ t \geq 0 : s_{T-t}(Y_t) > v_t\right\} \wedge T. 
\end{align*}
In order to show equality in $(i)$ for these stopping times,
it suffices to show equality of the indicator functions appearing in \eqref{eq:F(s)} and \eqref{eq:F(s-1)}, 
equivalently 
\[
    \left\{\rho^{\eta} > T-s \} \supseteq \{X_{\rho^{\eta} \wedge (T-s)} = Y_{s-1}, \tau^* \geq s \right\}
\]
up to nullsets. 
Indeed, on $\{\tau^* \geq s\}$ we deduce $\tau^* > s-1$. 
Hence we must have $s_{T-(s-1)} \left(Y_{s-1}\right) < 1$ as otherwise 
$\tau^* = s-1$ almost surely. 
By the definition of the Root stopping probabilities we thus have $s_{u} \left(Y_{s-1}\right) = r_{u} \left(Y_{s-1}\right) = 0$ for all $u < T-(s-1)$. 
Moreover, on the set $\{X_{\rho^{\eta} \wedge (T-s)} = Y_{s-1}\}$ we have $r_{u} \left(X_{\rho^{\eta} \wedge (T-s)}\right) = 0$ for all $u < T-(s-1)$. 
In particular, since $\rho^{\eta} \wedge (T-s) < T-(s-1)$, on the set $\{X_{\rho^{\eta} \wedge (T-s)} = Y_{s-1}\}$ 
we have $r_{\rho^{\eta} \wedge (T-s)} \left(X_{\rho^{\eta} \wedge (T-s)}\right) = 0$. 
Hence $\rho^{\eta}> T-s$ needs to be satisfied as otherwise the Root properties of the stopping probabilities would be violated.
See Figure \ref{fig:Root-F} for an illustration of these arguments. 
\end{proof}
\begin{figure}
\centering
\begin{subfigure}{.49\linewidth}
\centering
\includegraphics[width = 1\linewidth]{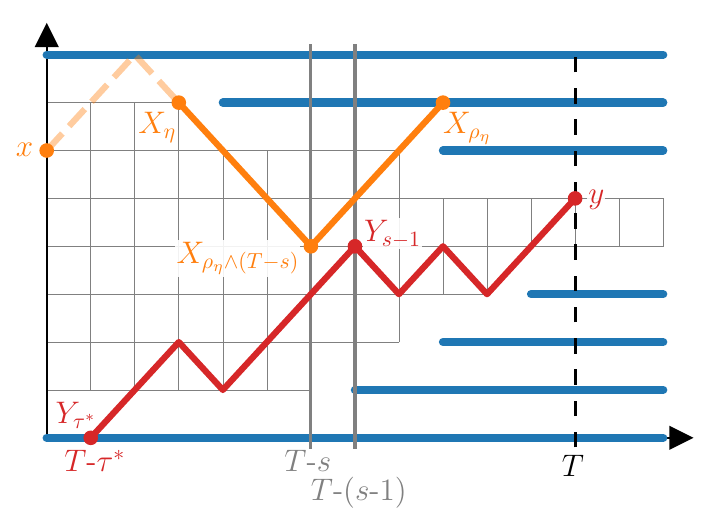}
\caption{Here $\tau^* \geq s$ and $X_{\rho_{\eta} \wedge (T-s)} = Y_{s-1}$.
We see that in this case $\rho_{\eta} \wedge (T-s) = T-s$ must hold as otherwise $Y_{\tau^*}$ would have stopped earlier.}
\end{subfigure}
\begin{subfigure}{.49\linewidth}
\centering
\includegraphics[width = 1\linewidth]{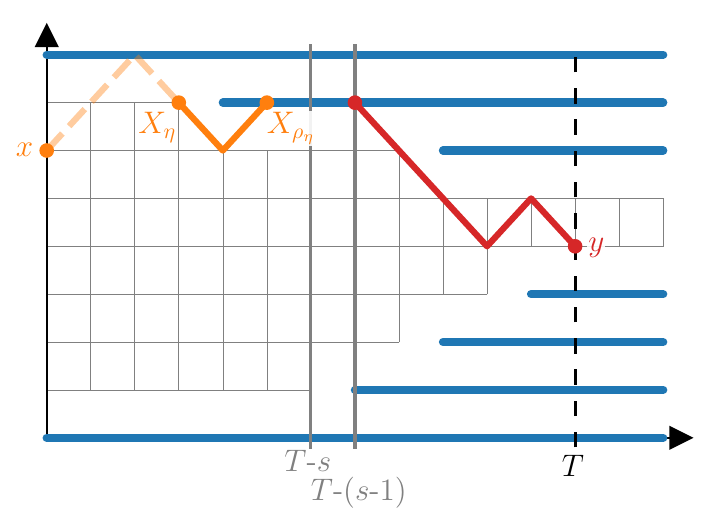}
\caption{Here $\rho_{\eta} \wedge (T-s) = \rho_{\eta}$. 
We see how $X_{\rho_{\eta} \wedge (T-s)} = Y_{s-1}$ implies that we must have $\tau^* \leq s-1$ due to the Root barrier structure.}
\end{subfigure}
\caption{An illustration of some different cases that appear when investigating equality of the interpolation function for forward and backward Root stopping times.}
\label{fig:Root-F}
\end{figure}
%
%
%
%
%
%
%
%
%
%
%
%
\subsection{The Root Optimal Stopping Representation} \label{sec:Root-OSP}
Let us introduce some frequently used notations. 
For the choice of $\nu = \delta_y$ we simplify the notation of the interpolation function 
between the forward and backward Root stopping times to
\[
    F^{\alpha}(s) := F^{\lambda, y}_{\rho_{\eta}, \tau^*}(s).
\]
Here $\alpha$ denotes the space-time law induced by the delay stopping time $\eta$ 
and $\beta^{\alpha}$ is the corresponding delayed Root law as defined in Section \ref{sec:discrete-OSP}. 
For a given stopping time $\tau$ consider the function
\begin{equation} \label{def:u(t,x)}
   u^{\alpha}_{\tau}(T,y) :=  \E_y\left[ V^{\alpha}_{T - \tau} (Y_{\tau}) 
        + (U_{\beta^{\alpha}} - U_{\alpha_X})(Y_{\tau})\ind_{\tau < T}\right]. 
\end{equation}
We also introduce  
\[
    u^{\alpha}(T,y) := u^{\alpha}_{\tau^*}(T,y)
\] 
for a backward Root stopping time 
$\tau^* = \inf \left \{ t \geq 0 : s_{T-t}(Y_t) > v_t\right\} \wedge T$. 
Considering this refined notation Equations \eqref{eq:discrete-delayed-Root}-\eqref{eq:discrete-delayed-RootOSP} 
can be written in the following way
\begin{align} \label{eq:delayed-Root-u}
     U_{\beta^{\alpha}_T}(y) &=  u^{\alpha}(T,y)
\\ \label{eq:delayed-RootOSP-u}
                            &= \sup_{\tau \leq T} u^{\alpha}_{\tau}(T,y).
\end{align}
Furthermore, it will be of importance that the function $u^{\alpha}_{\tau}$ has the following alternative representation
\begin{align}
u^{\alpha}_{\tau}(T,y) &=  \E_y\left[ V^{\alpha}_{T - \tau} (Y_{\tau}) 
                        + (U_{\beta^{\alpha}} - U_{\alpha_X})(Y_{\tau})\ind_{\tau < T}\right]
\\                     &= -\E^{\lambda}_y \left[ \left| X_{\eta \wedge (T-\tau)} - Y_{\tau} \right| 
                        + \left( \left| X_{\rho_{\eta}} - Y_{\tau}\right| - \left| X_{\eta} - Y_{\tau}\right| \right) \ind_{\tau < T} \right]. \label{eq:u-representation}
\end{align}
To further simplify and rewrite these expressions we observe the following lemma.
\begin{lemma} \label{lem:opt-stopping}
Let $\eta$ and $\theta$ be delay stopping times for the processes $X$ and $Y$ respectively. 
By $(r_t(x))_{(t,x) \in \mathbb{N} \times \mathbb{Z}}$ we denote a Root field of stopping probabilities. 
For mutually independent $U[0,1]$-distributed random variables $u_0, u_1, \dots, v_0, v_1, \dots$ we 
consider the forward Root stopping time $\rho := \inf \left \{ t \geq \eta : r_t(X_t) > u_t \right\}$ 
and for $s \in \mathcal{R}_r$ the backward Root stopping time 
$\tau := \inf \left \{ t \geq 0 : s_{T-t}(Y_t) > v_t\right\} \in \mathcal{T}_r$.

Furthermore we introduce the filtration 
$\mathcal{G}_t := \sigma \left( (X_s)_{0 \leq s \leq t}, (Y_s)_{0 \leq s \leq T}, u_1, \dots, u_t, v_1, \dots, v_T \right)$. 
Then for all $A \in \mathcal{H}:= \mathcal{G}_{\rho \wedge \left( \eta \vee (T- \tau)\right)}$ 
with $\{\tau < T\} \subseteq A$ we have 
\begin{align} \label{eq:opt-stopping}
    \E^{\lambda}_{\nu} \left[
        \left| X_{\rho} - Y_{\tau} \right| \ind_A\right] =
     \E^{\lambda}_{\nu} \left[
        \left| X_{\rho \wedge \left( \eta \vee (T- \tau)\right)} - Y_{\tau} \right| \ind_A \right].   
\end{align}
\end{lemma}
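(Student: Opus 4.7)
Write $S := \eta \vee (T-\tau)$ throughout. On the event $\{\rho \le S\}$ we have $X_\rho = X_{\rho \wedge S}$ pathwise, so the two sides of \eqref{eq:opt-stopping} coincide there, and the identity reduces to showing
\[
\E^{\lambda}_{\nu}\bigl[(|X_\rho - Y_\tau| - |X_S - Y_\tau|)\,\ind_A\,\ind_{\{\rho > S\}}\bigr] = 0.
\]

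The main tool is the discrete Tanaka formula of Theorem~\ref{thm:discrete-Tanaka} applied to the SSRW $X$ between the stopping times $S$ and $\rho$, with the $\mathcal{H}$-measurable level $y = Y_\tau$:
\[
|X_\rho - Y_\tau| - |X_S - Y_\tau| = \sum_{t=S}^{\rho - 1} \sgn(X_t - Y_\tau)\,\xi_{t+1} + \bigl(L^{Y_\tau}_\rho - L^{Y_\tau}_S\bigr).
\]
Since $A \in \mathcal{H} = \mathcal{G}_{\rho \wedge S}$ and since the uniform-integrability hypothesis on $(X_{\rho \wedge t})_t$ legitimises optional sampling between $S$ and $\rho$, the conditional expectation (given $\mathcal{H}$) of the martingale-increment sum vanishes. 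The claim therefore reduces to proving that
\[
\E^{\lambda}_{\nu}\Bigl[\sum_{t=S}^{\rho-1}\ind_{\{X_t = Y_\tau\}}\,\ind_A\,\ind_{\{\rho > S\}}\Bigr] = 0.
\]

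This is where the Root structure is used. Since $\{\tau < T\} \subseteq A$, the definition of the backward Root stopping time forces $s_{T-\tau}(Y_\tau) > v_\tau \ge 0$ on that portion of $A$; because $s \in \mathcal{R}_r$ coincides with $r$ wherever $r \in \{0, 1\}$, this implies $r_{T-\tau}(Y_\tau) > 0$, and the Root property \eqref{eq:Root-field-def} of $r$ then yields $r_t(Y_\tau) = 1$ for every $t \ge T-\tau+1$. Any visit $X_t = Y_\tau$ with such $t$ would therefore force $\rho \le t$, contradicting $t < \rho$. Since $S \geq T-\tau$, this excludes every index $t \in [S,\rho-1]$ with the possible exception of the single boundary index $t = T-\tau$ arising when $S = T-\tau$; the resulting residual contribution, together with the complementary $A \cap \{\tau = T\}$ piece, is absorbed by re-applying Tanaka and optional sampling on the shortened interval $[S+1,\rho]$, exploiting that $r \equiv 1$ at $Y_\tau$ from time $T-\tau+1$ onwards to reconcile the single unit of local time with the martingale correction. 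The main obstacle is precisely this boundary analysis at the space-time point $(T-\tau,Y_\tau)$ where $r$ may sit in the randomised regime $(0,1)$, and where one must combine the coincidence $r=s$ on $\{r \in \{0,1\}\}$ with the independence of the randomisations $(u_t),(v_t)$ from the walks $X,Y$ to show that the extra local time integrates against $\ind_A$ to zero.
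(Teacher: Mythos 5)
Your Tanaka reduction is sound up to the step you yourself flag as the main obstacle. Since $Y_\tau$ is $\mathcal{G}_0$-measurable, the discrete Tanaka formula at the random level $Y_\tau$ and optional sampling of the $\mathcal{G}_t$-martingale part correctly reduce \eqref{eq:opt-stopping} to the vanishing of the expected number of visits of $X$ to the level $Y_\tau$ at times in $[\eta\vee(T-\tau),\rho)$ on $A$, and the Root property \eqref{eq:Root-field-def} does exclude every visit time $t\geq T-\tau+1$. This is the same mechanism the paper exploits, but the paper never passes through local time: it argues that on $\{\rho>\eta\vee(T-\tau)\}$ the walk cannot change the sign of $X-Y_\tau$ before $\rho$ (so $X_\rho\leq Y_\tau$ on $\{X_{\eta\vee(T-\tau)}<Y_\tau\}$ and $X_\rho\geq Y_\tau$ on $\{X_{\eta\vee(T-\tau)}\geq Y_\tau\}$), linearizes both absolute values accordingly, and concludes by optional sampling of $X$ on events of $\mathcal{H}$; the delicate configuration $X_{\eta\vee(T-\tau)}=Y_\tau$ is there subsumed in the ``$\geq$'' branch and never produces a local-time unit.

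The gap is your closing sentence. Consider $B:=A\cap\{\eta\leq T-\tau,\ X_{T-\tau}=Y_\tau,\ \rho>T-\tau\}$. This event can have positive probability at a randomised boundary site, i.e.\ when $r_{T-\tau}(Y_\tau)\in(0,1)$, because the coin $u_{T-\tau}$ that lets $X$ survive and the coin $v_\tau$ that makes $Y$ stop are independent. On $B$ the walk accrues exactly one unit of local time at $Y_\tau$, and re-applying Tanaka and optional sampling on the shortened interval $[T-\tau+1,\rho]$, as you propose, does not absorb anything: since $|X_{T-\tau+1}-Y_\tau|=1$ on $B$ and no further visits occur, it evaluates $\E^{\lambda}_{\nu}\left[\left|X_\rho-Y_\tau\right|\ind_B\right]=\mathbb{P}^{\lambda}_{\nu}[B]$, whereas $\E^{\lambda}_{\nu}\left[\left|X_{\rho\wedge(\eta\vee(T-\tau))}-Y_\tau\right|\ind_B\right]=0$; moreover the local time is nonnegative on every other portion of $A$ (including $A\cap\{\tau\geq T\}$, which your barrier argument does not reach, since the hypothesis only forces $\{\tau<T\}\subseteq A$), so there is no compensating term anywhere. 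Thus your argument as written only yields ``$\geq$'' with a deficit of $\mathbb{P}^{\lambda}_{\nu}[B]$: the boundary event must be shown negligible or treated by a genuinely different argument (the paper's proof does this via the sign-constancy claim on $\{X_{\eta\vee(T-\tau)}\geq Y_\tau\}$), and the proposed ``absorption'' is not such an argument.
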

\begin{proof}
As $X$ is independent of $Y$, we have that $X$ is a $(\mathcal{G}_t)$ martingale 
and furthermore $\eta$, $\rho$ as well as $\tilde \sigma := \eta \vee (T- \tau)$ are $(\mathcal{G}_t)$ stopping times. 
Hence, by the optional sampling theorem we can conclude that for every $A \in \mathcal{H} = \mathcal{G}_{\rho \wedge \tilde \sigma}$ 
the following holds
\begin{equation} \label{eq:opt-samp}
    \E^{\lambda}_{\nu} \left[X_{\rho} \ind_A \right] = \E^{\lambda}_{\nu} \left[X_{\rho \wedge \tilde \sigma} \ind_A \right].
\end{equation}
By definition of the Root stopping probabilities and the definition of $\tau$, 
on the set $\{\tau < T\}$ we have $s_{T-\tau}\left( Y_{\tau}\right) > 0$. 
Furthermore $s_{T-\tau + u}\left( Y_{\tau}\right) = r_{T-\tau + u}\left( Y_{\tau}\right) = 1$ for all $u \geq 1$.
Moreover, on the set $\{\rho > \tilde \sigma, X_{\tilde \sigma} < Y_{\tau}\}$ we must have that $X_{\rho} \leq Y_{\tau}$ as otherwise the Root property of the stopping probabilities would be violated. 
We can argue analogously that on $\{\rho > \tilde \sigma, X_{\tilde \sigma} \geq Y_{\tau}\}$ we must have $X_{\rho} \geq Y_{\tau}$. 
Then 
\begin{align*}
    & \E^{\lambda}_{\nu} \left[\left| X_{\rho} - Y_{\tau} \right|\ind_A \right] 
\\  &= \E^{\lambda}_{\nu} \left[\left| X_{\rho} - Y_{\tau} \right|\ind_{A, \rho \leq \tilde \sigma}
        +\left| X_{\rho} - Y_{\tau} \right|\ind_{A, \rho > \tilde \sigma} \right]
\\  &= \E^{\lambda}_{\nu} \left[\left| X_{\rho} - Y_{\tau} \right|\ind_{A, \rho \leq \tilde \sigma} 
        +\left| X_{\rho} - Y_{\tau} \right| \ind_{A, \rho > \tilde \sigma, X_{\tilde \sigma} < Y_{\tau}}                               
        +\left| X_{\rho} - Y_{\tau} \right|\ind_{A, \rho > \tilde \sigma, X_{\tilde \sigma} \geq Y_{\tau}} \right]
\\  &= \E^{\lambda}_{\nu} \left[\left| X_{\rho} - Y_{\tau} \right|\ind_{A, \rho \leq \tilde \sigma} 
        -\left( X_{\rho} - Y_{\tau} \right) \ind_{A, \rho > \tilde \sigma, X_{\tilde \sigma} < Y_{\tau}}                               
        +\left( X_{\rho} - Y_{\tau} \right)\ind_{A, \rho > \tilde \sigma, X_{\tilde \sigma} \geq Y_{\tau}} \right]
\\  &= \E^{\lambda}_{\nu} \left[\left| X_{\rho} - Y_{\tau} \right|\ind_{A, \rho \leq \tilde \sigma} 
        -\left( X_{\rho \wedge \tilde \sigma} - Y_{\tau} \right) \ind_{A, \rho > \tilde \sigma, X_{\tilde \sigma} < Y_{\tau}}                               
        +\left( X_{\rho \wedge \tilde \sigma} - Y_{\tau} \right)\ind_{A, \rho > \tilde \sigma, X_{\tilde \sigma} \geq Y_{\tau}} \right]
\\      &= \E^{\lambda}_{\nu} \left[\left| X_{\rho \wedge \tilde \sigma} - Y_{\tau} \right| \ind_A \right]. 
\end{align*}
where we use Equation \eqref{eq:opt-samp} and the fact that 
$\{\rho > \tilde \sigma, X_{\tilde \sigma} < Y_{\tau} \}, \{\rho > \tilde \sigma, X_{\tilde \sigma} \geq Y_{\tau}\} \in \mathcal{H}$. 
\end{proof}
Choosing $\nu = \delta_y$, $\rho = \rho_{\eta}$ and $\theta = 0$ in the lemma above enables us to prove the following.
\begin{lemma} \label{lem:Root-prep}
The RHS of Equation \eqref{eq:discrete-delayed-Root} equals the terminal value of the interpolation function, that is
\begin{align} \label{eq:first-equality}
    -u^{\alpha}(T,y) = F^{\alpha}(T).
\end{align}
\end{lemma}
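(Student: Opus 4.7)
The plan is a direct identification of the two sides after unpacking definitions, with the only real work being a delicate swap of stopping times that will be supplied by Lemma~\ref{lem:opt-stopping}.

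First I would combine the definition \eqref{eq:u-representation} of $u^{\alpha}_{\tau}$ with $\tau=\tau^{*}$ to get
\[
-u^{\alpha}(T,y)=\E^{\lambda}_{y}\!\left[\bigl|X_{\eta\wedge(T-\tau^{*})}-Y_{\tau^{*}}\bigr|+\bigl(|X_{\rho_{\eta}}-Y_{\tau^{*}}|-|X_{\eta}-Y_{\tau^{*}}|\bigr)\ind_{\tau^{*}<T}\right],
\]
and rewrite $F^{\alpha}(T)=\E^{\lambda}_{y}[|X_{\rho_{\eta}\wedge(T-\tau^{*})}-Y_{\tau^{*}}|]$, using $\tau^{*}\le T$ so that $\tau^{*}\wedge T=\tau^{*}$. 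Setting $S:=T-\tau^{*}$, the goal becomes
\[
\E^{\lambda}_{y}\!\left[|X_{\eta\wedge S}-Y_{\tau^{*}}|+\bigl(|X_{\rho_{\eta}}-Y_{\tau^{*}}|-|X_{\eta}-Y_{\tau^{*}}|\bigr)\ind_{\tau^{*}<T}\right]=\E^{\lambda}_{y}\!\left[|X_{\rho_{\eta}\wedge S}-Y_{\tau^{*}}|\right].
\]
On the event $\{\tau^{*}=T\}$ we have $S=0$ and $\rho_{\eta}\ge\eta\ge 0$, so $\eta\wedge S=\rho_{\eta}\wedge S=0$, making both sides contribute $|X_{0}-Y_{T}|$ identically; the problem reduces to matching the contributions on $\{\tau^{*}<T\}$.

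Next I would invoke Lemma~\ref{lem:opt-stopping} with $\rho=\rho_{\eta}$, $\tau=\tau^{*}$, $\nu=\delta_{y}$, and $A=\{\tau^{*}<T\}$. The set $A$ is $\mathcal{G}_{0}$-measurable, because $\tau^{*}$ depends only on $Y$ and on $v_{1},\dots,v_{T}$, all of which sit in $\mathcal{G}_{0}$ by the definition of the filtration, so $A\in\mathcal{H}=\mathcal{G}_{\rho_{\eta}\wedge(\eta\vee S)}$ and the hypothesis $\{\tau^{*}<T\}\subseteq A$ is satisfied. The lemma then yields
\[
\E^{\lambda}_{y}\!\left[|X_{\rho_{\eta}}-Y_{\tau^{*}}|\ind_{\tau^{*}<T}\right]=\E^{\lambda}_{y}\!\left[|X_{\rho_{\eta}\wedge(\eta\vee S)}-Y_{\tau^{*}}|\ind_{\tau^{*}<T}\right].
\]

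It remains to check the pathwise identity
\[
|X_{\eta\wedge S}-Y_{\tau^{*}}|+|X_{\rho_{\eta}\wedge(\eta\vee S)}-Y_{\tau^{*}}|-|X_{\eta}-Y_{\tau^{*}}|=|X_{\rho_{\eta}\wedge S}-Y_{\tau^{*}}|
\]
on $\{\tau^{*}<T\}$, which I would do by case-splitting. If $\eta\le S$ then $\eta\wedge S=\eta$ and $\eta\vee S=S$, so the first and third terms cancel and the middle term is $|X_{\rho_{\eta}\wedge S}-Y_{\tau^{*}}|$, matching the right-hand side. If $\eta>S$ then $\eta\wedge S=S$, $\eta\vee S=\eta$, and $\rho_{\eta}\wedge\eta=\eta$ (since $\rho_{\eta}\ge\eta$), so the middle and third terms cancel and the first term is $|X_{S}-Y_{\tau^{*}}|=|X_{\rho_{\eta}\wedge S}-Y_{\tau^{*}}|$ (again using $\rho_{\eta}\ge\eta>S$). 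Summing the two cases and combining with the trivial $\{\tau^{*}=T\}$ contribution completes the identification $-u^{\alpha}(T,y)=F^{\alpha}(T)$. The main obstacle is the justification of the measurability of $A$ and the correct identification $\rho_{\eta}\wedge(\eta\vee S)=(\rho_{\eta}\wedge S)\vee\eta$ needed to merge Lemma~\ref{lem:opt-stopping} with the pathwise case analysis; once these are in place the remainder is bookkeeping.
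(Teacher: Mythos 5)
Your proof is correct, and it reaches the identity by a leaner route than the paper, although the main ingredients coincide: both arguments start from the representation \eqref{eq:u-representation} and both lean on Lemma \ref{lem:opt-stopping}. The paper decomposes $-u^{\alpha}(T,y)$ according to $\{\eta>T\}$ versus $\{\eta\leq T\}$ and then further into $\{T-\eta\leq\tau^*\}$ versus $\{\tau^*<T-\eta\}$, invoking Lemma \ref{lem:opt-stopping} three times with the finer events as the set $A$, and tracking several cancellations across the resulting terms. You instead invoke the lemma once, with the single set $A=\{\tau^*<T\}$, and then dispose of all the casework through a \emph{pathwise} identity on $\{\tau^*<T\}$, split into $\eta\leq S$ and $\eta>S$ with $S=T-\tau^*$; your verification of that identity is correct (in the second case $\rho_{\eta}\wedge\eta=\eta$ and $\rho_{\eta}\wedge S=S$ because $\rho_{\eta}\geq\eta>S$), and the $\{\tau^*=T\}$ contribution is indeed trivially equal on both sides. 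What your organisation buys is a clean separation of the probabilistic content (the optional-sampling-type replacement of $X_{\rho_{\eta}}$ by $X_{\rho_{\eta}\wedge(\eta\vee S)}$) from purely deterministic bookkeeping; the paper's finer decomposition instead makes visible on which events which terms cancel, in a form that parallels its later arguments. Two small points you should state explicitly: Lemma \ref{lem:opt-stopping} is formulated for the uncapped backward Root time $\tau\in\mathcal{T}_r$, so one should note (as the paper does) that on $A=\{\tau^*<T\}$ one has $\tau^*=\tau$ and $\eta\vee(T-\tau)=\eta\vee S$, which legitimises your application; and the concluding remark about the identity $\rho_{\eta}\wedge(\eta\vee S)=(\rho_{\eta}\wedge S)\vee\eta$ is true but not actually needed, since your two-case check already closes the argument.
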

\begin{proof}
Recall the representation \eqref{eq:u-representation} and consider the following decomposition
\begin{align}
-u^{\alpha}(T,y)
    = &\,\E^{\lambda}_y \left[ \left| X_{\eta \wedge (T-\tau^*)} - Y_{\tau^*} \right| 
        + \left( \left| X_{\rho_{\eta}} - Y_{\tau^*}\right| - \left| X_{\eta} - Y_{\tau^*}\right| \right) \ind_{\tau^* < T} \right] \nonumber
\\  = &\,\E^{\lambda}_y \left[ \left| X_{\eta \wedge (T-\tau^*)} - Y_{\tau^*} \right|\ind_{\eta > T} 
        + \left( \left| X_{\rho_{\eta}} - Y_{\tau^*}\right| - \left| X_{\eta} - Y_{\tau^*}\right| \right) \ind_{\tau^* < T, \eta > T} \right] \label{eq:u1}
\\  + &\,\E^{\lambda}_y \left[ \left| X_{\eta \wedge (T-\tau^*)} - Y_{\tau^*} \right|\ind_{\eta \leq T} 
        + \left( \left| X_{\rho_{\eta}} - Y_{\tau^*}\right| - \left| X_{\eta} - Y_{\tau^*}\right| \right) \ind_{\tau^* < T, \eta \leq T} \right]. \label{eq:u2}
\end{align}
We will investigate the two lines \eqref{eq:u1} and \eqref{eq:u2} separately. 

Let us first consider \eqref{eq:u1}. 
As $\rho_{\eta} \geq \eta$, we observe that on the event $\{\eta > T\}$ 
we have $\eta \wedge (T-\tau^*) = T-\tau^* = \rho_{\eta} \wedge (T-\tau^*)$, 
hence for the first term of \eqref{eq:u1} we can conclude 
\begin{equation} \label{eq:u1-a}
    \E^{\lambda}_y \left[ \left| X_{\eta \wedge (T-\tau^*)} - Y_{\tau^*} \right|\ind_{\eta > T} \right] 
        = \E^{\lambda}_y \left[ \left| X_{\rho_{\eta} \wedge (T-\tau^*)} - Y_{\tau^*} \right|\ind_{\eta > T} \right].
\end{equation}
Furthermore, note the following:
\begin{itemize}
\item[\itembullet] On the event $\{\tau^* < T\}$ the stopping time $\tau^*$ equals a backward Root stopping time.
\item[\itembullet] On the event $\{\eta > T\}$ we have $\eta \vee (T- \tau^*) = \eta = \rho_{\eta} \wedge (\eta \vee (T- \tau^*))$.
\item[\itembullet] We have $\{\tau^* < T, \eta > T\} \in \mathcal{H}$ (for $\mathcal{H}$ defined in Lemma \ref{lem:opt-stopping}).
\end{itemize}
Therefore we can apply Lemma \ref{lem:opt-stopping} to the second term of \eqref{eq:u1} to arrive at
\begin{align}
    &\, \E^{\lambda}_y \left[ \left( \left| X_{\rho_{\eta}} - Y_{\tau^*}\right| - \left| X_{\eta} - Y_{\tau^*}\right| \right) \ind_{\tau^* < T, \eta > T} \right] \nonumber
\\  = &\, \E^{\lambda}_y \left[ \left( \left| X_{\rho_{\eta}} - Y_{\tau^*}\right| - \left| X_{\rho_{\eta} \wedge (\eta \vee (T-\tau^*))} - Y_{\tau^*}\right| \right) \ind_{\tau^* < T, \eta > T} \right] 
  = \,0. \label{eq:u1-b}
\end{align}
To summarize, line \eqref{eq:u1} reduces to 
\begin{equation} \label{eq:u1-summary}
    \E^{\lambda}_y \left[ \left| X_{\rho_{\eta} \wedge (T-\tau^*)} - Y_{\tau^*} \right|\ind_{\eta > T} \right].
\end{equation}
Next we consider \eqref{eq:u2}. 
Let us start by examining the first term of \eqref{eq:u2} which can be decomposed further into
\begin{align}
      &\, \E^{\lambda}_y \left[ \left| X_{\eta \wedge (T-\tau)} - Y_{\tau^*} \right|\ind_{\eta \leq T} \right] \nonumber
\\  = &\, \E^{\lambda}_y \left[ \left| X_{\eta \wedge (T-\tau)} - Y_{\tau^*} \right|\ind_{\eta \leq T, T- \eta \leq \tau^*} \right]
     + \E^{\lambda}_y \left[ \left| X_{\eta \wedge (T-\tau)} - Y_{\tau^*} \right|\ind_{\eta \leq T, \tau^* < T - \eta} \right] \nonumber
\\  = &\, \E^{\lambda}_y \left[ \left| X_{\rho_{\eta} \wedge (T-\tau)} - Y_{\tau^*} \right|\ind_{\eta \leq T, T- \eta \leq \tau^*} \right]
     + \E^{\lambda}_y \left[ \left| X_{\eta} - Y_{\tau^*} \right|\ind_{\eta \leq T, \tau^* < T - \eta} \right]. \label{eq:u2-a}
\end{align}
Similarly, the second term of \eqref{eq:u2} decomposes into 
\begin{align}
    & \E^{\lambda}_y \left[\left( \left| X_{\rho_{\eta}} - Y_{\tau^*}\right| - \left| X_{\eta} - Y_{\tau^*}\right| \right) \ind_{\tau^* < T, \eta \leq T} \right] \nonumber
\\  = &\, \E^{\lambda}_y \left[\left( \left| X_{\rho_{\eta}} - Y_{\tau^*}\right| - \left| X_{\eta} - Y_{\tau^*}\right| \right) \ind_{\tau^* < T, \eta \leq T, T- \eta \leq \tau^*} \right] \label{eq:u2-2a}
\\  + &\, \E^{\lambda}_y \left[\left( \left| X_{\rho_{\eta}} - Y_{\tau^*}\right| - \left| X_{\eta} - Y_{\tau^*}\right| \right) \ind_{\eta \leq T, \tau^* < T- \eta} \right]. \label{eq:u2-2b}
\end{align}
Importantly, the second term of \eqref{eq:u2-a} cancels out the second term of \eqref{eq:u2-2b}.

We will proceed to conclude that the terms depicted in \eqref{eq:u2-2a} will equate to 0. 
For this purpose observe that: 
\begin{itemize}
\item[\itembullet] On the event $\{\tau^* < T\}$ the stopping time $\tau^*$ equals a backward Root stopping time.
\item[\itembullet] On the event $\{T - \eta \leq \tau^*\}$ we have $\eta \vee (T- \tau^*) = \eta = \rho_{\eta} \wedge (\eta \vee (T- \tau^*))$.
\item[\itembullet] We have $\{\tau^* < T, \eta \leq T, T - \eta \leq \tau^*\} \in \mathcal{H}$.
\end{itemize}
Hence, we can apply Lemma \ref{lem:opt-stopping} to deduce 
\begin{align}
    &\, \E^{\lambda}_y \left[\left( \left| X_{\rho_{\eta}} - Y_{\tau^*}\right| - \left| X_{\eta} - Y_{\tau^*}\right| \right) \ind_{\tau^* < T, \eta \leq T, T- \eta \leq \tau^*} \right] \nonumber
\\  = &\, \E^{\lambda}_y \left[\left( \left| X_{\rho_{\eta}} - Y_{\tau^*}\right| - \left| X_{\rho_{\eta} \wedge (\eta \vee (T-\tau^*))} - Y_{\tau^*}\right| \right) \ind_{\tau^* < T, \eta \leq T, T- \eta \leq \tau^*} \right] \nonumber
\\  = &\,0. \label{eq:u2-b}
\end{align}
The final term left to consideration is the first term in \eqref{eq:u2-2b}. 
For this purpose, note that
\begin{itemize}
\item[\itembullet] on the event $\{\tau^* < T - \eta\}$ we have $\eta \vee (T- \tau^*) = (T- \tau^*)$, and 
\item[\itembullet] $\{\eta \leq T, \tau^* < T - \eta\} \in \mathcal{H}$.
\end{itemize}
Hence, application of Lemma \ref{lem:opt-stopping} to the first term of \eqref{eq:u2-2b} yields
\begin{align}
    &\, \E^{\lambda}_y \left[\left| X_{\rho_{\eta}} - Y_{\tau^*}\right| \ind_{\eta \leq T, \tau^* < T- \eta} \right] 
 = \E^{\lambda}_y \left[\left| X_{\rho_{\eta} \wedge (T - \tau^*)} - Y_{\tau^*}\right| \ind_{\eta \leq T, \tau^* < T- \eta} \right].
 \label{eq:u2-c}
\end{align}
To summarize, \eqref{eq:u2} reduces to 
\begin{equation} \label{eq:u2-summary}
    \E^{\lambda}_y \left[\left| X_{\rho_{\eta} \wedge (T - \tau^*)} - Y_{\tau^*}\right| \ind_{\eta \leq T} \right].
\end{equation}
Combining the observations \eqref{eq:u1-summary} and \eqref{eq:u2-summary} 
the desired identity now becomes easy to see:
\[
    -u^{\alpha}(T,y) 
    = \E^{\lambda}_y \left[\left| X_{\rho_{\eta} \wedge (T - \tau^*)} - Y_{\tau^*}\right|\right] = F^{\alpha}(T).
\] 
\end{proof}
We are ready to prove Theorem \ref{thm:discrete-delayed-Root}, the optimal stopping representation of Root solutions to \eqref{dSEP}.
\begin{proof}[Proof of Theorem \ref{thm:discrete-delayed-Root}]
We begin by proving \eqref{eq:discrete-delayed-Root} through consideration of the interpolation function 
\[
 F^{\alpha}(s) = F^{\lambda, y}_{\rho_{\eta}, \tau^*}(s) 
               = \E^{\lambda}_{y} \left[\left| X_{\rho_{\eta} \wedge (T-\tau^* \wedge s)} - Y_{\tau^* \wedge s}\right|\right].
\]
Then $-F^{\alpha}(0) = \E^{\lambda}_{y} \left[\left| X_{\rho_{\eta} \wedge T} - Y_{0}\right|\right] = U_{\beta^{\alpha}_T}(y)$ which equals the LHS of \eqref{eq:discrete-delayed-Root}, equivalently of \eqref{eq:delayed-Root-u}. 
It was the purpose of Lemma \ref{lem:Root-prep} to show that $-F^{\alpha}(T)$ equals the RHS of \eqref{eq:discrete-delayed-Root}, 
equivalently of \eqref{eq:delayed-Root-u}. 
The equality in \eqref{eq:discrete-delayed-Root} now follows by Lemma \ref{lem:interpolation-function} $(ii)$, 
which states that the interpolation function $F^{\alpha}(T)$ is constant given our consideration of Root stopping times.

Let us proceed to establish the optimal stopping problem \eqref{eq:discrete-delayed-RootOSP}. 
We keep the Root stopping time $\rho_{\eta}$ but allow for an arbitrary $Y$ stopping time $\tau \leq T$. 
It still remains true that $F^{\lambda, y}_{\rho_{\eta}, \tau}(0) = - U_{\beta^{\alpha}}(y)$, 
and due to Lemma \ref{lem:interpolation-function} $(i)$ we have the inequality 
$- U_{\beta^{\alpha}}(y) \leq F^{\lambda, y}_{\rho_{\eta}, \tau}(T)$. 
Therefore, we are left to show that 
\begin{equation*}
F^{\lambda, y}_{\rho_{\eta}, \tau}(T) 
    \leq u^{\alpha}_{\tau}(T,y)
    = -\E_y\left[ V^{\alpha}_{T - \tau} (Y_{\tau}) 
        + (U_{\beta^{\alpha}} - U_{\alpha_X})(Y_{\tau})\ind_{\tau  < T}\right],
\end{equation*}
or equivalently
\begin{equation} \label{eq:Root-inequality}
 \E^{\lambda}_{y} \left[\left| X_{\rho_{\eta} \wedge (T-\tau)} - Y_{\tau}\right|\right]
\leq 
\E^{\lambda}_{y} \left[\left| X_{\eta \wedge (T-\tau)} - Y_{\tau}\right|     
    + \left( \left| X_{\rho_{\eta}} - Y_{\tau}\right|
    - \left| X_{\eta} - Y_{\tau}\right| \right) \ind_{\tau < T} \right].
\end{equation}
We will consider a decomposition into the events $\{\eta > T-\tau\}$ and $\{\eta \leq T-\tau\}$. 

Due to Jensen's inequality and optional sampling we have the following
\[
    \E^{\lambda}_{y} \left[\left( \left| X_{\rho_{\eta}} - Y_{\tau}\right|
    - \left| X_{\eta} - Y_{\tau}\right| \right) \ind_{\tau < T, \eta > T-\tau} \right] \geq 0
\]
and can conclude
\begin{align*}
    &\E^{\lambda}_{y} \left[\left| X_{\rho_{\eta} \wedge (T-\tau)} - Y_{\tau}\right| \ind_{\eta > T-\tau}\right]
\\  &= \E^{\lambda}_{y} \left[\left| X_{\eta \wedge (T-\tau)} - Y_{\tau}\right| \ind_{\eta > T-\tau}\right]
\\  &\leq \E^{\lambda}_{y} \left[\left| X_{\eta \wedge (T-\tau)} - Y_{\tau}\right| \ind_{\eta > T-\tau}
        + \left( \left| X_{\rho_{\eta}} - Y_{\tau}\right|
    - \left| X_{\eta} - Y_{\tau}\right| \right) \ind_{\tau < T, \eta > T-\tau} \right]. 
\end{align*}
On the other hand we have 
\begin{align*}
    &\E^{\lambda}_{y} \left[\left| X_{\rho_{\eta} \wedge (T-\tau)} - Y_{\tau}\right| \ind_{\eta \leq T-\tau}\right]
\\  &=\E^{\lambda}_{y} \left[\left| X_{\rho_{\eta} \wedge (T-\tau)} - Y_{\tau}\right| \ind_{\eta \leq T-\tau, \tau < T}
                    + \left| X_{0} - Y_{\tau}\right| \ind_{\eta \leq T-\tau, \tau = T} \right]
\\ &\leq \E^{\lambda}_{y} \left[\left| X_{\rho_{\eta}} - Y_{\tau}\right| \ind_{\eta \leq T-\tau, \tau < T}
                    + \left| X_{\eta} - Y_{\tau}\right| \ind_{\eta \leq T-\tau, \tau = T} \right]
\\ &= \E^{\lambda}_{y} \left[\left| X_{\eta} - Y_{\tau}\right| \ind_{\eta \leq T-\tau}
        + \left( \left| X_{\rho_{\eta}} - Y_{\tau}\right|
    - \left| X_{\eta} - Y_{\tau}\right| \right) \ind_{\eta \leq T-\tau, \tau < T} \right]
\\ &= \E^{\lambda}_{y} \left[\left| X_{\eta \wedge (T-\tau)} - Y_{\tau}\right| \ind_{\eta \leq T-\tau}
        + \left( \left| X_{\rho_{\eta}} - Y_{\tau}\right|
    - \left| X_{\eta} - Y_{\tau}\right| \right) \ind_{\eta \leq T-\tau, \tau < T} \right]
\end{align*}
where the inequality again follows by Jensen's inequality and optional sampling.
Together this proves \eqref{eq:Root-inequality}. 
Thus we have shown that
\[
    U_{\beta^{\alpha}}(y) \geq u^{\alpha}_{\tau}(T,y)
\]
which concludes the proof of \eqref{eq:discrete-delayed-RootOSP}.
\end{proof}
%
%
%
%
%
%
%
%
%
%
%
\subsection{The Root-Rost-Symmetry} \label{sec:Rost-OSP}
%
To establish the optimal stopping representation of Rost solution to \eqref{dSEP} we could simply repeat the appropriate analogues of the arguments in the section above. 
Instead however we will derive the Rost representation from the Root one by observing a symmetry between the two similar to the approach in \cite{BaCoGrHu21}. 

We will first give a brief summary of the results in \cite{BaCoGrHu21} which were stated in non-randomized terms. 

Recall the definition of a discrete Rost barrier $\bar R \subseteq \mathbb{N} \times \mathbb{Z}$, that is 
\begin{itemize}
\item[\itembullet] If $(t,m)\in \bar R$ then for all $s < t$ also $(s,m) \in \bar R$.
\end{itemize}
Now observe that for any $T \in \mathbb{N}$ we can transform this Rost barrier into a Root Barrier $R_T$ in the following way
\begin{equation} \label{def:R_T}
    R_{T} := \left\{(T - t, x) : (t,x) \in R \right\}.
\end{equation}
To more conveniently apply results from the sections above we will reverse the roles of $X$ and $Y$ for Rost stopping times.

Consider a non-delayed (forward) Rost stopping time $\bar {\rho} = \inf \left\{t \geq 0 : (t, Y_t) \in \bar R \right\}$.
We can now observe as illustrated in Figure \ref{fig:SM-Root-Rost-symmetry} that the (forward) Rost stopping time $\bar{\rho}$ 
can be represented as a backward Root stopping time $\tau := \inf \left\{t \geq 0 : (T-t, Y_t) \in R_T \right\}$, 
and that the backwards Rost stopping time $\sigma := \inf \left\{t \geq 0 : (T-t, X_t) \in \bar R \right\}$ 
can be represented as a (non-delayed, forward) Root stopping time $\rho := \rho_0 = \inf \left\{t \geq 0 : (t, X_t) \in R_T \right\}$, 
to summarize 
\begin{align} 
\begin{split}\label{eq:SM-Root-Rost-symmetry}
    \bar{\rho} = \inf \left\{t \geq 0 : (t, Y_t) \in \bar R \right\} 
        = \inf \left\{t \geq 0 : (T-t, Y_t) \in R_T \right\} = \tau
\\  \sigma = \inf \left\{t \geq 0 : (T-t, X_t) \in \bar R \right\}
        = \inf \left\{t \geq 0 : (t, X_t) \in R_T \right\} = \rho.
\end{split}
\end{align}
\begin{figure}
\centering
\begin{subfigure}{.49\linewidth}
\centering
\includegraphics[width = 1\linewidth]{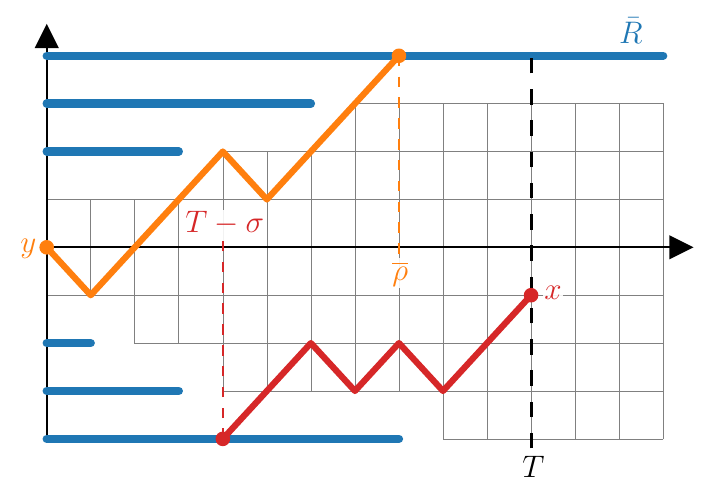}
\end{subfigure}
\begin{subfigure}{.49\linewidth}
\centering
\includegraphics[width = 1\linewidth]{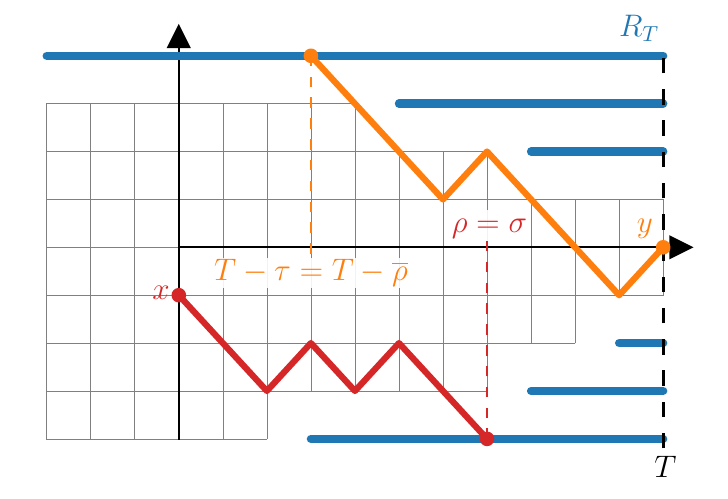}
\end{subfigure}
\caption{An illustration of the connection between $\bar{R}$ and $R_T$, resp. between $\bar{\rho}$ and $\tau$ as well as $\sigma$ and $\rho$ without delay.} 
\label{fig:SM-Root-Rost-symmetry}
\end{figure}
Using this symmetry, in \cite{BaCoGrHu21} the Rost optimal stopping problem was then derived with the methods established for the Root optimal stopping problem using the following proposition.
\begin{proposition}[{\cite[Proposition 3.7]{BaCoGrHu21}}] \label{prp:SM-symmetry}
For each $x,y,T$, any $Y$-stopping time $\tau$ such that $\Ey[|Y_{\tau}|]<\infty$, 
and every $\{0,\dots,T\}$-valued $X$-stopping time $\sigma$, we have
\begin{align} \label{eq:SM-symmetry}
    \Ey\left[|x - Y_{\tau}|-|x - Y_{\tau\wedge T}|\right]&\leq \Exy\left[|X_{\sigma} - Y_{\tau}|-|X_{\sigma}-y| \right].
\end{align}
Suppose furthermore that 
\begin{equation}
    \tau = \inf \{t\in \mathbb{N}: (T-t,Y_t) \in R\}, 
\end{equation} 
for a Root barrier $R$ and that
$\sigma=\rho^{Root}\wedge T$. Then there is equality in \eqref{eq:SM-symmetry}. 
\end{proposition}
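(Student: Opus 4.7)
The plan is to reduce the inequality to two ingredients: the monotonicity of the interpolation function of Lemma \ref{lem:interpolation-function}(i), and the optional sampling theorem applied conditionally to the independent martingale $X$.

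First I would evaluate $F(s):=F^{\delta_x,\delta_y}_{\sigma,\tau}(s)$ at $s=0$ and $s=T$. Since $\sigma\le T$ one has $F(0)=\Exy[|X_\sigma-y|]$, while
\begin{equation*}
F(T)\;=\;\Exy\!\left[|X_{\sigma\wedge(T-\tau)}-Y_\tau|\ind_{\tau\le T}\right]+\Ey\!\left[|x-Y_T|\ind_{\tau>T}\right],
\end{equation*}
and monotonicity of $F$ gives $F(0)\le F(T)$. Next I would compare $F(T)$ to $\Exy[|X_\sigma-Y_\tau|]$ by applying Jensen's inequality for the convex map $a\mapsto|a-Y_\tau|$, with $Y_\tau$ independent of $X$, on each of the two events:
\begin{align*}
\Exy[|X_{\sigma\wedge(T-\tau)}-Y_\tau|\ind_{\tau\le T}]&\le\Exy[|X_\sigma-Y_\tau|\ind_{\tau\le T}],\\
\Ey[|x-Y_\tau|\ind_{\tau>T}]&\le\Exy[|X_\sigma-Y_\tau|\ind_{\tau>T}].
\end{align*}
Combining these bounds with $F(0)\le F(T)$ and using $|x-Y_{\tau\wedge T}|=|x-Y_T|$ on $\{\tau>T\}$ and $|x-Y_{\tau\wedge T}|=|x-Y_\tau|$ otherwise, a short rearrangement yields the claimed inequality.

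For the equality case, each of the three bounds above becomes sharp. Lemma \ref{lem:interpolation-function}(ii) upgrades $F(0)\le F(T)$ to an equality. The backward hitting time $\tau$ takes values only in $\{0,1,\dots,T\}\cup\{+\infty\}$, so the integrability assumption $\Ey[|Y_\tau|]<\infty$ forces $\tau\le T$ almost surely, and the Jensen step on $\{\tau>T\}$ becomes vacuous. Finally, on $\{\tau\le T\}$ one has $(T-\tau,Y_\tau)\in R$, and the Root property of $R$ implies $(s,Y_\tau)\in R$ for all $s\ge T-\tau$; consequently, if $X$ ever reaches the level $Y_\tau$ at a time $s\in[T-\tau,T]$ the stopping condition forces $\sigma\le s$. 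Thus $X_u-Y_\tau$ retains constant sign on the interval $[T-\tau,\sigma]$, and optional sampling of the martingale $X$ converts the remaining Jensen inequality into an equality.

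The main obstacle is this last sign-preservation step, a discrete analogue of the barrier-avoidance argument in Lemma \ref{lem:opt-stopping}. One has to handle separately the strict case $X_\sigma\ne Y_\tau$ and the boundary case $X_\sigma=Y_\tau$; in both, the Root barrier forces $X_\sigma$ to lie on the same side of $Y_\tau$ as $X_{T-\tau}$, or to coincide with it, and optional sampling then yields $\Ex[\,|X_\sigma-Y_\tau|\mid X_{T-\tau},Y_\tau\,]=|X_{T-\tau}-Y_\tau|$, closing the equality case.
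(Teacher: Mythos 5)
Your proof is correct and follows essentially the same route as the paper: it evaluates the interpolation function $F^{\lambda,\nu}_{\sigma,\tau}$ at $s=0$ and $s=T$, obtains the inequality from Lemma \ref{lem:interpolation-function}(i) combined with conditional Jensen and optional sampling on the events $\{\tau\le T\}$ and $\{\tau>T\}$, and gets the Root-case equality from Lemma \ref{lem:interpolation-function}(ii). Your extra step — noting that the Jensen bound on $\{\tau\le T\}$ must itself be upgraded to an equality via the Root-barrier sign-preservation and optional-sampling argument (the discrete analogue of Lemma \ref{lem:opt-stopping}), and that $\{\tau>T\}$ is null under the integrability hypothesis — is precisely what the paper's two-line recap leaves implicit and what it carries out in detail in the proof of Proposition \ref{prp:Root-Rost-prp}(iii), so your write-up is, if anything, slightly more complete.
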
 

The proof given in \cite{BaCoGrHu21} relied heavily on the interpolation function 
$F_{\sigma, \tau}(s) = F^{x,y}_{\sigma, \tau}(s) = \E^{x}_{y} \left[\left| X_{\sigma \wedge (T - \tau \wedge s)} - Y_{\tau \wedge s} \right| \right]$ 
in the following way. 
To see inequality we observe 
\begin{align*}
    \E_y\left[|x - Y_{\tau}|-|x - Y_{\tau\wedge T}|\right] 
        &\leq \E^x_y \left[|X_{\sigma} - Y_{\tau}| \right] - F_{\sigma, \tau}(T)
\\      &\leq \E^x_y \left[|X_{\sigma} - Y_{\tau}| \right] - F_{\sigma, \tau}(0)
\\      &=    \E^x_y \left[|X_{\sigma} - Y_{\tau}|-|X_{\sigma}-y| \right]. 
\end{align*}
Equality for Root forward and backward stopping times follows from Lemma \ref{lem:interpolation-function} (ii). 
In the delayed regimes however, these arguments cannot be repeated verbatim as Lemma \ref{lem:interpolation-function} (ii) does not hold for delayed backwards Root stopping times. 

We will observe an analogous symmetry to \eqref{eq:SM-Root-Rost-symmetry} in the delayed regime and give the appropriate generalization of the symmetries \eqref{eq:SM-Root-Rost-symmetry}  as well as Proposition \ref{prp:SM-symmetry}. 

\subsubsection*{A delayed randomized Root-Rost-Symmetry}
Let $(\bar r_t(x))_{(t,x) \in \mathbb{N} \times \mathbb{Z}}$ denote a field of Rost stopping probabilities. 
Then for any $T \in \mathbb{N}$ we can transform these Rost stopping probabilities into a field $(r^T_t(x))_{(t,x) \in \mathbb{Z} \times \mathbb{Z}}$ of Root stopping probabilities in the following way
\begin{equation} \label{def:r_T}
    r_t^T(x) := \bar r_{T-t} (x).
\end{equation}
Note that the definitions of Root and Rost fields of stopping probabilities naturally extend to $(t,x) \in \mathbb{Z} \times \mathbb{Z}$. 
Consider a delayed (forward) Rost stopping time 
\[ 
\bar \rho_{\theta} = \inf \left\{t \geq \theta : \bar r_t(Y_t) > v_t \right\}.
\] 
We can now observe as illustrated in Figure \ref{fig:SM-Root-Rost-symmetry} that the (forward) Rost stopping time $\bar{\rho}$ 
can be represented as a delayed backward Root stopping time 
\[
\tau_{\theta} := \inf \left\{t \geq \theta : r_{T-t}^T(Y_t) > v_t \right\},
\] 
and that the (undelayed) backwards Rost stopping time 
\[
\sigma := \inf \left\{t \geq 0 :  \bar r_{T-t}(X_t) > u_t \right\}
\]
can be represented as a (undelayed, forward) Root stopping time 
\[
\rho := \rho_0 = \inf \left\{t \geq 0 : r_{t}^T(X_t) > u_t \right\},
\]
in sum 
\begin{align} 
\label{eq:delayed-Root-Rost-symmetry1}
\bar{\rho}_{\theta} := \inf \left\{t \geq \theta : \bar r_t(Y_t) > v_t \right\} 
                    &= \inf \left\{t \geq \theta : r_{T-t}^T(Y_t) > v_t \right\}    =: \tau_{\theta}
\\  \sigma := \inf \left\{t \geq 0 :  \bar r_{T-t}(X_t) > u_t \right\}
           &= \inf \left\{t \geq 0 : r_{t}^T(X_t) > u_t \right\}                =: \rho. \label{eq:delayed-Root-Rost-symmetry2}
\end{align}
See also Figure \ref{fig:delayed-Root-Rost-symmetry}  for an illustration of these stopping times.
\begin{figure}
\centering
\begin{subfigure}{1\linewidth}
\centering
\includegraphics[width = 0.49\linewidth]{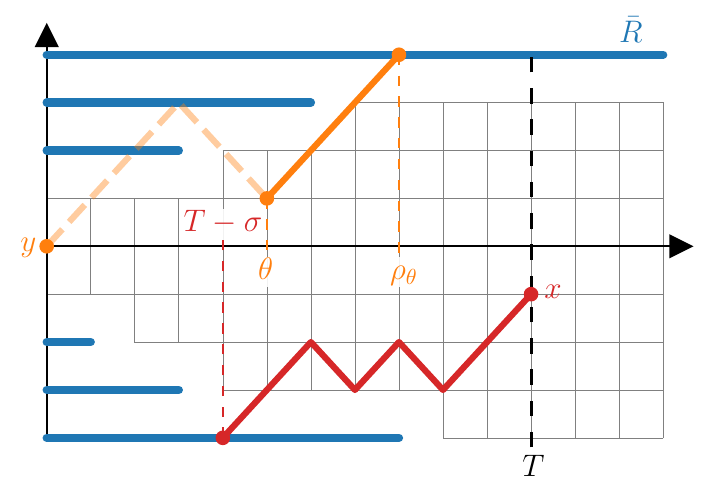}
\includegraphics[width = 0.49\linewidth]{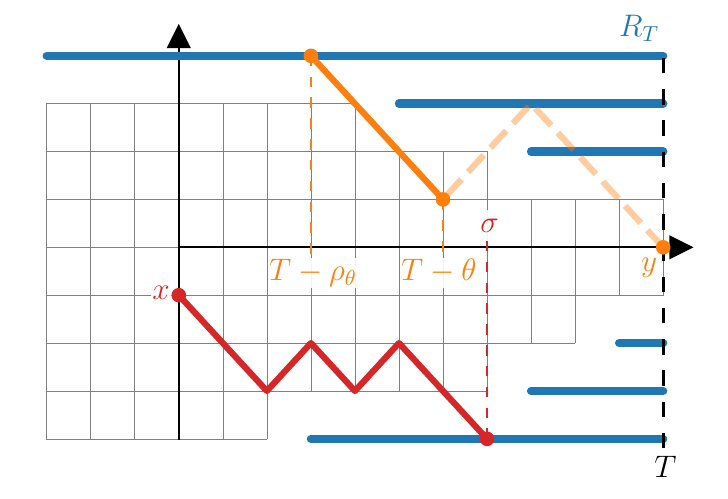}
\caption{An illustration of the connection between $\bar R$ and $R_T$ in the delayed regime for $\theta < T$.}
\end{subfigure}
\centering
\begin{subfigure}{1\linewidth}
\centering
\includegraphics[width = 0.49\linewidth]{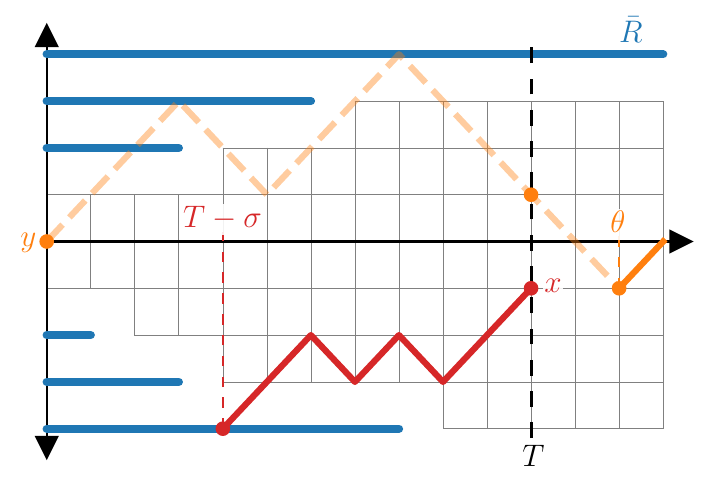}
\includegraphics[width = 0.49\linewidth]{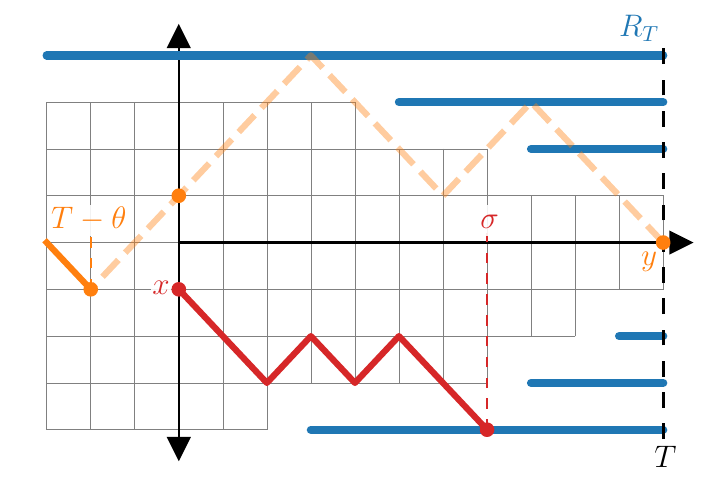}
\caption{An illustration of the connection between $\bar R$ and $R_T$ in the delayed regime for $\theta \geq T$.}
\end{subfigure}
\caption{}
\label{fig:delayed-Root-Rost-symmetry}
\end{figure}

We see that Proposition \ref{prp:SM-symmetry} is valid for \emph{any} stopping times $\sigma$ and $\tau$, 
hence also for the choices $\sigma = \rho_0$ and $\tau = \tau_{\theta}$.
However, in this case is was already observed in Section \ref{sec:interpolation-function} that the interpolation function $F_{\rho_0, \tau_{\theta}}(s)$ 
will in general no longer be constant when $\tau$ is subject to a non-zero delay $\theta$. 
Thus we will no longer recover an equality in Equation \eqref{eq:SM-symmetry} for a \emph{delayed} backward Root stopping time.
Moreover the RHS of Equation \eqref{eq:SM-symmetry} will in general also not equal the RHS of Equation \eqref{eq:discrete-delayed-Rost}, 
thus cannot be applied to prove our desired generalization of the delayed Rost optimal stopping representation.

The following generalization of Proposition \ref{prp:SM-symmetry} is needed.
\begin{proposition} \label{prp:Root-Rost-prp}
Let $\sigma \leq T$ be a stopping time for $X$ and let $\tau$ be a stopping time for $Y$ such that $\E_{\nu}\left[|Y_{\tau}| \right]$. 
Then
\begin{enumerate}[(i)]
\item 
\begin{equation} \label{eq:Root-Rost-prp1}
    \E_{\nu} \left[\left| x - Y_{\tau}\right| - \left| x - Y_{\tau \wedge T}\right|  \right]
    \leq
\E^x_{\nu} \left[\left| X_{\sigma} - Y_{\tau}\right| - \left| X_{\sigma} - Y_{\tau \wedge (T - \sigma)} \right|  \right].
\end{equation}
\item For any $Y$-stopping time $\theta$ such that $\theta \leq \tau$ we have
\begin{equation} \label{eq:Root-Rost-prp2}
    \E_{\nu} \left[\left| x - Y_{\tau}\right| - \left| x - Y_{\tau \wedge T}\right|  \right]
    \leq
\E^x_{\nu} \left[\left| X_{\sigma} - Y_{\tau}\right| - \left| X_{\sigma} - Y_{\theta \wedge (T - \sigma)} \right|  \right].
\end{equation}
\item 
Furthermore, let $(r_t(x))_{(t,x) \in \mathbb{Z} \times \mathbb{Z}}$ denotes a field of Root stopping probabilities, 
consider $(s_t(x))_{(t,x) \in \mathbb{Z} \times \mathbb{Z}} \in \mathcal{R}_r$ 
and let $\theta$ denote a delay stopping time for $Y$. 
Then there is equality in \eqref{eq:Root-Rost-prp1} and \eqref{eq:Root-Rost-prp2} when considering the stopping times 
\begin{align*}
    \sigma  &= \inf \left \{ t \geq 0 : r_t(X_t) > u_t \right\} \,\, \text{ and }
\\  \tau    &= \inf \left \{ t \geq \theta : s_{T-t}(Y_t) > v_t\right\} \wedge T.
\end{align*}
\end{enumerate}
\end{proposition}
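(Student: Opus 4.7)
The plan is to obtain part~(i) as the specialisation $\theta=\tau$ of part~(ii), and to treat the equality statement~(iii) by identifying when the inequalities used in the proof of~(ii) are tight. Throughout I take $\lambda=\delta_x$ and write $F(s):=F^{x,\nu}_{\sigma,\tau}(s)=\E^x_\nu[|X_{\sigma\wedge(T-\tau\wedge s)}-Y_{\tau\wedge s}|]$ for the interpolation function of Lemma~\ref{lem:interpolation-function}.

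The cornerstone of the proof of~(ii) is the identity
\[
 F(T)\;=\;\E^x_\nu\bigl[|X_\sigma-Y_{\tau\wedge(T-\sigma)}|\bigr].
\]
To establish it I condition on the path of $X$ and apply the discrete Tanaka formula of Theorem~\ref{thm:discrete-Tanaka} to the bounded $Y$-stopping time $\tau\wedge(T-\sigma)$; this gives $\E^x_\nu[|X_\sigma-Y_{\tau\wedge(T-\sigma)}|]-F(0)=\E^x_\nu[L^{X_\sigma}_{\tau\wedge(T-\sigma)}(Y)]$. Telescoping the one-step increment formula from Lemma~\ref{lem:interpolation-function}~(i) yields exactly the same right-hand side, $F(T)-F(0)=\E^x_\nu[L^{X_\sigma}_{\tau\wedge(T-\sigma)}(Y)]$, and combining with $F(0)=\E^x_\nu[|X_\sigma-Y_0|]$ closes the identity. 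Since $\theta\le\tau$ forces $\theta\wedge(T-\sigma)\le\tau\wedge(T-\sigma)$, the submartingale property of $|X_\sigma-Y_{\cdot}|$ (conditionally on $X$) gives $\E^x_\nu[|X_\sigma-Y_{\theta\wedge(T-\sigma)}|]\le F(T)$, so part~(ii) reduces to
\[
 \E_\nu[|x-Y_\tau|]-\E_\nu[|x-Y_{\tau\wedge T}|]\;\le\;\E^x_\nu[|X_\sigma-Y_\tau|]-F(T).
\]
I then swap roles: conditioning on $Y$ and applying Tanaka to $X$ (using that $\sigma\wedge(T-\tau\wedge T)$ is a bounded $X$-stopping time in the enlarged filtration) turns both sides into expected local times of $X$, so the inequality reduces to
\[
 \E^x_\nu\bigl[L^{Y_{\tau\wedge T}}_{\sigma\wedge(T-\tau\wedge T)}(X)\bigr]\;\le\;\E^x_\nu\bigl[L^{Y_\tau}_\sigma(X)\bigr],
\]
which I verify pointwise: on $\{\tau\le T\}$ one has $Y_{\tau\wedge T}=Y_\tau$ with $\sigma\wedge(T-\tau)\le\sigma$, while on $\{\tau>T\}$ the left-hand side has its upper time limit truncated to $0$.

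For part~(iii) I retrace the two inequalities used above. The first, $\E^x_\nu[|X_\sigma-Y_{\theta\wedge(T-\sigma)}|]\le F(T)$, is an equality precisely when $Y$ accrues no local time at $X_\sigma$ on $[\theta\wedge(T-\sigma),\,\tau\wedge(T-\sigma))$; the second, the local time comparison for $X$ at $Y_{\tau\wedge T}$, is tight precisely when $X$ accrues no local time at $Y_{\tau\wedge T}$ on $[\sigma\wedge(T-\tau\wedge T),\,\sigma)$. For $\sigma$ the undelayed forward Root stopping time, the second no-crossing property follows from the path-wise argument of Lemma~\ref{lem:interpolation-function}~(ii) (cf.\ Figure~\ref{fig:Root-F}). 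For the first, I carry out a parallel argument for the delayed backward Root stopping time $\tau$, exploiting the Root structure of the time-reversed field $s$ while tracking the delay $\theta$ explicitly.

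The principal obstacle is exactly this first tightness check. As the discussion preceding Proposition~\ref{prp:Root-Rost-prp} emphasises, Lemma~\ref{lem:interpolation-function}~(ii) does \emph{not} extend to backward Root stopping times with non-trivial delay, so the no-crossing property cannot simply be read off from the constancy of $F$. It must instead be verified directly on the coupled $(X,Y)$-paths, in the spirit of---but more delicate than---the proof of Lemma~\ref{lem:interpolation-function}~(ii), by showing that the Root boundary condition together with the delay $\theta$ still preclude the problematic visits of $Y$ to $X_\sigma$ within the relevant time window.
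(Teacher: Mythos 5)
Your treatment of parts (i) and (ii) is sound and is essentially the paper's argument in different clothing: the identity $F(T)=\E^x_\nu\bigl[\left|X_{\sigma}-Y_{\tau\wedge(T-\sigma)}\right|\bigr]$ is exactly what the paper obtains from the core argument \eqref{eq:core} plus optional sampling, and your reduction of the main inequality to the pointwise local-time comparison $\E^x_\nu\bigl[L^{Y_{\tau\wedge T}}_{\sigma\wedge(T-\tau\wedge T)}(X)\bigr]\le\E^x_\nu\bigl[L^{Y_\tau}_{\sigma}(X)\bigr]$ is the Tanaka reformulation of the paper's Jensen/optional-sampling step, with (ii) then following from $\theta\wedge(T-\sigma)\le\tau\wedge(T-\sigma)$ just as in the paper.

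Part (iii), however, is not proved. You reduce equality to two ``no accrued local time'' conditions and then explicitly concede that the first one --- the tightness for the delayed backward Root time --- is the ``principal obstacle'' which ``must be verified directly''; describing what remains to be shown is not a proof, and this equality is the entire point of the proposition (it is what feeds the Rost optimal stopping representation). Moreover, your reduction itself is shakier than you indicate: the second condition, which you claim follows from the pathwise argument of Lemma \ref{lem:interpolation-function}~(ii), requires on the event $\{\tau\ge T\}$ that $X$ accrue no local time at the level $Y_{T}$ on all of $[0,\sigma)$; but on that event the cap (or an unexpired delay $\theta$) means the backward walker was never stopped by the field, so the level $Y_T$ bears no relation to the Root structure of $(r_t(x))$, and nothing in Lemma \ref{lem:interpolation-function}~(ii) delivers such a statement. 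The paper proceeds quite differently and avoids both of your tightness checks: it never uses constancy of the interpolation function for the delayed backward time. Instead it decomposes on $\{\tau<T\}$ and $\{\tau\ge T\}$ and applies the optional-sampling sign argument of Lemma \ref{lem:opt-stopping} with the \emph{forward} Root time and $\eta=0$ (the absence of delay on the $X$-side is stressed as essential) to get equality in \eqref{eq:Root-Rost-prp1}; equality in \eqref{eq:Root-Rost-prp2} is then obtained from the separate identity $\E^x_\nu\bigl[\left|X_\sigma-Y_{\tau\wedge(T-\sigma)}\right|\bigr]=\E^x_\nu\bigl[\left|X_\sigma-Y_{\theta\wedge(T-\sigma)}\right|\bigr]$, proved trivially on $\{\sigma=T\}$ and on $\{\sigma<T\}$ by re-applying Lemma \ref{lem:opt-stopping} with the roles of the two stopping times exchanged. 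If you want to complete your route, you would need to supply an argument of that optional-sampling/sign type for the delayed $\tau$; the constancy-of-$F$ (equivalently, vanishing local time) route you sketch does not go through as stated.
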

\begin{proof}
As observed in the proof of \cite[Proposition 3.7]{BaCoGrHu21} 
we can conclude the following with the help of Jensen's inequality and optional sampling
\begin{equation*} 
    \E_{\nu} \left[\left| x - Y_{\tau}\right| - \left| x - Y_{\tau \wedge T}\right|  \right]
    \leq
    \E^x_{\nu} \left[\left| X_{\sigma} - Y_{\tau}\right| - \left| X_{\sigma \wedge (T - \tau \wedge T)} - Y_{\tau \wedge T} \right|  \right].
\end{equation*}
It is also a straightforward application of the Core Argument \eqref{eq:core} together with optional sampling to obtain 
\begin{align*}
   \E^x_{\nu} \left[\left| X_{\sigma \wedge (T- \tau \wedge T)} - Y_{\tau \wedge T} \right|  \right]
 = \E^x_{\nu} \left[\left| X_{\sigma} - Y_{\tau \wedge (T - \sigma)} \right|  \right]
\end{align*}
which concludes the proof of $(i)$.

Note that $(ii)$ easily follows from $(i)$ via Jensen's inequality and optional sampling by observing that when 
$\theta \leq \tau$ we clearly also have $\theta \wedge (T-\sigma) \leq \tau \wedge (T-\sigma)$.
 
It now remains to show $(iii)$, the equality in the Root case. 
We will apply Lemma \ref{lem:opt-stopping} 
for the choices $\lambda = \delta_{x}$, $\rho = \sigma$ and $\eta = 0$. 
We will consider a decomposition into the events $\{\tau < T\}$ and $\{\tau \geq T\}$. 
First note that on $\{\tau < T\}$ we have $\eta \vee (T-\tau) = T-\tau$, hence by Lemma \ref{lem:opt-stopping} we obtain
\[
    \E^{x}_{\nu} \left[ \left| X_{\sigma} - Y_{\tau} \right| \ind_{\tau < T}\right] =
    \E^{x}_{\nu} \left[\left| X_{\sigma \wedge (T-\tau)} - Y_{\tau} \right| \ind_{\tau < T} \right],
\]
thus
\begin{align*}
    \E_{\nu} \left[\left(\left| x - Y_{\tau}\right| - \left| x - Y_{\tau \wedge T}\right|\right) \ind_{\tau < T} \right] 
     = 0
     &= \E^{x}_{\nu} \left[ \left(\left| X_{\sigma} - Y_{\tau} \right| 
        - \left| X_{\sigma \wedge (T-\tau)} - Y_{\tau} \right| \right) \ind_{\tau < T} \right]
\\   &= \E^{x}_{\nu} \left[ \left(\left| X_{\sigma} - Y_{\tau} \right| 
        - \left| X_{\sigma \wedge (T-\tau \wedge T)} - Y_{\tau \wedge T} \right| \right) \ind_{\tau < T} \right].
\end{align*}
On the other hand, on $\{\tau \geq T\}$ we have $\eta \vee (T-\tau) = 0$, hence by Lemma \ref{lem:opt-stopping}
\begin{align*}
    \E^{x}_{\nu} \left[ \left| X_{\sigma} - Y_{\tau} \right| \ind_{\tau \geq T}\right] 
    = \E^{x}_{\nu} \left[\left| X_{\sigma \wedge 0} - Y_{\tau} \right| \ind_{\tau \geq T} \right] 
    = \E^{x}_{\nu} \left[\left| X_{0} - Y_{\tau} \right| \ind_{\tau \geq T} \right] 
     = \E_{\nu} \left[\left| x - Y_{\tau} \right| \ind_{\tau \geq T} \right]
\end{align*}
which gives
\begin{align*}
    \E_{\nu} \left[\left(\left| x - Y_{\tau}\right| - \left| x - Y_{\tau \wedge T}\right|\right) \ind_{\tau \geq T} \right] 
     &= \E^{x}_{\nu} \left[ \left(\left| X_{\sigma} - Y_{\tau} \right| 
        - \left| X_{0} - Y_{\tau \wedge T} \right| \right) \ind_{\tau \geq T} \right]
\\   &= \E^{x}_{\nu} \left[ \left(\left| X_{\sigma} - Y_{\tau} \right| 
        - \left| X_{\sigma \wedge (T - \tau \wedge T)} - Y_{\tau \wedge T} \right| \right) \ind_{\tau \geq T} \right].
\end{align*}
Altogether equality in \eqref{eq:Root-Rost-prp1} follows. 
Note that $\eta = 0$ was essential here in order to gain this equality.

It now remains to establish equality in \eqref{eq:Root-Rost-prp2}, that is we need to show
\begin{align*}
    \E^x_{\nu} \left[\left| X_{\sigma} - Y_{\tau \wedge (T - \sigma)} \right|  \right]
    = \E^x_{\nu} \left[\left| X_{\sigma} - Y_{\theta \wedge (T - \sigma)} \right|  \right].
\end{align*}
On the one hand we trivially have
\begin{align*}
    \E^x_{\nu} \left[\left| X_{\sigma} - Y_{\tau \wedge (T - \sigma)} \right| \ind_{\sigma = T} \right]
    = \E^x_{\nu} \left[\left| X_{\sigma} - Y_{0} \right| \ind_{\sigma = T}  \right]
    = \E^x_{\nu} \left[\left| X_{\sigma} - Y_{\theta \wedge (T - \sigma)} \right| \ind_{\sigma = T}  \right],
\end{align*}
whereas 
\begin{align*}
    \E^x_{\nu} \left[\left| X_{\sigma} - Y_{\tau \wedge (T - \sigma)} \right| \ind_{\sigma < T} \right]
    = \E^x_{\nu} \left[\left| X_{\sigma} - Y_{\theta \wedge (T - \sigma)} \right| \ind_{\sigma < T}  \right]
\end{align*}
can be argued verbatim as in Lemma \ref{lem:opt-stopping}, switching the roles of the two stopping times therein.
\end{proof}
\begin{remark}
Note that Proposition \ref{prp:SM-symmetry} can easily be recovered from Proposition \ref{prp:Root-Rost-prp} $(ii)$ via the choices $\nu = \delta_y$ and $\theta = 0$.
\end{remark}
Equipped with this proposition and the (delayed) Root-Rost symmetries \eqref{eq:delayed-Root-Rost-symmetry1} and \eqref{eq:delayed-Root-Rost-symmetry2} we can now give a proof of Theorem \ref{thm:discrete-delayed-Rost}, the optimal stopping representation of Rost solutions to \eqref{dSEP}

\begin{proof}[Proof of Theorem \ref{thm:discrete-delayed-Rost}]
We want to apply Proposition \ref{prp:Root-Rost-prp}. 
Consider $\tau = \tau_{\theta}$ and note that for the LHS of \eqref{eq:Root-Rost-prp1} respectively \eqref{eq:Root-Rost-prp2} with the help of the Root-Rost symmetry \eqref{eq:delayed-Root-Rost-symmetry1} we have 
\begin{align*}
    \E_{\nu} \left[ \left| x - Y_{\tau} \right| - \left| x - Y_{\tau \wedge T} \right| \right]
    =    U_{\beta^{\alpha}}(x) - U_{\beta^{\alpha}_T}(x).  
\end{align*}
Furthermore for any $X$-stopping time $\sigma$ we have
\begin{align}
    \E^x_{\nu} \left[\left| X_{\sigma} - Y_{\tau}\right| - \left| X_{\sigma} - Y_{\theta \wedge (T - \sigma)} \right|  \right]
    =   \E^x\left[ U_{\beta^{\alpha}} (X_{\sigma}) - V^{\alpha}_{T - \sigma} (X_{\sigma}) \right].
\end{align}
It now becomes obvious that Equation \eqref{eq:discrete-delayed-Rost} follows by Proposition \ref{prp:Root-Rost-prp} $(iii)$ by considering 
$\sigma = \sigma^*$ and the Root-Rost symmetry \eqref{eq:delayed-Root-Rost-symmetry2}, 
while Equation \eqref{eq:discrete-delayed-RostOSP} follows from Proposition \ref{prp:Root-Rost-prp} $(ii)$ considering general $X$- stopping times $\sigma \leq T$.
\end{proof}
%
%
%
%
%
%
%
%
%
%
%
%
%
%
%
%
%
%
%
%
%
%
%
%
%
%
%
%
%
%
%
%
%
%
%
%
%
%
%
%
%
%
%
%
%
%
%
%
%
%
\section{Recovering the Continuous Optimal Stopping Representation 
as a Limit of the Discrete Optimal Stopping Representation} \label{sec:limit}
%
The aim of this section is to recover Theorem \ref{thm:delayed-Root} (resp. Theorem \ref{thm:delayed-Rost}) 
from its discrete counterpart, Theorem \ref{thm:discrete-delayed-Root} (resp. Theorem \ref{thm:discrete-delayed-Rost}) 
through taking limits of appropriately scaled SSRWs. 

This passage to continuous time essentially relies on the application of Donsker-type results, 
similar to the limiting procedure carried out in \cite[Section 5]{BaCoGrHu21} 
which in turn heavily built on arguments established in \cite{CoKi19b}. 
For this purpose it is important to point out that the results and arguments presented in  Section \ref{sec:discrete} 
remain invariant under appropriate scaling of the space-time grid. 

In Section \ref{sec:discretization} we recall the appropriate space-time grid and an associated discretization  
of a Brownian motion into a scaled SSRW as proposed in \cite{CoKi19b}. 
We proceed to introducing a discretization of a continuous \eqref{dSEP} to a scaled discrete delayed problem 
in Section \ref{sec:(dSEP)-discretization} 
and furthermore explain how our results of Section \ref{sec:discrete} apply in this case. 

It is subject of Section \ref{sec:taking-limits} to recover continuous Root and Rost solutions to \eqref{dSEP} 
as limits of solutions to a appropriately scaled and discretized \eqref{dSEP}. 
This will give us convergence of the LHS of \ref{eq:discrete-delayed-Root} to the LHS of \ref{eq:delayed-Root} 
(resp. convergence of the LHS of \ref{eq:discrete-delayed-Rost} to the LHS of \ref{eq:delayed-Rost}). 

In Section \ref{sec:OSP-convergence} these limiting results are furthermore applied 
to the recovery of the optimal stopping problem and its value, 
namely establishing convergences of the RHS of \ref{eq:discrete-delayed-Root} to the LHS of \ref{eq:discrete-delayed-Root} 
as well as \ref{eq:discrete-delayed-RootOSP} to \ref{eq:discrete-delayed-RootOSP} (analogously for Rost). 
%
%
%
\subsection{Discretization of a Brownian motion and its stopping times}
\label{sec:discretization}
We begin by recalling the discretization of a Brownian motion $(W_t)_{t \geq 0}$ into a scaled SSRW as proposed in \cite{CoKi19b}.
For a given $N \in \mathbb{N}$ consider a discrete set of spatial points 
$\mathcal{X}^N := \{x^N_1, \dots, x^N_{L^N}\}$ such that 
\begin{itemize}
\item[\itembullet] $|x^N_{k+1} - x^N_{k}| = \frac{1}{\sqrt{N}}$ for $j = 1, \dots, L^N-1$.
\item[\itembullet] $L^N \sim \sqrt{N}$.
\end{itemize} 
We define a random walk $(Y^N_k)_{k \in \mathbb{N}}$ in the following way. 
Let $Y^N_k := W_{\tau^N_k}$ where the $\tau^N_k$ are given as those times when a Brownian Motion hits a new grid point in $\mathcal{X}^N$. 
More precisely, let $(W_t)_{t \geq 0}$ be a Brownian motion started in $W_0 \sim \lambda$. 
Then we define
\begin{itemize}
\item[\itembullet] $\tau^N_0 = \inf \{t \geq 0 : W_t \in \mathcal{X}^N\}$, and 
\item[\itembullet] given $W_{\tau^N_k} = x^N_j$ we define $\tau^N_{k+1} := \inf \{t \geq \tau^N_k : W_t \in \{x^N_{j-1}, x^N_{j+1} \} \}$.
\end{itemize} 
Note, this definition implies that the random walk $(Y^N_k)_{k \in \mathbb{N}}$ 
is started according to a discretization $\lambda^N$ of the starting law $\lambda$ given by 
\[ 
    \lambda^N := \mathcal{L}(Y^N_0) = \mathcal{L}^{\lambda}(W_{\tau^N_0}).
\]
Now let $\tau$ be a stopping time for the Brownian Motion $W_t$. 
Then we can define a stopping rule $\tilde \sigma^N$ for the discretized process $Y^N_k$ via
\[
   \tilde \sigma^N := \inf \{t \in \mathbb{N}: \tau^N_{t-1} < \tau \leq \tau^N_t  \}. 
\]
For such a discretized stopping time we have the following convergence results.
\begin{lemma}[Cf.{{\cite[Lemma 5.2]{CoKi19b}}}] \label{lem:limit}
Let $\tilde \sigma^N$ be the discretization of a stopping time $\tau$ of a Brownian motion $W_t$ as defined above. 
Then 
\begin{alignat}{3}
    Y^N_{\tilde \sigma^N}&\rightarrow W_{\tau} \quad &&\text{almost surely, and}\\
    \frac{\tilde \sigma^N}{N}&\rightarrow\tau \quad &&\text{in probability, as } N\rightarrow\infty.
\end{alignat}
In particular, this implies that for $\mu^N := \mathcal{L}(Y^N_{\tilde \sigma^N})$ 
and for $\mu := \mathcal{L}^{\lambda}(W_{\tau})$ we have that $\mu^N \rightarrow \mu$.
Moreover, for every $T \geq 0$ we have
\begin{alignat*}{3}
    Y^N_{\tilde \sigma^N \wedge NT}&\rightarrow W_{\tau \wedge T} \quad &&\text{almost surely, and}\\
    \frac{\tilde \sigma^N \wedge NT}{N}&\rightarrow \tau \wedge T \quad &&\text{in probability, as } N\rightarrow\infty.
\end{alignat*}
\end{lemma}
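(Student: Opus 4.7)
The plan is to exploit the path-wise geometric structure of the grid discretization: between consecutive grid-hitting times $\tau^N_{k-1}$ and $\tau^N_k$, the Brownian path is confined to the interval $[W_{\tau^N_{k-1}} - 1/\sqrt{N},\, W_{\tau^N_{k-1}} + 1/\sqrt{N}]$. Since by definition $\tau \in (\tau^N_{\tilde \sigma^N - 1}, \tau^N_{\tilde \sigma^N}]$, a triangle inequality immediately yields the deterministic pathwise bound $|Y^N_{\tilde \sigma^N} - W_\tau| \leq 2/\sqrt{N}$. This settles the first convergence uniformly in $\omega$, and delivers $\mu^N \to \mu$ by the continuous mapping theorem.

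For the time convergence, I would set $\Delta^N_k := \tau^N_k - \tau^N_{k-1}$. By the strong Markov property these increments are iid with $\mathbb{E}[\Delta^N_1] = 1/N$ (the classical exit-time formula for BM from an interval of radius $1/\sqrt{N}$) and $\mathrm{Var}(\Delta^N_1) = O(1/N^2)$. An $L^2$-estimate on the martingale $n \mapsto \tau^N_n - n/N$ then gives $\tau^N_{\lfloor Nt \rfloor}/N \to t$ uniformly in $t$ on compacts, in probability. On the other hand, by the strong Markov property of $W$ applied at the \emph{continuous-time} stopping time $\tau$, the excess time $\tau^N_{\tilde \sigma^N} - \tau$ is dominated by the first exit time of $W$ from an interval of radius at most $2/\sqrt{N}$ around $W_\tau$, hence has expectation $O(1/N)$; this shows $\tau^N_{\tilde \sigma^N} \to \tau$ in $L^1$. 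Inverting the monotone relation $n \leftrightarrow \tau^N_n \approx n/N$ using these two ingredients then produces the desired $\tilde \sigma^N/N \to \tau$ in probability.

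The truncated assertions follow from the same pathwise bound and a short additional argument: the one extra ingredient needed is $Y^N_{\lfloor NT \rfloor} \to W_T$ in probability for fixed $T$, which is a consequence of $\tau^N_{\lfloor NT \rfloor}/N \to T$ established above, combined with almost-sure continuity of the Brownian path. Comparing $Y^N_{\tilde \sigma^N \wedge \lfloor NT \rfloor}$ with $W_{\tau \wedge T}$ then reduces to the already-proven statements by splitting into the events $\{\tilde \sigma^N \leq NT\}$ and $\{\tilde \sigma^N > NT\}$.

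The main obstacle I anticipate is controlling the straddling increment $\tau^N_{\tilde \sigma^N} - \tau$. Since $\tilde \sigma^N$ depends on $\tau$ and is \emph{not} a stopping time of the discrete filtration, the naive iid argument is unavailable and size-biasing could a priori inflate this increment. The way around it is precisely the stochastic domination mentioned above, which leverages the strong Markov property of $W$ at the $W$-stopping time $\tau$, bypassing the discrete filtration entirely.
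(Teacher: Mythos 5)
Your proposal is correct, and it is genuinely more self-contained than what the paper does: the paper essentially outsources the two main convergences to \cite[Lemma 5.2]{CoKi19b} (remarking only that the optimality assumption there is never used), and for the ``moreover'' part gives two lines --- $\tau^N_{NT}\to T$ a.s.\ by the strong law of large numbers, together with the pathwise identity $Y^N_{\tilde\sigma^N\wedge NT}=W_{\tau^N_{\tilde\sigma^N}\wedge\tau^N_{NT}}$ --- from which the truncated statements follow by path continuity. Your route (the deterministic bound $|Y^N_{\tilde\sigma^N}-W_\tau|\le 2/\sqrt N$ from confinement between consecutive grid-hitting times; the i.i.d.\ increments with mean $1/N$ and variance $O(1/N^2)$ plus a maximal inequality giving $\sup_{t\le T}|\tau^N_{\lfloor Nt\rfloor}/N-t|\to 0$; and the strong Markov property at the $W$-stopping time $\tau$ to dominate the straddling increment $\tau^N_{\tilde\sigma^N}-\tau$ by an exit time of an $\mathcal F_\tau$-measurable interval of radius $\le 2/\sqrt N$, hence $O(1/N)$ in $L^1$) is exactly the right way to avoid the size-biasing trap you identify, and it yields quantitative error rates that the citation-based argument does not display. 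Two small points of comparison: the paper's identity $Y^N_{\tilde\sigma^N\wedge NT}=W_{\tau^N_{\tilde\sigma^N}\wedge\tau^N_{NT}}$ is slicker than your splitting into $\{\tilde\sigma^N\le NT\}$ and $\{\tilde\sigma^N>NT\}$, because on the first event one may still have $\tau\in(T,\tau^N_{NT}]$, so you need the extra (easy) remark that then $|W_\tau-W_T|$ is small by continuity since $\tau-T\le\tau^N_{NT}-T\to0$; and as written your truncated spatial convergence comes out in probability rather than almost surely, since you only establish $\tau^N_{\lfloor NT\rfloor}/N\to T$ in probability --- to match the stated a.s.\ mode you would upgrade this via fourth moments and Borel--Cantelli (or the SLLN argument the paper invokes), which is routine and in any case convergence in probability already suffices for the weak-convergence consequences used later.
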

Note that while in \cite{CoKi19b} this lemma was stated for stopping times $\tau$ that are optimizers of a given {\sf (OptSEP)}, 
this characterization is not essential to the proof given, thus we state the lemma in more generality. 
Given the convergence of $\frac{\tilde \sigma^N}{N} \rightarrow \tau$ in probability, 
the convergence of $\frac{\tilde \sigma^N \wedge NT}{N} \rightarrow \tau \wedge T$ in probability is clear.
Regarding the convergence of the processes, it is important to observe that by construction of the discretization 
we have $\tau^N_{NT} \rightarrow T$ a.s. by the strong law of large numbers. 
Note furthermore that $Y^N_{\tilde \sigma^N \wedge NT} = W_{\tau^N_{\tilde \sigma^N} \wedge \tau^N_{NT}}$, 
hence the almost sure convergence $Y^N_{\tilde \sigma^N \wedge NT} \rightarrow W_{\tau \wedge T}$ is clear. 
%
%
%
%
%
%
%
%
%
%
\subsection[A Suitable Discretization of Root and Rost solutions to \eqref{dSEP}]{A Suitable Discretization of Root and Rost solutions to (dSEP)} \label{sec:(dSEP)-discretization}
Consider a continuous \eqref{dSEP} given by the triple $(\lambda, \eta, \mu)$. 
Theorem \ref{thm:dRoot} (resp. Theorem \ref{thm:dRost}) gives us existence of a Root (resp. Rost) barrier $R$ in $[0, \infty) \times \mathbb{R}$ 
such that
\[
    \rho = \rho_R := \inf \{ t \geq \eta : (t, W_t) \in R \}
\] 
is a stopping time embedding the given measure $\mu$. 
We propose the following discretization $(\lambda^N, \tilde\eta^N, \mu^N)$ of $(\lambda, \eta, \mu)$.
\begin{align*}
    \lambda^N       &:= \mathcal{L}(Y^N_0),
\\  \tilde \eta^N   &:= \inf \{t \in \mathbb{N}: \tau_{t-1}^N < \eta \leq \tau_{t}^N \}  \,\, \text{ and }
\\  \mu^N           &:= \mathcal{L}^{\lambda^N}(Y^N_{\tilde \sigma^N}) \,\, \text{ where }
\\  \tilde \sigma^N &:= \inf \{t \in \mathbb{N}: \tau_{t-1}^N < \rho \leq \tau_{t}^N \}. 
\end{align*}
(Note that since $\rho \geq \eta$, it is clear that we also have $\tilde{\sigma}^N \geq \tilde \eta^N$.) 
If the measure $\mu$ is supported also beyond the grid $\mathcal{X}^N$ given at the discretization step $N$ we make the boundaries of the grid, $x^N_1$ resp. $x^N_{L^N}$ absorbing barriers for the Brownian motion to ensure that the random walk only moves on the given grid. 

The following convergences are now clear by the definitions in Section \ref{sec:discretization} and Lemma \ref{lem:limit} 
\begin{align*}
    \lambda^N &\Rightarrow \lambda
\\  \eta^{(N)} := \frac{\tilde \eta^N}{N} &\xrightarrow{p} \eta
\\    \mu^N   &\Rightarrow \mu.
\end{align*}
Moreover, it is important to observe that the conditions $\lambda^N \leq_c \mathcal{L}^{\lambda^N}\left( Y^N_{\tilde \eta^N} \right) \leq_c \mu^N$ and $\E \left[\tilde \eta^N \right] < \infty$ are satisfied. 
Therefore, by Theorem \ref{thm:ddSEP-Root} (resp. Theorem \ref{thm:ddSEP-Rost}) 
and rescaling there exists a field of Root (resp. Rost) stopping probabilities $(r^N_t(x))_{(t,x) \in \mathbb{N} \times \mathcal{X}^N}$ 
where $\mathcal{X}^N \subseteq \frac{1}{\sqrt{N}} \mathbb{Z}$ such that 
\[
    \tilde \rho^N := \inf\left\{ t \geq \tilde \eta^N : r^N_t(Y^N_{t}) > u_t \right\}
\]
defines a discrete Root (resp. Rost) stopping time embedding the measure $\mu^N$ into the SSRW $Y^N$, 
solving the discrete (and rescaled) \eqref{dSEP} given by $(\lambda^N, \tilde\eta^N, \mu^N)$. 

By $\left(W_t^{(N)}\right)_{t \geq 0}$ we denote the rescaled continuous version of the random walk $Y^N$ defined via 
\[
    W_t^{(N)} :=Y_{\lfloor Nt \rfloor}^N + \left(Nt - \lfloor Nt \rfloor \right) \left(Y_{\lfloor Nt \rfloor + 1}^N - Y_{\lfloor Nt \rfloor}^N \right).
\]
Note that we have $\left(\frac{\tilde \eta^N}{N}, Y^{N}_{\tilde \eta^N} \right) = \left(\eta^{(N)}, W_{\eta^{(N)}}^{(N)} \right)$ almost surely. 

We now want to define Root and Rost stopping times for $\left(W_t^{(N)}\right)_{t \geq 0}$.
By definition of the discretization we have $W^{(N)}_t \in \mathcal{X}^N$ if and only if $Nt \in \mathbb{N}$. 
Hence we can extend a Root field of stopping probabilities $(r^N_t(x))_{(t,x) \in \mathbb{N} \times \mathcal{X}^N}$ to $[0, \infty) \times \mathbb{R}$ by setting
\[
    r^N_t(x) := \begin{cases}
        r^N_{\lfloor t \rfloor}(x) &\text{ for } x \in \mathcal{X}^N, 
\\      0 &\text{ for all } t \text{ when } x \in \mathbb{R} \setminus \mathcal{X}^N.
\end{cases}
\]
Then we  define
\begin{equation} \label{eq:rho-N-def}
    \rho^{(N)} := \inf\left\{t \geq \eta^{(N)} : r^N_{ \lfloor Nt \rfloor}\left(W^{(N)}_t \right) > u_{\lfloor Nt \rfloor } \right\}
\end{equation}
in order to have
\[
    \left(\tilde \rho^N, Y^N_{\tilde \rho^N} \right) = \left(\rho^{(N)} , W^{(N)}_{\rho^{(N)}}\right) \, \text{ almost surely}.
\]
Furthermore $(r^N_t(x))_{(t,x) \in \mathbb{N} \times \mathcal{X}^N}$ induce a Root barrier $\tilde R^N \subseteq \mathbb{N} \times \frac{1}{\sqrt{N}} \mathbb{Z}$ via
\[
    \tilde R^N := \left\{(t,x) : r^N_t(x) > 0 \right\},
\]
and a corresponding Root barrier in $[0, \infty) \times \mathbb{R}$ can be defined in the following way
\begin{equation} \label{eq:R^N-def}
  R^{(N)} := \left\{ (t,x) \in [0, \infty) \times \mathbb{R} :   \left( \lfloor Nt \rfloor,x \right) \in \tilde R^N \right\}.
\end{equation}
We will sometimes use the notation $\tilde R^N =: \tilde R^N_+$ (resp. $R^{(N)} =: R^{(N)}_+$) and 
analogously define $\tilde R^N_-$ (resp. $R^{(N)}_-$) via
\[
    \tilde R^N_- := \left\{(t,x) : r^N_t(x) = 1 \right\}.
\]
Note that $\tilde R^N_- \subseteq \tilde R^N_+$ and $R^{(N)}_- \subseteq R^{(N)}_+$.
We make the same definitions for Rost fields of stopping probabilities replacing the floor function by a ceiling function. 
%
%
%
%
%
%
%
%
%
%
\subsection{Taking Limits} \label{sec:taking-limits}
The main objective of this section is to show the following convergence of the discrete time objects defined above to their continuous conterparts. 
This will be carried out individually for Root and Rost respectively. 

We follow \cite{Ro69} for the definition of a metric on the space of barriers. 
Let $\mathscr{R}$ denote the space of all Root and Rost barriers in $[0, \infty] \times [-\infty, \infty]$. 
Then a complete metric on this space can be defined in the following way. 
Consider the map $f: [0, \infty] \times [-\infty, \infty]  \rightarrow [0,1] \times [-1,1]$, $f(t,x) := \left(\frac{t}{1+t}, \frac{x}{1+|x|} \right)$. 
Let $d$ denote the ordinary Euclidean metric on $\mathbb{R}$. 
Then for two barriers $R,S \subseteq [0, \infty] \times [-\infty, \infty]$ \emph{Root's metric} $d_R$ on the space of all barriers $\mathscr{R}$ is defined as follows
\[
    d_R(R,S) := \max \left\{ \sup_{(t,x) \in R} d\left(f(t,x), f(S) \right), \sup_{(s,y) \in S} d\left(f(R), f(s,y) \right) \right\}.
\]
%
%
%
%
\subsubsection{A Limit of Root Stopping Times}

\begin{lemma} \label{lem:main-convergence-Root}
Let $\left(\rho^{(N)}\right)_{n \in \mathbb{N}}$ be a sequence of discrete Root stopping times as defined in \eqref{eq:rho-N-def}. 
Then there exists a Root barrier $R \subseteq [0, \infty) \times \mathbb{R}$ such that we have the following convergence
\begin{equation} \label{eq:main-convergence-Root}
     \left( \rho^{(N)}, W^{(N)}_{\rho^{(N)}} \right) \xrightarrow{d} \left( \rho, W_{\rho} \right)
\end{equation}
where
\[
    \rho := \inf\left\{t \geq \eta : (t, W_t)   \in R \right\}.
\]
\end{lemma}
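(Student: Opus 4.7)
The plan is to combine Root's compactness theorem for the space of barriers with the Donsker-type convergence of Section \ref{sec:discretization}, sandwich the hitting times between those of inner and outer barriers $R^{(N)}_\pm$, and upgrade subsequential convergence to convergence of the full sequence via a Loynes-type uniqueness for continuous delayed Root solutions.

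First I would invoke compactness: the space $\mathscr{R}$ of Root barriers in $[0,\infty]\times[-\infty,\infty]$ equipped with $d_R$ is compact, so along some subsequence $(N_k)$ one has $R^{(N_k)} \to R$ for a barrier $R \in \mathscr{R}$. Simultaneously, Lemma \ref{lem:limit} furnishes $W^{(N)} \to W$ uniformly on compacts (via the coupling implicit in the discretization) together with $(\eta^{(N)}, W^{(N)}_{\eta^{(N)}}) \to (\eta, W_\eta)$.

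Next I would transfer this to the hitting times using the sandwich $R^{(N)}_- \subseteq R^{(N)} \subseteq R^{(N)}_+$, whose symmetric difference is precisely the boundary layer where randomised stopping may occur. Writing $\rho^{(N)}_\pm$ for the delayed first-entry times into $R^{(N)}_\pm$ after $\eta^{(N)}$, one has $\rho^{(N)}_- \geq \rho^{(N)} \geq \rho^{(N)}_+$. Since hitting times of closed outer barriers are lower semicontinuous and hitting times of open inner barriers are upper semicontinuous in $(\text{path},\text{barrier})$, the $d_R$-convergence combined with the path convergence squeezes $\rho^{(N_k)}$ to $\rho := \inf\{t \geq \eta : (t,W_t) \in R\}$, provided $W$ almost surely enters the interior of $R$ strictly after first contact.

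Then, to identify $\rho$ as a Root solution of the continuous dSEP $(\lambda,\eta,\mu)$, I would pass the marginal convergence $\mu^N \Rightarrow \mu$ through the continuous mapping theorem applied to $(W^{(N_k)}, \rho^{(N_k)}) \to (W, \rho)$ to obtain $W_\rho \sim \mu$. By the Loynes-type uniqueness for Root solutions alluded to after Theorem \ref{thm:dRoot}, such a $\rho$ is almost surely unique; consequently every subsequence of $(\rho^{(N)}, W^{(N)}_{\rho^{(N)}})$ has a further subsequence converging in law to the same limit $(\rho, W_\rho)$, which yields the full convergence \eqref{eq:main-convergence-Root}.

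The hard part will be the squeeze step: continuity of hitting times under simultaneous perturbation of paths and barriers can fail if $R$ has flat portions touched tangentially by $W$, or if $W$ sticks to the lateral boundary of $R$ before time $\eta$. The inner/outer barrier sandwich is the device designed to bypass these pathologies, in combination with the fine path properties of Brownian motion (no positive occupation time at any fixed spatial level) and with the regularity of the delay $\eta$; the parallel undelayed arguments in \cite{CoKi19b} and \cite{BaCoGrHu21} provide the template to adapt.
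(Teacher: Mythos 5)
Your plan is sound and follows the same overall strategy as the paper: compactness of the barrier space under Root's metric to extract a limiting barrier $R$, Donsker-type path convergence from Lemma \ref{lem:limit}, the inner/outer sandwich $R^{(N)}_- \subseteq R^{(N)} \subseteq R^{(N)}_+$ with $d_R\bigl(R^{(N)}_+,R^{(N)}_-\bigr)\le 1/N$ to neutralise the boundary randomisation, and stability of delayed hitting times adapted from \cite{CoKi19b}. Two differences are worth noting. First, the paper organises the squeeze differently: it introduces the auxiliary time $\rho^N:=\inf\{t\ge\eta:(t,W_t)\in R^{(N)}\}$ (the true Brownian motion and true delay hitting the discrete barrier), proves $\rho^N\xrightarrow{p}\rho$ by a delayed version of Root's convergence lemma (Lemma \ref{lem:delayed-original-Root}), and separately shows $\bigl|(\rho^{(N)},W^{(N)}_{\rho^{(N)}})-(\rho^N,W_{\rho^N})\bigr|\xrightarrow{p}0$ via delayed versions of Lemmas 5.6--5.7 of \cite{CoKi19b}; your proposal instead attempts a direct joint-perturbation squeeze. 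Be aware that your appeal to ``lower/upper semicontinuity of hitting times in (path, barrier)'' is not an off-the-shelf fact: it is precisely the content of those delayed lemmas, and the genuinely delicate points there are (a) quantifying, à la Root, that closeness of barriers plus closeness of delays forces $\mathbb{P}[\bar\tau>\tau+\varepsilon]<\varepsilon$, and (b) the Portmanteau/Donsker argument of Lemma \ref{lem:5.7}, which must handle mass that \emph{appears at time $\eta$ exactly on} the limiting barrier -- a case that does not arise in the undelayed template and that your clause ``provided $W$ enters the interior of $R$ strictly after first contact'' glosses over. Second, your final step (identify $W_\rho\sim\mu$ by continuous mapping and invoke Loynes uniqueness to upgrade subsequential to full-sequence convergence) is an addition the paper does not make -- its proof settles for convergence along a subsequence of barriers -- and it is a legitimate and arguably cleaner way to obtain the convergence of the whole sequence, at the price of first establishing the limit identification that the paper only records later in Corollary \ref{cor:LHS-conv}.
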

\begin{proof}
The Root barriers $(R^{(N)})_{N \in \mathbb{N}}$ converge (possibly along a subsequence) to a Root barrier $R$ in Root's metric, 
details can be found in \cite{Ro69}.

We define the following auxiliary hitting time
\begin{align*}
    \rho^N     &:= \inf\left\{t \geq \eta : (t, W_t) \in R^{(N)}\right\}
\end{align*}
and show convergence in two steps.
\begin{enumerate}[(i)]
\item We have $\rho^N \xrightarrow{p} \rho$. 
This is basically a consequence of a delayed version of Root's original convergence lemma, \cite[Lemma 2.4]{Ro69}, which we provide in Lemma \ref{lem:delayed-original-Root}.
\item We furthermore have
\[
    \left| \left(\rho^{(N)}, W^{(N)}_{\rho^{(N)}}\right) - \left(\rho^N, W_{\rho^N}\right)\right| \xrightarrow{p} 0,
\]
which is a delayed version of \cite[Lemma 5.6]{CoKi19b} which we give in Lemma \ref{lem:5.6-Root}. 
\end{enumerate}
\end{proof}
%
%
%
%
%
%
%
We formulate a delayed version of Root's convergence lemma, \cite[Lemma 2.4]{Ro69}
\begin{lemma} \label{lem:delayed-original-Root}
Let $\eta$ be a delay stopping time and $R$ be a Root barrier such that for the corresponding delayed hitting time  
\[
    \tau = \inf \left\{t \geq \eta : (t, W_t) \in R \right\}
\]
we have $\E[\tau]<\infty$.

Then for any $\varepsilon > 0$ there exist $\delta, \tilde \delta > 0$ 
such that if for another Root barrier $\bar R$ we have $d_R(R, \bar R)< \delta$ 
and for another delay stopping time $\bar \eta$ we have $\mathbb{P}[|\eta - \bar{\eta}|>\tilde \delta] < \tilde \delta$, 
then for the corresponding delayed hitting time $\bar \tau$ defined as
\[
    \bar \tau = \inf \left\{t \geq \bar{\eta} : (t, W_t) \in \bar R \right\},
\]
it holds that
\[
    \mathbb{P}[\bar \tau > \tau + \varepsilon] < \varepsilon.
\]
\end{lemma}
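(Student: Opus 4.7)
The plan is to reduce the statement to the undelayed Root convergence lemma \cite[Lemma~2.4]{Ro69}, handling the barrier perturbation and the delay perturbation separately. To that end, introduce the auxiliary stopping time
\[
    \tau' := \inf\{s \geq \eta : (s, W_s) \in \bar R\},
\]
which shares the delay $\eta$ with $\tau$ but the perturbed barrier $\bar R$ with $\bar \tau$. The inclusion $\{\bar\tau > \tau + \varepsilon\} \subseteq \{\tau' > \tau + \varepsilon/2\} \cup \{\bar\tau > \tau' + \varepsilon/2\}$ then reduces the problem to bounding each piece by $\varepsilon/2$.

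For the first piece, I would condition on $\mathcal{F}_\eta$ and apply Root's original lemma pointwise. By the strong Markov property the shifted process $\hat W_u := W_{\eta + u}$ is a Brownian motion started at $W_\eta$, and $\tau - \eta$, $\tau' - \eta$ are first hitting times (by $\hat W$) of the time-shifted Root barriers $R - \eta$ and $\bar R - \eta$, where $R - t := \{(s-t, x) : (s, x) \in R,\ s \geq t\}$. Picking a horizon $T_0$ with $\mathbb{P}[\tau > T_0] < \varepsilon/4$ (possible since $\E[\tau] < \infty$) and restricting to $\eta \leq T_0$, one checks that $\sup_{t \in [0, T_0]} d_R(R - t, \bar R - t)$ can be made arbitrarily small by choosing $\delta$ small, using uniform continuity of the embedding $f(t, x) = (t/(1+t), x/(1+|x|))$ in $t$ on compacts. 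The undelayed Root lemma then yields the desired pointwise bound, which integrates to $\mathbb{P}[\tau' > \tau + \varepsilon/2] < \varepsilon/2$.

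For the second piece, I would work on the event $A := \{|\eta - \bar\eta| \leq \tilde\delta\}$, whose complement has probability less than $\tilde\delta$. On $A \cap \{\bar\eta \leq \eta\}$ the infimum defining $\bar\tau$ is over a larger set than that defining $\tau'$, so $\bar\tau \leq \tau'$ trivially. On $A \cap \{\bar\eta > \eta\}$, either $\tau' \geq \bar\eta$ (in which case $\bar\tau = \tau'$), or $\tau' < \bar\eta$, in which case $(\tau', W_{\tau'}) \in \bar R$ and the Root property gives $(s, W_{\tau'}) \in \bar R$ for all $s \geq \tau'$. It then remains to show that from $(\bar\eta, W_{\bar\eta})$ the Brownian motion hits $\bar R$ within time $\varepsilon/2$; this combines the path-continuity estimate $\mathbb{P}[\sup_{u \leq \tilde\delta} |W_{\eta + u} - W_\eta| > \kappa] \to 0$ as $\tilde\delta \to 0$ with the observation that a Brownian motion starting near $W_{\tau'}$ hits the vertical ray $\{(s, W_{\tau'}) : s \geq \bar\eta\} \subseteq \bar R$ in short time with high probability.

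The main obstacle is this last subcase: even after $\bar R$ has been hit at $W_{\tau'}$, the Brownian motion sits at a different spatial position $W_{\bar\eta}$ by the time the perturbed delay elapses, and must be shown to return to a neighborhood of $W_{\tau'}$ (which the Root property has frozen inside $\bar R$) quickly. A further subtlety is the uniform-in-$t$ control of $d_R(R - t, \bar R - t)$ needed in the first step, where Root's particular choice of metric, together with the truncation at $T_0$, is what makes the reduction go through.
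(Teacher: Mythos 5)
Your overall mechanism is the right one, and it is the same one the paper uses: truncate using $\E[\tau]<\infty$, note that a point of $\bar R$ lies within $\tilde\varepsilon$ of $(\tau,W_\tau)$ so that, by the Root structure, its vertical ray is frozen inside $\bar R$ and is crossed by the oscillation of the path shortly after $\tau$, and use $\bar\eta\le\eta+\tilde\delta\le\tau+\tilde\delta$ to guarantee the perturbed delay has elapsed by the time this crossing occurs. Your second piece (comparing $\bar\tau$ with $\tau'$, including the case $\tau'<\bar\eta$ via path continuity over an interval of length $\tilde\delta$ and the quick hitting of the nearby frozen level) is sound in this spirit.

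The genuine gap is in the first piece. You propose to condition on $\mathcal{F}_\eta$ and invoke Root's undelayed lemma pointwise for the shifted barriers $R-\eta$ and $\bar R-\eta$. But that lemma, for a fixed barrier and fixed initial law, produces a threshold $\delta$ depending on that data: on the (here conditional) expected hitting time, on a spatial bound for the stopped position, and on the barrier geometry in the relevant compact window. Conditionally on $\mathcal{F}_\eta$ all of these are random, so you obtain a random threshold $\delta'(\omega)$; you cannot simply ``integrate'' the pointwise bound against one deterministic $\delta$ without first showing that $\delta'$ stays bounded away from zero off an event of small probability. Making that uniform requires reopening Root's proof and fixing $\tilde\varepsilon$, a horizon $T$ with $\mathbb{P}[\tau\ge T]<\varepsilon/4$ and a spatial bound $M$ globally --- which is exactly the paper's one-shot argument: it works on the single event where the path oscillates by more than $\tilde\varepsilon$ in $(\tau+\tilde\varepsilon,\tau+\varepsilon)$, $\tau<T$, $|W_\tau|<M$ and $\bar\eta<\eta+\tilde\varepsilon$, and no intermediate time $\tau'$ or conditional use of the undelayed lemma is needed. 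Relatedly, your auxiliary claim that $\sup_{t\in[0,T_0]}d_R(R-t,\bar R-t)$ is small when $d_R(R,\bar R)<\delta$ is not innocuous: the shift truncates the barriers at time $t$ (the near-partner of a point $(s,x)$ with $s\ge t$ may have time coordinate below $t$, and one must pass to its vertical translate using the Root structure), and the map $f$ distorts time so that a $d_R$-ball of radius $\delta$ at times of order $T_0$ has Euclidean width of order $\delta(1+T_0)^2$; both issues are fixable, but they need an argument rather than an appeal to uniform continuity of $f$. Once these repairs are made your route goes through, but it then coincides in substance with the paper's direct adaptation of Root's argument rather than being a reduction to it as a black box.
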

\begin{proof}
Similar to \cite{Ro69} we make the following choices for $\epsilon > 0$.
\begin{itemize}
\item[\itembullet] Choose $\tilde \varepsilon > 0$ such that $\tilde \varepsilon < \frac{\epsilon}{4}$ and
\[
    \mathbb{P}\left[\sup_{\tilde\varepsilon < t < \varepsilon} W_t > \tilde \varepsilon \text{ and }  
    \inf_{\tilde\varepsilon < t < \varepsilon} W_t < - \tilde \varepsilon
\right] > 1 -\frac{\varepsilon}{4}.
\]
\item[\itembullet] Choose $T > \frac{3 \E[\tau]}{\varepsilon}$, then $\mathbb{P}[\tau \geq T] < \frac{\varepsilon}{4}$ (due to Markov's inequality).
\item[\itembullet] Choose $M$ and $\delta >0$ such that if $(t,x) \in R$, $t \leq T$, $|x| \leq M$ and $d_R(R, \bar R)< \delta$, then $d((t,x), \bar R) \leq \tilde\varepsilon$. 
\item[\itembullet] Choose $\tilde \delta = \tilde \varepsilon$, then $\mathbb{P}[|\eta - \bar{\eta}|>\tilde \varepsilon] < \tilde \varepsilon$.
\end{itemize}
Then we consider the set
\begin{align*}
A := \{ \omega \in \Omega : 
     (i)   &    \sup_{\tau(\omega) + \tilde\varepsilon < t < \tau(\omega) + \varepsilon} (W_t(\omega) - W_{\tau}(\omega)) > \tilde \varepsilon \text{ and }  
\\         & \quad \inf_{\tau(\omega) + \tilde\varepsilon < t < \tau(\omega) + \varepsilon} (W_t(\omega) - W_{\tau}(\omega)) < - \tilde \varepsilon,
\\   (ii)  & \;\; \tau(\omega) < T,
\\   (iii) & \;\; |W_{\tau}(\omega)| < M,
\\   (vi)  & \;\; \tilde \eta < \eta + \tilde \varepsilon. \}
\end{align*}
We then see from the definitions made above that for $\omega \in A$ follows $\bar \tau(\omega) < \tau(\omega) + \epsilon$. 
Moreover, these choices and the strong Markov property give us $\mathbb{P}[A] > 1-\varepsilon$.
\end{proof}
%
%
%
%
%
%
In order to complete step (ii) in the proof of Lemma \ref{lem:main-convergence-Root}, 
we need to provide the following auxiliary convergence result, 
which is basically a delayed version of \cite[Lemma 5.7]{CoKi19b}.
\begin{lemma} \label{lem:5.7}
Consider the stopping times
\begin{align*}
    \rho^{(M,N)} &:= \inf \left\{t \geq \eta^{(N)} : \left(t, W^{(N)}_t\right) \in R^{(M)} \right\},
\\  \rho^{M}     &:= \inf \left\{t \geq \eta : \left(t, W_t \right) \in R^{(M)} \right\},
\end{align*}
where $\eta^{(N)} \rightarrow \eta$ in probability.
Then we have the following convergence
\begin{equation} \label{eq:lem:5.7-conv}
    \left(W^{(N)}_{\rho^{(M,N)}}, \rho^{(M,N)} \right) \xrightarrow[N \rightarrow \infty]{d} \left(W_{\rho^M}, \rho^M\right).
\end{equation}
\end{lemma}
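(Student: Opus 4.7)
The plan is to upgrade the weak convergence $W^{(N)} \Rightarrow W$ in $C([0,\infty))$ (which holds by Donsker's theorem applied to the rescaled random walks constructed in Section \ref{sec:discretization}) together with the hypothesis $\eta^{(N)} \xrightarrow{p} \eta$ to almost sure convergence via a joint application of the Skorokhod representation theorem. After passing to a subsequence, we may work on a common probability space on which $W^{(N)} \to W$ uniformly on every compact interval $[0,T]$ and $\eta^{(N)} \to \eta$, both almost surely. It then suffices to prove the pathwise convergence $\left( \rho^{(M,N)}, W^{(N)}_{\rho^{(M,N)}}\right) \to \left(\rho^M, W_{\rho^M}\right)$ almost surely, from which the distributional convergence \eqref{eq:lem:5.7-conv} follows by continuous mapping.

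The heart of the proof is establishing $\rho^{(M,N)} \to \rho^M$ almost surely. For the upper bound $\limsup_N \rho^{(M,N)} \leq \rho^M$, we adapt the argument of Lemma \ref{lem:delayed-original-Root}: on the event $\{\rho^M < T, |W_{\rho^M}| \leq M\}$ for suitable constants, the oscillatory behaviour of $W$ immediately after $\rho^M$ guarantees that on any interval $(\rho^M, \rho^M + \varepsilon)$ the path $W$ enters the interior of $R^{(M)}$. Uniform convergence $W^{(N)} \to W$ then forces $W^{(N)}$ to enter $R^{(M)}$ during the same window for all $N$ sufficiently large; since $\eta^{(N)} \to \eta \leq \rho^M$, the delay constraint $\rho^{(M,N)} \geq \eta^{(N)}$ does not obstruct this, so $\rho^{(M,N)} \leq \rho^M + \varepsilon$ eventually.

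For the lower bound $\liminf_N \rho^{(M,N)} \geq \rho^M$, suppose for contradiction that a subsequence satisfies $\rho^{(M,N_k)} \to t^* < \rho^M$. Since $\rho^{(M,N_k)} \geq \eta^{(N_k)}$ and $\eta^{(N_k)} \to \eta$, we have $t^* \geq \eta$; uniform convergence of the paths together with continuity of $W$ implies that $(t^*, W_{t^*})$ lies in the closure of $R^{(M)}$. The standard Root barrier argument (as in \cite[Lemma 5.7]{CoKi19b}) shows that Brownian motion almost surely does not first touch a Root barrier at a boundary point without immediately entering its interior, which would contradict $\rho^M > t^*$. Combining both bounds gives $\rho^{(M,N)} \to \rho^M$ almost surely, and then $W^{(N)}_{\rho^{(M,N)}} \to W_{\rho^M}$ follows from uniform convergence of paths on $[0, \rho^M + 1]$ and continuity of $W$ at $\rho^M$.

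The main obstacle is the intrinsic discontinuity of the hitting-time functional at the barrier boundary, and the interplay between the delay convergence and this discontinuity: a naive bound would allow $\rho^{(M,N)}$ to drift below $\rho^M$ if $\eta^{(N)}$ approached $\eta$ from below while $W^{(N)}$ briefly touched the boundary of $R^{(M)}$. This is resolved by the fluctuation argument for Brownian motion together with the fact that, because $\eta^{(N)} \to \eta$ almost surely on the enhanced space, the admissible hitting regime for $W^{(N)}$ converges to the correct one. Once these limits are matched, the remaining assembly into the joint limit is routine.
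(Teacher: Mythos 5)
Your argument is correct in substance, but it takes a genuinely different route from the paper. The paper proves Lemma \ref{lem:5.7} by applying Donsker's theorem on the space $\mathcal{D}_T=\bigcup_{a\le T}C([a,T])$ equipped with a suitable metric, and then invoking the Portmanteau theorem for a family of carefully constructed Borel sets of paths (those which first hit $R^{(M)}$ inside a small piece $\bar R(\gamma)$ of the barrier), checking that the boundaries of these sets are null for the Brownian law, covering the barrier by such pieces, and finally treating separately the possibility that the delayed starting point $(\eta,W_\eta)$ lies exactly on the barrier. You instead argue pathwise: after reducing to almost sure locally uniform convergence of the paths and a.s. convergence of the delays, you show $\rho^{(M,N)}\to\rho^{M}$ almost surely by an oscillation argument for the upper bound and a limit-point argument for the lower bound, and conclude by continuity. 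This is more elementary, avoids the construction of the sets $\hat{\mathcal K},\mathcal K^{\epsilon,\delta},\dots$, and automatically subsumes the start-on-the-barrier case that the paper handles separately.

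Two points should be tightened. First, the Skorokhod representation theorem requires weak convergence of the \emph{joint} law of $\bigl(W^{(N)},\eta^{(N)}\bigr)$, which does not follow from the two marginal statements you quote; the clean repair is to note that in this construction $W^{(N)}$ is built from the same Brownian motion $W$ and $\eta^{(N)}$ from the same $\eta$, so that $W^{(N)}\to W$ locally uniformly in probability and $\eta^{(N)}\to\eta$ in probability on one probability space, and you may simply pass to an a.s.\ convergent subsequence (arguing along arbitrary subsequences), dispensing with Skorokhod representation altogether. Second, $R^{(M)}$ is a finite union of horizontal rays of the form $[k_j/M,\infty)\times\{x_j\}$ with $x_j\in\mathcal X^M$, hence has empty interior, so the phrase ``$W$ enters the interior of $R^{(M)}$'' is vacuous as stated; what your oscillation argument actually yields (and what the proof of Lemma \ref{lem:delayed-original-Root} uses) is that $W$ exceeds $x_j+\delta$ and falls below $x_j-\delta$ at times in $(\rho^M,\rho^M+\varepsilon)$, so that any continuous path uniformly within $\delta$ of $W$ must cross the level $x_j$ at a time lying in the ray and, for $N$ large, after $\eta^{(N)}$; with this restatement the upper bound is correct, including when $\rho^M=\eta$. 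For the lower bound no probabilistic regularity of the barrier is needed: since $R^{(M)}$ is closed and $\bigl(\rho^{(M,N_k)},W^{(N_k)}_{\rho^{(M,N_k)}}\bigr)\in R^{(M)}$, the limit point $(t^*,W_{t^*})$ with $t^*\ge\eta$ lies in $R^{(M)}$, which contradicts $t^*<\rho^M$ directly.
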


\begin{proof}
We aim to apply Donsker's theorem to the continuous scaled symmetric random walk $\left(W_t^{(N)}\right)_{t \in \left[\eta^{(N)}, T \right]}$. 
Let $\left(B_t\right)_{t \in \left[\eta, T \right]}$ denote a Brownian motion, 
then for each $\omega \in \Omega$ we have 
$\left(W_t^{(N)}\right)_{t \in \left[\eta^{(N)}, T \right]}, \left(B_t\right)_{t \in \left[\eta, T \right]} \in \mathcal{D}_T$ where 
$\mathcal{D}_T := \bigcup_{a \leq T} C([a,T])$.

We define an appropriate metric on $\mathcal{D}_T$. 
More generally, consider the function space $\mathcal{D}:= \bigcup_{a \leq b} C([a,b])$. 
Thus for each $f \in \mathcal{D}$ there exist $l_f, r_f \in \mathbb{R}$, $l_f \leq r_f$ such that $f \in C([l_f,r_f])$. 
For $f \in \mathcal{D}$ and $t \geq 0$ we consider $\overline f (t) := f(l_f \vee (t \wedge r_f)) \in C([0, \infty))$ and define the following distance on $\mathcal{D}$.
\begin{align*}
    d_{\mathcal{D}}(f,g) 
            &:=  \left( \sup_{t \in [0, \infty)} |\overline f(t) -  \overline g(t)| \right) \vee |l_f - l_g| \vee |r_f - r_g|.
\end{align*}
Considering this, metric we can apply Donsker's theorem to get
\[
\left(W_t^{(N)}\right)_{t \in \left[\eta^{(N)}, T \right]} \Rightarrow \left(B_t\right)_{t \in \left[\eta, T \right]}
\]
and we are able to use the Portmanteau Theorem in the following way
\begin{align*}
      &\lim_{N \rightarrow \infty} \mathbb{P}\left[\left(W_t^{(N)}\right)_{t \in \left[\eta^{(N)}, T \right]} \in \mathcal{K} \right] 
        =  \mathbb{P}\left[\left(B_t\right)_{t \in \left[\eta, T \right]} \in \mathcal{K} \right] 
\\    & \quad \text{ for all } \mathcal{K} \subseteq \mathcal{D} \text{ Borel}
      \text{ with } \mathbb{P}^{\lambda}\left[\left(B_t\right)_{t \in \left[\eta, T \right]} \in \partial \mathcal{K} \right] = 0.
\end{align*}
It is now left to consider cleverly chosen sets $\mathcal{K} \in \mathcal{B}(\mathcal{D})$ 
to support our convergence claim \eqref{eq:lem:5.7-conv}.
For $M$ fixed consider $W^{(N)}_{\rho^{(M,N)}}$ and $B_{\rho^M}$ 
as well as the spatial grid $\mathcal{X} := \{x^M_1, \dots, x^M_{L^M}\}$. 

Choose an arbitrary point $x_i^M \in \mathcal{X}$, $t \in (0,T]$ such that
$(t, x_i^M)$ lies somewhere in the interior of $R^{(M)}$ and some $\gamma < \left(t - \inf\left\{s: \left(s, x_i^M\right) \in R^{(M)}\right\}\right) \wedge (T-t)$ 
to consider the set
\[
    \bar{R}(\gamma) := [t - \gamma, t + \gamma] \times \left\{x_i^M \right\} \subseteq R^{(M)}.
\]
Since the barrier $R^{(M)}$ will be a subset of $[0,\infty) \times \mathcal{X}$ we can identify a smallest $z \in \mathcal{X}$ such that 
$z > x_i^M$ and $\left([t - \gamma, t + \gamma]  \times \{z\}  \right) \subseteq R^{(M)}$. 
Analogously, we identify a largest $y \in \mathcal{X}$, $y < x_i^M$. 

Moreover, let $y < w_1 < \cdots < w_n < z$,  $w_i \in \mathcal{X} \setminus \left\{ x^M_i \right\}$ 
denote all those points at which a piece of the barrier $R^{(M)}$ sticks into our area of interest,
precisely $\left([t - \gamma, t + \gamma]  \times \{w_i\}  \right) \cap R^{(M)} \neq \emptyset$. 
It is clear that there can only be finitely may such points. 
Consider $\bar{R}_{w_i} := \left( [t - \gamma, t + \gamma] \times \left\{w_i\right\} \right) \cap R^{(M)}$ for $i = 1, \dots, n$. 

We proceed to define the following subsets of $\mathcal{D}_T$.
\begin{align*}
    \hat{\mathcal{K}}
    &:= \bigcup_{\substack{\hat \gamma > 0 \\ \hat \gamma \in \mathbb{Q} }} 
    \left\{f \in \mathcal{D}_T: \| (s, f(s)) - R^{(M)}\| > \hat \gamma \, \, \forall \, s \in [l_f, (t - \gamma) \vee l_f] \right\}
\\  \hat{\mathcal{K}}^{w}
        &:= \bigcap_{i = 1, \dots ,n}
            \bigcup_{\substack{\hat \gamma > 0 \\ \hat \gamma \in \mathbb{Q} }}
            \left\{f \in \mathcal{D}_T: \| (s, f(s)) - \bar R_{w_i}\| > \hat \gamma \, \, \forall \, s \in [(t - \gamma) \vee l_f, t + \gamma] \right\}
\\  \hat{\mathcal{K}}_z &:= \{f \in \hat{\mathcal{K}} \cap \hat{\mathcal{K}}^{w}: 
        f((t - \gamma) \vee l_f) \in (x_i^M,z) \}
\\  \mathcal{K}^{\epsilon}_q &:= \{ f \in \mathcal{D}_T: f(s) < z - \epsilon \,\, \forall \, s \in [(t - \gamma) \vee l_f, q] \cap \mathbb{Q} \} 
\\  \mathcal{K}^{\epsilon, \delta}_q &:= \{ f \in \mathcal{D}_T: f(q) < x_i^M - \delta \} \cap \mathcal{K}^{\epsilon}_q 
\\  \mathcal{K}^{\epsilon, \delta} &:= \bigcup_{\substack{ q \in [t - \gamma, t + \gamma] \\ q \in \mathbb{Q} }} \mathcal{K}^{\epsilon, \delta}_q
\\  \mathcal{K}^{z} &:= \bigcap_{\substack{\delta > 0 \\ \delta \in \mathbb{Q} }}
                        \bigcup_{\substack{\epsilon > 0 \\ \epsilon \in \mathbb{Q} }} \mathcal{K}^{\epsilon, \delta} \cap \hat{\mathcal{K}}_z
\end{align*}
Then
\begin{itemize}
\item[{$\hat{\mathcal{K}}$}] is the set of all paths 
which stay away from the barrier $R^{(M)}$ at any time point before $(t - \gamma) \vee l_f$.
Since $R^{(M)}$ is a finite barrier, $\hat{\mathcal{K}}$ is a Borel set. 
\item[{$\hat{\mathcal{K}}^w$}] is the set of all paths 
which stay away from the barrier pieces $\bar{R}_{w_1}, \dots, \bar{R}_{w_n}$ before time $t+\gamma$.
\item[{$\hat{\mathcal{K}}_z$}] is the set of all paths in $\hat{\mathcal{K}} \cap \hat{\mathcal{K}}^w$ 
that are between $x_i^M$ and $z$ at time $(t - \gamma) \vee l_f$. 
\item[{$\mathcal{K}^{\epsilon}_q$}] is the set of all paths that stay below $z - \epsilon$ after time $(t - \gamma) \vee l_f$ and before time $q$.
\item[{$\mathcal{K}^{\epsilon, \delta}_q$}] is the set of all paths that are below $x_i^M - \delta$ at time $q$ 
and stay below $z - \epsilon$ after time $(t - \gamma) \vee l_f$ and before time $q$. 
\item[{$\mathcal{K}^{\epsilon, \delta}$}] is the set of all paths that 
such that there exist a $q \in [t - \gamma, t + \gamma] \cap \mathbb{Q}$  fulfilling the properties of set $\mathcal{K}^{\epsilon, \delta}_q$.
\item[{$\mathcal{K}^{z}$}] is the set of all paths such that for all $\delta > 0$ there exists an $\epsilon > 0$ such that the properties of set $\mathcal{K}^{\epsilon, \delta}$ are fulfilled.
\end{itemize}
Analogously, we define $\mathcal{K}^{y}$ with the opposite inequalities.

Note that $\mathcal{K}^{z}$ will be a Borel set consisting of all those paths 
that start somewhere away from the barrier $R^{(M)}$ 
but hit the barrier for the first time in $\bar{R}(\gamma)$ coming from above, 
while $\mathcal{K}^{y}$ will be a Borel set of those paths 
that hit the barrier $R^{(M)}$ for the first time in $\bar{R}(\gamma)$ coming from below. 
Altogether, $\mathcal{K} := \mathcal{K}^{y} \cup \mathcal{K}^{z}$ will be a Borel set consisting 
of exactly those paths that hit the barrier $R^{(M)}$ for the first time in $\bar{R}(\gamma)$ 
after starting in some initial time $l_f$ away from the barrier $R^{(M)}$.

We need to identify $\partial\mathcal{K}$ and to argue that $\mathbb{P}^{\lambda}\left[\left(B_t\right)_{t \in \left[\eta, T \right]} \in \partial \mathcal{K} \right] = 0$.
The set $\partial\mathcal{K}$ will consist of exactly those paths that start somewhere away from the boundary
and either
\begin{itemize}
\item[\itembullet] hit $\bar R(\gamma)$ exactly at time $t \pm \gamma$, or
\item[\itembullet] hit $\bar R(\gamma)$ elsewhere, but also \emph{touch} the barrier $R^{(M)}$ before, without passing \emph{through} the barrier. 
\item[\itembullet] are born somewhere inside the barrier $R^{(M)}$ but immediately leave it without ever touching the barrier again.
\item[\itembullet] are born somewhere inside the barrier and remain flat. 
\end{itemize}
By standard properties of a Brownian motion it is clear that these events must have zero probability. 

With appropriate choices of $x_i^N, t$ and $\gamma$ it now becomes clear 
that we can cover the whole barrier $R^{(M)}$ with such sets $\bar R (\gamma)$. 

We are left to consider the possibility of \emph{appearing} exactly \emph{on} the barrier. 
As a combination of the convergence  of $\eta^{N}$ to $\eta$ and Donsker's theorem gives (should give?)
\[
    \left(\eta^{(N)},  W^{(N)}_{\eta^{(N)}}\right) \xrightarrow[N \rightarrow \infty]{d} \left(\eta,  B_{\eta} \right).
\]
Now note that similarly to the arguments given in the proof of Lemma \ref{lem:delayed-original-Root}, 
a Brownian motion that is very close to a Root barrier will proceed to hit this barrier almost immediately.  
It is then a consequence of this regularity of Root barriers which gives us 
\[
    \mathbb{P}^{\lambda^N} \left[ \left(\eta^{(N)},  W^{(N)}_{\eta^{(N)}}\right) \in R^{(M)} \right] 
        \xrightarrow[N \rightarrow \infty]{} 
    \mathbb{P}^{\lambda} \left[ \left(\eta,  B_{\eta} \right) \in R^{(M)} \right],
\]
concluding the proof of the convergence \eqref{eq:lem:5.7-conv}. 
\end{proof}
%
%
%
%
%
%
%
We are now ready to finish step (ii) in the proof of Lemma \ref{lem:main-convergence-Root}.
\begin{lemma}[Cf.{{\cite[Lemma 5.6]{CoKi19b}}}] \label{lem:5.6-Root}
We have the following convergence
\[
    \left| \left(\rho^{(N)}, W^{(N)}_{\rho^{(N)}}\right) - \left(\rho^N, W_{\rho^N}\right)\right|  
        \xrightarrow{p} 0 \text{ for } N \rightarrow \infty.
\]
\end{lemma}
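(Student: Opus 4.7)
The plan is to carry out a pathwise coupling argument: realise the discretised walk $W^{(N)}$ directly from the Brownian motion $W$ via the grid-hitting times $(\tau^N_k)_k$ introduced in Section~\ref{sec:discretization}, so that $W^{(N)}_{k/N}=Y^N_k=W_{\tau^N_k}$ for every integer $k\geq 0$. Under this coupling, both $\rho^{(N)}$ and $\rho^N$ become measurable functionals of the same Brownian path together with the common randomisation sequence $(u_t)_{t\geq 0}$, and the claim reduces to a pathwise closeness assertion.

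First, the increments $\tau^N_{k+1}-\tau^N_k$ are i.i.d.\ exit times of Brownian motion from an interval of half-width $1/\sqrt{N}$, each with mean $1/N$. The strong law of large numbers then yields, for any fixed $T>0$,
\[
    \sup_{0\leq t\leq T}\bigl|\tau^N_{\lfloor Nt\rfloor}-t\bigr|\xrightarrow{\;\mathrm{a.s.}\;} 0\quad\text{as }N\to\infty,
\]
so that with probability tending to one, $\lfloor N\tau^N_k\rfloor=k$ for every $k\leq NT$. Combined with the convergence $\eta^{(N)}\xrightarrow{p}\eta$ from Lemma~\ref{lem:limit}, this provides a good event on which the discrete clock $k\mapsto \tau^N_k$ and the continuous clock $t\mapsto Nt$ are essentially identified.

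Next, because the continuous barrier $R^{(N)}$ is supported spatially on the lattice $\mathcal{X}^N$ and $W$ is path-continuous, every entry of $W$ into $R^{(N)}$ necessarily occurs at one of the grid-hitting times $\tau^N_k$. On the good event from the previous step, the index of this first entry after the delay coincides with the discrete Root stopping time $\tilde\rho^N$, so that $\rho^N=\tau^N_{\tilde\rho^N}$. Combined with $\rho^{(N)}=\tilde\rho^N/N$ and the uniform SLLN estimate, this gives $|\rho^{(N)}-\rho^N|\xrightarrow{p}0$. The spatial convergence is then automatic, since on the same event
\[
    W^{(N)}_{\rho^{(N)}}=Y^N_{\tilde\rho^N}=W_{\tau^N_{\tilde\rho^N}}=W_{\rho^N}.
\]

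The principal obstacle lies in the identification $\rho^N=\tau^N_{\tilde\rho^N}$. The continuous-time hitting time $\rho^N$ is defined without consulting the randomisation sequence $(u_t)$, while $\tilde\rho^N$ does consult it on the randomisation boundary $\tilde R^N_+\setminus\tilde R^N_-$ where $r^N_t(x)\in(0,1)$. One must therefore argue that the probability of the first entry of $W$ into $R^{(N)}$ occurring at such a boundary point, together with the randomisation not triggering a stop, vanishes as $N\to\infty$. This will be handled by a tightness/weak-limit argument using that any limit barrier is a genuine continuous-time Root barrier carrying no atomic randomisation stratum, together with a careful measurability step to align the delay $\eta^{(N)}$ (which converges only in probability) with $\eta$; passing to a subsequence if necessary to upgrade to almost-sure convergence is the cleanest route.
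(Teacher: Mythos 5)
Your coupling set-up is unobjectionable --- the paper's discretisation already realises $Y^N_k=W_{\tau^N_k}$ on one probability space, and the time alignment via the law of large numbers is contained in Lemma \ref{lem:limit} --- but the argument has a genuine gap precisely at the point you yourself flag as ``the principal obstacle'', and the fix you sketch does not close it. The identification $\rho^N=\tau^N_{\tilde\rho^N}$ is not just delicate, it is false in general, and the event you propose to neglect does \emph{not} have vanishing probability: when the walk first reaches a boundary point with $r^N_t(x)\in(0,1)$, the randomisation fails to stop it with probability $1-r^N_t(x)$, and the amount of mass stopped at such fractional points can be macroscopic (cf.\ Example \ref{exp:easy-Rost-example}, where a third of all paths is stopped at one such point), so ``first entry at a fractional point and no stop'' occurs with probability bounded away from zero as $N\to\infty$. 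What is true, and what the paper's proof uses, is that on this event the \emph{additional waiting time} is small: by the Root structure \eqref{eq:Root-field-def} the barriers $R^{(N)}_+=\{r^N>0\}$ and $R^{(N)}_-=\{r^N=1\}$ differ by at most one time step, so $d_R\bigl(R^{(N)}_+,R^{(N)}_-\bigr)\le \tfrac1N$, the randomised time is sandwiched between the two non-randomised hitting times, $\tau^{(N)}_+\le\rho^{(N)}\le\tau^{(N)}_-$, and their difference tends to $0$ in probability by the delayed version of Root's convergence lemma (Lemma \ref{lem:delayed-original-Root}) combined with Donsker's theorem. The comparison with the Brownian hitting time $\rho^N$ (delay $\eta$ rather than $\eta^{(N)}$) is then made through Lemma \ref{lem:5.7} and the $R^{(M)}$-approximation, not through an exact pathwise identity; your appeal to ``a tightness/weak-limit argument'' and to the absence of a randomisation stratum in the limit barrier does not by itself control $\bigl|\rho^{(N)}-\rho^N\bigr|$.

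A secondary inaccuracy: it is not true that every entry of $W$ into $R^{(N)}$ occurs at a grid-hitting time $\tau^N_k$. Between consecutive grid times the Brownian motion returns repeatedly to the lattice level it currently occupies, and since the time component of $R^{(N)}$ is indexed by real time $\lfloor Nt\rfloor$ rather than by the walk's step count, the (growing) Root barrier can switch on at that level in the middle of such an excursion, so $W$ can enter $R^{(N)}$ strictly between grid times. Consequently even without randomisation one only gets approximate, not exact, agreement of $(\rho^N,W_{\rho^N})$ with $(\tau^N_{\tilde\rho^N},Y^N_{\tilde\rho^N})$; this is harmless for a convergence-in-probability statement, but it removes the exact equalities on which your concluding display relies, and it is another reason why the proof has to proceed by comparing hitting times of nearby barriers rather than by pathwise identification.
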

\begin{proof}
Define the following hitting times for the scaled continuous random walk $W^{(N)}$ 
\begin{align*}
    \tau^{(M,N)} &:= \inf\left\{t \geq \eta^{(N)} : \left(t, W^{(N)}_t \right) \in R^{(M)} \right\}, \,\, \text{ and }
\\  \tau^{(N)}   &:= \inf\left\{t \geq \eta^{(N)} : \left(t, W^{(N)}_t \right) \in R^{(N)} \right\}. 
\end{align*}
Then analogous to the proof of \cite[Lemma 5.6]{CoKi19b}, 
but replacing Root's original convergence lemma with its delayed version Lemma \ref{lem:delayed-original-Root}, 
we can deduce the convergence
\[
    \left| \left(\tau^{(M,N)}, W^{(N)}_{\tau^{(M,N)}}\right) - \left(\tau^{(N)}, W^{(N)}_{\tau^{(N)}}\right) \right|  
        \xrightarrow[M,N \rightarrow \infty]{p} 0.
\]
It is a consequence of Lemma \ref{lem:5.7} that
\[
    \left(\tau^{(M,N)}, W^{(N)}_{\tau^{(M,N)}}\right) \xrightarrow[N \rightarrow \infty]{d} 
    \left(\rho^M, W_{\rho^M}\right).
\]
Thus it remains to show that 
\begin{equation} \label{eq:Root-conv-z}
    \left| \left(\rho^{(N)}, W^{(N)}_{\rho^{(N)}}\right) - \left(\tau^{(N)}, W^{(N)}_{\tau^{(N)}}\right) \right|  
        \xrightarrow[N \rightarrow \infty]{p} 0.
\end{equation}
For this purpose recall the Root barriers $R^{(N)}_+$ and $R^{(N)}_-$ defined above and consider 
\begin{align*}
    \tau^{(N)}_+ &:= \inf\left\{t \geq \eta : \left(t, W^{(N)}_t \right) \in R^{(N)}_+ \right\} = \tau^{(N)},
\\  \tau^{(N)}_- &:= \inf\left\{t \geq \eta : \left(t, W^{(N)}_t \right) \in R^{(N)}_- \right\}.
\end{align*}
Note that by Root's barrier structure and the definition of the discretization we have 
\begin{equation*} 
    d_R\left( R^{(N)}_+ , R^{(N)}_- \right) \leq \frac{1}{N} \rightarrow 0 \,\,\text{ for } N \rightarrow \infty,
\end{equation*}
hence both $R^{(N)}_+$ and $R^{(N)}_-$ converge to the same Root barrier $R$ in Root's metric. 
Moreover, it is a consequence of Lemma \ref{lem:delayed-original-Root} together with Donsker's Theorem that
\[
    \left| \tau^{(N)}_+ - \tau^{(N)}_- \right|  \xrightarrow[N \rightarrow \infty]{p} 0.
\]
Now since $\tau^{(N)}_+ \leq \rho^{(N)} \leq \tau^{(N)}_-$ the convergence \eqref{eq:Root-conv-z} follows, concluding the proof.
\end{proof}
%
%
%
%
%
%
%
%
%
%
%
\subsubsection{A Limit of Rost Stopping Times}
%
We are left to show convergence of discrete Rost solutions of \eqref{dSEP} to continuous ones. 
For a measurable set $A$ and $t \geq 0$ recall the following measures defined in Section \ref{sec:delayed}
\begin{align*}
    \nu_t(A)      &= \mu(A) - \mathbb{P}^{\lambda} \left[ W_{\rho} \in A, \rho < t \right] \,\, \text{ and }
\\  \alpha_{t}(A) &= \alpha\left(\{t\} \times A \right) = \mathbb{P}^{\lambda} \left[ W_{t} \in A, \eta = t \right].
\end{align*}
\begin{lemma} \label{lem:main-convergence-Rost}
Let $\left(\rho^{(N)}\right)_{n \in \mathbb{N}}$ be a sequence of discrete Rost stopping times. 
Then there exists a Rost barrier $\bar R \subseteq [0, \infty) \times \mathbb{R}$ such that we have the following convergence
\begin{equation} \label{eq:main-convergence-Rost}
     \left( \rho^{(N)}, W^{(N)}_{\rho^{(N)}} \right) \xrightarrow{d} \left( \rho, W_{\rho} \right)
\end{equation}
where
\[
    \rho := \inf\left\{t \geq \eta : (t, W_t)   \in \bar R \right\} \wedge Z 
\]
for 
\[
    Z = \min_{k \in \mathbb{N}} Z_k , \quad
    Z_k := 
        \begin{cases}
            t_k  &\text{ with probability }\, \mathbb{P}^{\lambda} 
                \left[Z = t_k \big{|} W_{\eta}, \eta = t_k \right] 
                    = \frac{\mathrm{d}\left( \alpha_{t_k} \wedge \nu_{t_k} \right) }{ \mathrm{d}\left( \alpha_{t_k} \right) } \left( W_{\eta} \right)
        \\  \infty & \text{ else. }
      \end{cases}
\]
(Here $\{ t_0, t_1, \dots \}$ denote the set of (at most countably many) atoms  of $\mathcal{L}^{\lambda}(\eta)$.)
\end{lemma}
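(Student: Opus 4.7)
The proof plan parallels the Root case (Lemma \ref{lem:main-convergence-Root}), modified to account for the randomization at atoms of $\eta$ which is intrinsic to Rost solutions. By the natural Rost analogue of Root's compactness argument, Rost barriers can be identified with upper semicontinuous functions $f : \mathbb{R} \to [0,\infty]$ via $\bar R = \{(t,x) : t \leq f(x)\}$, yielding compactness in a suitably defined Rost metric. Hence, passing to a subsequence, we may assume $\bar R^{(N)} \to \bar R$ for some limiting Rost barrier $\bar R$.

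In parallel to step $(i)$ in the Root proof, I would introduce the auxiliary stopping time $\rho^N := \inf\{t \geq \eta : (t, W_t) \in \bar R^{(N)}\} \wedge Z^N$ where $Z^N$ is a randomization built from $\bar R^{(N)}$ at the atoms of $\mathcal{L}^{\lambda}(\eta)$. A delayed Rost analogue of Lemma \ref{lem:delayed-original-Root} then yields $\rho^N \xrightarrow{p} \rho$: if $\bar R^{(N)}$ is close to $\bar R$ in Rost's metric and $\eta^{(N)} \xrightarrow{p} \eta$, a Brownian path initially outside $\bar R^{(N)}$ hits $\bar R^{(N)}$ at a time close to its $\bar R$-hitting time, up to a short perturbation controlled by path regularity. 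A delicate point is that $\bar R$ may be hit \emph{immediately} at $t = \eta$ when $\eta$ is an atom $t_k$ and $W_\eta$ lies on the piece of $\bar R$ that extends up to height $t_k$; thus one must argue that the atomic randomization of $\rho^{(N)}$ converges to that of $\rho$. Here Lemma \ref{lem:Rost-uniqueness} is crucial: since the canonical density $\psi_k$ is uniquely determined off the irrelevant set, any limit point of the discrete randomization probabilities at the atom $t_k$ must coincide with $\psi_k(W_\eta)$.

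In parallel to step $(ii)$, one then needs $|(\rho^{(N)}, W^{(N)}_{\rho^{(N)}}) - (\rho^N, W_{\rho^N})| \xrightarrow{p} 0$. Introducing $\tau^{(M,N)} := \inf\{t \geq \eta^{(N)} : (t, W^{(N)}_t) \in \bar R^{(M)}\} \wedge Z^M$ and $\tau^{(N)} := \inf\{t \geq \eta^{(N)} : (t, W^{(N)}_t) \in \bar R^{(N)}\} \wedge Z^N$, one combines a Rost analogue of Lemma \ref{lem:5.7} (proved by applying Donsker's theorem together with the Portmanteau theorem to cleverly chosen Borel sets of paths hitting a fixed Rost barrier at a prescribed location, now subject to an additional independent randomization) with a sandwich-type argument using $\bar R^{(N)}_-$ and $\bar R^{(N)}_+$, noting that $d_R(\bar R^{(N)}_-, \bar R^{(N)}_+) \leq 1/N \to 0$, so both extremal barriers share the same limit and force $\rho^{(N)}$ between two stopping times that collapse in probability.

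The main obstacle is the interplay between discrete grid-boundary randomization and the atomic randomization needed in the continuous limit. At spacing $1/\sqrt{N}$, the field $r_t^N(x) = (\nu_t^N \wedge \tilde\alpha_t^N)(x)/\tilde\alpha_t^N(x)$ captures both phenomena simultaneously, but in the limit only the atomic randomization survives, while the barrier-boundary randomization is absorbed into $\bar R$ itself. To resolve this, I would show weak convergence $\tilde\alpha_{t}^N \Rightarrow \alpha_{t}$ at atom times and $\nu_t^N \Rightarrow \nu_t$ in a neighborhood of each atom, then invoke the uniqueness result of Lemma \ref{lem:Rost-uniqueness} to pin down the limit of the discrete randomization at each $t_k$ as the Radon--Nikodym density $\psi_k$. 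The order-dependence issue flagged after Lemma \ref{lem:Rost-uniqueness} is handled here by enumerating the atoms in increasing order of $t_k$ (which is always possible up to a countable set of exceptions) and arguing inductively.
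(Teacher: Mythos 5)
Your skeleton (subsequential convergence of the barriers, a sandwich between $\bar R^{(N)}_-$ and $\bar R^{(N)}_+$, and an appeal to Lemma \ref{lem:Rost-uniqueness} to identify the atomic randomization) overlaps with the paper, but two essential ingredients are missing or wrong. First, the treatment of the instantaneous randomization. The paper never proves convergence of the discrete randomization probabilities directly: it splits the claim into the events $\{\rho^{(N)}>\eta^{(N)}+\varepsilon\}$ and $\{\rho^{(N)}\le\eta^{(N)}+\varepsilon\}$ (see \eqref{eq:conv-A}--\eqref{eq:conv-B}), proves \eqref{eq:conv-A} by pure hitting-time arguments, and then gets the instantly-stopped part by \emph{mass balance}: since $\mu^N\Rightarrow\mu$ and the strictly-delayed part of the embedded law converges, the complementary mass $\overline\mu^N_\varepsilon=\mu^N-\hat\mu^N_\varepsilon$ converges automatically, after which Lemma \ref{lem:Rost-uniqueness} pins down the limiting randomization as $\psi_k$. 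Your route instead requires weak convergence of $\tilde\alpha^N_{t}$ and $\nu^N_t$ at every atom and an induction over the atoms ``in increasing order''. That enumeration does not exist in general: the atoms of $\mathcal{L}^\lambda(\eta)$ may be dense, a countable dense set cannot be listed increasingly, and ``up to a countable set of exceptions'' is vacuous since the whole set is countable; the paper explicitly flags this after Lemma \ref{lem:Rost-uniqueness}, and its proof is built to avoid any ordering. Relatedly, your auxiliary times carry randomizations $Z^N,Z^M$ ``built from $\bar R^{(N)}$ at the atoms of $\mathcal{L}^\lambda(\eta)$'', but the discrete solutions have no such atomic randomization (all randomization lives in the field $r^N$ at grid boundary points), so these objects are undefined as stated; in the paper the auxiliary times $\tau^N,\tau^{(N)}$ are plain hitting times, and the grid-boundary randomization is absorbed by the sandwich Lemma \ref{lem:5.6-2}, which operates only on the event $\{\rho^{(N)}>\eta^{(N)}+\varepsilon\}$ and compares the barriers with the slice $\{0\}\times\mathbb{R}$ removed -- which is exactly why the $\varepsilon$-decomposition you do not have is needed.

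Second, the claim that Rost analogues of Lemma \ref{lem:delayed-original-Root} and Lemma \ref{lem:5.7} follow ``by the same argument'' is unjustified. Both of those proofs rest on the forward-in-time Root structure: a path that comes close to a Root barrier hits it almost immediately, which drives the perturbation estimate $\mathbb{P}[\bar\tau>\tau+\varepsilon]<\varepsilon$ and the zero-probability-of-boundary argument in the Portmanteau step. For a Rost barrier the cross-section recedes as time increases, so proximity gives no such regularity, and this is precisely why the paper replaces these steps by Lemma \ref{lem:5.5-Rost} (representing the delayed Rost hitting time through families of upper and lower boundary functions, following the non-delayed argument of Cox--Kinsley) and Lemma \ref{lem:5.6} (a Girsanov argument squeezing the hitting times between those of the tilted paths $W_t\pm\varepsilon t$). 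Without substitutes for these two lemmas your steps (i) and (ii) do not go through as described.
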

\begin{proof}
Let $\bar R^{(N)}$ denote the Rost barrier associated to $\rho^{(N)}$ via Definition \eqref{eq:R^N-def}. 
The the sequence $(\bar R^{(N)})_{N \in \mathbb{N}}$ of these Rost barriers converges 
(possibly along a subsequence) to a Rost barrier $\bar R$ in Root's metric. 
 
Consider the following decomposition
\begin{align*}
    \left| \left(\rho^{(N)}, W^{(N)}_{\rho^{(N)}} \right) - \left(\rho, W_{\rho} \right) \right|
        &\leq \left| \left(\rho^{(N)}, W^{(N)}_{\rho^{(N)}} \right) \ind_{\rho^{(N)} > \eta^{(N)} + \varepsilon} - 
           \left(\rho, W_{\rho} \right) \ind_{\rho > \eta} \right| 
\\      &+ \left| \left(\rho^{(N)}, W^{(N)}_{\rho^{(N)}} \right) \ind_{\rho^{(N)} \leq \eta^{(N)} + \varepsilon} - 
           \left(\rho, W_{\rho} \right) \ind_{\rho = \eta} \right|, 
\end{align*}
hence \eqref{eq:main-convergence-Rost} is equivalent to both 
\begin{align}
\left| \left(\rho^{(N)}, W^{(N)}_{\rho^{(N)}} \right) \ind_{\rho^{(N)} > \eta^{(N)} + \varepsilon} - 
           \left(\rho, W_{\rho} \right) \ind_{\rho > \eta} \right| 
     & \xrightarrow[\varepsilon \searrow 0, N \rightarrow \infty]{p} 0 \quad \text{ and }                \label{eq:conv-A}
\\   \left| \left(\rho^{(N)}, W^{(N)}_{\rho^{(N)}} \right) \ind_{\rho^{(N)} \leq \eta^{(N)} + \varepsilon} - 
           \left(\rho, W_{\rho} \right) \ind_{\rho = \eta} \right| 
    & \xrightarrow[\varepsilon \searrow 0, N \rightarrow \infty]{p} 0.                                   \label{eq:conv-B}
\end{align}
Let us first consider \eqref{eq:conv-A}. 
Define the hitting time
\[
    \tau := \inf\left\{t \geq \eta : (t, W_t)   \in R \right\},
\]
thus $\rho = \tau \wedge Z$ and on $\{\rho > \eta\}$ we have $\rho = \tau$. 
Furthermore, for $\bar R^{(N)}$ 
consider the following (auxiliary) hitting times
\begin{align*}
    \tau^N     &:= \inf\left\{t \geq \eta       : (t, W_t) \in \bar R^{(N)}\right\} \, \text{ and }
\\  \tau^{(N)} &:= \inf\left\{t \geq \eta^{(N)} : (t, W^{(N)}_t) \in \bar R^{(N)} \right\}.
\end{align*}
We will then show convergence in three steps. 
\begin{enumerate}[(i)]
\item $\tau^N \xrightarrow{p} \tau$, see Lemma \ref{lem:5.5-Rost}.
\item Convergence for non-randomized hitting times, precisely
\[
    \left| \left(\tau^{(N)}, W^{(N)}_{\tau^{(N)}}\right) - \left(\tau^N, W_{\tau^N}\right)\right| \xrightarrow{d} 0,
\]
see Lemma \ref{lem:5.6}. 
\item Convergence for randomized Rost stopping times after $\eta^{(N)}$, precisely 
\[
 \left|\left(\rho^{(N)}, W^{(N)}_{\rho^{(N)}} \right)\ind_{\rho^{(N)} > \eta^{(N)} + \varepsilon} 
    - \left( \tau^N, W_{\tau^N}\right) \ind_{\tau^N > \eta} \right|
    \xrightarrow[\varepsilon \searrow 0, N \rightarrow \infty]{p} 0,
\]
see Lemma \ref{lem:5.6-2}.
\end{enumerate}
It remains to show \eqref{eq:conv-B}.
For $\varepsilon > 0$ define 
\begin{align*}
    \hat \mu^N_{\varepsilon} &:= \mathcal{L}^{\lambda^N} \left(W^{(N)}_{\rho^{(N)}} ; \rho^{(N)} > \eta^{(N)} + \varepsilon \right)
\\  \overline \mu^N_{\varepsilon} &:= \mu^N - \hat \mu^N_{\varepsilon}
\\ \text{ as well as} 
\\    \hat \mu      &:= \mathcal{L}^{\lambda} \left(W_{\rho} ; \rho > \eta\right) \,\,\text{ and}
\\  \overline \mu &:= \mu - \hat \mu.
\end{align*}

Then by \eqref{eq:conv-A} we have 
\[
\hat \mu^N_{\varepsilon} = \mathcal{L}^{\lambda^N} \left(W^{(N)}_{\rho^{(N)}} ; \rho^{(N)} > \eta^{(N)} + \varepsilon \right)
    \Rightarrow 
\hat \mu 
\,\,\text{ for  }  N \rightarrow \infty \text{, then } \varepsilon \searrow 0.
\]
Thus recall $\mu^N \Rightarrow \mu$ for $N \rightarrow \infty$ to conclude that we must have
\[
    \overline \mu^N_{\varepsilon} \Rightarrow \overline \mu := \mu - \hat \mu 
    \,\,\text{ for  }  N \rightarrow \infty \text{, then } \varepsilon \searrow 0
\]
where $\overline \mu$ corresponds to the mass that is acquired by stopping instantly. 
In other words
\begin{equation} \label{eq:Rost-0-conv}
    \mathcal{L}^{\lambda^N} \left(W^{(N)}_{\rho^{(N)}} ; \rho^{(N)} \leq \eta^{(N)} + \varepsilon \right)
    \Rightarrow 
    \mathcal{L}^{\lambda} \left(W_{\rho} ; \rho = \eta \right) 
    \,\,\text{ for  }  N \rightarrow \infty \text{, then } \varepsilon \searrow 0.
\end{equation}
The convergence \eqref{eq:Rost-0-conv} is equivalent to
\begin{equation} \label{eq:Rost-0-conv-2} 
    \mathbb{P}^{\lambda^N} \left[W^{(N)}_{\rho^{(N)}} \in A , \rho^{(N)} \leq \eta^{(N)} + \varepsilon \right]
    \xrightarrow[\varepsilon \searrow 0, N \rightarrow \infty]{} \mathbb{P}^{\lambda} \left[ W_{\rho} \in A, \rho = \eta \right]
\end{equation}
for all measurable $A$ such that $\mathbb{P}^{\lambda} \left[ W_{\rho} \in \partial A, \rho = \eta \right] = 0$.
As we have the convergence $\eta^{(N)} \rightarrow \eta$ in probability we can furthermore conclude
\begin{equation} \label{eq:Rost-0-conv-t} 
    \mathbb{P}^{\lambda^N} \left[W^{(N)}_{\rho^{(N)}} \in A , \rho^{(N)} \leq \eta^{(N)} + \varepsilon, |\eta^{(N)} - t| < \varepsilon \right]
    \xrightarrow[\varepsilon \searrow 0, N \rightarrow \infty]{} \mathbb{P}^{\lambda} \left[ W_{\rho} \in A, \rho = \eta , \eta = t\right]
\end{equation}
for all measurable $A$ such that $\mathbb{P}^{\lambda} \left[ W_{\rho} \in \partial A, \rho = \eta, \eta = t \right] = 0$. 

That $\rho$ must now share the desired representation on $\{\rho = \eta\}$ is a consequence of Lemma \ref{lem:Rost-uniqueness} and concludes the proof.
\end{proof}
%
%
%
%
%
%
In order to conclude the proof above we show a delayed Version of \cite[Lemma 5.5]{CoKi19b}.
\begin{lemma} \label{lem:5.5-Rost}
The Rost barriers $\bar R^{(N)}$ converge (possibly along a subsequence) to another Rost barrier $\bar R^{\infty}$ and 
\[
    \rho^N := \inf \left\{t \geq \eta: (t, W_t) \in \bar R^{(N)} \right\} \xrightarrow{p} 
              \inf \left\{t \geq \eta: (t, W_t) \in \bar R^{\infty}\right\} =: \rho^{\infty} 
\]
\end{lemma}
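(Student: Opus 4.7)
The plan is a two-step argument modeled on the Root-case Lemma~\ref{lem:delayed-original-Root}. First, compactness of the space $(\mathscr{R}, d_R)$ established in \cite{Ro69} yields a $d_R$-convergent subsequence of $\bigl(\bar R^{(N)}\bigr)_N$ with limit $\bar R^\infty$. The Rost structure passes to the $d_R$-limit: given $(t,x) \in \bar R^\infty$ and $s < t$, approximate $(t,x)$ by $(t_k, x_k) \in \bar R^{(N_k)}$ in $d_R$; eventually $s < t_k$, so the Rost property of $\bar R^{(N_k)}$ places $(s, x_k)$ in $\bar R^{(N_k)}$, and passing to the limit gives $(s, x) \in \bar R^\infty$.

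For the convergence $\rho^N \xrightarrow{p} \rho^\infty$ I would split on the events $A := \{\rho^\infty > \eta\}$ and $A^c = \{\rho^\infty = \eta\}$. On $A$, the Brownian path $(t, W_t)_{t \in [\eta, \rho^\infty]}$ moves in the complement $(\bar R^\infty)^c$, which is up-closed in time and thus itself has Root-type structure; the boundary-crossing at $\rho^\infty$ is therefore regular in the same sense as the hitting of a Root barrier, and the argument of Lemma~\ref{lem:delayed-original-Root} adapts directly. Choosing a horizon $T$ with $\mathbb{P}[\rho^\infty > T] < \varepsilon/4$, a spatial bound $M$ with $\mathbb{P}\bigl[\sup_{t \leq T} |W_t| > M\bigr] < \varepsilon/4$, and an oscillation parameter $\tilde \varepsilon$ so that Brownian oscillations detect $d_R$-perturbations of the upper envelope of $\bar R^\infty$ within distance $\tilde \varepsilon$, one concludes $\mathbb{P}\bigl[\,|\rho^N - \rho^\infty| > \varepsilon, \, A\,\bigr] < \varepsilon/2$ as soon as $d_R(\bar R^{(N)}, \bar R^\infty) < \delta$ for some sufficiently small $\delta = \delta(\varepsilon, T, M)$.

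The main obstacle lies in $A^c$, on which $(\eta, W_\eta) \in \bar R^\infty$ and one needs $(\eta, W_\eta) \in \bar R^{(N)}$ for $N$ large in order to conclude $\rho^N = \eta = \rho^\infty$. Interior points propagate to $\bar R^{(N)}$ directly by $d_R$-convergence, so the delicate case is $(\eta, W_\eta) \in \partial \bar R^\infty$. Off the (at most countable) atoms of $\mathcal{L}^\lambda(\eta)$, Brownian transition-density estimates and Fubini show that the one-dimensional upper envelope of $\bar R^\infty$ is a $\mathcal{L}^\lambda(\eta, W_\eta)$-null set; at an atom $t_k$ the barriers $\bar R^{(N)}$ arise from the discrete Rost constructions of Theorem~\ref{thm:ddSEP-Rost} solving embedding problems compatible with $\bar R^\infty$, and the uniqueness argument of Lemma~\ref{lem:Rost-uniqueness} forces the upper envelopes of $\bar R^{(N)}$ on $\{t_k\} \times \R$ to converge to that of $\bar R^\infty$ in $\mathcal{L}^\lambda(W_\eta \mid \eta = t_k)$-measure. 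Combining both cases then yields $\mathbb{P}\bigl[\,|\rho^N - \rho^\infty| > \varepsilon\,\bigr] \to 0$ along the chosen subsequence, which is the desired convergence in probability.
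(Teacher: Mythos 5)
Your first step (compactness of $(\mathscr{R},d_R)$ and stability of the Rost structure under $d_R$-limits) is fine, but the heart of your argument has a genuine gap. On $\{\rho^\infty>\eta\}$ you assert that, because the complement of $\bar R^\infty$ is up-closed in time, the crossing at $\rho^\infty$ is ``regular in the same sense as the hitting of a Root barrier'' and that the oscillation argument of Lemma~\ref{lem:delayed-original-Root} adapts directly. It does not: the mechanism of that lemma is that the $d_R$-close barrier \emph{persists forward in time} at a spatial level within $\tilde\varepsilon$ of the hitting point, so that post-crossing oscillations of size $\tilde\varepsilon$ force a hitting shortly after. A Rost barrier recedes in time, so the approximating barrier $\bar R^{(N)}$ may have no points at all at times after $\rho^\infty-\tilde\varepsilon$ near the level $W_{\rho^\infty}$ (picture the upper boundary of $\bar R^\infty$ jumping upward just after $\rho^\infty$, with the boundary of $\bar R^{(N)}$ jumping $\tilde\varepsilon$ earlier: then $d_R(\bar R^{(N)},\bar R^\infty)\le\tilde\varepsilon$, yet $\rho^N$ exceeds $\rho^\infty$ by a macroscopic amount whenever the first crossing falls in that small window). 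Controlling the probability of such events is exactly the content of the Rost-specific argument, and it is not delivered by Brownian oscillations after the crossing; one needs the representation of the Rost hitting time as the crossing time of monotone upper/lower boundary functions $b,c$, which is how \cite[Lemma 5.5]{CoKi19b} proceeds and what the paper's proof invokes. The only new ingredient the paper adds for the delayed start is that, from a random initial position $(\eta,W_\eta)$, the path may sit in different components of the complement, so a single pair $(b,c)$ no longer suffices; since there are at most countably many such components one uses a family $\{b_n\},\{c_n\}$ and, for each realisation of $(\eta,W_\eta)$, the closest pair $(b_\eta,c_\eta)$.

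A secondary remark: your treatment of $\{\rho^\infty=\eta\}$ imports the embedding and randomisation machinery (Theorem~\ref{thm:ddSEP-Rost}, Lemma~\ref{lem:Rost-uniqueness}), but Lemma~\ref{lem:5.5-Rost} is a statement about non-randomised hitting times of a given convergent sequence of barriers with the \emph{same} delay $\eta$; the atoms of $\mathcal{L}^\lambda(\eta)$ and the randomisation densities only enter later, in Lemma~\ref{lem:main-convergence-Rost} and Lemma~\ref{lem:5.6-2}. Also, your claim that off the atoms the upper envelope is $\mathcal{L}^\lambda(\eta,W_\eta)$-null is not justified in this generality (take $\eta$ a first hitting time of a level, so that $W_\eta$ is deterministic given $\eta$); the boundary-function route avoids having to argue this.
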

\begin{proof}
Note that both stopping times are subject to the \emph{same} delay $\eta$, 
hence we can repeat the proof of Lemma 5.5 in \cite{CoKi19b} with minimal adaptations. 
The arguments used therein rely on the existence of boundary functions 
$b:(0, \infty) \rightarrow \mathbb{R} \cup \{\infty\}$ and 
$c:(0, \infty) \rightarrow \mathbb{R} \cup \{-\infty\}$ 
such that the Rost hitting time can be represented as the first time the Brownian motions $(W_t)$ either rises above $b(t)$ or falls below $c(t)$. 

While in \cite[Lemma 5.5]{CoKi19b} the Brownian motion was assumed to start in $(0, W_0)$ it was possible to give this description using only a single upper boundary function $b$ and lower boundary function $c$. 
Due to the possibly delayed starting in $(\eta, W_{\eta})$ however, the processes might see different parts of the barrier that remain unseen for other starting positions. 
Hence we will no longer be able to describe the Rost hitting time with a single boundary function but as at most countably many such areas of the boundary can exists we can instead consider a family of boundary function 
$\{ b_n:(0, \infty) \rightarrow \mathbb{R} \cup \{\infty\}, n \in \mathbb{N} \}$ and 
$\{ c_n:(0, \infty) \rightarrow \mathbb{R} \cup \{-\infty\}, n \in \mathbb{N} \}$. 
Then for each starting position $(\eta, W_{\eta})$ we can identify the \emph{closest} boundary functions $b_{\eta}$ and $c_{\eta}$. 
Precisely, for each realization of $\eta$ we can define $b_{\eta}$ as the boundary function $b_n$ such that $W_{\eta} \leq b_n(\eta)$ and 
$\left|(\eta, W_{\eta}) - (\eta,b_n(\eta)) \right|$ is minimal.
There might be several boundary functions fulfilling these properties, however as they all have to coincide for $t\geq \eta$ the specific choice is irrelevant. 
Analogously we define $c_{\eta}$ to conclude the proof.
\end{proof}
%
%
%
%
%
%
We now give a delayed Version of \cite[Lemma 5.6]{CoKi19b}.
\begin{lemma}[Cf.{{\cite[Lemma 5.6]{CoKi19b}}}] \label{lem:5.6}
For $N \in \mathbb{N}$ consider a Rost barrier $\bar R^{(N)} \subseteq [0, \infty) \times \frac{1}{\sqrt{N}}\mathbb{Z}$ and  
let $\eta^{(N)} \in \frac{1}{N}\mathbb{N}$ and $\eta \in [0, \infty)$ be delay stopping times such that $\eta^{(N)} \xrightarrow{p} \eta$. 
Then for the stopping times
\begin{align*}
    \tau^{(N)} &:= \inf\left\{t \geq \eta^{(N)} : (t, W^{(N)}_t) \in \bar R^{(N)} \right\} \, \text{ and }
\\  \tau^N     &:= \inf\left\{t \geq \eta       : (t, W_t) \in \bar R^{(N)}\right\}
\end{align*}
we have the following convergence
\[
    \left| \left( \tau^{(N)}, W^{(N)}_{\tau^{(N)}} \right) - \left( \tau^{N}, W_{\tau^{N}}\right) \right| 
        \xrightarrow{p} 0 \text{ for } N \rightarrow \infty.
\]
\end{lemma}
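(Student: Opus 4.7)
The plan is to decompose the difference using an intermediate hitting time and then control each piece using tools already established in the paper. Concretely, I would introduce
\[
    \tilde\tau^N := \inf\{t \ge \eta^{(N)} : (t, W_t) \in \bar R^{(N)}\},
\]
which uses the discretised delay $\eta^{(N)}$ but the original Brownian motion $W$ against the Rost barrier $\bar R^{(N)}$. By the triangle inequality it then suffices to establish $|\tau^N - \tilde\tau^N| \to 0$ in probability and
\[
    \left| \left(\tilde\tau^N, W_{\tilde\tau^N}\right) - \left(\tau^{(N)}, W^{(N)}_{\tau^{(N)}}\right) \right| \xrightarrow{p} 0.
\]

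For the first piece, both hitting times involve the same process $W$ and the same Rost barrier $\bar R^{(N)}$, and they differ only through the starting time. Combining the convergence $\bar R^{(N)} \to \bar R^\infty$ in Root's metric (from Lemma \ref{lem:5.5-Rost}) with $\eta^{(N)} \to \eta$ in probability, I would argue as in the delayed Root convergence Lemma \ref{lem:delayed-original-Root}, but for the Rost direction: on the event $\{\eta^{(N)} < \eta + \varepsilon, \, \tau^N \le T\}$ the only way $\tilde\tau^N$ can disagree significantly with $\tau^N$ is if the Brownian path fails to cross $\bar R^{(N)}$ in a short window after $\eta \vee \eta^{(N)}$; standard Brownian fluctuation arguments then give the desired smallness.

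For the second piece, both hitting times use the same delay $\eta^{(N)}$ and the same barrier $\bar R^{(N)}$, but one is evaluated against $W$ and the other against $W^{(N)}$. By the construction in Section \ref{sec:discretization}, $W^{(N)}$ is obtained from $W$ through the hitting-time evaluations $W^{(N)}_{k/N} = W_{\tau_k^N}$, and an SLLN-type argument gives $\sup_{t \le T}|\tau_{\lfloor Nt\rfloor}^N - t| \to 0$ almost surely. Consequently $W^{(N)}$ and $W$ are uniformly close on compact intervals, and since $\bar R^{(N)}$ is supported on the spatial grid $\mathcal{X}^N$ of mesh $1/\sqrt{N} \to 0$, a direct adaptation of the argument from \cite[Lemma 5.6]{CoKi19b} yields the claim.

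The main obstacle is the sensitivity of Rost hitting times to perturbations: unlike the Root case, where once a path enters the barrier it stays in, Rost stopping requires that the path avoid the barrier up to the crossing, so a path narrowly missing the barrier could produce a discontinuous jump in $\tau$. The remedy is the usual regularity property of Brownian motion against a Rost barrier boundary, namely that at a typical hitting point the path oscillates through the boundary on both sides, so a small uniform perturbation cannot delay the hitting time by much. Exactly this observation underlies the argument of \cite[Lemma 5.6]{CoKi19b}, and the modifications needed here are limited to carrying the delay $\eta^{(N)} \to \eta$ through the bookkeeping.
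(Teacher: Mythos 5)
Your second piece is unproblematic and is in substance what the paper does: on the Donsker event $\sup_{t\le T}|W_t-W^{(N)}_t|\le\varepsilon$ one sandwiches the hitting time of $W^{(N)}$ between the hitting times of the drifted processes $W_t\pm\varepsilon t$ and uses Girsanov (exactly the argument of \cite[Lemma 5.6]{CoKi19b}, split into the up-crossing and down-crossing parts $\rho^{+},\rho^{-}$). The genuine gap is in your first piece, the comparison of $\tau^N$ and $\tilde\tau^N$, i.e.\ same Brownian path, same barrier, delays $\eta$ versus $\eta^{(N)}$. You propose to ``argue as in Lemma \ref{lem:delayed-original-Root}, but for the Rost direction'' via Brownian fluctuation in a short window after $\eta\vee\eta^{(N)}$. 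That mechanism is specific to Root barriers: it works because a Root barrier persists forward in time, so a path that is close to the barrier hits it almost immediately. A Rost barrier recedes in time, and the dangerous event is not ``failing to cross shortly after $\eta\vee\eta^{(N)}$'' but rather the hit occurring \emph{inside the delay gap}: if, say, $\eta\le\eta^{(N)}$ then $\tau^N=\tilde\tau^N$ unless $\tau^N\in[\eta,\eta^{(N)})$, and the typical way this happens for Rost embeddings is instantaneous stopping at the delay ($(\eta,W_\eta)$ already inside the barrier), an event of positive probability. In that case the path is stopped from \emph{inside} the barrier, not at a boundary crossing, so the oscillation/regularity remedy you invoke does not apply, and after the gap the barrier may have receded so that $\tilde\tau^N$ jumps by a macroscopic amount (e.g.\ $\bar R^{(N)}=[0,t_0]\times\R$, $\eta=t_0$, $\eta^{(N)}=t_0+\tfrac1N$ gives $\tau^N=t_0$ and $\tilde\tau^N=\infty$). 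So the delay-discrepancy step cannot be closed by a fluctuation argument alone; it needs additional structure or an explicit restriction away from the instant-stopping scenario.

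For comparison, the paper does not introduce an intermediate stopping time at all: it reruns the \cite{CoKi19b} sandwich with the delays built in from the start, defining the drifted Brownian hitting times after $\eta$ and the walk hitting times after $\eta^{(N)}$, and working on the event $C^{N,\varepsilon}=A^{N,\varepsilon}\cap\{|\eta^{(N)}-\eta|\le\varepsilon\}$, with Girsanov shrinking $|\rho^{\pm,N,+\varepsilon}-\rho^{\pm,N,-\varepsilon}|$. Moreover, at the level where this lemma is used (Lemma \ref{lem:main-convergence-Rost}), the problematic instant-stopping mass is deliberately not treated pathwise: Lemma \ref{lem:5.6-2} restricts to $\{\rho^{(N)}>\eta^{(N)}+\varepsilon\}$, and the mass with $\rho^{(N)}\le\eta^{(N)}+\varepsilon$ is handled by weak convergence of the embedded measures together with Lemma \ref{lem:Rost-uniqueness} (see \eqref{eq:conv-B}). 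Your proposal should either adopt that route or add an argument showing the hit-in-the-gap event is negligible in your setting; as written, piece one is where the proof fails.
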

\begin{proof}
First we give a slight refinement of the proof in \cite[Lemma 5.6]{CoKi19b} for $\eta = 0$.

As in \cite[Lemma 5.6]{CoKi19b} consider the Brownian motions $W^{\pm \varepsilon} = W_t \pm \varepsilon t$ with drift. 
Then we define the following stopping times 
\begin{align*}
    \rho^{+, N, \pm \varepsilon} &:= \inf \left\{ t \geq 0: W_t \geq W_0, \left(t, W^{\pm \varepsilon}_t \right)   \in \bar R^{(N)} \right\},
\\  \rho^{-, N, \pm \varepsilon} &:= \inf \left\{ t \geq 0: W_t \leq W_0, \left(t, W^{\pm \varepsilon}_t \right)   \in \bar R^{(N)} \right\},
\end{align*}
and 
\begin{align*}
    \rho^{+, N} &:= \inf \left\{ t \geq 0: W_t \geq W_0, \left(t, W_t \right)   \in \bar R^{(N)} \right\},
\\  \rho^{-, N} &:= \inf \left\{ t \geq 0: W_t \leq W_0, \left(t, W_t \right)   \in \bar R^{(N)} \right\},
\end{align*}
as well as
\begin{align*}
    \rho^{+, (N)} &:= \inf \left\{ t \geq 0: W^{(N)}_t \geq W^{(N)}_0, \left(t, W^{(N)}_t \right)   \in \bar R^{(N)} \right\},
\\  \rho^{-, (N)} &:= \inf \left\{ t \geq 0: W^{(N)}_t \leq W^{(N)}_0, \left(t, W^{(N)}_t \right)   \in \bar R^{(N)} \right\}.
\end{align*}
Furthermore for $T > 0$ and $\varepsilon > 0$ consider the set
\[
    A^{N, \varepsilon} = \left\{ \omega \in \Omega : \sup_{0 \leq t \leq T} \left|W_t - W^{(N)}_t \right| \leq \varepsilon \right\}
\]
and note that by Donsker's Theorem the probability of $\left( A^{N, \varepsilon} \right)^c$ goes to zero for $N \rightarrow \infty$. 

We have the following properties of the stopping times defined above. 
\begin{itemize}
\item[\itembullet] On the set $A^{N, \varepsilon}$ we have that  $|\rho^{+,(N)} - \rho^{+,N} | \leq |\rho^{+, N, +\varepsilon} - \rho^{+, N, - \varepsilon}|$ 
  as well as $|\rho^{-,(N)} - \rho^{-,N} | \leq |\rho^{-, N, +\varepsilon} - \rho^{-, N, - \varepsilon}|$.
\item[\itembullet] As demonstrated in the proof of \cite[Lemma 5.6]{CoKi19b} it follows from Girsanov's Theorem that 
    $|\rho^{+, N, +\varepsilon} - \rho^{+, N, - \varepsilon}| \xrightarrow{p} 0$ 
and 
    $|\rho^{-, N, +\varepsilon} - \rho^{-, N, - \varepsilon}| \xrightarrow{p} 0$ for $\varepsilon \searrow 0$.
\item[\itembullet] Now note that $\rho^{(N)} = \rho^{+,(N)} \wedge \rho^{-,(N)}$ and $\rho^{N} = \rho^{+,N} \wedge \rho^{-,N}$, 
thus on the set $A^{N, \varepsilon}$ we have
\begin{align*}
    |\rho^{(N)} - \rho^{N}| 
        &= |\rho^{+,N} \wedge \rho^{-,N} - \rho^{+,(N)} \wedge \rho^{-,(N)}|
\\      &\leq |\rho^{+,N} - \rho^{+,(N)}| + |\rho^{-,N} - \rho^{-,(N)}|    
\\      &\leq  |\rho^{+, N, +\varepsilon} - \rho^{+, N, - \varepsilon}| + |\rho^{-, N, +\varepsilon} - \rho^{-, N, - \varepsilon}|
\end{align*}
\item[\itembullet] Combining all these ingredients we can conclude that $|\rho^{(N)} - \rho^{N}| \xrightarrow{p} 0$ just as in \cite[Lemma 5.6]{CoKi19b}.
\end{itemize}

We now add a delay. 

Since $\eta^{(N)} \xrightarrow{p} \eta$ we can for ever $\varepsilon, \delta > 0$ find an $N_0 \in \mathbb{N}$ such that for
\[
    B^{N, \varepsilon} := \left\{\left|\eta^{(N)} - \eta \right| \leq \varepsilon \right\}
\]
we have $\mathbb{P}\left[ \left( B^{N, \varepsilon} \right)^c \right] \leq \delta$ for all $N \geq N_0$.
Define the set $C^{N, \varepsilon} := A^{N, \varepsilon} \cap B^{N, \varepsilon}$ for $A^{N, \varepsilon}$ defined above, 
then similarly $\mathbb{P}\left[ \left( C^{N, \varepsilon} \right)^c \right] \leq \tilde \delta$ for any $\tilde \delta > 0$ in $N$ is chosen big enough.

Define the following stopping times.
\begin{align*}
    \rho^{+, N, \pm \epsilon} &:= \inf \left\{ t \geq \eta: W_t \geq W_0, \left(t, W^{\pm \varepsilon}_t \right)   \in \bar R^{(N)} \right\},
\\  \rho^{-, N, \pm \epsilon} &:= \inf \left\{ t \geq \eta: W_t \leq W_0, \left(t, W^{\pm \varepsilon}_t \right)   \in \bar R^{(N)} \right\},
\end{align*}
and 
\begin{align*}
    \rho^{+, N} &:= \inf \left\{ t \geq \eta: W_t \geq W_0, \left(t, W_t \right)   \in \bar R^{(N)} \right\},
\\  \rho^{-, N} &:= \inf \left\{ t \geq \eta: W_t \leq W_0, \left(t, W_t \right)   \in \bar R^{(N)} \right\},
\end{align*}
as well as
\begin{align*}
    \rho^{+, (N)} &:= \inf \left\{ t \geq \eta^{(N)}: W^{(N)}_t \geq W^{(N)}_0, \left(t, W^{(N)}_t \right)   \in \bar R^{(N)} \right\},
\\  \rho^{-, (N)} &:= \inf \left\{ t \geq \eta^{(N)}: W^{(N)}_t \leq W^{(N)}_0, \left(t, W^{(N)}_t \right)   \in \bar R^{(N)} \right\}.
\end{align*}
Replacing the stopping times as well as $A^{N, \varepsilon}$ with $C^{N, \varepsilon}$ in the arguments above resp. in \cite[Lemma 5.6]{CoKi19b} we can conclude the desired convergence. 
\end{proof}
%
%
%
%
%
%
We show the final convergence via a sandwiching argument. 

For a given field of stopping probabilities $(r^N_t(x))_{(t,x) \in \mathbb{N} \times \frac{1}{\sqrt{N}} \mathbb{Z}}$ define
\begin{align*}
    \tilde R^{N}_+ &:= \left\{(t,x): r^N_t(x) > 0 \right\}, 
\\  \tilde R^{N}_- &:= \left\{(t,x): r^N_t(x) = 1 \right\}  =: \tilde R^N
\end{align*}
and let $\bar R^{(N)}_+$ and $\bar R^{(N)}_-$ respectively denote their (Rost) continuifications as defined in \eqref{eq:R^N-def}.  
Note that $\bar R^{(N)} = \bar R^{(N)}_- \subseteq \bar R^{(N)}_+$ and the two barriers coincide on all points $(t,x)$ such that  $r^N_t(x) \in \{0, 1\}$.
Furthermore define $\bar S^{(N)}_+ := \bar R^{(N)}_+ \setminus \left( \{0\} \times \mathbb{R} \right)$ 
as well as $\bar S^{(N)}_- := \bar R^{(N)}_- \setminus \left(\{0\} \times \mathbb{R}\right)$ and recall $d_R$, Roots metric on barriers. 
While $\bar R^{(N)}_+$ and $\bar R^{(N)}_-$ might differ substantially on $\{0\} \times \mathbb{R}$, 
for $\bar S^{(N)}_+$ and $\bar S^{(N)}_-$ by Rost's barrier structure and definition of the discretization we have 
\begin{equation} \label{eq:R+-conv}
    d_R\left( \bar S^{(N)}_+ , \bar S^{(N)}_- \right) \leq \frac{1}{N} \rightarrow 0 \,\,\text{ for } N \rightarrow \infty. 
\end{equation}
Hence both $\bar S^{(N)}_+$ and $\bar S^{(N)}_-$ converge to the same object $\bar S$ in Roots metric. 

Define the hitting times
\begin{align*}
    \tau^{(N)}_+ &:= \inf\left\{t \geq \eta^{(N)} : \left(t, W^{(N)}_t \right) \in \bar R^{(N)}_+ \right\},
\\  \tau^{(N)}_- &:= \inf\left\{t \geq \eta^{(N)} : \left(t, W^{(N)}_t \right) \in \bar R^{(N)}_- \right\}.
\end{align*}
Then both $\tau^{(N)}_+$ and $\tau^{(N)}_-$ are non-randomized hitting times and we will almost surely have
\[
    \tau^{(N)}_+ \leq \rho^{(N)} \leq \tau^{(N)}_-.
\]
We will also consider the hitting times 
\begin{align*}
    \tau^{N}_+ &:= \inf\left\{t \geq \eta : \left(t, W_t \right) \in \bar R^{(N)}_+ \right\},
\\  \tau^{N}_- &:= \inf\left\{t \geq \eta : \left(t, W_t \right) \in \bar R^{(N)}_- \right\}.
\end{align*}
These auxiliary stopping times will help us prove the following lemma.
\begin{lemma} \label{lem:5.6-2}
We have the convergence
\begin{equation} 
 \left|\left(\rho^{(N)}, W^{(N)}_{\rho^{(N)}} \right)\ind_{\rho^{(N)} > \eta^{(N)} + \varepsilon} 
    - \left( \tau^N, W_{\tau^N}\right) \ind_{\tau^N > \eta} \right|
    \xrightarrow[\varepsilon \searrow 0, N \rightarrow \infty]{p} 0. 
\end{equation}
\end{lemma}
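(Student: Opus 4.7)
The plan is to use the sandwich $\tilde\tau^{(N)}_+ \le \rho^{(N)} \le \tilde\tau^{(N)}_-$ combined with the proximity bound \eqref{eq:R+-conv} and the delayed convergence results from Lemmas \ref{lem:delayed-original-Root} and \ref{lem:5.6}. Here I would set $\tilde\tau^{(N)}_\pm := \inf\{ t \ge \eta^{(N)} + \varepsilon : (t, W^{(N)}_t) \in \bar R^{(N)}_\pm \}$, and analogously $\tilde\tau^{N}_\pm := \inf\{ t \ge \eta + \varepsilon : (t, W_t) \in \bar R^{(N)}_\pm \}$ on the Brownian side. The role of the buffer $\varepsilon > 0$ is essential: it decouples the randomized hitting time from the initial-slice issues, which is precisely the slice that \eqref{eq:R+-conv} excludes.

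First I would verify the sandwich on the event $\{\rho^{(N)} > \eta^{(N)} + \varepsilon\}$: since $\rho^{(N)}$ stops the path as soon as $u_{\lfloor Nt\rfloor} < r^N_{\lfloor Nt\rfloor}(W^{(N)}_t)$ and this has not yet happened by $\eta^{(N)} + \varepsilon$, on this event $\rho^{(N)}$ is no smaller than the first subsequent time where the stopping probability is strictly positive (namely $\tilde\tau^{(N)}_+$) and no larger than the first subsequent time where it equals one (namely $\tilde\tau^{(N)}_-$). Next, I would apply the delayed version of Lemma \ref{lem:5.6} separately to $\bar R^{(N)}_+$ and $\bar R^{(N)}_-$, with the perturbed delays $\eta^{(N)} + \varepsilon$ and $\eta + \varepsilon$ (which still satisfy the required probability convergence), obtaining $|\tilde\tau^{(N)}_\pm - \tilde\tau^{N}_\pm| \xrightarrow{p} 0$ as $N \to \infty$. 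Then, since hitting times starting from $\eta + \varepsilon > 0$ are insensitive to the time-zero slice of the barriers, the bound \eqref{eq:R+-conv} together with Lemma \ref{lem:delayed-original-Root} yields $|\tilde\tau^{N}_+ - \tilde\tau^{N}_-| \xrightarrow{p} 0$. Combining these gives $|\rho^{(N)} - \tilde\tau^{N}_-| \xrightarrow{p} 0$ on $\{\rho^{(N)} > \eta^{(N)} + \varepsilon\}$, and the identification $\bar R^{(N)} = \bar R^{(N)}_-$ ensures that $\tilde\tau^{N}_- \searrow \tau^N$ as $\varepsilon \searrow 0$ on $\{\tau^N > \eta\}$.

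The remaining task is to reconcile the two indicators $\ind_{\rho^{(N)} > \eta^{(N)} + \varepsilon}$ and $\ind_{\tau^N > \eta}$ in the iterated limit ``first $N \to \infty$, then $\varepsilon \searrow 0$''. On the intersection of the two events, the previous paragraph supplies the required convergence. The contribution from $\{\tau^N > \eta\} \setminus \{\rho^{(N)} > \eta^{(N)} + \varepsilon\}$ vanishes because on $\{\tau^N > \eta\}$ one can pick $\varepsilon < \tau^N - \eta$, and then discrete approximation forces $\rho^{(N)}$ to overshoot $\eta^{(N)} + \varepsilon$ for all sufficiently large $N$. Conversely, the contribution from $\{\rho^{(N)} > \eta^{(N)} + \varepsilon\} \setminus \{\tau^N > \eta\}$ vanishes because on $\{\tau^N = \eta\}$ the Brownian motion starts inside $\bar R^{(N)}$; its discrete counterpart lies inside $\bar R^{(N)}_+$ with probability tending to one, so the chance of surviving randomization for a full duration $\varepsilon$ is small as $\varepsilon \searrow 0$ once $N$ is large.

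The main obstacle will be this second symmetric-difference estimate, since the randomization layer $\bar R^{(N)}_+ \setminus \bar R^{(N)}_-$ at the delay slice is an essentially discrete phenomenon that has no direct Brownian analogue. Controlling it cleanly requires exploiting both the regularity of Rost barriers (paths landing close to the top boundary must hit it almost immediately thereafter) and the fact that the mass carried by the undecided layer vanishes in the iterated limit, so that the total probability of ``surviving many independent sub-unit coin flips over time $\varepsilon$'' collapses in the right order of limits.
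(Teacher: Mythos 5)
Your strategy is essentially the paper's: sandwich the randomized time $\rho^{(N)}$ between the non-randomized hitting times of $\bar R^{(N)}_+$ and $\bar R^{(N)}_-$, transfer each of these to the Brownian side via the delayed Lemma \ref{lem:5.6}, and close the gap between the two barriers using \eqref{eq:R+-conv}. The differences are mostly organisational: the paper keeps the delay $\eta^{(N)}$ in the sandwiching times $\tau^{(N)}_\pm$ and carries the indicators along, reducing the claim to \eqref{eq:conv11}--\eqref{eq:conv12}, whereas you build the $\varepsilon$-shift into the delay of the auxiliary hitting times; both devices serve the same purpose, namely to keep the comparison away from the initial slice, which is exactly where \eqref{eq:R+-conv} gives no control. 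Your third paragraph, reconciling the two indicator events and estimating the symmetric differences, goes beyond what the paper does at all -- the paper simply works on the events $\{\rho^{(N)}>\eta^{(N)}+\varepsilon\}$ and $\{\tau^N>\eta\}$ and does not attempt this bookkeeping -- so the ``main obstacle'' you identify is an issue of rigour lying outside the paper's own argument rather than a divergence from it.

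One concrete correction is needed: for the step $\left|\tilde\tau^{N}_+ - \tilde\tau^{N}_-\right|\xrightarrow{p}0$ you invoke Lemma \ref{lem:delayed-original-Root}, but that lemma concerns \emph{Root} barriers, and its proof rests on the forward-in-time regularity (a path close to a Root barrier hits it almost immediately afterwards) which is not available here: $\bar R^{(N)}_\pm$ are \emph{Rost} barriers, and going forward in time the spatial sections of a Rost barrier shrink, so a path starting just outside one need not hit it quickly. The paper instead deduces $\left|\tau^{N}_+\ind_{\tau^N>\eta}-\tau^{N}_-\ind_{\tau^N>\eta}\right|\xrightarrow{p}0$ from \eqref{eq:R+-conv} together with the Rost-specific convergence result Lemma \ref{lem:5.5-Rost} (the boundary-function description), and then combines this with \eqref{eq:conv22}; your argument should cite and use that lemma (or reproduce its Girsanov/boundary-function reasoning) in place of Lemma \ref{lem:delayed-original-Root}. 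With that substitution, and granting the paper's implicit treatment of the indicators, your proof matches the intended one.
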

\begin{proof}
Since 
\(
    \tau^{(N)}_+ \leq \rho^{(N)} \leq \tau^{(N)}_-
\) 
almost surely it suffices to show the two convergences
\begin{align}
    \left| \left( \tau^{(N)}_+, W^{(N)}_{\tau^{(N)}_+} \right) \ind_{\rho^{(N)} > \eta^{(N)} + \varepsilon}  
        - \left( \tau^{N}, W_{\tau^{N}}\right)  \ind_{\tau^N > \eta} \right| 
        &\xrightarrow[\varepsilon \searrow 0, N \rightarrow \infty]{p} 0  \,\,\text{ and}       \label{eq:conv11}
\\     \left| \left( \tau^{(N)}_-, W^{(N)}_{\tau^{(N)}_-} \right) \ind_{\rho^{(N)} > \eta^{(N)} + \varepsilon} 
        - \left( \tau^{N}, W_{\tau^{N}}\right) \ind_{\tau^N > \eta} \right| 
        &\xrightarrow[\varepsilon \searrow 0, N \rightarrow \infty]{p} 0. \label{eq:conv12}
\end{align}
Lemma \ref{lem:5.6} gives us the following convergence
\begin{align}
    \left| \left( \tau^{(N)}_+, W^{(N)}_{\tau^{(N)}_+} \right) - \left( \tau^{N}_+, W_{\tau^{N}_+}\right) \right| 
        \xrightarrow[N \rightarrow \infty]{d} 0 \label{eq:conv21}
\end{align}
as well as 
\begin{align}
    \left| \left( \tau^{(N)}_-, W^{(N)}_{\tau^{(N)}_-} \right) - \left( \tau^{N}_-, W_{\tau^{N}_-}\right) \right| 
        \xrightarrow[N \rightarrow \infty]{d} 0. \label{eq:conv22}
\end{align}
Note that \eqref{eq:conv21} already implies \eqref{eq:conv11} as $\tau^N = \tau^N_+$.

To see the convergence \eqref{eq:conv12}, observe that on $\{\rho^{(N)} > \eta^{(N)} + \varepsilon\}$ we will also have $\tau^{(N)}_- > \eta^{(N)} + \varepsilon$. 
Furthermore, since $\tau^N =\tau^N_+ \leq \tau^N_-$ on $\{\tau^N > \eta \}$ we thus also have $\tau^N_- > \eta$.
Hence it is a consequence of \eqref{eq:R+-conv} and Lemma \ref{lem:5.5-Rost} that
\[
    \left|\tau^{N}_+ \ind_{\tau^{N} > \eta} - \tau^{N}_- \ind_{\tau^{N} > \eta} \right| \xrightarrow{p} 0.
\]
This together with  \eqref{eq:conv22} implies convergence \eqref{eq:conv12}, concluding the proof.
\end{proof}
Lemma \ref{lem:main-convergence-Root} (resp. \ref{lem:main-convergence-Rost}) now enables the first step of recovering the continuous optimal stopping representation of Theorem \ref{thm:delayed-Root} (resp. \ref{thm:delayed-Rost}) 
from the discrete counterpart Theorem \ref{thm:discrete-delayed-Root} (resp. \ref{thm:discrete-delayed-Rost}). 
\begin{corollary} \label{cor:LHS-conv}
Let $(\lambda^N, \tilde \eta ^N, \mu^N)$ denote a (rescaled) discrete \eqref{dSEP} approximating the continuous \eqref{dSEP}  given by $(\lambda, \eta, \mu)$ 
as defined in Section \ref{sec:(dSEP)-discretization}.  
Let $\rho^{(N)}$ denote a Root (resp. Rost) solution to $(\lambda^N, \tilde \eta ^N, \mu^N)$ while $\rho$ denotes a Root (resp. Rost) solution to $(\lambda, \eta, \mu)$. 
Then for every $T \in [0, \infty)$ we have the convergence
\begin{equation} \label{eq:mu_T-conv}
    \mu^N_T = \mathcal{L}^{\lambda^N}\left(W^{(N)}_{\rho^{(N)} \wedge T}\right) 
        \Rightarrow \mathcal{L}^{\lambda} \left(W_{\rho \wedge T}\right) = \mu_T.
\end{equation}
Moreoever this gives convergence of the LHS of \ref{eq:discrete-delayed-Root} (resp. \ref{eq:discrete-delayed-Rost})  to the LHS of \ref{eq:delayed-Root} (resp. \ref{eq:delayed-Rost}).
\end{corollary}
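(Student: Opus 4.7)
The plan is to upgrade the distributional convergence of Lemmas \ref{lem:main-convergence-Root} and \ref{lem:main-convergence-Rost} to a convergence of stopped values at time $T$, and then pass from weak convergence of the distributions to convergence of their potentials by means of uniform integrability. The key technical observation is that, by the embedding construction of Section \ref{sec:discretization}, the scaled walks $W^{(N)}$ live on the same probability space as the Brownian motion $W$ via $W^{(N)}_{\tau^N_k} = W_{\tau^N_k}$, hence $W^{(N)} \to W$ uniformly on compact time intervals almost surely (standard consequence of the strong law of large numbers applied to $\tau^N_{\lfloor NT \rfloor}/N \to T$ together with continuity of $W$). Moreover, $\eta^{(N)} \to \eta$ in probability by Lemma \ref{lem:limit}.

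\textbf{Step 1 (convergence of stopped positions).} Combining Lemmas \ref{lem:main-convergence-Root} and \ref{lem:main-convergence-Rost} with a subsequence argument allows us to assume, after passing to a further subsequence if necessary, that $\rho^{(N)} \to \rho$ in probability on the common probability space. Write
\[
    W^{(N)}_{\rho^{(N)} \wedge T} - W_{\rho \wedge T}
    = \bigl(W^{(N)}_{\rho^{(N)} \wedge T} - W_{\rho^{(N)} \wedge T}\bigr)
    + \bigl(W_{\rho^{(N)} \wedge T} - W_{\rho \wedge T}\bigr).
\]
The first bracket tends to zero because $\sup_{t \le T}|W^{(N)}_t - W_t| \to 0$ almost surely, and the second bracket tends to zero in probability by continuity of $t \mapsto W_t$ and $\rho^{(N)} \wedge T \to \rho \wedge T$ in probability. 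Therefore
\[
    W^{(N)}_{\rho^{(N)} \wedge T} \;\xrightarrow{p}\; W_{\rho \wedge T},
\]
which in particular gives the weak convergence \eqref{eq:mu_T-conv}.

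\textbf{Step 2 (uniform integrability and the LHS).} To upgrade the weak convergence of $\mu^N_T$ into convergence of the potentials $U_{\mu^N_T}(y) = -\E^{\lambda^N}\bigl[|W^{(N)}_{\rho^{(N)} \wedge T}-y|\bigr]$ for each fixed $y$, we argue uniform integrability of $\{W^{(N)}_{\rho^{(N)} \wedge T}\}_N$. Bound
\[
    \bigl|W^{(N)}_{\rho^{(N)} \wedge T}\bigr|
    \;\le\; \sup_{t \le T} \bigl|W^{(N)}_t\bigr| \,\vee\, \bigl|W^{(N)}_{\rho^{(N)}}\bigr|.
\]
The first term is uniformly $L^2$-bounded by Doob's inequality applied to the martingale $W^{(N)}$; the second has law $\mu^N$, and since $\mu^N \Rightarrow \mu$ with $\int|x|\,\mu^N(\di x) \to \int|x|\,\mu(\di x) < \infty$ by the explicit construction of the discretization via the Brownian embedding (so that $W^{(N)}_{\rho^{(N)}} \to W_\rho$ in $L^1$), uniform integrability of $\{W^{(N)}_{\rho^{(N)}}\}_N$ follows from Vitali's theorem. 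Combining these, $\{W^{(N)}_{\rho^{(N)} \wedge T}\}_N$ is uniformly integrable, and together with Step 1 this yields
\[
    U_{\mu^N_T}(y)
    = -\E^{\lambda^N}\bigl[|W^{(N)}_{\rho^{(N)} \wedge T}-y|\bigr]
    \;\longrightarrow\;
    -\E^{\lambda}\bigl[|W_{\rho \wedge T}-y|\bigr]
    = U_{\mu_T}(y),
\]
i.e.\ the LHS of \eqref{eq:discrete-delayed-Root} (resp.\ \eqref{eq:discrete-delayed-Rost}) converges to the LHS of \eqref{eq:delayed-Root} (resp.\ \eqref{eq:delayed-Rost}).

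\textbf{Main obstacle.} The cleanest point is Step 1, which relies on having all objects realised on a single probability space so that the uniform almost sure convergence $W^{(N)} \to W$ can be combined with the in-probability convergence $\rho^{(N)} \to \rho$. The main subtlety is the Rost case: Lemma \ref{lem:main-convergence-Rost} produces a limit $\rho$ of the form $\inf\{t \ge \eta:(t,W_t)\in \bar R\}\wedge Z$ where $Z$ introduces a randomised atom at the atoms of $\mathcal{L}^\lambda(\eta)$, and one must verify that the joint convergence $(\rho^{(N)},W^{(N)}_{\rho^{(N)}})$ together with continuity of Brownian paths still forces $W^{(N)}_{\rho^{(N)} \wedge T} \to W_{\rho \wedge T}$ on the event $\{\rho = \eta\}$; this is where the careful identification of the instantaneous-stopping mass in the proof of Lemma \ref{lem:main-convergence-Rost} (i.e.\ the convergence \eqref{eq:Rost-0-conv-t}) is used.
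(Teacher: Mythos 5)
Your overall strategy (use the convergence lemmas to get convergence of the stopped laws at time $T$, then pass to the potentials via uniform integrability) is the intended route — the paper gives no separate proof, and the corollary is meant to follow from Lemmas \ref{lem:main-convergence-Root} and \ref{lem:main-convergence-Rost}. However, there is a genuine gap in Step 1. You claim that "a subsequence argument" lets you assume $\rho^{(N)} \to \rho$ in probability on the common probability space; convergence in distribution cannot be upgraded to convergence in probability by passing to a subsequence (the limit is not a constant, and Skorokhod-type representations change the space, destroying exactly the coupling $W^{(N)}_{\tau^N_k}=W_{\tau^N_k}$ you rely on). In the Root case the in-probability statement is in fact available, but only because the proof of Lemma \ref{lem:main-convergence-Root} establishes it internally (steps (i) and (ii) give $|\rho^{(N)}-\rho^N|\to 0$ and $\rho^N\to\rho$ in probability), not by your subsequence argument. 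In the Rost case it is genuinely unavailable: $\rho^{(N)}$ is randomised through the variables $u_t$ and the limit $\rho$ through the independent variable $Z$, and these randomisations are not coupled, so only distributional convergence holds; your "main obstacle" paragraph acknowledges this but does not close it. The clean fix is to avoid in-probability convergence altogether: the proofs of the lemmas deliver joint distributional convergence of $(\rho^{(N)},W^{(N)})$ (the process convergence comes from Donsker, and in the Rost case the instantaneous-stopping mass is identified via \eqref{eq:Rost-0-conv-t}), and since the map $(s,w)\mapsto w(s\wedge T)$ is continuous on $[0,\infty]\times C([0,\infty))$, the continuous mapping theorem gives $W^{(N)}_{\rho^{(N)}\wedge T}\Rightarrow W_{\rho\wedge T}$ directly.

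Step 2 also needs repair in two places. Doob's $L^2$ inequality for $\sup_{t\le T}|W^{(N)}_t|$ requires a second moment of $\lambda^N$, which is not assumed (only first moments enter the potentials); you would need either an $L^1$-maximal argument, a comparison with $\sup_{s\le \tau^N_{\lfloor NT\rfloor}}|W_s|$ together with integrability of $\tau^N_{\lfloor NT\rfloor}$, or a localisation in the starting law. And the assertion that $W^{(N)}_{\rho^{(N)}}\to W_\rho$ in $L^1$ (equivalently $\int|x|\,\mu^N(\di x)\to\int|x|\,\mu(\di x)$) is essentially the uniform integrability you are trying to prove, so invoking it and then citing Vitali is circular as written; it needs its own argument, e.g.\ via uniform integrability of $(W_{t\wedge\rho})_{t\ge0}$ and the fact that $\tau^N_{\tilde\sigma^N}\searrow\rho$ in the construction of the discretisation.
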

%
%
%
%
%
%
%
%
%
%
%
\subsection{A Discretization of the Optimal Stopping Problem and its Convergence} \label{sec:OSP-convergence}
It remains to show convergence of the discrete optimal stopping problem 
to the continuous one. 

Les us consider a continous \eqref{dSEP} determined by  $(\lambda, \eta, \mu)$.
We will first prove convergence of the Root case, the Rost case can then be argued analogously. 

To avoid heavy usage of floor functions, we will assume from now on for the cutoff time $T \in I:=\left\{ \frac{m}{2^n}:m,n\in\mathbb{N} \right\}$.
Then taking limits along the subsequence $\left(Y^{2^n}\right)_{n\in\mathbb{N}}$ 
(resp. $\left(W^{(2^n)}\right)_{n\in\mathbb{N}}$) ensures that there exists an $N_0 \in \mathbb{N}$ such that 
$T$ will always be a multiple of the step size $\frac{1}{2^n}$ for all $n \geq N_0$.
For arbitrary $T>0$ the results can be recovered via density arguments.

With the help of the function
\begin{align*}
    G_T(x,t)   &:= V^{\alpha}_{T-t}(x) + \left(U_{\mu}(x) - U_{\alpha_X}(x)\right) \ind_{t < T}
\end{align*}
we can recall the continuous identities of interest
\begin{align}
    \label{eq:delayed-Root-G}
     U_{\mu_T}(x) &= \E^x\left[G_T\left(W_{\tau^*},\tau^*\right) \right]   \tag{\ref{eq:delayed-Root}}
\\ \label{eq:delayed-RootOSP-G}
                            &= \sup_{\tau \leq T} \E^x\left[G_T\left(W_{\tau},\tau\right) \right]  \tag{\ref{eq:delayed-RootOSP}}
\end{align}
where the optimizer is given by 
\[
\tau^* := \tau_T \wedge T \text{ for } \tau_T :=\inf \{t \geq 0 : (T-t, W_t) \not\in R \}
\]
for a Root barrier $R$.

Let now $(\lambda^N, \tilde \eta ^N, \mu^N)$ denote a (rescaled) discrete \eqref{dSEP} approximating the continuous \eqref{dSEP} 
as defined in Section \ref{sec:(dSEP)-discretization}. 

Note that Lemma \ref{lem:limit} gives us the following additional convergence results
\begin{align}
    \alpha^N_X := &\mathcal{L}^{\lambda^N}(Y^N_{\tilde \eta^N}) \rightarrow \alpha_X \,\,\text{ and }
\\                &\mathcal{L}^{\lambda^N}(Y^N_{\tilde \eta^N \wedge TN}) \rightarrow \mathcal{L}^{\lambda}(W_{\eta \wedge T}). \label{eq:V^N-conv}
\end{align}
Thus we are able to define a discrete version of $G_T$ in the following way 
\begin{align*}
  G^N_T(x,t) &:= V^{N}_{T-t}(x) + \left(U_{\mu^N}(x) - U_{\alpha^N_X}(x)\right) \ind_{t < T}
\end{align*}
where
\begin{align*}
    V^{N}_{T}(x) &:= \E^{\lambda^N} \left[ \left| Y^N_{\tilde \eta ^N \wedge NT} - x \right| \right].
\end{align*}
Here $V^{N}_{T}$ (resp. $V^{\alpha}_{T}$) is the potential function with respect to the LHS (resp. RHS) of \eqref{eq:V^N-conv}, 
hence we have uniform convergence of $G^N_T$ to $G_T$ due to the uniform convergence of all the potential functions involved.

Theorem \ref{thm:ddSEP-Root} gives existence of a Root field of stopping probabilities $(r^N_t(x))_{(t,x) \in \mathbb{N} \times \mathcal{X}^N}$ 
such that the corresponding discrete delayed Root stopping time $\tilde \rho^{N}$ embeds the measure $\mu^N$.
Let 
\[
    \tilde R^N := \left\{(t,x) \in \mathbb{N} \times \mathcal{X}^N : r^N_t(x) > 0 \right\}
\]
denote a Root barrier induced by the field of stopping probabilities 
and let $R^{(N)}$  denote its continuification as defined in \eqref{eq:R^N-def}.

The rescaled version of the results in Theorem \ref{thm:discrete-delayed-Root} then read
\begin{align}
    \label{eq:discrete-delayed-Root-G}
     U_{\mu^{N}_T}(x) &= \E^x\left[G^N_T\left(Y^N_{\tilde\tau^{N*}}, \frac{\tilde\tau^{N*}}{N}\right) \right]   \tag{\ref{eq:discrete-delayed-Root}*}
\\ \label{eq:discrete-delayed-RootOSP-G}
                            &= \sup_{\frac{\tau}{N} \leq T} 
                           \E^x\left[G^N_T\left(Y^N_{\tau}, \frac{\tau}{N}\right) \right],  \tag{\ref{eq:discrete-delayed-RootOSP}*}
\end{align}
and an optimizer is given by
\[
\tilde{\tau}^{N*} := \tilde{\tau}^N_T \wedge NT \,\,\text{ for }\,\, 
    \tilde{\tau}^N_T :=\inf \{t \in \mathbb{N} : (NT-t, Y^N_t) \in \tilde{R}^N \}.
\]
To conclude the proof of Theorem \ref{thm:delayed-Root}, 
it remains to establish the following two convergence results. 
\begin{lemma} 
\hfill
\begin{enumerate}[(i)]
\item
We have convergence of the RHS of \eqref{eq:discrete-delayed-Root-G} to the RHS of \ref{eq:delayed-Root-G}, 
precisely
\begin{equation*}
     \E^x\left[G^N_T\left(Y^N_{\tilde\tau^{N*}}, \frac{\tilde\tau^{N*}}{N}\right) \right] \xrightarrow[]{N \rightarrow \infty}
    \E^x\left[G_T\left(W_{\tau^*},\tau^*\right) \right]. 
\end{equation*}
\item We furthermore have convergence of the RHS of \eqref{eq:discrete-delayed-RootOSP-G} to the RHS of \ref{eq:delayed-RootOSP-G}, 
precisely
\begin{equation*}
    \sup_{\frac{\tau}{N} \leq T} \E^x\left[G^N_T\left(Y^N_{\tau}, \frac{\tau}{N}\right) \right] 
        \xrightarrow[]{N \rightarrow \infty}
    \sup_{\tau \leq T} \E^x\left[G_T\left(W_{\tau},\tau\right) \right].
\end{equation*}
\end{enumerate}
\end{lemma}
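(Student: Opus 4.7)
The plan is to first establish (i) by a direct convergence argument, and then to deduce (ii) via a two-sided bound: the $\limsup$ direction follows from (i) combined with the discrete identity \eqref{eq:discrete-delayed-Root-G}, while the $\liminf$ direction requires approximating any continuous stopping time by a discrete one.

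For (i), the key observation is a time-reversal: the backward Root hitting time $\tau^{*}=\inf\{t\ge 0:(T-t,W_{t})\in R\}\wedge T$ equals the capped first hitting time of the set $\bar R:=\{(s,x)\in[0,T]\times\R:(T-s,x)\in R\}$, which is itself a Rost barrier on $[0,T]$ (for $(s,x)\in\bar R$ and $s'<s$, one has $T-s'>T-s$, so $(T-s',x)\in R$ by the Root property, hence $(s',x)\in\bar R$). The map $(t,x)\mapsto(T-t,x)$ is a homeomorphism on $[0,T]\times\R$ and sends $R^{(N)}$ to a Rost barrier $\bar R^{(N)}$, so convergence $R^{(N)}\to R$ in Root's metric (established in the proof of Lemma \ref{lem:main-convergence-Root}) yields $\bar R^{(N)}\to\bar R$. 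Applying Lemma \ref{lem:main-convergence-Rost} with trivial delay $\eta=0$ and starting distribution $\delta_{x}$ gives
\[
 \bigl(\tfrac{\tilde\tau^{N*}}{N},\,Y^{N}_{\tilde\tau^{N*}}\bigr)\xrightarrow{d}(\tau^{*},W_{\tau^{*}}).
\]
Combined with the uniform convergence of $G^{N}_{T}\to G_{T}$ on compacts (which follows from Corollary \ref{cor:LHS-conv}, \eqref{eq:V^N-conv}, and the $1$-Lipschitz property of the potentials), the domination $|G^{N}_{T}(x,t)|\le C(1+|x|)$, and the uniform bound $\sup_{N}\E^{x}|Y^{N}_{\tilde\tau^{N*}}|<\infty$ (since $\tilde\tau^{N*}/N\le T$), dominated convergence yields (i).

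For (ii), part (i) combined with the discrete identity gives
\[
 \sup_{\tau/N\le T}\E^{x}\bigl[G^{N}_{T}(Y^{N}_{\tau},\tau/N)\bigr]=\E^{x}\bigl[G^{N}_{T}(Y^{N}_{\tilde\tau^{N*}},\tilde\tau^{N*}/N)\bigr]\xrightarrow{N\to\infty}\E^{x}\bigl[G_{T}(W_{\tau^{*}},\tau^{*})\bigr]\le \sup_{\tau\le T}\E^{x}\bigl[G_{T}(W_{\tau},\tau)\bigr],
\]
which is the $\limsup$ bound. For the matching $\liminf$, given any $W$-stopping time $\tau\le T$, I construct a $Y^{N}$-adapted stopping time $\tau^{N}$ via the hitting-time discretization of Section \ref{sec:discretization}, so that $(\tau^{N}/N,Y^{N}_{\tau^{N}})\xrightarrow{d}(\tau,W_{\tau})$ by Lemma \ref{lem:limit}. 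The same argument as in (i) then gives $\E^{x}[G^{N}_{T}(Y^{N}_{\tau^{N}},\tau^{N}/N)]\to\E^{x}[G_{T}(W_{\tau},\tau)]$, whence the $\liminf$ of the discrete supremum is at least $\E^{x}[G_{T}(W_{\tau},\tau)]$; taking the supremum over $\tau$ concludes (ii).

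The main obstacle is the discontinuity of $G_{T}$ at $t=T$ induced by $\ind_{t<T}$: weak convergence alone does not preserve expectations of functions with such jumps. I would circumvent this by splitting $\E^{x}[G^{N}_{T}(\cdot)]$ along $\{\tilde\tau^{N*}/N<T-\varepsilon\}$, where $G^{N}_{T}\to G_{T}$ uniformly and the indicator is identically $1$, and applying the $L^{1}$-domination on the complement, letting $N\to\infty$ and then $\varepsilon\searrow 0$. A secondary technical point is ensuring genuine $Y^{N}$-adaptedness of the approximating $\tau^{N}$ in the $\liminf$ argument, which is standard but requires care. The Rost case of the lemma follows the same blueprint after swapping the roles of $X$ and $Y$ via the Root--Rost symmetry of Section \ref{sec:Rost-OSP} and invoking the Rost convergence results of Section \ref{sec:taking-limits}.
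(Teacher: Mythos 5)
Your part (i) is essentially the paper's argument: the Root--Rost time reversal, Lemma \ref{lem:main-convergence-Rost} with trivial delay to get $\bigl(\tilde\tau^{N*}/N, Y^N_{\tilde\tau^{N*}}\bigr)\xrightarrow{d}(\tau^*,W_{\tau^*})$, and uniform convergence of $G^N_T$ to $G_T$; the handling of the discontinuity at $t=T$ works there because the limiting time is a barrier hitting time capped at $T$, so the uncapped backward hitting time has no atom at $T$ and $\ind_{t<T}$ is a.s.\ continuous along the limit.

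The gap is in your $\liminf$ direction of (ii). You claim that for an \emph{arbitrary} stopping time $\tau\le T$ the hitting-time discretization satisfies $\E^x\bigl[G^N_T(Y^N_{\tau^N},\tau^N/N)\bigr]\to\E^x\bigl[G_T(W_\tau,\tau)\bigr]$. This fails in general when $\tau$ charges $\{T\}$ (or piles mass just below $T$): on $\{\tau=T\}$ the discretized time $\tau^N/N$ falls strictly below $T$ with non-vanishing probability, because the random time change $\tau^N_k$ versus $k/N$ fluctuates on both sides of $T$; on that event $G^N_T$ acquires the extra term $(U_{\mu^N}-U_{\alpha^N_X})(Y^N_{\tau^N})\le 0$, which is in general bounded away from zero, so the limit of the discrete expectations is \emph{strictly smaller} than $\E^x[G_T(W_\tau,\tau)]$ --- precisely the direction that destroys the inequality $\liminf_N \sup \ge \E^x[G_T(W_\tau,\tau)]$ you need. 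Your proposed remedy (splitting at $T-\varepsilon$ and using $L^1$-domination on the complement) does not repair this: the complement has probability at least $\mathbb{P}[\tau=T]$, which does not vanish as $\varepsilon\searrow 0$, and domination bounds only the size of the error, not its sign. No structural regularity is available for a generic competitor $\tau$ (unlike $\tau^*$ in (i)). The paper's proof of (ii) supplies exactly the missing machinery: it replaces $\ind_{t<T}$ by $\ind_{t\le T-\varepsilon}$ in auxiliary payoffs, shows via monotonicity of $T\mapsto V^N_T$ and a supermartingale argument (concavity of $U_{\lambda^N}$) that the resulting discrete problem is dominated by the discrete problem at horizon $T-\varepsilon$, identifies that value through Theorem \ref{thm:discrete-delayed-Root} and part (i) applied at horizon $T-\varepsilon$ (giving $U_{\mu_{T-\varepsilon}}(x)$ via Corollary \ref{cor:LHS-conv}), and finally uses continuity of $T\mapsto U_{\mu_T}$, through Lemma \ref{lem:delayed-original-Root}, to send $\varepsilon\searrow 0$ and land on $U_{\mu_T}(x)=\E^x[G_T(W_{\tau^*},\tau^*)]$. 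These steps carry the real content of (ii) and are absent from your argument.
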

\begin{proof}
So see \emph{(i)}, recall the continuous rescaled version $W^{(N)}$ of $Y^N$.
Then due to \cite{CoKi19b}, Section 5, alternatively 
also due to Lemma \ref{lem:main-convergence-Rost} for the trivial delay $\eta^{(N)} = \eta = 0$
and additionally considering the Root-Rost symmetry 
we have the following Donsker type convergence 
\begin{equation}
    \left(\tau^{(N)*}, W^{(N)}_{\tau^{(N)*}} \right) \xrightarrow{d} \left(\tau^{*}, W_{\tau^{*}}\right)  \text{  as } N \rightarrow \infty,
\end{equation}
where
\[
    \tau^{(N)*} := \inf \left\{ t \geq 0 : \left(T-t, W^{(N)}_t\right) \in R^{(N)} \right\} \wedge T
\]
is the backwards Root stopping time of $W^{(N)}$ and
\[
    \tau^{*} := \inf \left\{ t \geq 0 : \left(T-t, W_t \right) \in R \right\} \wedge T
\]
is the backwards Root stopping time of the Brownian motion. 
Furthermore, note that by the definition of the discretization we have
\begin{equation*}
    \left(\tau^{(N)*}, W^{(N)}_{\tau^{(N)*}} \right) = \left( \frac{\tilde \tau^{N*}}{N}, Y^N_{\tilde \tau^{N*}}\right).
\end{equation*}
Since all other crucial properties - that $G_T$ is a lower semi-continuous function and that $G_T^N$ converges uniformly to $G_T$ - 
are also given here, we can repeat the convergence proof given in \cite{BaCoGrHu21}, Section 5 verbatim.

It is left to show \emph{(ii)}, convergence of the optimal stopping problems. 
Trivially, we have
\[
    \E^x \left[G^T(W_{\tau^*}, \tau^*)\right] \leq \sup_{\tau \leq T} \E^x \left[ G^T(W_{\tau}, \tau) \right].
\]
Hence it remains to show that 
\[
    \sup_{\tau \leq T} \E^x \left[ G^T(W_{\tau}, \tau) \right] \leq \E^x \left[G^T(W_{\tau^*}, \tau^*)\right].
\]
Let $\bar{\tau}$ be an optimizer of the continuous optimal stopping problem \eqref{eq:delayed-RootOSP}. 
Consider the auxiliary functions 
\begin{align*}
    \bar{G}^{\varepsilon}_T(x,t)     &:= V^{\alpha}_{T-t}(x) + \left(U_{\mu}(x) - U_{\alpha_X}(x)\right) \ind_{t \leq T - \varepsilon} \,\, \text{ and }
\\  \tilde{G}^{N,\varepsilon}_T(x,t) &:= V^{N}_{T-t}(x) + \left(U_{\mu^N}(x) - U_{\alpha^N_X}(x)\right) \ind_{t \leq T - \varepsilon}.
\end{align*}
Then for any $\delta > 0$ we have the following
\begin{align}
    \sup_{\tau \leq T} \E^x \left[ G_T(W_{\tau}, \tau) \right] - \delta
        &< \E^x \left[G_T  \left(W_{\bar\tau}, \bar\tau\right) \right]      \label{1.}              
\\      &= \lim_{\varepsilon \searrow 0} \E^x \left[\bar{G}^{\varepsilon}_T (W_{\bar\tau},\bar\tau)\right]     \label{2.}
\\      &\leq \lim_{\varepsilon \searrow 0} \liminf_{N \rightarrow \infty} 
            \E^x \left[\tilde{G}^{N,\varepsilon}_T\left(Y^N_{\tilde \sigma^N}, \frac{\tilde \sigma^N}{N} \right) \right]     \label{3.}
\\      &\leq \lim_{\varepsilon \searrow 0} \liminf_{N \rightarrow \infty} 
            \sup_{\frac{\tau}{N} \leq T} \E^x \left[ \tilde{G}^{N,\varepsilon}_T \left(Y^N_{\tau}, \frac{\tau}{N}\right) \right]     \label{4.}
\\      &\leq \lim_{\varepsilon \searrow 0} \liminf_{N \rightarrow \infty} \sup_{\frac{\tau}{N} \leq T- \varepsilon } 
            \E^x \left[ G^{N}_{T-\varepsilon} \left(Y^N_{\tau}, \frac{\tau}{N}\right) \right]     \label{5.}
\\      &= \lim_{\varepsilon \searrow 0} \liminf_{N \rightarrow \infty} 
            \E^x \left[ G^{N}_{T-\varepsilon} \left(Y^N_{\tilde\tau^{N*}_{T-\varepsilon}}, \frac{\tilde\tau^{N*}_{T-\varepsilon}}{N}\right) \right]      \label{6.}
\\      &= \lim_{\varepsilon \searrow 0} 
            \E^x \left[ G_{T-\varepsilon} \left(W_{\tau^{*}_{T-\varepsilon}}, \tau^{*}_{T-\varepsilon}\right) \right]     \label{7.}
\\      &= \lim_{\varepsilon \searrow 0} U_{\mu_{T-\varepsilon}}(x)       \label{8.}
\\      &= U_{\mu_{T}}(x) = \E^x \left[ G \left(W_{\tau^*}, \tau^*\right) \right].       \label{9.}
\end{align}
We justify this chain of inequalities and equalities step by step. 

The inequality in \eqref{1.} is clear as $\bar{\tau}$ was assumed to be an optimizer.
%
For the difference between $G_T$ and $\bar{G}^{\varepsilon}_T$ we have
\[
    |G_T(x,t) - \bar{G}^{\varepsilon}_T(x,t)| = \underbrace{|U_{\mu}(x) - U_{\alpha_X}(x)|}_{\leq c < \infty} \1_{T-\varepsilon < t < T},
\]
hence for a random variable $X$ and stopping time $\tau$ we have
\begin{equation*} 
 \E^x \left[|G_T(X,\tau) - \bar{G}^{\varepsilon}_T(X,\tau)|\right] \leq c \cdot \mathbb{P}\left[ \tau \in (T-\varepsilon, T)\right].   
\end{equation*}
Now since $\lim_{\varepsilon \searrow 0}\mathbb{P}\left[ \tau \in (T-\varepsilon, T)\right] = 0$ we can conclude
\[
 \lim_{\varepsilon \searrow 0} \E^x \left[|G_T(W_{\bar\tau},\bar\tau) - \bar{G}^{\varepsilon}_T(W_{\bar\tau},\bar\tau)|\right] = 0,
\]
which gives \eqref{2.}. 
%
The inequality in \eqref{3.} follows from uniform convergence of $\tilde{G}^{N, \varepsilon}_T$ to $\bar{G}^{\varepsilon}_T$ and the fact that $\tilde{G}^{N, \varepsilon}_T$ is lower semi-continuous.
%
In \eqref{4.} we simply take a supremum. 
%
To see \eqref{5.}, let us extend the function $V^N_{T} (x)$ to $T<0$ in the following way. 
\begin{equation} \label{eq:V-extension}
    V^{N}_{T}(x) := \begin{cases} 
                - \E^{\lambda^N} \left[ \left| Y^N_{\tilde \eta ^N \wedge NT} - x \right| \right] & \text{ for } T \geq 0 \\
                U_{\lambda^N}(x) & \text{ for } T < 0.
                     \end{cases}
\end{equation}
Consider $t \leq u$, then $T-u \leq T-t$, thus also $\tilde\eta^N \wedge (T-u) \leq \tilde\eta^N \wedge (T-t)$. 
Due to Jensen and Optional Sampling we then have
\[
   - V^N_{T-u} (x) 
    =    \E^{\lambda^N} \left[\left| Y^N_{\tilde \eta^N \wedge (T-u)} - x  \right| \right] 
    \leq \E^{\lambda^N} \left[\left| Y^N_{\tilde \eta^N \wedge (T-t)} - x  \right| \right] = - V^N_{T-t} (x),
\]
or equivalently
\[
   V^N_{T-t} (x) \leq V^N_{T-u} (x).
\]
Especially, we have for $\varepsilon > 0$ that
\[
   V^N_{T-t} (x) \leq V^N_{T-\varepsilon-t} (x),
\]
hence the function $v(t,x):= V^N_{T-t} (x)$ is \emph{increasing} in $t$ and convex in $x$.
Now 
\begin{align}
    \sup_{\frac{\tau}{N} \leq T} \E^x \left[ \tilde{G}^{N,\varepsilon}_T \left(Y^N_{\tau}, \frac{\tau}{N}\right) \right] 
        &= \sup_{\frac{\tau}{N} \leq T} 
            \E^x \Big[ 
                \underbrace{ V^{N}_{T - \tau}\left(Y^N_{\tau}\right) }_{\leq V^{N}_{T - \varepsilon - \tau}\left(Y^N_{\tau}\right)}
                    + 
                \underbrace{ \left(U_{\mu^N} - U_{\alpha^N_X}\right)\left(Y^N_{\tau}\right) \ind_{\frac{\tau}{N} \leq T - \varepsilon} }_{
                    \leq \left(U_{\mu^N} - U_{\alpha^N_X}\right)\left(Y^N_{\tau}\right) \ind_{\frac{\tau}{N} < T - \varepsilon}
}
            \Big] \nonumber
\\      &\leq \sup_{\frac{\tau}{N} \leq T}
            \left[ V^{N}_{T - \varepsilon - \tau}\left(Y^N_{\tau}\right) + \left(U_{\mu^N} - U_{\alpha^N_X}\right)\left(Y^N_{\tau}\right) \ind_{\frac{\tau}{N} < T - \varepsilon} 
            \right] \label{eq:new-OSP}
\\      &\leq \sup_{\frac{\tau}{N} \leq T} \E^x \left[ \tilde{G}^{N,\varepsilon}_{T-\varepsilon} \left(Y^N_{\tau}, \frac{\tau}{N}\right) \right] 
            \label{eq:new-OSP-G}
\end{align}
It remains to show that no optimizer of \eqref{eq:new-OSP} resp. \eqref{eq:new-OSP-G} will stop after time $T-\varepsilon$. 
By definition \eqref{eq:V-extension} we have $V^{N}_{T - \varepsilon - \tau}\left(Y^N_{\tau}\right) = V^{N}_{T - \varepsilon - \tau \wedge (T-\varepsilon)}\left(Y^N_{\tau}\right)$ since we cannot do better than $V^{N}_{0}\left(Y^N_{\tau}\right) = U_{\lambda^N}\left(Y^N_{\tau}\right)$. 
Now, let $(Z_t)_{t \geq 0}$ be a martingale. 
Then we consider 
    $\left( \tilde{G}^{N,\varepsilon}_{T-\varepsilon} \left(Z_t, t\right) \right)_{t \in [T-\varepsilon, T]} = 
        \left( U_{\lambda^N}\left(Z_t\right) \right)_{t \in [T-\varepsilon, T]}$.
As $U_{\lambda^N}$ is a concave function, $\left( U_{\lambda^N}\left(Z_t\right) \right)_{t \in [T-\varepsilon, T]}$ is a supermartingale and for any stopping time $\tau$ we have
\[
    \E^x \left[ \tilde{G}^{N,\varepsilon}_{T-\varepsilon} \left(Z_{\tau \wedge (T-\varepsilon)}, \tau \wedge (T-\varepsilon) \right) \right]
    \geq \E^x \left[ \tilde{G}^{N,\varepsilon}_{T-\varepsilon} \left(Z_{\tau \wedge T}, \tau \wedge T\right) \right].
\]
Hence we see that no optimizer of \eqref{eq:new-OSP-G} will stop after time $T-\varepsilon$ which concludes \eqref{5.}.
%
Due to Theorem \ref{thm:discrete-delayed-Root} we know that 
\[
\tilde\tau^{N*}_{T-\varepsilon} := \inf\{t \in \mathbb{N} : (N(T-\varepsilon)-t, Y_t^N) \in \tilde{R}^N\} \wedge N(T-\varepsilon)
\]
is an optimizer of the optimal stopping problem 
\[
\sup_{\frac{\tau}{N} \leq T- \varepsilon } \E^x \left[ G^{N}_{T-\varepsilon} \left(Y^N_{\tau}, \frac{\tau}{N}\right) \right]
\]
which gives equality in  \eqref{6.}. 
%
The convergence \emph{(i)} together with Corollary \ref{cor:LHS-conv} gives equality in \eqref{7.} and \eqref{8.}.
%
The convergence \eqref{9.} follows by considering the barrier $R$ such that the corresponding Root stopping time $\rho$ embeds the measure $\mu$. 
Then the measure $\mu_T$ will be embedded by 
$\rho_T := \inf\{t \geq \eta :  (t, W_t) \in R \cup ([T, \infty) \times \mathbb{R}) \}$.
Obviously, we have that $R \cup ([T, \infty) \times \mathbb{R})$ converges to $R \cup ([T, \infty) \times \mathbb{R})$ in Root's barrier distance for $\varepsilon \searrow 0$ and convergence of the corresponding measures follows from Lemma \ref{lem:delayed-original-Root}. 
\end{proof}

This concludes the convergence of the discrete optimal stopping representation to the continuous optimal stopping representation, 
hence completing the proof of Theorem \ref{thm:delayed-Root}.

The convergence in the Rost case now follows analogously, choosing the function $G_T$ and $G^N_T$ as
\begin{align*}
    G_T(x,t)   &:= U_{\mu}(x) - V^{\alpha}_{T-t}(x) \,\, \text{ and }
\\  G_T^N(x,t) &:= U_{\mu^N}(x) - V^{N}_{T-t}(x),
\end{align*}
concluding the proof of Theorem \ref{thm:delayed-Rost}.

\bibliographystyle{abbrv}
\bibliography{../MBjointbib/joint_biblio}
\end{document}